%

\documentclass[a4paper,11pt]{book}

\usepackage[utf8]{inputenc} 

\usepackage[T1]{fontenc}

\usepackage[english,francais]{babel}

\usepackage{amsmath}
\usepackage{amsfonts}
\usepackage{amssymb}
\usepackage{amsthm} 
\usepackage{mathrsfs}

\usepackage{subfig} 

\usepackage{tabularx} 
\newcolumntype{C}{>{\centering}X} 
\usepackage{calc} 
\usepackage[pdftex]{graphicx,color} 

\usepackage{hyperref}

\usepackage[nottoc]{tocbibind}


\usepackage{multirow} 

\usepackage{xy} 
\usepackage{bm}
\xyoption{all}
\xyoption{poly}
\xyoption{arc}

\usepackage{a4wide}

\usepackage{url} 
\usepackage{enumerate}

\DeclareGraphicsRule{.pdftex}{pdf}{.pdftex}{}

\usepackage{graphics}

\usepackage{array} 

\theoremstyle{plain} 
\newtheorem{thm}{Théorème}[chapter]
\newtheorem{cor}[thm]{Corollaire}
\newtheorem{lemme}[thm]{Lemme}
\newtheorem{prop}[thm]{Proposition}
\theoremstyle{definition}

\newtheorem{defn}[thm]{Définition}
\newtheorem{rqe}[thm]{Remarque}


\theoremstyle{plain} 
\newtheorem{theo}{Theorem}[chapter]
\newtheorem{coro}[theo]{Corollary}
\newtheorem{propo}[theo]{Proposition}
\newtheorem{lemma}[theo]{Lemma}

\theoremstyle{definition}
\newtheorem{defi}[theo]{Definition}

\newtheorem{remark}[theo]{Remark}




\newcommand{\FS}{{\mathfrak{S}}} 
 
\newcommand{\CO}{{\mathcal{O}}} 
\newcommand{\CL}{{\mathcal{L}}} 
 \newcommand{\CH}{{\mathcal{H}}}
 \newcommand{\CR}{{\mathcal{R}}}
 
\newcommand{\CU}{{\mathcal{U}}} \newcommand{\CA}{{\mathcal{A}}}

\renewcommand{\CR}{{\mathcal{R}}}
\renewcommand{\CL}{{\mathcal{L}}} 

\newcommand{\CK}{{\mathcal{K}}}
 
\renewcommand{\CU}{{\mathcal{U}}} 
\newcommand{\CB}{{\mathcal{B}}}

\newcommand{\BQ}{{\mathbb{Q}}}
\newcommand{\BN}{{\mathbb{N}}}
\newcommand{\BC}{{\mathbb{C}}} 
\newcommand{\BZ}{{\mathbb{Z}}}
\newcommand{\BR}{{\mathbb{R}}}

\newcommand{\fp}{{\mathfrak{p}}}
\newcommand{\fq}{{\mathfrak{q}}}
\newcommand{\FD}{{\mathfrak{D}}}

\makeatletter
\def\hlinewd#1{%
\noalign{\ifnum0=`}\fi\hrule \@height #1 %
\futurelet\reserved@a\@xhline}
\makeatother

 \newenvironment{disarray}%
  {\everymath{\displaystyle\everymath{}}\array}%
  {\endarray}


\newcommand{\qg}{{\backslash}} 
\newcommand{\eps}{\varepsilon} 
\newcommand{\D}{\Delta}
\newcommand{\fconj}{\stackrel{c}{\sim}}
\newcommand{\Lb}{\bar{\CL}}

\newcommand{\Vreg}{{V^{\mathrm{reg}}}}

\newcommand{\Enreg}{{E_n^{\mathrm{reg}}}}

\newcommand{\Ubr}{{U_{\mathrm{branch}}}}
\newcommand{\Vram}{{V_{\mathrm{ram}}}}

\newcommand{\eh}{{e^{2i\pi/h}}}
\newcommand{\tq}{{~|~}}
\newcommand{\ie}{{\emph{i.e.}~}}
\newcommand{\surj}{{\twoheadrightarrow}}
\newcommand{\inj}{{\hookrightarrow}}
\newcommand{\ssi}{{\Leftrightarrow}}


\newcommand{\slfrac}[2]{\left.#1\middle/#2\right.}

\newcommand{\DP}[3][]{\frac{\partial^{#1} #2}{\partial #3^{#1}}}


\newcommand{\Tr}{\operatorname{Tr}\nolimits}
\newcommand{\GL}{\operatorname{GL}\nolimits}

\newcommand{\OO}{\operatorname{O}\nolimits}
\newcommand{\U}{\operatorname{U}\nolimits}

\newcommand{\Red}{\operatorname{Red}\nolimits}

\newcommand{\Cat}{\operatorname{Cat}\nolimits}
\newcommand{\reg}{\operatorname{reg}\nolimits}

\newcommand{\im}{\operatorname{im}\nolimits}
\newcommand{\re}{\operatorname{re}\nolimits}
\newcommand{\Spec}{\operatorname{Spec}\nolimits}
\newcommand{\Ker}{\operatorname{Ker}\nolimits}

\renewcommand{\Im}{\operatorname{Im}\nolimits}

\newcommand{\codim}{\operatorname{codim}\nolimits}

\newcommand{\Disc}{\operatorname{Disc}\nolimits}

\renewcommand{\det}{\operatorname{det}\nolimits}

\renewcommand{\Red}{\operatorname{Red}\nolimits}

\newcommand{\Jac}{\operatorname{Jac}\nolimits}
\newcommand{\Fact}{\operatorname{\textsc{fact}}\nolimits}
\newcommand{\fact}{\operatorname{\underline{fact}}\nolimits}
\newcommand{\LL}{\operatorname{LL}\nolimits}
\newcommand{\lbl}{\operatorname{lbl}\nolimits}

\newcommand{\rg}{\operatorname{rg}\nolimits}
\newcommand{\rk}{\operatorname{rk}\nolimits}
\newcommand{\NCP}{\operatorname{\textsc{ncp}}\nolimits}
\newcommand{\cp}{\operatorname{cp}\nolimits}
\newcommand{\comp}{\operatorname{comp}\nolimits}
\newcommand{\Ch}{\operatorname{\textsc{ch}}\nolimits}

\newcommand{\<}{\preccurlyeq}
\newcommand{\Rleq}{\mathrel{\preccurlyeq_\CR}}
\newcommand{\Sleq}{\mathrel{\preccurlyeq_S}}
\newcommand{\Aleq}{\mathrel{\preccurlyeq_A}}
\newcommand{\Aleqst}{\mathrel{\prec_A}}
\newcommand{\lex}{\mathrel{\leq_{\mathrm{lex}}}}
\newcommand{\pr}{\operatorname{pr}\nolimits}
\newcommand{\Spram}{\operatorname{Spec_1^{\mathrm{ram}}}\nolimits}
\newcommand{\lcm}{\operatorname{lcm}\nolimits}
\newcommand{\grad}{\operatorname{grad}\nolimits}
\newcommand{\grdim}{\operatorname{grdim}\nolimits}


\usepackage{fancyhdr}
\pagestyle{fancy}
\fancyhf{}
\setlength{\headheight}{26pt} 

\makeatletter
\let\ps@plain=\ps@empty
\makeatother

\fancyhead[RO,LE]{%
\thepage
}
\fancyhead[LO]{\scshape \nouppercase{\rightmark}}  
\fancyhead[RE]{\scshape \nouppercase{\leftmark}} 

\fancyfoot{}




\let\origdoublepage\cleardoublepage
\newcommand{\clearemptydoublepage}{%
  \clearpage
  {\pagestyle{empty}\origdoublepage}%
}
\let\cleardoublepage\clearemptydoublepage

\FrenchFootnotes 
\AddThinSpaceBeforeFootnotes 

\newcommand*\chapterstar[1]{%
  \chapter*{#1}%
  \addcontentsline{toc}{chapter}{#1}%
  \markboth{#1}{#1}}







\title{Groupes de réflexion, géométrie du discriminant et
             partitions non-croisées}
\author{Vivien RIPOLL}


\includeonly{%
  arxiv_pagedegarde,
  arxiv_resume,
  arxiv_intro,
  arxiv_Ch_Hurfact,
  arxiv_Ch_Jac,
  arxiv_factodisc,
  arxiv_chapocox
  }


\begin{document}

%

\pdfbookmark[0]{Page de garde}{garde}

\thispagestyle{empty}

\begin{center}
  \begin{tabularx}{\textwidth}{m{1.5cm}Cm{3.5cm}C}
	\includegraphics[height = 4cm,width = 1.5cm]{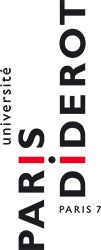}
	& \large Université Paris VII - \hbox{Denis Diderot}
	& \includegraphics[width = 3.5cm]{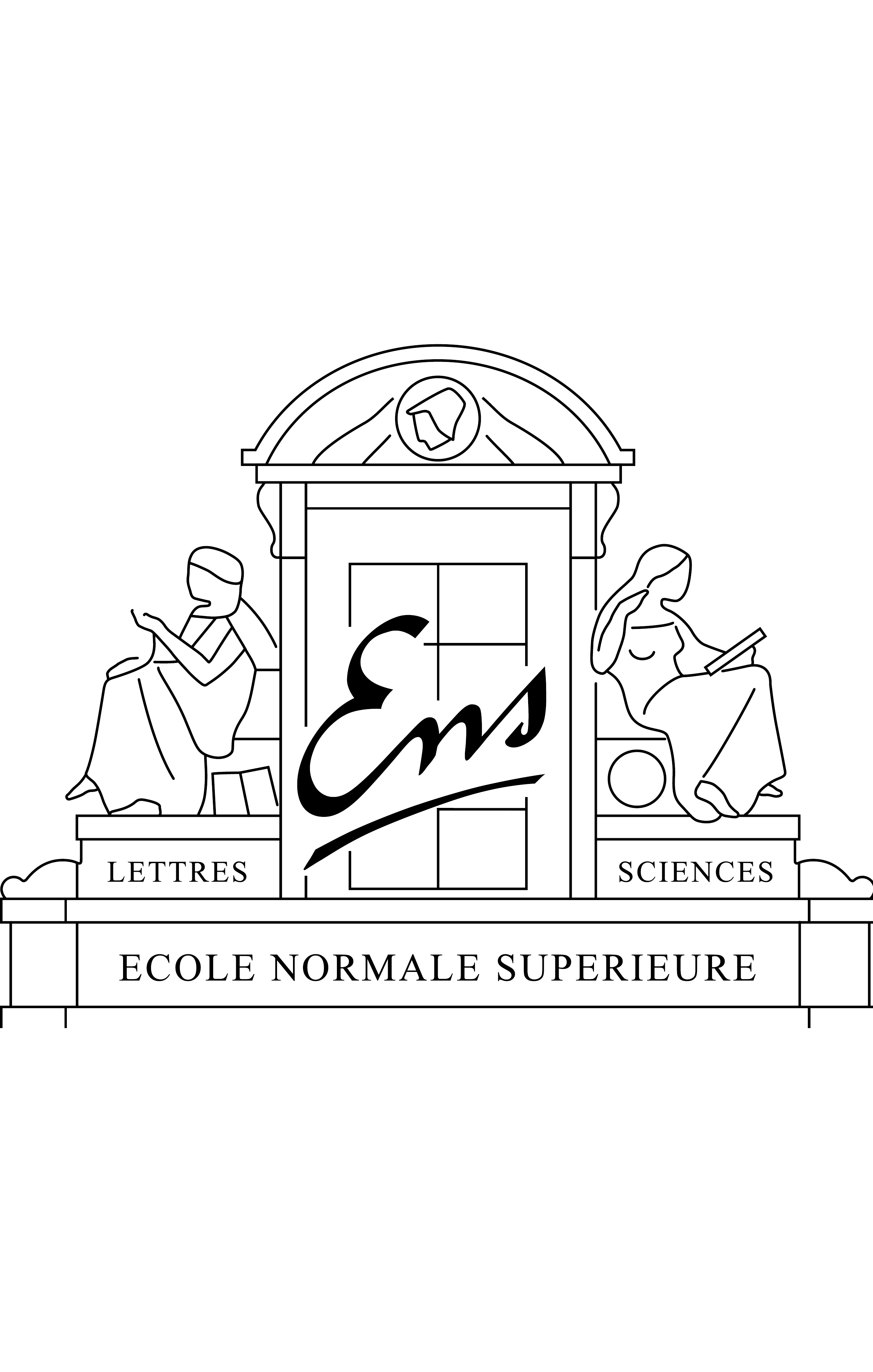}
	& \large École Normale Supérieure
  \end{tabularx}
\end{center}

\begin{center}
\vspace{\stretch{1}}
{\Large \textbf{École Doctorale Paris Centre}}

\vspace{\stretch{2}}

{\Huge \textsc{Thèse de doctorat}}

\vspace{\stretch{1}}

{\LARGE Discipline : Mathématiques}

\vspace{\stretch{3}}

{\large présentée par}
\vspace{\stretch{1}}

{\LARGE Vivien \textsc{Ripoll}}

\vspace{\stretch{2}} 
\hrule 
\vspace{\stretch{1}} 
\textbf{{\LARGE Groupes~de~réflexion, géométrie du discriminant\\et
    partitions non-croisées}}

\vspace{\stretch{1}}
\hrule
\vspace{\stretch{1}}

 {\Large 
 dirigée par David \textsc{Bessis}.}

\vspace{\stretch{5}}

{\Large Soutenue le 9 juillet 2010 devant le jury composé de :}

\vspace{\stretch{2}}
{\large
\begin{tabular}{lll}
M. David \textsc{Bessis} & École Normale Supérieure & (Directeur)\\
M. Cédric \textsc{Bonnafé} & Université de Franche-Comté & \\
M. Frédéric \textsc{Chapoton} & Université Lyon 1 & (Rapporteur)\\
M. Patrick \textsc{Dehornoy} & Université de Caen & \\
M. Christian \textsc{Krattenthaler} & Universität Wien & (Rapporteur)\\
M. François \textsc{Loeser} & École Normale Supérieure & \\
M. Jean \textsc{Michel} & Université Paris 7 & 
\end{tabular}
}

\end{center}

\newpage

\vspace*{\fill}

\noindent
Département de Mathématiques 
et Applications\\
École Normale Supérieure\\
45 rue d'Ulm \\
75\,005 Paris

\bigskip
\bigskip

\noindent
Université Paris Diderot - Paris 7\\
5 rue Thomas Mann\\
75\,205 Paris Cedex 13

\bigskip
\bigskip

\noindent
\'Ecole Doctorale de Sciences Mathématiques de Paris-Centre\\
Case 188 \\ 
4 place Jussieu \\
75\,252 Paris Cedex 05

\pdfbookmark[0]{Résumé}{resume}
    
\chapter*{Résumé}

\section*{Résumé}


Lorsque $W$ est un groupe de réflexion complexe bien engendré, le
\emph{treillis $\NCP_W$ des partitions non-croisées de type $W$} est un
objet combinatoire très riche, généralisant la notion de partitions
non-croisées d'un $n$-gone, et intervenant dans divers contextes algébriques
(monoïde de tresses dual, algèbres amassées...). De nombreuses propriétés
combinatoires de $\NCP_W$ sont démontrées au cas par cas, à partir de la
classification des groupes de réflexion. C'est le cas de la formule de
Chapoton, qui exprime le nombre de chaînes de longueur donnée dans le
treillis $\NCP_W$ en fonction des degrés invariants de $W$. Les travaux de
cette thèse sont motivés par la recherche d'une explication géométrique de
cette formule, qui permettrait une compréhension uniforme des liens entre
la combinatoire de $\NCP_W$ et la théorie des invariants de $W$. Le point
de départ est l'utilisation du \emph{revêtement de Lyashko-Looijenga}
($\LL$), défini à partir de la géométrie du discriminant de $W$.

Dans le chapitre 1, on raffine des constructions topologiques de Bessis,
permettant de relier les fibres de $\LL$ aux factorisations d'un élément de
Coxeter. On établit ensuite une propriété de transitivité de l'action
d'Hurwitz du groupe de tresses $B_n$ sur certaines factorisations. Le
chapitre 2 porte sur certaines extensions finies d'anneaux de polynômes, et
sur des propriétés concernant leurs jacobiens et leurs discriminants. Dans
le chapitre 3, on applique ces résultats au cas des extensions définies par
un revêtement $\LL$. On en déduit --- sans utiliser la classification --- des
formules donnant le nombre de factorisations sous-maximales d'un élément de
Coxeter de $W$ en fonction des degrés homogènes des composantes
irréductibles du discriminant et du jacobien de $\LL$.

\subsubsection*{Mots-clefs}
Groupes de réflexion complexes, partitions non-croisées, nombres de
Fuss-Catalan, formule de Chapoton, revêtement de Lyashko-Looijenga,
factorisations d'élément de Coxeter.

\begin{otherlanguage}{english}

  \vspace{1cm}

  \begin{center} \rule{\textwidth/3}{1pt} \end{center}

  \vspace{1cm}
\newpage
  \begin{center} \Large \bf Reflection groups, geometry of the discriminant
    and noncrossing partitions \end{center}
  \section*{Abstract}
  
  
When $W$ is a well-generated complex reflection group, the \emph{noncrossing
partition lattice $\NCP_W$ of type $W$} is a very rich combinatorial object,
extending the notion of noncrossing partitions of an $n$-gon. This
structure appears in several algebraic setups (dual
braid monoid, cluster algebras...). Many combinatorial properties of
$\NCP_W$ are proved case-by-case, using the classification of reflection
groups. It is the case for Chapoton's formula, expressing the number
of multichains of a given length in the lattice $\NCP_W$, in terms of the
invariant degrees of $W$. This thesis work is motivated by the search
for a geometric explanation of this formula, which could lead to a
uniform understanding of the connections between the combinatorics of
$\NCP_W$ and the invariant theory of $W$. The starting point is to use the
\emph{Lyashko-Looijenga covering} ($\LL$), based on the geometry of the
discriminant of $W$. In the first chapter, some topological
constructions by Bessis are refined, allowing to relate the fibers of
LL with block factorisations of a Coxeter element. Then we prove a
transitivity property for the Hurwitz action of the braid group $B_n$ on
certain factorisations. Chapter 2 is devoted to certain finite
polynomial extensions, and to properties about their Jacobians and
discriminants. In Chapter 3, these results are applied to the
extension defined by the covering $\LL$. We deduce --- with a case-free
proof --- formulas for the number of submaximal factorisations of a
Coxeter element in $W$, in terms of the homogeneous degrees of the
irreducible components of the discriminant and Jacobian for $\LL$.

 \subsection*{Keywords} 
 Complex reflection groups, noncrossing partitions, Fuss-Catalan numbers,
 Chapoton's formula, Lyashko-Looijenga covering, factorisations of a
 Coxeter element.

\end{otherlanguage}

\setcounter{tocdepth}{1} 
\pdfbookmark[0]{Table des matières}{tablematieres} 

\tableofcontents  


\chapterstar{Introduction}
\label{intro}

Le point de départ des travaux de cette thèse était d'essayer de comprendre
une formule qui exprime le nombre de chaînes de longueur donnée dans le
treillis des partitions non-croisées associé à un groupe de réflexion $W$,
en fonction des degrés invariants de $W$. J'ai ainsi été amené à étudier
les relations entre la combinatoire du treillis et la géométrie de $W$, et
plus précisément à faire le lien entre certaines factorisations d'un
élément de Coxeter et le revêtement de Lyashko-Looijenga de $W$.

Ce chapitre introductif vise à présenter les principaux personnages :
\begin{itemize}
\item les groupes de réflexion complexes bien engendrés ;
\item le treillis des partitions non-croisées de type $W$, pour $W$ un tel
  groupe ;
\item le revêtement de Lyashko-Looijenga associé à $W$ ;
\item les factorisations par blocs d'un élément de Coxeter de $W$.
\end{itemize}

\medskip

La première partie introduit le treillis des partitions non-croisées de
type $W$, un objet combinatoire très riche, qui étend la notion de
partitions non-croisées d'un $n$-gone. Les parties \ref{sectalg},
\ref{sectcomb} et \ref{sectgeom} donnent un aperçu des aspects
combinatoires, algébriques, et géométriques de cet objet et des autres
structures étudiées. On y expose l'état de l'art et le contexte historique
dans lequel s'inscrit ce travail.

Dans la partie \ref{sectresult} on présente les principaux résultats de la
thèse.

\section{Le treillis des partitions non-croisées}
\label{partintroNCP}

\subsection{Partitions non-croisées classiques}

Soit $n$ un entier non nul. Considérons $n$ points du plan, disposés sur
les sommets d'un $n$-gone régulier, et étiquetés $1,\dots ,n$ dans le sens
des aiguilles d'une montre. Étant donnée une partition de l'ensemble $\{1,\dots,
n\}$, on peut tracer dans le polygone les enveloppes convexes des parts de
la partition. On dit que celle-ci est \emph{non-croisée} si ces enveloppes
convexes ne s'intersectent pas deux à deux (dans le cas contraire, elle est
dite croisée).

\begin{figure}[!h]

  \begin{center}

        \resizebox{4.5cm}{!}{\input{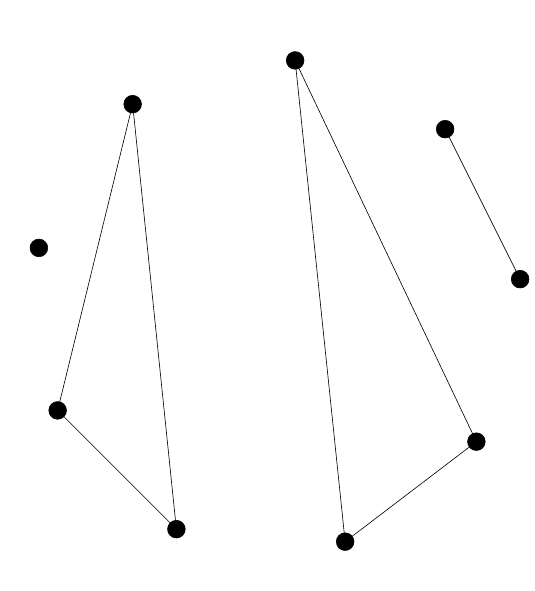_t}}
        \hspace{3cm}
        \resizebox{4.5cm}{!}{\input{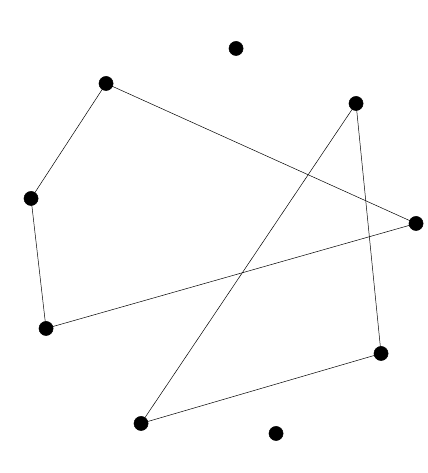_t}}

  \end{center}
\caption{La partition $P_1=\{\{1,4,5\} , \{2,3\} , \{6,7,9\}, \{8\}\}$
    est non-croisée ; la partition $P_2= \{
    \{1\},\{2,4,6\},\{5\},\{3,7,8,9\} \}$ est croisée.}
\label{figNCP}
\end{figure}

Si l'on préfère une définition combinatoire, on a la caractérisation
suivante : une partition de $\{1,\dots, n\}$ est
non-croisée si et seulement si pour $1\leq i <j<k<l\leq n$, si $i$ et $k$
sont dans la même part $P$, alors $j$ et $l$ sont soit dans des parts
distinctes, soit tous deux dans la part $P$ également.

\medskip

L'ensemble des partitions non-croisées d'un $n$-gone (que l'on notera
$\NCP_n$) est un ensemble partiellement ordonné, par l'ordre de raffinement
des partitions ($\alpha \leq \beta$ si $\alpha$ est une partition plus fine
que $\beta$). C'est de plus un treillis pour cet ordre, c'est-à-dire qu'il
existe des infs et des sups (cf. définition \ref{deftreillis}).

\medskip

Le treillis $\NCP_n$ est l'un des nombreux objets
combinatoires comptés par le nombre de Catalan :
\[ |\NCP_n | = \Cat(n) = \frac1{n+1} \binom{2n}{n} \ . \]

La première étude détaillée des partitions non-croisées date de 1972, par
Kreweras \cite{krew} (pour plus de détails sur l'historique du problème, on
renvoie à Stanley \cite[pp. 261--262]{stanley2} et Armstrong
\cite[p. 5]{Armstrong}). Le treillis est devenu depuis un objet classique en
combinatoire algébrique (cf. l'article de synthèse de Simion \cite{simion}). 

\medskip

Plus récemment, l'intérêt pour les partitions non-croisées s'est
diversifié. Elles sont étudiées en lien avec la théorie des probabilités
libres de Voiculescu (voir les synthèses de Speicher \cite{speicher} et Biane
\cite{biane}), et dans de nouveaux problèmes combinatoires (voir par
exemple les fonctions parkings \cite{stanley3}). Surtout, elles
interviennent dans de nombreuses situations algébriques (théorie des
groupes, théorie des représentations) sur lesquelles nous reviendrons plus
loin dans cette introduction. On renvoie à l'article \cite{mc} de McCammond
pour un tour d'horizon des diverses incarnations des partitions
non-croisées découvertes ces dernières années.

\medskip

Depuis la fin des années 90, la structure combinatoire a été généralisée
dans le contexte des groupes de réflexion finis (réels, puis complexes), le
cas du $n$-gone correspondant au type $W(A_{n-1})$, \ie au groupe de
permutations $\FS_n$. Pour expliquer cette généralisation, commençons par
donner une autre interprétation des partitions non-croisées.

A chaque partition $P$ (croisée ou non) d'un $n$-gone, on peut associer une
permutation $\sigma$ de $\FS_n$, qui est le produit des cycles donnés par
chaque part de $P$, ses éléments étant pris dans le sens des aiguilles
d'une montre : voir les diagrammes de la figure \ref{figperm}.

\begin{figure}[!h]
  
  \begin{center}
    \resizebox{4.5cm}{!}{\input{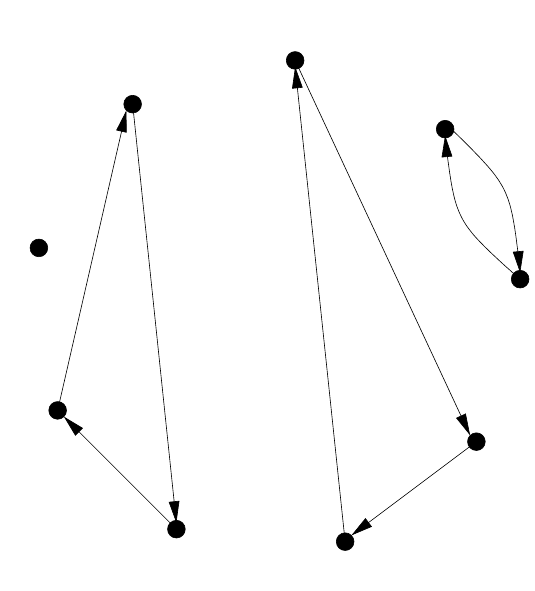_t}}
    \hspace{3cm}
    \resizebox{4.5cm}{!}{\input{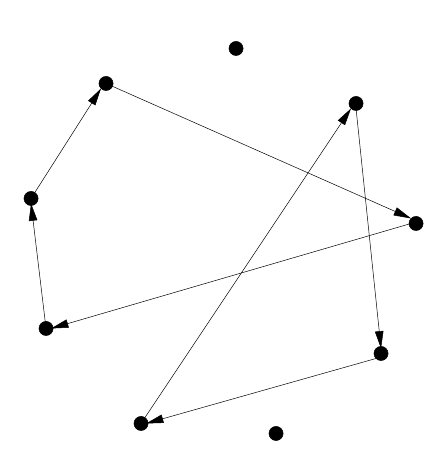_t}}
  \end{center}
  \caption{Les permutations $\sigma_1=(1\ 4\ 5)(2\ 3)(6\ 7\ 9)$ et
    $\sigma_2= (2\ 4\ 6)(3\ 7\ 8\ 9)$, correspondant aux partitions $P_1$
    et $P_2$ de la figure \ref{figNCP}.}
\label{figperm}
\end{figure}

On peut alors montrer que les partitions non-croisées correspondent
exactement aux permutations qui se situent sur une géodésique entre la
permutation identité et le $n$-cycle $c=(1\ 2\ \dots \ n)$ dans le graphe
de Cayley de $(\FS_n, T)$, où $T$ est l'ensemble de toutes les
transpositions de $\FS_n$. Ainsi, l'ensemble des partitions non-croisées
est en bijection avec l'ensemble des permutations $\sigma$ de $\FS_n$
telles que \[\ell_T(\sigma) + \ell_T (\sigma^{-1}c) = \ell_T (c) \ ,\] où
$\ell_T (\sigma)$ désigne le nombre minimal de transpositions nécessaires
pour écrire $\sigma$ (on a $\ell_T(\sigma)=n- \#\{\text{cycles de } \sigma
\}$). Sous cette forme, en voyant le groupe $\FS_n$ comme un groupe de
Coxeter de type $A$, on peut généraliser la notion de partitions
non-croisées à tous les groupes de Coxeter finis (et à certains groupes de
réflexion complexes).

\subsection{Généralisation aux groupes de réflexion}

\begin{center}
\textbf{Dans la suite, sauf mention explicite du contraire,
      tous les groupes de réflexion seront supposés \emph{finis}.}
\end{center}

\subsubsection*{Groupes de Coxeter finis}

Soit un groupe de réflexion réel fini $W$, c'est-à-dire un sous-groupe fini
de $\GL(V)$ (où $V$ est un espace vectoriel réel), engendré par des
réflexions. La théorie de Coxeter permet d'étudier ce type de groupe de
manière combinatoire, en fixant un ensemble de générateurs particuliers,
noté $S$, tel que $(W,S)$ ait une présentation très remarquable. On renvoie
aux livres \cite{Kane}, \cite{Humphreys}, ou \cite{Bourbaki} pour plus de
détails sur les groupes de Coxeter et les groupes de réflexion finis.

Pour construire les partitions non-croisées de type $W$, on munit au
contraire $W$ d'une autre partie génératrice, notée $\CR$, et constituée de
\emph{toutes} les réflexions de $W$. Dans le cas d'un groupe de type $A$,
on retrouve l'ensemble de toutes les transpositions, noté $T$ dans le
paragraphe précédent\footnote{L'ensemble de toutes les réflexions de $W$
  est d'ailleurs par extension noté $T$ par plusieurs auteurs
  (cf. \cite{Bessisdual} ou \cite{Armstrong}) ; on préfère ici la notation
  $\CR$ pour \og réflexions \fg.}.

\medskip

\begin{defn}
\label{defordreintro}
On note $\ell_\CR(w)$ la longueur d'un élément $w$ de $W$ en tant que mot
sur l'alphabet $\CR$, \ie
\[ \ell_\CR(w)= \min \{p\in \BN \tq \exists r_1,\dots, r_p \in \CR,
w=r_1\dots r_p \} . \]
Si $r_1\dots r_p =w$, avec $p=\ell_\CR(w)$, on dit que $(r_1,\dots, r_p)$
est une décomposition réduite de $w$. On définit un ordre partiel $\Rleq$ sur $W$,
en disant que $u$ \og divise \fg~  $v$ si et seulement si $u$ peut s'écrire en préfixe
d'une décomposition réduite de $v$, \ie :
\[ u \Rleq v \text{ si et seulement si } \ell_\CR(u) + \ell_\CR(u^{-1}v) =
\ell_\CR(v) \ . \]
\end{defn}

On notera simplement $\ell$ et $\<$, plutôt que $\ell_\CR$ et $\Rleq$, s'il
n'y a pas d'ambiguïté ; l'ordre $\<$ est parfois appelé ordre absolu sur
$W$ (et $\ell$ longueur absolue)\footnote{Il ne doit pas être confondu avec
  l'ordre plus classique $\Sleq$ sur les groupes de Coxeter, défini par la
  longueur $\ell_S$ relative à un système de réflexions fondamentales, ou
  encore avec l'ordre de Bruhat.}.

\begin{rqe}
\label{rqhasse}
Le diagramme de Hasse pour cet ordre partiel est donné par
le graphe de Cayley de $(W,\CR)$ (en effet, si $w\in W$ et $r\in \CR$, on
a nécessairement $\ell_\CR (rw)= \ell_\CR(w) \pm 1$, car les réflexions ont pour
déterminant $-1$).
\end{rqe}

\medskip

Le rôle du $n$-cycle dans le type $A$ va être joué ici par un \emph{élément
  de Coxeter} de $W$, c'est-à-dire un produit (dans n'importe quel ordre)
de toutes les réflexions par rapport aux murs d'une chambre donnée de
l'arrangement d'hyperplans de $W$ (cf. par exemple
\cite[29.1]{Kane}). L'ensemble des éléments de Coxeter de $W$ forme une classe
de conjugaison constituée d'éléments de longueur $\ell_\CR$
maximale dans $W$ (en type $A$ --- mais pas dans le cas général --- on
obtient tous les éléments de longueur maximale, \ie les cycles
maximaux). Fixons un élément de Coxeter $c$. On définit ainsi l'ensemble des
partitions non-croisées de type $W$ :
\[ \NCP_W(c)= \{ w \in W \tq w\< c\} \ .\] 

Soient $c$ et $c'$ deux éléments de Coxeter. Comme la longueur $\ell_\CR$
est invariante par conjugaison, et que $c$ et $c'$ sont conjugués,
$\NCP_W(c)$ et $\NCP_W(c')$ sont isomorphes en tant qu'ensembles
ordonnés. Du coup, tant qu'on peut travailler à isomorphisme près, on
notera simplement $\NCP_W$.

On peut montrer (et ce n'est pas évident, voir paragraphe
\ref{subparttreillis}), que, comme pour le type $A$, l'ensemble
partiellement ordonné $(\NCP_W, \<)$ est un treillis.

\subsubsection*{Groupes de réflexion complexes}

Dans \cite{BessisKPi1}, motivé par des questions géométriques sur les
groupes de tresses généralisés (cf. partie \ref{sectalg}), Bessis montre
que la définition de $\NCP_W$ peut s'étendre au cas où $W$ est un groupe de
réflexion complexe bien engendré. Rappelons quelques définitions (plus de
détails sont donnés au paragraphe \ref{subsectgrcdbm}).

\medskip

Si $V$ est un $\BC$-espace vectoriel de dimension $n\geq 1$, une
\emph{réflexion}\footnote{Certains auteurs utilisent le terme
  \emph{pseudo-réflexion} ; on préfère ici utiliser simplement
  \emph{réflexion} quand le cadre est un espace vectoriel complexe.} est un
élément $r$ de $\GL(V)$, d'ordre fini, et tel que $\Ker(r-1)$ soit un
hyperplan de $V$. Son unique valeur propre distincte de $1$ est donc une
racine de l'unité, qui n'est pas nécessairement $-1$ comme dans le cas
réel.

Un \emph{groupe de réflexion complexe} (fini) est un sous-groupe fini de
$\GL(V)$ engendré par des réflexions. Tout groupe de réflexion réel fini
peut bien sûr se voir comme un groupe complexe, en complexifiant l'espace
vectoriel sur lequel il agit. Cependant, on obtient avec cette définition
de nombreux nouveaux groupes de réflexion non-réels (et ce même dans le cas
où toutes les réflexions du groupe sont d'ordre $2$). La théorie de Coxeter
ne s'applique pas à ces groupes. Mais comme les définitions de l'ordre $\<$
et de la longueur $\ell$ 
vues plus haut sont relatives à l'ensemble $\CR$ de toutes les réflexions de
$W$, on peut les étendre telles quelles aux groupes de réflexion complexes. On
construit ainsi, comme dans le cas réel, un ensemble partiellement ordonné
$(W,\<)$, dont la structure provient de la longueur de réflexion
$\ell$.

\medskip

Notons que la situation est plus subtile que dans le cas réel : comme les
réflexions ne sont pas forcément de déterminant $-1$, on peut théoriquement
avoir $\ell_\CR(wr)=\ell_\CR(w)$ pour $w\in W$ et $r\in \CR$ (et ceci se
produit effectivement dans certains cas) ; la remarque \ref{rqhasse} sur le
lien entre le diagramme de Hasse de $\<$ et le graphe de Cayley de
$(W,\CR)$ n'est donc plus valide. On verra cependant que, si l'on se
restreint aux diviseurs d'un élément de Coxeter, ce problème ne se pose
plus.

\medskip

Si $W$ est un groupe de réflexion complexe qui agit de façon
essentielle\footnote{c'est-à-dire que $V^W=\{0\}$ : en tant que $W$-module,
  $V$ n'a pas de composante irréductible triviale.}  sur l'espace $V$ de
dimension $n$, on dit que $W$ est \emph{bien engendré} s'il peut être
engendré par $n$ réflexions (c'est le cas pour tous les groupes réels, mais
pas pour tous les groupes complexes). Si $W$ est bien engendré, il existe
une généralisation naturelle de la notion d'élément de Coxeter (voir la
définition \ref{defcox} du chapitre \ref{chaphur}). On peut ainsi donner la
définition générale de l'ensemble des partitions non-croisées de type $W$.

\begin{defn}
\label{defNCP}
Soit $W$ un groupe de réflexion complexe bien engendré, et $c$ un élément
de Coxeter de $W$. On note $\<$ l'ordre sur $W$ défini par la
$\CR$-longueur (cf. Déf. \ref{defordreintro}). Le treillis des partitions
non-croisées de type $W$ est l'intervalle $[1,c]$ pour l'ordre $\<$ :
\[ \NCP_W(c)= \{ w \in W \tq w\< c\} \ .\] 
\end{defn}

Il s'avère que la structure combinatoire de cet ensemble est alors très
similaire au cas réel ; en particulier $\NCP_W$ est encore un treillis. Au
paragraphe \ref{subsectgrcdbm} on donnera une interprétation algébrique de
ce treillis.

\subsubsection*{Historique et références}

Le treillis des partitions non-croisées de type $W$ est un objet à la
combinatoire très riche, et également très utile pour comprendre la
structure et la géométrie des groupes de réflexion ; on développe ces
questions en \ref{sectalg} et \ref{sectcomb}. Cette
généralisation est apparue à la fin des années 90, de façon indépendante
dans différents domaines mathématiques.

Travaillant en combinatoire algébrique, Reiner introduit des partitions
non-croisées de types $B$ et $D$ dans \cite{reiner}. Du côté de la théorie
des groupes, la construction du monoïde de Birman-Ko-Lee dans \cite{BKL}
suscite deux axes de recherche indépendants donnant une définition
algébrique de $\NCP_W$ : voir d'une part Bessis-Digne-Michel \cite{BDM}
(suivi de l'article \cite{Bessisdual}), d'autre part Brady-Watt
\cite{BradyA_n,BWordre,BWordre2}. Toujours dans la même période, des
problèmes en théorie des probabilités libres motivent également la
construction de certaines généralisations (Biane-Goodman-Nica \cite{BGN}).

En janvier 2005 une rencontre est organisée à l'American Institute of
Mathematics, rassemblant pour la première fois tous les chercheurs de
domaines divers travaillant sur ce même objet : il en résulte une
longue liste de problèmes ouverts (cf. \cite{NCP}).

\medskip

Pour plus de détails historiques, on renvoie au chapitre 4.1. dans
\cite{Armstrong}. Ici on va s'intéresser tout particulièrement aux
motivations algébriques liées aux groupes de réflexion et monoïdes de
tresses.

\section{Motivations algébriques}
\label{sectalg}

\subsection{Groupes de tresses}
\label{subsecttresses}

Soit $W \subseteq \GL(V)$ un groupe de réflexion complexe, et $\CR$
l'ensemble de toutes ses réflexions. On définit l'arrangement d'hyperplans
de $W$ :
\[\CA := \{ \Ker (r-1) \tq r\in \CR \} \ , \]
et l'espace des points réguliers :
\[ \Vreg := V - \bigcup_{H\in \CA} H \ .\] Le groupe $W$ agit
naturellement\footnote{On considèrera qu'il s'agit d'une action à gauche,
  et on notera les quotients à gauche.} sur $\Vreg$, et on définit :
\[ B(W) := \pi_1(W \qg \Vreg) \ ,\]
le groupe de tresses de $W$. Pour $W=\FS_n$ (\ie le groupe de
Coxeter $W(A_{n-1})$), on retrouve le groupe de tresses usuel à $n$ brins.

\bigskip

Dans le cas où $W$ est un groupe réel (complexifié), la théorie de Coxeter
permet de comprendre la structure de $B(W)$. Si l'on choisit une
chambre $C$ de l'arrangement réel (\ie une composante connexe de l'espace
des points réguliers \emph{réels}), alors l'ensemble $S \subseteq \CR$
formé des réflexions par rapport aux murs de $C$ engendre $W$, et l'on a
une \emph{présentation de Coxeter} :
\begin{equation*} W \simeq \big< S \ \big| \ \forall s\in S, s^2 =1 \; ; \;
  \forall s, t \in S\ (s\neq t),\ \underbrace{sts\dots}_{m_{s,t}} =
  \underbrace{tst\dots}_{m_{s,t}} \big>_{\text{groupe}}\
  \tag{1},\end{equation*} où $m_{s,t}$ est l'ordre du produit $st$. D'après
un théorème de Brieskorn (\cite{Brieskorn2}), le groupe de tresses $B(W)$
est isomorphe au groupe d'Artin-Tits de $W$, défini par la présentation
suivante\footnote{dans laquelle l'ensemble des générateurs est plus
  exactement une copie formelle de $S$.} :
\begin{equation*}A(W,S) := \big< S \ \big| \ \forall s, t \in S\ (s\neq
  t),\ 
  \underbrace{sts\dots}_{m_{s,t}} = \underbrace{tst\dots}_{m_{s,t}}
  \big>_{\text{groupe}}\ \tag{2}.\end{equation*} Deligne et Brieskorn-Saito,
dans les deux articles fondateurs \cite{Deligne,BriSaito}, ont étudié la
structure de $B(W)$, notamment en construisant une forme normale dans
$A(W,S)$ (y résolvant ainsi le problème des mots), et en établissant que le
\emph{monoïde d'Artin-Tits} $A_+(W,S)$ (défini par la présentation (2),
mais en tant que monoïde) se plonge dans son groupe des fractions $A(W,S)$.

\medskip

Cette méthode n'offre par contre aucune prise sur les groupes de réflexion
complexes, puisqu'alors la théorie de Coxeter ne s'applique pas (on n'a
plus de domaine fondamental naturel comme dans la géométrie réelle). Ce
n'est qu'au début des années 2000 qu'a été contruit un substitut pour le
monoïde d'Artin-Tits qui, lui, peut se généraliser.

\subsection{Groupes de réflexion complexes et monoïde de tresses dual}
\label{subsectgrcdbm}

Lorsqu'on passe des groupes de Weyl finis (groupes de réflexion définis sur
$\BQ$) aux groupes de Coxeter finis (définis sur $\BR$), peu de nouveaux
exemples se présentent (dans la classification des groupes irréductibles,
ce sont les types $H_3$, $H_4$, et les groupes diédraux). Par contre, si
l'on autorise les groupes de réflexion à être définis sur $\BC$, la
situation change drastiquement. La classification des groupes de réflexion
complexes irréductibles a été établie en 1954 par Shephard-Todd
(\cite{Shephard} ; voir aussi Cohen \cite{cohen}) ; elle contient :
\begin{itemize}
\item une série infinie à $3$ paramètres : $G(de,e,r)$ est le groupe des
  matrices monomiales\footnote{Un seul coefficient non nul sur chaque
    ligne, ainsi que sur chaque colonne ; autrement dit, une matrice
    monomiale est le produit d'une matrice diagonale inversible et d'une
    matrice de permutation.} de $\GL_r(\BC)$, dont les
  coefficients non nuls sont dans $\mu_{de}$, avec un produit dans
  $\mu_d$ (où $\mu_n$ désigne l'ensemble des racines $n$-ièmes de
  l'unité). On y retrouve les séries infinies réelles : $G(1,1,n)\simeq
  W(A_{n-1})$, $G(2,1,n)\simeq  W(B_n)$, $G(2,2,n)\simeq W(D_n)$,
  $G(e,e,2)\simeq W(I_2(e))$. 
\item $34$ groupes exceptionnels (numérotés historiquement $G_4,\dots,
  G_{37}$), de petit rang : $19$ sont de rang $2$, et le rang maximal est $8$
  (atteint pour $G_{37}$ qui n'est autre que le groupe $W(E_8)$).
\end{itemize}

Contrairement au cas réel, cette classification n'apparaît pas comme la
solution d'un problème combinatoire simple. Elle a été obtenue par des
méthodes \emph{ad hoc}, et reste encore aujourd'hui assez mal comprise. En
particulier, on ne peut pas toujours trouver un système générateur minimal
de réflexions qui soit \og naturel \fg, et trouver des présentations satisfaisantes
pour ces groupes est un problème difficile (voir les travaux de
Broué-Malle-Rouquier \cite{broumarou1,broumarou}, où sont construits des
diagrammes \og à la Coxeter \fg\ pour les groupes complexes). Encore
aujourd'hui, on ne dispose pas d'une vision globale satisfaisante sur ces
groupes, et de nombreuses propriétés importantes sont démontrées au cas par
cas (comme des coïncidences constatées sur tous les groupes de la
classification), et non comprises en profondeur. Les travaux de cette thèse
visent d'ailleurs à améliorer la compréhension conceptuelle de certaines de
ces propriétés empiriques.

\bigskip

L'étude des groupes de réflexion complexes connaît depuis une quinzaine
d'années un regain d'intérêt, motivé par de nouvelles questions en théorie
des représentations. De la même façon que les groupes de Weyl sont les \og
squelettes \fg\ de la théorie de Lie, il semblerait en effet qu'on puisse
construire des structures algébriques (appelées \emph{Spetses}) associées
aux groupes de réflexion complexes (travaux de Broué, Malle, Michel,
Rouquier, présentés dans \cite{broue}).

Pour une présentation exhaustive de la théorie des groupes de réflexion
complexes, on renvoie au récent livre de Lehrer-Taylor \cite{unitary}.

\bigskip

En 1998, Birman-Ko-Lee \cite{BKL} donnent une nouvelle présentation pour le
groupe de tresses usuel, qui est réinterprétée par Bessis-Digne-Michel
\cite{BDM} dans le langage alors tout récent des \emph{monoïdes de Garside}
(voir partie suivante). Ceci est le point de départ de la construction dans
\cite{Bessisdual} d'un
nouveau monoïde de tresses pour tous les groupes de Coxeter finis (voir
aussi \cite{BWordre2}) : le \emph{monoïde de
  tresses dual}. Celui-ci vérifie des propriétés structurelles analogues à
celles du monoïde d'Artin-Tits (c'est un monoïde de Garside), mais il a
l'avantage de ne pas utiliser la structure de Coxeter du groupe : il est en
effet construit à partir de la paire $(W,\CR)$, où $\CR$ est l'ensemble de
\emph{toutes} les réflexions. Cela permet de le définir également pour
certains groupes de réflexion non réels (ceux qui sont bien engendrés),
cf. \cite{BessisKPi1}.

\medskip

Pour une présentation synthétique du monoïde dual et de ses applications,
on renvoie à l'introduction de \cite{bessishdr}. Donnons ici simplement sa
définition. Celle-ci s'inspire d'une présentation alternative du monoïde
d'Artin-Tits (établie par Tits) :
\begin{equation*}A_+ (W,S)\simeq \big< W\ |\ w.w'=ww' \text{ (produit dans }
W\text{) si } \ell_S(w)+\ell_S(w')=\ell_S(ww') \big>_{\text{monoïde}},
\tag{3} \end{equation*} 
où $\ell_S$ désigne la longueur relativement à l'ensemble $S$ des réflexions
fondamentales (définition analogue à Déf. \ref{defordreintro}). On définit
alors le monoïde dual de tresses par :
\begin{equation*}M(W,c):=\big< \NCP_W(c) |\ w.w'=ww' \text{ (produit dans }
  W\text{) si } \ell_\CR(w)+\ell_\CR(w')=\ell_\CR(ww')
  \big>_{\text{monoïde} } ,\tag{4} \end{equation*} où $c$ est un élément de
Coxeter et où $\NCP_W(c)$, qui sert ici de système générateur, est l'ensemble
des diviseurs de $c$ pour l'ordre $\Rleq$ (Déf. \ref{defNCP}). On
montre que le monoïde dual se plonge dans son groupe des fractions, qui est
$B(W)$, mais qu'il n'est pas (dans le cas réel) isomorphe au monoïde
d'Artin-Tits.

\subsection{Structures de Garside}
\label{subsectgarside}

Le monoïde d'Artin-Tits et le monoïde dual sont deux exemples de \emph{monoïdes
de Garside}. Cette notion a été introduite à la fin des années 90 par
Dehornoy et Paris \cite{Deh2,DehPa}, dans le but d'axiomatiser les
propriétés essentielles de $A_+(W,S)$, de sorte que les méthodes de preuve
et les résultats de Deligne et Brieskorn-Saito s'appliquent dans un cadre
plus général. 

Il n'est pas utile de donner ici la définition précise d'un monoïde de
Garside\footnote{Le nom donné à cette structure provient du premier cas
  historique, celui du monoïde de tresses classique de type $A$, étudié par
  Garside dans \cite{Garside}.} --- d'autant que celle-ci n'est pas tout à
fait stabilisée ---, mais signalons qu'un axiome fondamental est une
propriété de treillis pour un système générateur du monoïde. Les
générateurs de la présentation (3) de $A_+ (W,S)$ constituent l'ensemble
ordonné $(W,\Sleq)$ (où $\Sleq$ est l'ordre de divisibilité associé à $S$),
dont on peut montrer géométriquement qu'il est un treillis. Pour le cas du
monoïde dual (présentation (4)), il faut introduire un élément de Coxeter
(l'ensemble $(W,\Rleq)$ entier n'est en général pas un treillis). Le
système générateur est alors précisément $\NCP_W(c)$, et on comprend
l'importance algébrique de la propriété de treillis pour l'ensemble des
partitions non-croisées de type $W$ (voir paragraphe
\ref{subparttreillis}).

\bigskip

La construction du monoïde de tresses dual pour les groupes non réels a
donc des applications importantes, puisqu'elle permet d'adapter les preuves
de Deligne et Brieskorn-Saito et de démontrer des propriétés fondamentales
de $W$ et $B(W)$ (problème des mots, problème de conjugaison...) dans des
cas nouveaux. Une application importante est la démonstration de la
propriété $K(\pi,1)$ pour les arrangements d'hyperplans des groupes de
réflexion complexes \cite{BessisKPi1}.

\bigskip

\begin{rqe}
  On ne traite dans cette thèse que de groupes de réflexion
  finis. Cependant, des monoïdes duaux ont également été construits 
  pour les groupes d'Artin-Tits de type non sphérique :
  \cite{charneypeifer,Digne,Bessis,BCKM}. 
\end{rqe}

La théorie de Garside continue par ailleurs de se développer, et de nombreuses
généralisations de structures de Garside ont été définies ces dernières années
(groupoïdes, catégories) : voir par exemple \cite{BessisGarside},
\cite{krammer}, et l'article de synthèse \cite{Gar}.

\section{Combinatoire de Coxeter-Fuss-Catalan}
\label{sectcomb}

Fixons un groupe de réflexion complexe bien engendré $W$. L'ensemble
ordonné $\NCP_W$ possède une combinatoire particulièrement riche, explorée
depuis quelques années (au moins lorsque $W$ est réel). Suivant une
terminologie d'Armstrong, on pourra parler de \emph{combinatoire de
  Coxeter-Fuss-Catalan}, car des généralisations des nombres de
Fuss-Catalan interviennent.

On donne dans cette partie quelques exemples (non exhaustifs) des problèmes
posés par cette combinatoire. Pour un exposé plus complet, et pour d'autres
références, on pourra se reporter à la thèse d'Armstrong \cite{Armstrong},
en particulier les chapitres 1 et 5.

\subsection{Propriété de treillis}
\label{subparttreillis}

Il est temps de rappeler la définition d'un treillis.

\begin{defn}
  \label{deftreillis}
  Soit $(E,\leq)$ un ensemble partiellement ordonné. On dit que $E$ est
  un treillis si :
  \begin{itemize}
  \item[(i)] $E$ est borné : $\exists a,b \in E,\ \forall x \in E,\ a \leq
    x \leq b$ ;
  \item[(ii)] pour tout $x,y\in E$, $x$ et $y$ admettent un infimum $x
    \wedge y$ ;
  \item[(iii)] pour tout $x,y\in E$, $x$ et $y$ admettent un supremum $x
    \vee y$.
  \end{itemize}
\end{defn}

Lorsque $E$ est fini, deux quelconques de ces propriétés entraînent la
troisième. 

\bigskip

Comme expliqué au paragraphe \ref{subsectgarside}, la propriété de treillis
pour $\NCP_W$ est essentielle pour montrer que le monoïde dual de tresses
de $W$ est un monoïde de Garside.

Pour l'ensemble $\NCP_n$ des partitions non-croisées d'un $n$-gone, on
vérifie aisément (i) et (ii). 

Par contre, dans le cas général, une méthode de preuve uniforme est
difficile à trouver. Historiquement, la démonstration a dans un premier
temps été faite au cas par cas, avant que Brady-Watt \cite{BWtreillis}
donnent une preuve générale --- mais seulement dans le cas où $W$ est réel
---, en construisant un complexe simplicial dont les simplexes représentent
les éléments de $\NCP_W$ (pour un exposé détaillé, voir le mémoire
\cite{M2}). Depuis, d'autres démonstrations uniformes indépendantes, mais
toujours dans le cas réel, ont été publiées (Ingalls-Thomas \cite{IngThom},
Reading \cite{readingshard}).

\medskip

Dans le cas où $W$ n'est pas réel, aucune démonstration sans cas par cas
n'a été obtenue à l'heure actuelle. Les méthodes citées ci-dessus ne se
généralisent pas facilement au cas complexe, car elles reposent de façon
essentielle sur la théorie de Coxeter.

\subsection{Sur la terminologie \og non-croisées \fg}

Historiquement, l'ensemble $\NCP_W$ est une généralisation algébrique des
partitions non-croisées d'un $n$-gone. On peut se demander s'il est
possible de voir effectivement les éléments de $\NCP_W$ comme certains
objets combinatoires sans croisement, ou plus généralement s'il existe une
façon naturelle de représenter géométriquement les éléments de cette
structure. C'est le cas pour les types $B$ et $D$ (Athanasiadis-Reiner
\cite{athareiner}) et $G(e,e,r)$ (Bessis-Corran \cite{BessisCorran}).

Un cas plus général (pour les séries réelles infinies et pour certains
groupes exceptionnels) est traité par les récents travaux de
Reading \cite{readingcoxeter}, qui donne des représentations graphiques des
sous-groupes non-croisés en utilisant le plan de Coxeter.

\subsection{Nombres de Catalan associés}
\label{subpartcatalan}

Le calcul du cardinal de $\NCP_W$ fait intervenir des généralisations des
nombres de Catalan, s'exprimant à partir des degrés invariants de
$W$. Rappelons la définition de ces degrés.

Soit $W\subseteq \GL(V)$ un groupe de réflexion (complexe ou réel) de rang
$n$ ; il agit naturellement sur l'algèbre symétrique $S(V^*) \simeq
\BC[v_1,\dots, v_n]$ (où $v_1,\dots, v_n$ est une base de $V$). En vertu du
théorème de Chevalley-Shephard-Todd, l'algèbre des invariants $S(V^*)^W$
est une algèbre de polynômes ; de plus, elle admet un système de $n$
générateurs homogènes $f_1,\dots, f_n$, et leurs degrés $d_1\leq \dots \leq
d_n$ ne dépendent pas du choix des invariants $f_i$. Ce sont les
\emph{degrés invariants} de $W$ ; le degré maximal $d_n$ est appelé nombre de Coxeter (noté $h$), et est
aussi l'ordre d'un élément de Coxeter.

\bigskip

Ces degrés invariants interviennent dans la combinatoire du treillis
$\NCP_W$, avec des formules souvent étonnamment simples. Ainsi, pour tout
groupe de réflexion complexe irréductible bien engendré $W$, on a l'égalité
suivante :
\begin{equation*} |\NCP_W|=\prod_{i=1}^n \frac{d_i + h}{d_i} \ .
  \tag{5} \end{equation*}

Aussi simple soit-elle, cette formule n'a jusqu'ici pas été démontrée de
manière uniforme : elle a simplement été vérifiée pour chaque groupe
irréductible de la classification. Le seul progrès vers une meilleure
explication est une formule de récurrence établie par Reading dans
\cite{reading} (valable dans le cas réel), et qui permet de vérifier
l'égalité au cas par cas de façon simple.

\medskip

Par extension du cas du type $A$ (où $|NCP_n|$ est le $n$-ième nombre de
Catalan $\frac1{n+1} \binom{2n}{n}$), le membre de droite de l'égalité (5)
est appelé \emph{nombre de Catalan de type $W$}, et noté  $\Cat(W)$.

\medskip

Ce nombre apparaît dans de nombreux autres contextes. Dans le contexte des
algèbres amassées (\og cluster algebras \fg), introduites par
Fomin-Zelevinsky dans \cite{FominZel}, il compte le nombre de sommets
dans l'associaèdre généralisé de type $W$, pour $W$ réel (voir
Fomin-Reading \cite{gcccc}). Quand $W$ est un groupe de Weyl, le nombre
$\Cat(W)$ intervient aussi dans la combinatoire des \og partitions
non-emboîtées \fg\ (``nonnested partitions''), dans celle des antichaînes
de racines, de l'arrangement de Shi étendu, des triangles de Chapoton
\cite{Atha,Atha2,Atha3,chapoton,chapoton2}... De nombreuses questions se
posent en lien avec ces objets combinatoires, car l'on a souvent seulement
une énumération au cas par cas, qui, de façon mystérieuse, donne $\Cat(W)$,
et très peu de preuves uniformes ou de bijections explicites entre les
différents objets comptés par le même nombre.

\medskip

La formule (5) n'est cependant qu'un cas particulier de la formule de
Chapoton sur le nombre de chaînes dans $\NCP_W$.

\subsection{Nombres de chaînes, formule de Chapoton}

\begin{defn}
  Pour $k \in \BN$, le $k$-ième \emph{nombre de Fuss-Catalan de type $W$}
  est défini par la formule suivante :
 \[ \Cat^{(k)}(W):= \prod_{i=1}^n \frac{d_i + kh}{d_i} \]
\end{defn}

\begin{thm}[Formule de Chapoton]
  Soit $W$ un groupe de réflexion complexe irréductible bien
  engendré. Alors le nombre de $k$-chaînes $w_1\< \dots w_k \< c$ dans
  $\NCP_W(c)$ est égal à $\Cat^{(k)}(W)$.
\end{thm}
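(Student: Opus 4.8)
Une démonstration uniforme de la formule de Chapoton n'étant pas disponible à ce jour, le plan réaliste consiste à ramener l'énoncé à une égalité entre deux polynômes en $k$, puis à identifier leurs valeurs. Soit $Z_{\NCP_W}(k)$ le nombre de multichaînes $w_1 \< \dots \< w_k \< c$ dans $\NCP_W(c)$ ; c'est, à un décalage près, le polynôme de la fonction zêta de l'ensemble ordonné $\NCP_W$, donc une fonction polynomiale de $k$ de degré $n$. Le membre de droite $\Cat^{(k)}(W) = \prod_{i=1}^n (d_i + kh)/d_i$ est lui aussi un polynôme en $k$ de degré $n$, de coefficient dominant $h^n/(d_1\cdots d_n)$. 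Pour prouver leur coïncidence il suffit donc de vérifier $n+1$ valeurs, ou d'identifier coefficient dominant et valeurs en quelques entiers ; mais la stratégie la plus efficace est de passer par un \emph{comptage de factorisations}.

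\textbf{Étape clef : le dictionnaire chaînes / factorisations.} Une $k$-multichaîne $w_1 \< \dots \< w_k \< c$ équivaut à la donnée d'une factorisation $c = t_1 t_2 \cdots t_{k+1}$ avec $\ell_\CR(c) = \sum \ell_\CR(t_i)$ (factorisation \og par blocs \fg\ de longueur $k+1$), via $t_1 = w_1$, $t_{i} = w_{i-1}^{-1} w_i$, $t_{k+1} = w_k^{-1} c$ : les $t_i$ parcourent exactement les intervalles $[1,c]$ successifs, et la condition d'additivité des longueurs est équivalente à $w_1 \< \dots \< w_k \< c$. Donc $Z_{\NCP_W}(k)$ est le nombre $\mathrm{FACT}_{k+1}(c)$ de telles factorisations ordonnées de $c$ en $k+1$ facteurs. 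Il s'agit alors de démontrer
\[ \mathrm{FACT}_{k+1}(c) = \prod_{i=1}^n \frac{d_i + kh}{d_i} . \]
C'est ici qu'intervient le \emph{revêtement de Lyashko-Looijenga} et la géométrie du discriminant : les constructions topologiques de Bessis raffinées au chapitre~1 mettent en bijection (compatible avec les longueurs) les fibres de $\LL$ avec les factorisations par blocs d'un élément de Coxeter. On peut ainsi espérer compter $\mathrm{FACT}_{k+1}(c)$ en comptant des configurations de points dans la fibre, pondérées par la structure des strates du discriminant, et reconnaître le produit $\prod (d_i + kh)$ comme un invariant géométrique (degré d'une variété, nombre de points d'une fibre générique d'un morphisme composé faisant intervenir le plongement $V/W \hookrightarrow E_n$).

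\textbf{Le point difficile.} L'obstacle principal est exactement le passage du comptage géométrique à la formule produit : établir \emph{sans recours à la classification} que le nombre de factorisations par blocs s'exprime par $\prod_i (d_i + kh)/d_i$ revient à contrôler finement la combinatoire des fibres de $\LL$ le long de toutes les strates, et c'est précisément ce que la thèse ne parvient à faire que partiellement (cas des factorisations sous-maximales, c'est-à-dire $k+1 = n+1$ facteurs moins un, traité au chapitre~3 via les degrés homogènes des composantes du discriminant et du jacobien de $\LL$). Pour l'énoncé général, la preuve effective reste donc \emph{au cas par cas} : on fixe un groupe irréductible bien engendré $W$ dans la liste de Shephard-Todd, on calcule les degrés $d_1,\dots,d_n$ et $h$, et on vérifie l'égalité polynomiale — soit directement, soit, dans le cas réel, en s'appuyant sur la formule de récurrence de Reading \cite{reading} qui relie $\mathrm{FACT}$ pour $W$ à ceux des sous-groupes paraboliques et rend la vérification élémentaire. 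Alternativement, on peut invoquer les travaux antérieurs (Reiner pour $B,D$, Athanasiadis, Bessis \cite{Bessisdual}, Chapoton \cite{chapoton} lui-même) qui ont établi l'énoncé type par type ; la contribution conceptuelle de ce mémoire étant précisément de dégager le mécanisme géométrique ($\LL$, discriminant) qui devrait, à terme, fournir la preuve uniforme manquante.
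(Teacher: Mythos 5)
Votre proposition coïncide pour l'essentiel avec ce que fait le texte : la thèse ne démontre pas la formule de Chapoton de manière uniforme, mais renvoie exactement comme vous à la vérification au cas par cas sur la classification (Chapoton, Reiner, Athanasiadis, Bessis--Corran, la récurrence de Reading), la preuve géométrique via $\LL$ n'étant aboutie que pour les factorisations sous-maximales. Votre dictionnaire entre $k$-multichaînes et factorisations ordonnées à longueurs additives (facteurs éventuellement triviaux) correspond bien aux formules de passage de l'annexe~\ref{annexchapo}, à ceci près que les \og factorisations par blocs \fg\ du texte sont par convention strictes, d'où la relation $Z_W(N)=\sum_k |\Fact_k(c)|\binom{N}{k}$ plutôt qu'une égalité directe.
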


Cette formule apparaît pour la première fois (dans le cas $W$ réel) dans
l'article de Chapoton \cite{chapoton} ; mais elle se vérifie au cas par
cas en utilisant de nombreux travaux (Edelman, Athanasiadis, Reiner,
Bessis...). On renvoie à \cite[3.5]{Armstrong} pour les détails
historiques.

\medskip

La plupart des remarques de la partie précédente s'appliquent encore ici :
les nombres de Fuss-Catalan de type $W$ apparaissent dans d'autres
contextes combinatoires, souvent sans qu'on en ait une explication
satisfaisante.

\bigskip

La formule de Chapoton a pour conséquence deux formules \og classiques \fg\
:
\begin{enumerate}[(i)]
\item $|\NCP_W| = \prod \frac{d_i + h}{d_i}$, comme décrit en
  \ref{subpartcatalan} ;
\item $|\Red_{\CR}(c)|= n!\ h^n /\ |W|$, où $\Red_{\CR}(c)$ désigne l'ensemble
  des décompositions réduites de l'élément de Coxeter $c$.
\end{enumerate}

Toutes deux avaient été observées avant la formule de Chapoton, également
avec des preuves reposant sur la classification. Historiquement, la formule
(ii) pour le cas réel a été conjecturée par Looijenga dans
\cite{looijenga}, puis démontrée au cas par cas par Deligne
\cite{deligneletter}. A ma connaissance, aucune autre preuve n'est connue à
ce jour. 

\medskip

La motivation initiale des travaux présentés ici était de progresser vers
une explication conceptuelle de la formule de Chapoton. On verra qu'au-delà
de cet enjeu qui pourrait sembler anecdotique, une compréhension profonde
de cette formule exige d'élucider une structure
algébro-géométrico-combinatoire riche et subtile.

L'un des principaux résultats obtenus est le calcul du nombre de
factorisations d'un élément de Coxeter (de longueur $n$) en $(n-1)$ blocs,
où un seul facteur est de longueur $2$, les autres étant des réflexions. La
preuve est de nature géométrique, et ne demande pas de nouvelle analyse au
cas par cas si l'on admet la formule (i) (cf. Remarque \ref{rqfinale}).

\subsection{Raffinements divers}

On peut munir l'ensemble des $k$-chaînes d'un ordre naturel, construit à
partir de l'ordre produit sur $(W,\<)$. Il est alors appelé
l'ensemble $NC^{(k)}(W)$ des partitions non-croisées $k$-divisibles de type
$W$ --- terminologie inspirée par le cas du type $A$, où
$NC^{(k)}(A_{n-1})$ est isomorphe à l'ensemble des partitions de
$\NCP_{kn}$ dont toutes les parts sont de cardinal multiple de $k$. Sa
combinatoire très riche fait l'objet des chapitres 3 et 4 de
\cite{Armstrong}. En particulier, on peut obtenir des formules encore plus
fines, où le rang du premier terme de la chaîne est spécifié (nombres de
Fuss-Narayana).

\bigskip

Signalons un autre raffinement de ces nombres, à base de $q$-analogues :
ainsi, Stump étudie dans \cite{stump} des $q,t$-nombres de Fuss-Catalan,
qui sont en rapport avec l'algèbre des coinvariants diagonaux (travaux de
Gordon \cite{gordon}, Gordon-Griffeth \cite{GorGri}).

Un autre problème combinatoire sur $\NCP_W$, lié aux $q$-analogues, est le
\og phénomène de crible cyclique \fg\ (``cyclic sieving phenomenon''
défini par Reiner-Stanton-White \cite{sieving}) : on renvoie aux travaux de
Bessis-Reiner \cite{bessisreiner} et Krattenthaler-Müller
\cite{KraMu2}.

\subsection{Factorisations d'un élément de Coxeter}
\label{subsectfactintro}

Si $w_1\< \dots \< w_k$ est une chaîne de $\NCP_W$, on peut construire une
factorisation $c=u_1\dots u_{k+1}$ en posant $u_i=w_{i-1}^{-1}w_{i}$ (avec
$w_0=1$ et $w_{k+1}=c$) ; on a alors $\ell(u_1) + \dots +
\ell(u_{k+1})=\ell(c)$. Inversement, à toute telle factorisation de $c$ on
peut associer une chaîne dans $\NCP_W$. Ce type de factorisation est une
extension naturelle de la notion de  décompositions réduites de $c$
($\Red_{\CR}(c)$).

Dans le chapitre \ref{chaphur} de cette thèse, on va construire
géométriquement des factorisations de $c$ (en passant par le groupe de
tresses $B(W)$), qui seront des factorisations strictes (aucun des facteurs
n'est trivial). Mais il existe des formules de passage simples entre les
factorisations strictes et les factorisations larges (ou entre les chaînes
strictes et les chaînes larges) : dans l'annexe \ref{annexchapo} on
détaille ces relations.

\medskip

La formule de Chapoton permet donc de calculer le nombre de
factorisations d'un élément de Coxeter en un nombre donné de facteurs. La
combinatoire de telles factorisations (qui, dans le cas du groupe
symétrique, est déjà très riche) offre de nombreux problèmes intéressants
(cf. les \og nombres de décompositions \fg\ de Krattenthaler-Müller
\cite{KraMu}).

\section{Géométrie du discriminant}
\label{sectgeom}

La plupart des travaux en combinatoire sur le treillis $\NCP_W$ concernent
le cas des groupes réels. Très souvent, les méthodes utilisées reposent sur
la théorie de Coxeter (par exemple sur l'existence d'un diagramme de
Coxeter qui admet un $2$-coloriage, ce qui permet de choisir un élément de
Coxeter bien adapté), et ne sont donc pas transposables aux groupes non
réels. 

En fait, la notion même d'élément de Coxeter dans un groupe de réflexion
complexe $W$ (non réel) n'a pas encore de définition combinatoire
satisfaisante. C'est plutôt une notion de nature géométrique (s'appliquant
aux valeurs propres et vecteurs propres de l'élément,
cf. Déf. \ref{defcox}), contrairement au cas où $W$ est un groupe de
Coxeter.

Pour ces raisons, si l'on veut comprendre de manière globable la
structure combinatoire de $\NCP_W$ pour l'ensemble des groupes de
réflexion complexes bien engendrés, on est amené à tenter une approche
géométrique. 

C'est l'approche qui a été menée dans ce travail de thèse.

\subsection{Le revêtement de Lyashko-Looijenga}

Le point de départ est l'utilisation d'un revêtement, construit à partir de
la géométrie de l'hypersurface $\CH$ associée au discriminant de $W$. Le
\emph{revêtement de Lyashko-Looijenga} ($\LL$) est introduit par Bessis
dans \cite{BessisKPi1}, généralisant une définition de Looijenga et Lyashko
(cf. partie \ref{partLL}). Décrivons grossièrement sa construction. On part
de l'espace quotient $W \qg V$, dont les fonctions coordonnées sont les
invariants $f_1,\dots, f_n$. On considère l'hypersurface $\CH$, image de
l'union des hyperplans par la surjection $V \ \surj \ W \qg V$. Le revêtement
$\LL$ permet d'étudier cette hypersurface via les fibres de la projection
$(f_1,\dots, f_n) \mapsto (f_1,\dots, f_{n-1})$. Il associe à chaque
$(n-1)$-uplet $(f_1,\dots, f_{n-1})$ le multi-ensemble des intersections
(avec multiplicités) de sa fibre avec $\CH$.

Le point de départ des travaux de \cite{BessisKPi1} est l'observation que
le degré du revêtement $\LL$ est égal au nombre de décompositions réduites
d'un élément de Coxeter. La relation entre la géométrie de
$\LL$ et les factorisations d'un élément de Coxeter dont on a parlé plus
haut va en fait beaucoup plus loin : en quelque sorte, ces factorisations
encodent naturellement les fibres de $\LL$. Les travaux de cette thèse
découlent d'un raffinement de ces observations.

\subsection{Travaux de Saito}

Signalons que ces questions sur la géométrie du discriminant sont liées aux
travaux de Kyoji Saito sur la structure de variété de Frobenius (``flat
structure'') de $W \qg V$
\cite{saito,saitoorbifold,saitopolyhedra,saitosemialg}. Même si ceux-ci
concernent les groupes réels, beaucoup de propriétés peuvent s'étendre sans
difficultés aux groupes complexes bien engendrés.

\section{Principaux résultats de la thèse}
\label{sectresult}

Comme on l'a dit plus haut, les travaux présentés ici étaient initialement
motivés par la recherche d'une explication conceptuelle de la formule de
Chapoton. Le point de départ a été l'utilisation des résultats récents de
Bessis \cite{BessisKPi1} : ses constructions topologiques (les \og tunnels
\fg) permettent en effet de caractériser les fibres du revêtement $\LL$ à
l'aide de certaines factorisations d'un élément de Coxeter $c$. La
combinatoire de ces factorisations est à son tour liée aux nombres de
chaînes dans $\NCP_W$ : on explique ces relations dans l'annexe
\ref{annexchapo}.

\subsection{Morphisme de Lyashko-Looijenga, factorisations primitives,
  action d'Hurwitz}

Soit $W$ un groupe de réflexion complexe bien engendré, irréductible, de
rang $n$. Notons $\Delta_W$ le discriminant de $W$ (cf. paragraphe
\ref{subpartLL0}) ; c'est un polynôme invariant, donc dans $\BC[f_1,\dots,
f_n]$ (où $f_1,\dots, f_n$ est un système d'invariants fondamentaux de
degrés $d_1\leq \dots \leq d_n$). On pose $Y:= \Spec \BC[f_1,\dots,
f_{n-1}]$, de sorte que le quotient $W \qg V$ s'identifie à $Y \times
\BC$. Le morphisme de Lyashko-Looijenga associe à $y\in Y$ l'ensemble des
points d'intersection (avec multiplicité) de la droite (complexe) $\{(y,x)
\tq x \in \BC \}$ avec l'hypersurface $\CH=\{\Delta_W=0 \}$ ; autrement
dit, $\LL(y)$ est le multi-ensemble des racines de $\Delta_W(y,x)$ vu comme
polynôme en $x$ (cf. Déf. \ref{defLL}). Si l'on préfère le voir comme
un morphisme algébrique explicite, $LL$ envoie $y$ sur les coefficients en
$x$ du polynôme $\Delta_W(y,x)$.

\medskip

Dans le chapitre \ref{chaphur} (qui est une reproduction de l'article
\cite{Ripollfacto}), on commence par raffiner les constructions de
\cite{BessisKPi1}. On définit, en passant par le groupe de tresses
$B(W)$, une application
\[ \fact : Y \to \Fact(c) \ ,\]
où $\Fact(c)$ est l'ensemble des \emph{factorisations par blocs} d'un
élément de Coxeter $c$ :
\[ \Fact(c) := \{(w_1,\dots, w_p) \in (W-\{1\})^p \tq w_1\dots
w_p = c,\text{ et } \ell(w_1) + \dots + \ell(w_p) = \ell(c) \} \ .\]
On obtient alors une reformulation d'un théorème fondamental de Bessis :

\begin{center} L'application $ \qquad \begin{array}{lcl}
    Y & \xrightarrow{\LL \times \fact} & E_n  \times_{\cp(n)}  \Fact(c)\\
    y & \mapsto &(\LL(y) \enspace ,\enspace \fact(y))
  \end{array}  \qquad $
  est bijective
\end{center}
(où  $E_n  \times_{\cp(n)}  \Fact(c)$ est un produit fibré au-dessus de
l'ensemble des compositions --- partitions ordonnées --- de $n$).

Les conséquences de ce théorème sont étudiées dans \cite{BessisKPi1}, mais
essentiellement sur la partie non ramifiée $Y'$ de $Y$ (\ie les $y$ tels
que le multi-ensemble $\LL(y)$ n'ait que des points simples) : la
restriction à cette partie est un revêtement non ramifié. En particulier,
le théorème implique que le degré de ce revêtement est égal au cardinal de
l'ensemble $\Red_\CR(c)$ des \emph{décompositions réduites} de $c$ (qui
sont les factorisations par blocs en $n$ réflexions). De plus, on peut
construire deux actions du groupe de tresses usuel $B_n$ :
\begin{itemize}
\item l'action de Galois (ou de monodromie) sur $Y'$, définie par le
  revêtement $\LL$ ;
\item l'action d'Hurwitz sur $\Red_\CR(c)$ (celle-ci conjugue les facteurs
  les uns avec les autres, cf. Déf. \ref{defhur}).
\end{itemize}
Via l'application $\fact$, ces deux actions sont compatibles.

\medskip

Dans le chapitre \ref{chaphur}, on se demande ce qu'il se passe lorsqu'on
n'est plus sur la partie non ramifiée. On traite particulièrement le cas où
le multi-ensemble $\LL(y)$ n'a qu'un seul point multiple (de multiplicité
quelconque) ; c'est aussi le cas où $\fact(y)$ contient un seul facteur de
longueur quelconque, tous les autres étant des réflexions. On parle alors
de \emph{factorisation primitive} de $c$.

On montre que le morphisme $\LL$ peut être vu comme un \og revêtement
ramifié stratifié \fg\ (Thm. \ref{thmrevetement}). Ensuite, on étudie les
orbites des factorisations primitives sous l'action d'Hurwitz : le groupe
$B_k$ agit naturellement sur l'ensemble des factorisations de $c$ en $k$
blocs, en conjuguant les facteurs les uns avec les autres. Il était déjà
connu que sur l'ensemble des décompositions réduites de $c$, l'action
d'Hurwitz est transitive. Le résultat principal du chapitre est le suivant
:

\begin{thm}[Thm. \ref{thmintro}]
  Soit $W$ un groupe de réflexion complexe fini, bien engendré. Soit $c$ un
  élément de Coxeter de $W$, et $F_u=(u,r_{p+1},\dots, r_n)$,
  $F_v=(v,r_{p+1}',\dots, r_n')$ deux factorisations primitives de $c$ (où
  les $r_i, r_i'$ sont des réflexions). Alors $F_u$ et $F_v$ sont dans la
  même orbite d'Hurwitz sous $B_n$ si et seulement si $u$ et $v$ sont
  conjugués dans $W$.
\end{thm}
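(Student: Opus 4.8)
The plan is to prove both implications, the easy one first. If $F_u$ and $F_v$ lie in the same Hurwitz orbit under $B_n$, then since the Hurwitz action conjugates factors among themselves, the multiset of conjugacy classes $\{[u],[r_{p+1}],\dots,[r_n]\}$ is preserved; as all reflections $r_i,r_i'$ are conjugate iff they lie in the same class, and the length-$2$ factor is distinguished (it is the unique factor of length $\neq 1$), we get that $u$ and $v$ are $W$-conjugate. (One must note a priori that $u$ has length $p = n - (n-p)$ with $\ell(u)=2$ only in the submaximal case; the argument works for arbitrary length of the non-reflection factor, comparing the unique factor not in $\CR$.) So the real content is the converse.

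For the converse, suppose $u$ and $v=gug^{-1}$ are conjugate in $W$. The strategy I would pursue has two stages. First, reduce to the case where $F_u$ and $F_v$ have the \emph{same} set of reflections $\{r_{p+1},\dots,r_n\}=\{r_{p+1}',\dots,r_n'\}$ and the \emph{same} parabolic-type data, by using the already-established transitivity of the Hurwitz action of $B_{n-p}$ on reduced decompositions: any factorisation $(u,r_{p+1},\dots,r_n)$ of $c$ with $\ell(u)=p$ determines a parabolic subgroup $W_u$ (the pointwise stabiliser of $\Fix(u)$, or equivalently the parabolic closure of $u$) with a Coxeter element, and $(r_{p+1},\dots,r_n)$ is a reduced decomposition of $u^{-1}c$, which itself is a Coxeter element of a parabolic subgroup. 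The point is that fixing $u$, all completions of $u$ to a block factorisation of $c$ are Hurwitz-equivalent; this follows by induction on rank from the known transitivity on $\Red_\CR(\cdot)$ and the lattice property of $\NCP_W$ (every such completion corresponds to a maximal chain in the interval $[u^{-1}\!\cdot\! u^{-1}c,\, c]$ of $\NCP_W$, wait — more precisely, to a reduced decomposition of $u^{-1}c$, on which $B_{n-p}$ acts transitively). So the whole orbit question reduces to: can we move $u$ to $v$ through primitive factorisations, allowing the reflection tail to vary freely along the way?

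Second stage: connect $u$ to $v$ inside $\NCP_W(c)$. Since $u\<c$ and $v\<c$, both are elements of the noncrossing partition lattice; I want to produce a sequence $u=u_0, u_1,\dots,u_m=v$ of elements of $\NCP_W(c)$, each of length $p$, with consecutive ones related by an "elementary" Hurwitz move at the level of block factorisations. The natural move: given a block factorisation $(u_i, r, \mathbf{r})$ of $c$ (grouping the reflection tail as $\mathbf r$ after pulling one reflection $r$ adjacent to $u_i$), the braid generator swapping the first two blocks sends $u_i \mapsto u_i r u_i^{-1}$ or $u_i \mapsto (u_i$ absorbs $r)$... — here I'd use instead the move that replaces $(u_i,r,\dots)$ by $(r, r^{-1}u_i r,\dots)$, i.e. conjugation of $u_i$ by a reflection $r\<u_i^{-1}c$ appearing in a completion. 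The key combinatorial claim to establish is: \emph{two length-$p$ elements $u,v$ of $\NCP_W(c)$ that are $W$-conjugate can be joined by a chain of such reflection-conjugations staying within length-$p$ elements of $\NCP_W(c)$, each conjugating reflection being usable as a block in a factorisation of $c$ through the intermediate element.} To prove this, I would argue by induction on the rank $n$: pick a reflection $r$ dividing $c$, descend to the parabolic subgroup $W'=\langle \text{reflections}\<r^{-1}c\rangle$ with Coxeter element $r^{-1}c$, and use that both $u$ and $v$ (after a Hurwitz move bringing them below $r^{-1}c$, possible because $\NCP_W$ is a lattice so $u\vee(\text{something})$ behaves well) lie in a proper parabolic, where the statement is known. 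The base cases ($n=1$, or $u,v$ reflections themselves) reduce to transitivity of Hurwitz on $\Red_\CR(c)$, which is given.

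The main obstacle I expect is precisely this inductive descent: making sure that a $W$-conjugate pair $u,v$ can be \emph{simultaneously} pushed into a common proper parabolic subgroup $W'=W_{\<c'}$ (for $c'$ a Coxeter element of a maximal parabolic) while staying inside $\NCP_W(c)$ — this is where the lattice property of $\NCP_W$ and the structure of the Hurwitz action must be used delicately, and where one cannot appeal to Coxeter-theoretic arguments since $W$ need not be real. A clean way around it may be to work geometrically instead, via the map $\fact: Y \to \Fact(c)$ and Theorem~\ref{thmrevetement}: the set of $y\in Y$ with $\fact(y)$ a primitive factorisation whose long factor lies in a fixed conjugacy class forms a stratum, and connectedness of (the relevant stratum of) $Y$ — which one can hope to read off from the stratified-covering structure and irreducibility of $W$ — would yield Hurwitz-transitivity directly via the Galois action of $B_n$ on $Y'$ and its compatibility with $\fact$. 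I would present the combinatorial induction as the primary argument and flag the geometric route as the conceptual explanation; reconciling the two, and in particular proving the needed connectedness, is the delicate point.
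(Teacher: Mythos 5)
Your easy direction is fine, and your overall reduction is sound: after using transitivity of $B_{n-p}$ on $\Red_\CR(u^{-1}c)$ to make the reflection tails irrelevant, what remains is to connect $u$ to $v$ by elementary conjugations by reflections occurring in completions of $u$ to a block factorisation of $c$ — this is exactly the relation of \emph{forte conjugaison} of la définition \ref{deffconj}, and your ``key combinatorial claim'' is precisely le théorème \ref{thmfconj}. The gap is that this claim is never proved. The inductive descent you sketch faces two obstructions you acknowledge but do not resolve: (a) there is in general no single reflection $r \< c$, nor maximal parabolic, below which conjugates of \emph{both} $u$ and $v$ can be pushed simultaneously while recording the pushes as admissible moves; and (b) even when $u$ and $v$ both land in a proper parabolic $W'$, conjugacy in $W$ does not imply conjugacy in $W'$ (fusion problem), so the inductive hypothesis need not apply to the pair inside $W'$. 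Since $W$ is not assumed real, none of this can be patched with Coxeter-diagram arguments, and no combinatorial proof along these lines is known.

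The geometric route you relegate to a ``conceptual explanation'' is in fact the actual proof, and the connectedness you flag as delicate is its entire content. Concretely: for $\Lambda \in \Lb_k$, the set of $y$ such that $\fact(y)$ is primitive with long factor of type $\Lambda$ is $\varphi(\Lambda)^0 = \varphi(\Lambda)\cap Y_k^0$, a nonempty Zariski-open subset of $\varphi(\Lambda)$; since $\varphi(\Lambda)$ is the image of a flat $L$ (a complex linear subspace) under the finite, hence closed, algebraic maps $p$ and $\varphi$, a Zariski-open subset of $L$ is path-connected and so is its continuous image (proposition \ref{propcpa}). Combined with le théorème \ref{thmcomposantes} — path-components of $Y_\lambda^0$ correspond bijectively to Hurwitz orbits of $\Fact_\lambda(c)$, via the compatibility of the Galois and Hurwitz actions on the strata — this yields transitivity on each $\Fact_{\alpha_k}^{\Lambda}(c)$ for $k\geq 2$. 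Note also that the case where the distinguished factors are reflections requires a separate argument (théorème \ref{thmfconjrefl}), since one must additionally track which strand carries the distinguished factor. Until either that connectedness statement or your combinatorial claim is actually established, the proposal is a correct reduction but not a proof.
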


La démonstration nécessite l'étude de la connexité de certaines strates de
l'hypersurface $\CH$ du discriminant.

Au passage, on approfondit la notion d'\emph{élément de Coxeter
  parabolique} dans les groupes bien engendrés, en montrant que les
propriétés classiques du cas réel s'étendent (Prop. \ref{propcoxpar} et
\ref{propcoxpar2}) :
\begin{itemize}
\item un élément de Coxeter parabolique peut être défini soit comme un élément
  de Coxeter d'un sous-groupe parabolique, soit comme un diviseur (pour
  $\<$) d'un élément de Coxeter de $W$ ;
\item le sous-groupe associé à un élément de Coxeter parabolique $w$ est
  engendré par toute factorisation de $w$.
\end{itemize}

\subsection{Extension d'anneaux de polynômes, Jacobiens, discriminants,
  groupes de réflexion virtuels}

Dans le chapitre \ref{chapjac}, on se place dans un cadre plus général, où
l'on étudie des questions d'algèbre commutative. Celles-ci ont au départ
été motivées par des constatations empiriques sur le morphisme de
Lyashko-Looijenga. En effet, pour pouvoir obtenir des formules explicites
sur la combinatoire des factorisations par blocs (voir partie suivante),
j'ai eu besoin d'examiner les degrés homogènes des composantes irréductibles
du lieu ramifié de $\LL$. Ce lieu ramifié est défini par le
\emph{discriminant de Lyashko-Looijenga} :
\[ D_{\LL}=\Disc(\Delta_W(f_1,\dots, f_n) \ ;\ f_n) \qquad \text{(polynôme
  de } \BC[f_1,\dots, f_{n-1}]\text{ ).}\] Or, en calculant dans quelques
exemples le polynôme $D_{\LL}$ ainsi que le Jacobien $J_{\LL}$ (déterminant
de la matrice jacobienne de $\LL$, en tant que morphisme algébrique), j'ai
observé qu'ils avaient une forme remarquable :
\[ D_{\LL} = \prod Q^{e_Q} \qquad ; \qquad J_{\LL} =\prod Q^{e_Q-1} \ ,\] 
où les facteurs irréductibles $Q$ sont les mêmes. J'ai cherché à
établir ces formules en toute généralité.

Ces propriétés évoquent le couple discriminant-Jacobien $(\Delta_W,J_W)$ issu de
la théorie des invariants d'un groupe de réflexion complexe $W$, qui est de la
forme
\[ \Delta_W = \prod_{H\in \CA} \alpha_H^{e_H} \qquad ; \qquad J_W
=\prod_{H\in \CA} \alpha_H^{e_H-1} \ .\] 

Mais la démonstration de ces propriétés n'est pas du tout transposable au cas de
$\LL$, car elle repose sur l'utilisation de l'action du groupe $W$.

\medskip

Une question naturelle est donc de comprendre dans quelle mesure ce type
de situation est général. On se place dans ce chapitre le cadre d'une
\emph{extension polynomiale finie graduée} (cf. Déf. \ref{defext}) :
\[ A=\BC[Y_1,\dots, Y_n] \subseteq \BC[X_1,\dots, X_n]=B \ ,\] où les
variables $X_i$, $Y_j$ sont homogènes pondérées, et où $B$ est fini sur $A$
en tant qu'anneau. Ainsi, les résultats obtenus s'appliquent à la fois :
\begin{itemize}
\item à une extension galoisienne, de la forme $\BC[V]^W \subseteq \BC[V]$
  ; dans ce cas, d'après le théorème de Chevalley-Shephard-Todd, $W$ est un
  groupe de réflexion complexe ;
\item à l'extension définie par un morphisme de Lyashko-Looijenga.
\end{itemize}

\medskip

Pour une telle extension, on commence par donner la factorisation en
irréductibles du Jacobien, en fonction des idéaux ramifiés de l'extension
et de leurs indices de ramification. Pour pouvoir définir un \og
discriminant \fg\ de l'extension, qui se comporte comme $\Delta_W$ ou
$D_{\LL}$ dans les cas précités, on introduit la notion d'\emph{extension
  bien ramifiée}. C'est une notion plus faible que celle d'extension
galoisienne, où pour chaque idéal premier $\fp$ de hauteur $1$ de $A$, les
idéaux de $B$ au-dessus de $\fp$ sont soit tous ramifiés, soit tous non
ramifiés (cf. Déf. \ref{defwellram}). On peut donner de nombreuses
caractérisations de cette propriété (Prop. \ref{propwell}).

Le résultat principal du chapitre \ref{chapjac} est le théorème suivant :

\begin{thm}[Thm. \ref{thmintrojac}]
Soit $A \subseteq B$ une extension polynomiale finie
graduée. Alors :
\begin{itemize}
\item le Jacobien $J_{B/A}$ de l'extension vérifie :
  \[ J = \prod_{Q \in \Spram(B)} Q^{e_Q-1} \quad \text{(à un scalaire
    multiplicatif près),}\]

  où $\Spram(B)$
  désigne l'ensemble des polynômes ramifiés de $B$ (à association près), et
  $e_Q$ l'indice de ramification de $Q$ ;
\item si l'extension est bien ramifiée, alors :
\[ (J)\cap A = \left(\prod_{Q \in \Spram(B)} Q^{e_Q}\right) \qquad \text{(en
  tant qu'idéal de } A \text{) .}\]
\end{itemize}
\end{thm}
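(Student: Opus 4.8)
\textbf{Proof proposal for Theorem \ref{thmintrojac}.}

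The plan is to treat the two assertions in turn, both by localising at a height-one prime of $A$ and reducing to the local study of ramification in a (principal) discrete valuation setup. For the first assertion, I would start from the definition of the Jacobian $J=J_{B/A}$ as the determinant of the matrix $(\partial X_i/\partial Y_j)$, or equivalently as a generator of the different $\mathfrak{D}_{B/A}$ of the extension (these agree up to a unit for a finite extension of polynomial rings, essentially by the Jacobian criterion for smoothness, since both $\Spec A$ and $\Spec B$ are regular). The key point is then that $B$ is a UFD, so $J$ factors as $\prod Q^{n_Q}$ over irreducible $Q$, and the exponent $n_Q$ can be read off after localising at the height-one prime $(Q)$: the ring $B_{(Q)}$ is a DVR with uniformiser $Q$, $A\cap(Q)=:\mathfrak{p}$ is a height-one prime of the regular ring $A$, hence principal, say $\mathfrak{p}=(P)$, and $A_{\mathfrak{p}}$ is a DVR. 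For an extension of DVRs the valuation of the different is exactly $e-1$ when the residue extension is separable (which is automatic in characteristic $0$) — this is the standard local formula for the different in the tamely ramified case (Serre, \emph{Corps locaux}, III). Here $e=e_Q$ is precisely the ramification index of $Q$ over $\mathfrak{p}$, which matches the definition of $\Spram(B)$: $Q$ is ramified iff $e_Q\geq 2$. So $v_Q(J)=e_Q-1$, giving $J=\prod_{Q}Q^{e_Q-1}$ up to a scalar, and the product is effectively over $\Spram(B)$ since the unramified $Q$ contribute exponent $0$.

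For the second assertion I would use the well-ramified hypothesis to control the contraction $(J)\cap A$. The ideal $(J)\cap A$ is again an intersection of height-one primary components (since $A$ is a UFD, equivalently integrally closed Noetherian), so it suffices to compute, for each height-one prime $\mathfrak{p}=(P)$ of $A$, the $\mathfrak{p}$-adic valuation $v_P((J)\cap A)$, i.e.\ $\min\{v_P(a) : a\in (J)\}$. Localising $A$ at $\mathfrak{p}$ and $B$ at the multiplicative set $A\setminus\mathfrak{p}$, we get a semilocal Dedekind ring $B_{\mathfrak p}$ whose maximal ideals correspond to the primes $Q_1,\dots,Q_g$ of $B$ above $\mathfrak{p}$, with ramification indices $e_{Q_i}$. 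The well-ramified condition says that either all $e_{Q_i}=1$ (then $J$ is a unit at $\mathfrak{p}$ and contributes nothing), or all $e_{Q_i}\geq 2$; in the latter case write $\mathfrak{p}B=\prod Q_i^{e_{Q_i}}$ (as ideals of $B_{\mathfrak p}$) and $(J)=\prod Q_i^{e_{Q_i}-1}$. Now $(J)\cap A_{\mathfrak p}$ is the largest ideal of $A_{\mathfrak p}$ whose extension lies in $(J)$: since $\mathfrak{p}^k A_{\mathfrak p}$ extends to $\prod Q_i^{k e_{Q_i}}$, we need $k e_{Q_i}\geq e_{Q_i}-1$ for all $i$, i.e.\ $k\geq 1-1/e_{Q_i}$, i.e.\ $k\geq 1$ (as $e_{Q_i}\geq 2$). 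Hence $v_P((J)\cap A)=1 = $ the $P$-valuation of $\prod_{Q\mid \mathfrak p} Q^{e_Q}$ viewed back in $A$ — one needs to check that $\prod_{Q\mid\mathfrak p}Q^{e_Q}$, which a priori is an element of $B$, actually is (an associate of) a power of $P$ times a unit, and this is exactly where well-ramifiedness is used: it guarantees the ramified primes over $\mathfrak p$ are "all of $\mathfrak pB$" so that $\prod_{Q\mid\mathfrak p}Q^{e_Q}$ and $P$ generate the same radical, hence (by unique factorisation and the valuation count) the same ideal of $A$ up to scalar. Assembling over all $\mathfrak{p}$ gives $(J)\cap A=\big(\prod_{Q\in\Spram(B)}Q^{e_Q}\big)$.

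The main obstacle I anticipate is \emph{not} the DVR computation of the different — that is classical — but rather the bookkeeping in the second part: making precise the passage between an ideal of $B$ (the product $\prod_{Q\mid\mathfrak p} Q^{e_Q}$, or $D_{B/A}:=\prod_Q Q^{e_Q}$ globally) and its role as an element of $A$, and checking that the well-ramified hypothesis is exactly what makes $\big(\prod_Q Q^{e_Q}\big)$ a well-defined ideal of $A$ (i.e.\ generated by an element of $A$). Concretely, one must verify that under well-ramifiedness the element $\prod_{Q\in\Spram(B)}Q^{e_Q}\in B$ is, up to a unit of $B$, fixed under the relevant symmetry/norm so that it lies in $A$ — or, more elementarily, that its divisor as computed above is the extension of a divisor on $\Spec A$. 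I would handle this by first establishing the various equivalent characterisations of well-ramified extensions (Prop.\ \ref{propwell}) — in particular the one saying that for every height-one $\mathfrak p$ the fibre $\Spec(B\otimes_A \kappa(\mathfrak p))$ is "uniformly ramified" — and then using that characterisation to identify, prime by prime, the contraction of $(J)$ with the stated product. The homogeneity/grading is used only at the end, to pin down the scalar ambiguity is harmless and to ensure finiteness of the set $\Spram(B)$, so it plays no essential role in the argument beyond guaranteeing we are in the Noetherian UFD setting throughout.
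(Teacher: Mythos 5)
Your overall architecture coincides with the paper's: both assertions are routed through the different ideal $\FD_{B/A}$, localised at height-one primes so as to reduce to an extension of discrete valuation rings, where the tame-ramification formula $v_Q(\FD_{B/A})=e_Q-1$ (Serre, \emph{Corps locaux}, III, Prop.~13, applicable since we are in characteristic $0$) gives the exponents. Your treatment of the second bullet by computing $v_P\bigl((J)\cap A\bigr)$ prime by prime is a valuation-theoretic rephrasing of the paper's Prop.~\ref{propwell}: the characterisation you invoke (all primes of $B$ over a given $\fp$ ramify together) is item (iv) there, and the point you correctly isolate --- that well-ramifiedness is exactly what makes $\prod_{Q\in\Spram(B)}Q^{e_Q}$ an element of $A$ up to a scalar --- is item (iii). (In the paper, ``well-ramified'' is \emph{defined} by the ideal equality of the second bullet, so that bullet is tautological there and the content lives entirely in Prop.~\ref{propwell}; you take an equivalent characterisation as the definition and derive the equality, which is the same mathematics.)

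The one step where your justification does not deliver what is claimed is the identification $J_{B/A}\doteq\theta_{B/A}$ with a generator of $\FD_{B/A}$. The Jacobian criterion for \'etaleness only shows that $Z(J_{B/A})$ is the non-\'etale locus, i.e.\ that $J_{B/A}$ and $\theta_{B/A}$ have the \emph{same irreducible factors}; it says nothing about multiplicities, and the whole content of the first bullet is the exact exponent $e_Q-1$. Regularity of $\Spec A$ and $\Spec B$ is not a one-line fix here: the paper has to invoke Broer's comparison theorem identifying the K\"ahler different (which, because $A$ and $B$ are both polynomial rings, is precisely the principal ideal $(J_{B/A})$) with the Dedekind different $\FD_{B/A}$, and it supplements this with an independent verification that $\deg J_{B/A}=\deg\theta_{B/A}$ via Benson's graded ramification formula. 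Some such argument --- the K\"ahler-different comparison, or a divisibility in one direction combined with the degree count --- is needed to pin down the multiplicities; as written, your appeal to the Jacobian criterion leaves that gap open. (A minor slip besides: the Jacobian is $\det(\partial Y_j/\partial X_i)$, the derivatives of the generators of $A$ with respect to the variables of $B$, not the other way around.)
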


Ainsi, dans le cas où l'extension est bien ramifiée, le polynôme $\prod
Q^{e_Q}$ peut être appelé \og discriminant \fg\ de l'extension, par
analogie avec la situation galoisienne, où il s'agit du discriminant du
groupe de réflexion associé. 

\bigskip

\begin{rqe}
Les résultats de ce chapitre sont des conséquences assez élémentaires
de propriétés d'algèbre commutative, mais je n'ai pas trouvé de
références satisfaisantes dans la littérature. Souvent, le point de
vue est soit beaucoup trop général (algèbre commutative des extensions
d'anneaux) soit trop spécifique (extensions galoisiennes). Ici, on se
place dans un cadre intermédiaire, où les preuves utilisant une action
de groupe (c'est-à-dire toute la théorie des invariants des groupes de
réflexion) ne s'appliquent pas, et où les résultats généraux
concernant les extensions d'anneaux se simplifient drastiquement.
\end{rqe}

\begin{rqe}
  Le vocabulaire et les propriétés énoncés dans ce chapitre sont utilisés
  dans le cas des extensions de Lyashko-Looijenga dans le chapitre
  suivant. Cependant, dans une optique plus générale, la situation décrite
  semble prometteuse, car elle offre une généralisation intrigante de la
  théorie des invariants d'un groupe de réflexion. Même s'il n'y a pas
  d'action de groupe, on a ainsi un Jacobien et un discriminant, qui se
  comportent de la même manière que dans le cas galoisien. Une telle
  extension peut ainsi être appelée \emph{groupe de réflexion
    virtuel}\footnote{terminologie proposée par David Bessis.} ; il serait
  intéressant de voir dans quelle mesure les analogies peuvent se
  poursuivre.
\end{rqe}

\subsection{Le morphisme de Lyashko-Looijenga vu comme groupe de réflexion
  virtuel ; factorisations sous-maximales}

Dans le chapitre \ref{chapsubmax}, on applique les propriétés données au
chapitre précédent, et on montre que les extensions de Lyashko-Looijenga
sont bien ramifiées. 

En utilisant les résultats du chapitre \ref{chaphur}, on peut décrire les
facteurs irréductibles du discriminant $D_{\LL}=\Disc(\Delta_W(f_1,\dots,
f_n) \ ;\ f_n) $ : il y en a un par classe de conjugaison de sous-groupes
paraboliques de rang $2$ de $W$ (ou d'élements de Coxeter paraboliques de
longueur $2$). On peut ainsi écrire
\[ D_{\LL} = \prod_{\Lambda \in \Lb_2} D_{\Lambda}^{r_{\Lambda}}\ , \] où
$\Lb_2$ désigne l'ensemble des classes de conjugaison de paraboliques de
rang $2$.

On obtient le théorème suivant :

\begin{thm}[Thm. \ref{thmLL}]
  Soit $A \subseteq B$ l'extension associée au morphisme de
  Lyashko-Looijenga $\LL$ d'un groupe de réflexion complexe, bien engendré,
  irréductible. Pour $\Lambda \in \Lb_2$, soit $w$ un élément de Coxeter
  parabolique de $W$, qui correspond à $\Lambda$.

  Alors, $r_\Lambda$ est le nombre de décompositions réduites de $w$ en
  deux réflexions. De plus :
  \begin{enumerate}[(a)]
  \item les polynômes ramifiés de l'extension sont les $D_{\Lambda}$ ;
  \item le Jacobien $J_{\LL}$ de l'extension est égal (à un scalaire près)
    à :
    \[ \prod_{\Lambda\in \Lb_2 } D_{\Lambda}^{r_{\Lambda} -1} \ ; \]
\item le polynôme $D_{\LL}$ engendre l'idéal $(J_{\LL})\cap A$.
\end{enumerate}
\end{thm}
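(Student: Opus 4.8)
The plan is to combine three ingredients: (1) the general machinery of Chapter \ref{chapjac} on finite graded polynomial extensions (Thm. \ref{thmintrojac}), (2) the description of the fibers of $\LL$ via block factorisations from Chapter \ref{chaphur} (the bijection $Y \xrightarrow{\LL \times \fact} E_n \times_{\cp(n)} \Fact(c)$), and (3) the explicit identification of the irreducible components of the ramification locus of $\LL$ with conjugacy classes of rank-$2$ parabolics. The first thing I would do is establish that the Lyashko--Looijenga extension $A \subseteq B$ is \emph{well ramified} in the sense of Def. \ref{defwellram}; this is the crucial hypothesis needed to apply the second half of Thm. \ref{thmintrojac}, and I expect it to be the main obstacle (see below). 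Once this is known, parts (b) and (c) follow almost formally: Thm. \ref{thmintrojac} immediately gives $J_{\LL} = \prod_{Q\in\Spram(B)} Q^{e_Q-1}$ up to a scalar, and $(J_{\LL})\cap A = \big(\prod_{Q\in\Spram(B)} Q^{e_Q}\big)$; it then remains to match $\Spram(B)$ with the $D_\Lambda$ and the exponents $e_Q$ with the $r_\Lambda$.

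For the identification of the ramified primes, I would argue as follows. A prime $Q$ of $B$ of height $1$ is ramified over $A$ exactly when the corresponding irreducible hypersurface in $\Spec B$ maps with nontrivial ramification to its image in $Y$; geometrically, the points of $Y$ over which the fiber of $\LL$ is non-generic (i.e.\ the multiset $\LL(y)$ has a multiple point) form the zero locus of the Lyashko--Looijenga discriminant $D_{\LL} = \Disc(\Delta_W(f_1,\dots,f_n); f_n)$ in $Y$, while its preimage in $\Spec B = \Spec\BC[f_1,\dots,f_n]$ (suitably interpreted) decomposes into the irreducible components indexed by the possible "merging patterns." Using the bijection of Chapter \ref{chaphur}, a point $y$ lies on the branch locus with exactly one double point precisely when $\fact(y)$ is a primitive factorisation $(w, r_{p+1},\dots,r_n)$ with $\ell(w)=2$, and the transitivity theorem (Thm. \ref{thmintro}) together with Prop. \ref{propcoxpar}, \ref{propcoxpar2} shows that the irreducible components of this branch locus are in bijection with conjugacy classes $\Lambda\in\Lb_2$ of rank-$2$ parabolics. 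This yields $D_{\LL} = \prod_{\Lambda\in\Lb_2} D_\Lambda^{r_\Lambda}$, and a local analysis of the covering near a generic point of $D_\Lambda$ (where exactly two sheets come together, the local monodromy being controlled by the factorisations of $w$) identifies the ramification index of $D_\Lambda$ with the number $r_\Lambda$ of reduced decompositions of the corresponding Coxeter element $w$ of length $2$ into two reflections — equivalently, $e_{D_\Lambda} = r_\Lambda$.

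The hard part will be proving well-ramifiedness: one must show that for every height-$1$ prime $\fp$ of $A = \BC[f_1,\dots,f_{n-1}]$, either all primes of $B$ above $\fp$ are ramified or none is. Equivalently, using the characterizations of Prop. \ref{propwell}, it suffices to show that whenever a fiber of $\LL$ degenerates "in one direction" it degenerates uniformly. The strategy here is to use the stratified-covering structure of $\LL$ (Thm. \ref{thmrevetement}): over the generic point of an irreducible component of the branch locus, the fiber has exactly one double point and $n-2$ simple points, so all the local factors (all the reflections $r_i$ and the one length-$2$ element $w$) behave symmetrically, and the connectedness results on the strata of the discriminant hypersurface $\CH$ (established in Chapter \ref{chaphur} for the proof of Thm. \ref{thmintro}) guarantee that the branch divisor, pulled back to $\Spec B$, has no "mixed" component meeting both ramified and unramified sheets over a common prime of $A$. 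Granting this, parts (a), (b), (c) are then just the specialization of Thm. \ref{thmintrojac} to this extension, with the dictionary $\Spram(B) = \{D_\Lambda : \Lambda\in\Lb_2\}$ and $e_{D_\Lambda} = r_\Lambda$ supplied by the geometric analysis above; in particular (c) says precisely that $D_{\LL} = \prod D_\Lambda^{r_\Lambda}$ generates $(J_{\LL})\cap A$, which is the well-ramified conclusion of Thm. \ref{thmintrojac}.
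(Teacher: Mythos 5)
Your overall architecture — specialising Thm.~\ref{thmintrojac} to the $\LL$-extension, identifying the ramified polynomials with the $D_\Lambda$ via the dictionary between fibers of $\LL$ and block factorisations, and computing ramification indices by counting nearby sheets with the Hurwitz rule and the transitivity of the Hurwitz action on the reduced decompositions of $w$ — is the paper's. But you have inverted the logical order in a way that hides a gap. Well-ramifiedness is not a prerequisite to be established first by a connectedness argument on the strata of $\CH$; in the paper it is the \emph{last} thing proved, and it comes for free once one knows that the ramified polynomials are exactly the $D_\Lambda$ with indices $e_{D_\Lambda}=r_\Lambda$: then $\prod_Q Q^{e_Q}=\prod_\Lambda D_\Lambda^{r_\Lambda}=D_{\LL}$ visibly lies in $A$, which is characterisation (iii) of Prop.~\ref{propwell}. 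Your proposed route (connectedness of the $\varphi(\Lambda)^0$ ruling out ``mixed'' components) does not address what actually needs proving, namely (i) that every ramified polynomial is among the $D_\Lambda$ — this is Cor.~\ref{coroetale}, i.e.\ $\LL$ is étale off $\CK$, proved by the tangent-hyperplane general-position argument, an ingredient you never invoke — and (ii) that each $D_\Lambda$ really \emph{is} ramified, i.e.\ $e_{D_\Lambda}\geq 2$, which follows from the fiber count, not from connectedness.

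The more serious gap is that you treat $e_{D_\Lambda}=r_\Lambda$ as ``equivalent'' to the local fiber count. It is not: the fiber count (Prop.~\ref{propfibers}) gives $e_{D_\Lambda}=$ number of reduced decompositions of $w$ into two reflections, while $r_\Lambda$ is by definition the valuation $v_{D_\Lambda}(D_{\LL})$ in $B$. Since $e_{D_\Lambda}=v_{D_\Lambda}(P)$ where $(P)=(D_\Lambda)\cap A$, and $P$ is some irreducible factor of $D_{\LL}$ \emph{in $A$}, one only gets a priori $r_\Lambda=m\cdot e_{D_\Lambda}$ with $m$ the multiplicity of $P$ in $D_{\LL}$ over $A$. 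The missing lemma — the first line of the paper's proof — is that $D_{\LL}$ is irreducible in $A=\BC[a_2,\dots,a_n]$, being the discriminant of the generic monic degree-$n$ polynomial, i.e.\ a discriminant of type $A_{n-1}$ in the variables $a_i$; this forces $(D_\Lambda)\cap A=(D_{\LL})$ for every $\Lambda$, hence $e_{D_\Lambda}=v_{D_\Lambda}(D_{\LL})=r_\Lambda$, and it is also what makes the well-ramifiedness check a one-liner. (A minor slip in the same vein: at a generic point of $Z(D_\Lambda)$ it is $r_\Lambda$ sheets of the covering that come together, not two; two points of the configuration $\LL(y)$ merge, but the local degree of $\LL$ there is the number of reduced decompositions of $w$.)
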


\begin{rqe}
  La formule (b) pour le Jacobien avait déjà été observée (dans le cas $W$
  réel) par K. Saito (formule 2.2.3 dans \cite{saitopolyhedra}) ; mais sa
  démonstration était au cas par cas. Ici on donne une preuve géométrique
  générale, qui repose sur les liens entre $\LL$ et l'application $\fact$.
\end{rqe}

\medskip

D'autre part, en raffinant des propriétés du chapitre \ref{chaphur}, on
parvient à calculer le nombre de factorisations sous-maximales d'un élément
de Coxeter de type $\Lambda$ --- \ie, constituées de $(n-2)$ réflexions et
un facteur de longueur $2$, ce dernier étant dans la classe de conjugaison
$\Lambda$ :

\begin{thm}[Thm. \ref{thmfact}]
Soit $\Lambda \in \Lb_2$, et $c$ un élément de Coxeter de $W$. Alors le
nombre de factorisations sous-maximales de $c$ de type $\Lambda$ est :
\[ |\Fact_{n-1}^{\Lambda}(c)| = \frac{(n-1)!\ h^{n-1}}{|W|} \deg
    D_\Lambda \ .\]
\end{thm}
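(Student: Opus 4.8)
The plan is to realise $|\Fact_{n-1}^{\Lambda}(c)|$ as $(n-1)$ times the degree of a finite morphism obtained by restricting $\LL$, and then to compute that degree by a Hilbert‑series count in the graded polynomial extension attached to $\LL$. Concretely, normalising the discriminant as $\Delta_W(f,x)=x^n+a_2x^{n-2}+\dots+a_n$, the morphism $\LL$ is the graded finite extension $\LL^{*}\colon A:=\BC[a_2,\dots,a_n]\hookrightarrow B:=\BC[f_1,\dots,f_{n-1}]$ with $\deg a_k=kh$, $\deg f_j=d_j$, and $\LL^{*}(\delta)=D_{\LL}$, where $\delta\in A$ is the classical (irreducible) discriminant of the generic monic polynomial of degree $n$; set $\CD=\{\delta=0\}\subset E_n=\Spec A$. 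By Theorem~\ref{thmLL}, $D_{\LL}=\prod_{\Lambda\in\Lb_2}D_{\Lambda}^{r_{\Lambda}}$, so $D_{\Lambda}\mid\LL^{*}(\delta)$ and $\LL$ restricts to a finite morphism $\{D_{\Lambda}=0\}\to\CD$; this restriction is dominant, its image being closed irreducible of dimension $n-2=\dim\CD$. I will call its degree $m_{\Lambda}$, equal, in characteristic zero, to the number of distinct points of $\{D_{\Lambda}=0\}$ over a generic point of $\CD$.

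First I would identify $m_{\Lambda}$ combinatorially. A generic $e_0\in\CD$ is a configuration with a single double point, sitting in some fixed place $j$ of the canonical order of its support. By the bijection $\LL\times\fact\colon Y\xrightarrow{\,\sim\,}E_n\times_{\cp(n)}\Fact(c)$ of Chapter~\ref{chaphur}, the set $\LL^{-1}(e_0)$ corresponds to the set of block factorisations of $c$ into $n-1$ blocks whose unique non‑reflection block (of length $2$) is the $j$‑th; and by Theorem~\ref{thmLL} (which labels $D_{\Lambda}$ by the class $\Lambda$) such a preimage lies on $\{D_{\Lambda}=0\}$ exactly when that block has type $\Lambda$. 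Hence $m_{\Lambda}$ is the number of submaximal factorisations of $c$ of type $\Lambda$ whose length‑$2$ block occupies place $j$. The cyclic rotation $(w_1,\dots,w_{n-1})\mapsto(w_2,\dots,w_{n-1},c^{-1}w_1c)$ is a bijection of $\Fact_{n-1}(c)$ preserving the length and conjugacy class of each block and sending place $j$ to place $j-1$, so this count is independent of $j$; summing over the $n-1$ places yields $|\Fact_{n-1}^{\Lambda}(c)|=(n-1)\,m_{\Lambda}$.

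Then I would compute $m_{\Lambda}$ by Hilbert series, writing $H_M(t)$ for the Hilbert series. Since $D_{\Lambda}$ and $\delta$ are homogeneous non‑zero‑divisors in the domains $B$ and $A$, one has $H_{B/(D_{\Lambda})}=(1-t^{\deg D_{\Lambda}})H_B$ and $H_{A/(\delta)}=(1-t^{\deg\delta})H_A$, with $H_B(t)=\prod_{j=1}^{n-1}(1-t^{d_j})^{-1}$, $H_A(t)=\prod_{k=2}^{n}(1-t^{kh})^{-1}$. As $B/(D_{\Lambda})$ is a finite faithful graded $A/(\delta)$‑module — the prime $(\delta)$, of height $1$ and contained in the height‑$1$ prime $\LL^{*-1}(D_{\Lambda})$, being equal to it — its generic rank $m_{\Lambda}$ equals $\lim_{t\to1^-}H_{B/(D_{\Lambda})}(t)/H_{A/(\delta)}(t)$, whence
\[ m_{\Lambda}=\lim_{t\to1^-}\frac{1-t^{\deg D_{\Lambda}}}{1-t^{\deg\delta}}\cdot\frac{H_B(t)}{H_A(t)}=\frac{\deg D_{\Lambda}}{\deg\delta}\cdot\frac{\prod_{k=2}^{n}(kh)}{\prod_{j=1}^{n-1}d_j}=\frac{\deg D_{\Lambda}}{\deg\delta}\cdot\frac{n!\,h^{n}}{|W|}, \]
using $\prod_{k=2}^{n}(kh)=h^{n-1}n!$ and $\prod_{j=1}^{n-1}d_j=|W|/h$ (which incidentally recovers $\deg\LL=n!\,h^n/|W|=|\Red_{\CR}(c)|$). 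Finally, the roots of $\Delta_W(f,x)$ in $x$ are homogeneous of degree $h$, so $\deg\delta=2\binom{n}{2}h=n(n-1)h$, and
\[ |\Fact_{n-1}^{\Lambda}(c)|=(n-1)\,m_{\Lambda}=(n-1)\cdot\frac{\deg D_{\Lambda}}{n(n-1)h}\cdot\frac{n!\,h^{n}}{|W|}=\frac{(n-1)!\,h^{n-1}}{|W|}\,\deg D_{\Lambda}. \]

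The routine steps are the Hilbert‑series bookkeeping and the final substitution; the hard part is the first one, matching the fibres of $\LL$ over the generic locus of $\CD$ with submaximal factorisations of the prescribed type. This is precisely where the machinery of Chapter~\ref{chaphur} is indispensable — the bijection $\LL\times\fact$, the description of $\LL$ as a stratified branched covering (Thm.~\ref{thmrevetement}), and the labelling of the components of $D_{\LL}$ by conjugacy classes of rank‑$2$ parabolics (Thm.~\ref{thmLL}) — and one has to check that a generic point of $\CD$ does lie under the stratum where $\fact$ has exactly one length‑$2$ block (with a reduced fibre on the combinatorial side), so that $m_{\Lambda}$ is honestly a point count rather than an intersection multiplicity.
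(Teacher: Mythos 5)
Your argument is correct and follows essentially the same route as the paper: you restrict $\LL$ to the component $\varphi(\Lambda)=Z(D_\Lambda)$ over the discriminant locus $Z(D)$, compute the degree of the corresponding graded extension $\BC[a_2,\dots,a_n]/(D)\subseteq\BC[f_1,\dots,f_{n-1}]/(D_\Lambda)$ by Hilbert series (using $\deg D=n(n-1)h$), identify the generic fibre with the submaximal factorisations of type $\Lambda$ whose length-$2$ block sits in a fixed position via $\LL\times\fact$, and multiply by the $n-1$ possible positions. The only cosmetic difference is that you justify the independence of the position by an explicit cyclic-rotation bijection, where the paper simply invokes the constancy of the generic fibre cardinality over $E_\alpha^0$.
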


Ce théorème fait donc le lien entre la combinatoire des factorisations par
blocs d'un élément de Coxeter, et la géométrie de $W \qg V$ : les $\deg
D_{\Lambda}$ sont en fait les degrés des projetés sur $\Spec
\BC[f_1,\dots, f_{n-1}]$ des orbites de plats de codimension $2$ de $V$.

\medskip

Dans l'annexe \ref{annexfactodisc}, on calcule explicitement les valeurs
associées à cette formule pour tous les groupes irréductibles bien
engendrés. On retrouve ainsi des formules de \cite{KraMu} pour les cas
réels ; dans les autres cas, les résultats sont nouveaux.

\bigskip

Les deux théorèmes précédents permettent de calculer le nombre total de
factorisations sous-maximales :

\begin{thm}[\ref{thmsubmax}]
  Soit $W$ un groupe de réflexion complexe, bien engendré, irréductible, et
  $d_1\leq\dots \leq d_n=h$ ses degrés invariants. Alors, le nombre de
  factorisations d'un élément de Coxeter en $(n-1)$ blocs est :
    \[ |\Fact_{n-1}(c)| = \frac{(n-1)!\ h^{n-1}}{|W|}
\left( \frac{(n-1)(n-2)}{2}h + \sum_{i=1}^{n-1} d_i \right) .\] 
\end{thm}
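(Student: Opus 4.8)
The plan is to reduce $|\Fact_{n-1}(c)|$ to Theorem~\ref{thmfact} by stratifying the submaximal factorizations according to the conjugacy type of their unique length-$2$ block, and then to evaluate the resulting sum of degrees $\sum_{\Lambda\in\Lb_2}\deg D_\Lambda$ by a weighted-homogeneity computation on $\LL$. First I would observe that any $(w_1,\dots,w_{n-1})\in\Fact_{n-1}(c)$ has all $w_i\neq 1$ and $\sum_i\ell(w_i)=\ell(c)=n$, so among the $n-1$ factors exactly one has length $2$ and the remaining $n-2$ are reflections; thus every element of $\Fact_{n-1}(c)$ is a submaximal factorization. Such a length-$2$ factor is a divisor of $c$, hence (by the results on parabolic Coxeter elements recalled in Chapter~\ref{chaphur}, in particular $\ell=\codim\Fix$) a parabolic Coxeter element attached to a rank-$2$ parabolic subgroup, so it determines a well-defined class $\Lambda\in\Lb_2$. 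This yields a disjoint decomposition $\Fact_{n-1}(c)=\bigsqcup_{\Lambda\in\Lb_2}\Fact_{n-1}^{\Lambda}(c)$, whence by Theorem~\ref{thmfact}
\[ |\Fact_{n-1}(c)|=\sum_{\Lambda\in\Lb_2}|\Fact_{n-1}^{\Lambda}(c)|=\frac{(n-1)!\,h^{n-1}}{|W|}\sum_{\Lambda\in\Lb_2}\deg D_\Lambda \ . \]

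It then remains to prove $\sum_{\Lambda\in\Lb_2}\deg D_\Lambda=\tfrac{(n-1)(n-2)}{2}h+\sum_{i=1}^{n-1}d_i$. By Theorem~\ref{thmLL} one has, up to nonzero scalars, $D_{\LL}=\prod_\Lambda D_\Lambda^{r_\Lambda}$ and $J_{\LL}=\prod_\Lambda D_\Lambda^{r_\Lambda-1}$ with the same irreducible factors and all $r_\Lambda\geq 1$; since $D_{\LL}$ and $J_{\LL}$ are both nonzero ($\LL$ is generically étale, $\Delta_W(y,\cdot)$ generically separable), dividing gives $\prod_\Lambda D_\Lambda=D_{\LL}/J_{\LL}$ up to a scalar, so $\sum_\Lambda\deg D_\Lambda=\deg D_{\LL}-\deg J_{\LL}$. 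I would compute the two degrees from the weighted-homogeneity of $\Delta_W$: recall $\Delta_W\in\BC[f_1,\dots,f_n]$ is weighted-homogeneous of total degree $nh$ (with $\deg f_i=d_i$, $\deg f_n=h$) and monic of degree $n$ in $f_n$, normalized so that the $f_n^{n-1}$-coefficient vanishes; writing $\Delta_W(y,x)=x^n+a_2(y)x^{n-2}+\dots+a_n(y)$, each $a_i$ is weighted-homogeneous of degree $ih$, and the roots $x_j(y)$ of $\Delta_W(y,\cdot)$ scale with weight $h$. Hence $D_{\LL}=\Disc_x\Delta_W=\prod_{j<k}(x_j-x_k)^2$ is weighted-homogeneous of degree $2\binom{n}{2}h=n(n-1)h$, while $J_{\LL}=\det\big(\partial a_i/\partial f_j\big)_{i=2,\dots,n;\,j=1,\dots,n-1}$ is weighted-homogeneous of degree $\sum_{i=2}^{n} ih-\sum_{j=1}^{n-1}d_j=\tfrac{(n-1)(n+2)}{2}h-\sum_{j=1}^{n-1}d_j$. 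Subtracting,
\[ \sum_{\Lambda\in\Lb_2}\deg D_\Lambda=n(n-1)h-\Big(\tfrac{(n-1)(n+2)}{2}h-\sum_{j=1}^{n-1}d_j\Big)=\tfrac{(n-1)(n-2)}{2}h+\sum_{i=1}^{n-1}d_i \ , \]
and substituting into the first display gives the claimed formula.

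The main obstacle is not conceptual but a matter of care in the weighted-degree bookkeeping: one must make sure the normalization under which $\LL$ is a morphism $Y\to E_n\cong\BC^{n-1}$ with target weights $2h,3h,\dots,nh$ is in force (the $f_n^{n-1}$-coefficient of $\Delta_W$ killed, $E_n$ graded accordingly), and that $\Delta_W$ is genuinely weighted-homogeneous of total degree $nh$ — which can either be quoted from the Lyashko--Looijenga setup of Chapter~\ref{chapsubmax} or re-derived from $\deg_V\Delta_W=N+N^{*}=nh$ using the degrees and codegrees of $W$. Once the weights are pinned down the rest is purely formal; a quick check in type $A$ (where the formula collapses to $\tfrac{(n-1)n(n+1)^{n-2}}{2}$ in rank $n$) confirms there is no off-by-one error.
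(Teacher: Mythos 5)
Your proof is correct and follows essentially the same route as the paper: decompose $\Fact_{n-1}(c)$ over the classes $\Lambda\in\Lb_2$, apply Theorem~\ref{thmfact} to each piece, convert $\sum_\Lambda \deg D_\Lambda$ into $\deg D_{\LL}-\deg J_{\LL}$ via Theorem~\ref{thmLL}, and finish by the weighted-degree computation $\deg D_{\LL}=n(n-1)h$, $\deg J_{\LL}=\sum_{i=2}^n ih-\sum_{j=1}^{n-1}d_j$. The only difference is that you spell out more of the bookkeeping (and add a type~$A$ sanity check) than the paper does.
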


\begin{rqe}
\label{rqfinale}
  Cette égalité est en réalité une conséquence logique de la formule de
  Chapoton. En effet, comme expliqué au paragraphe \ref{subsectfactintro},
  on peut passer des nombres de chaînes aux nombres de factorisations par
  blocs (voir Annexe \ref{annexchapo}).

  Cependant, on en donne ici une preuve directe, alors que la formule de
  Chapoton n'est connue qu'au cas par cas. En fait, dans toute la thèse
  (exceptée l'annexe \ref{annexfactodisc} sur les calculs explicites de
  discriminant), on n'utilise jamais la classification des groupes de
  réflexion. On se repose tout de même sur des propriétés pour lesquelles
  il n'y a pas encore de démonstrations générales (en particulier les
  premières propriétés de $\LL$ données dans \cite{BessisKPi1}) ; mais, au
  final, on parvient ici à passer, à l'aide d'une preuve géométrique sans
  cas par cas, de la connaissance de $|\Red_\CR(c)|$ à celle de
  $|\Fact_{n-1}(c)|$.
\end{rqe}



\chapter[Orbites d'Hurwitz des factorisations primitives d'un élément de
Coxeter]{Orbites d'Hurwitz des factorisations primitives d'un élément de
  Coxeter\protect\footnote{Ce chapitre reproduit intégralement le texte d'un article publié dans \emph{Journal of
      Algebra}, \textbf{323} (5), mars 2010.}}
\label{chaphur}

\noindent \textsc{Résumé}. On considère l'action d'Hurwitz du groupe de
tresses usuel sur les factorisations d'un élément de Coxeter $c$ d'un
groupe de réflexions complexe bien engendré $W$. Il est bien connu que
l'action d'Hurwitz est transitive sur l'ensemble des décompositions
réduites de $c$ en réflexions. On démontre ici une propriété similaire pour
les factorisations primitives de $c$, \ie celles dont tous les facteurs
sauf un sont des réflexions. Cette étude est motivée par la recherche d'une
explication géométrique de la formule de Chapoton sur le nombre de chaînes
de longueur donnée dans le treillis des partitions non croisées
$\NCP_W$. La démonstration présentée repose sur les propriétés du
revêtement de Lyashko-Looijenga et sur la géométrie du discriminant de $W$.

  \textbf{Mots-clés :} action d'Hurwitz, groupe de réflexions complexe, élément
  de Coxeter, treillis des partitions non croisées, revêtement de
  Lyashko-Looijenga.
  
  \medskip
  \selectlanguage{english}
  \noindent \textsc{Abstract}. We study the Hurwitz action of the classical
  braid group on factorisations of a Coxeter element $c$ in a
  well-generated complex reflection group $W$. It is well known that the
  Hurwitz action is transitive on the set of reduced decompositions of $c$
  in reflections. Our main result is a similar property for the primitive
  factorisations of $c$, \ie factorisations with only one factor which is
  not a reflection. The motivation is the search for a geometric proof of
  Chapoton's formula for the number of chains of given length in the
  non-crossing partitions lattice $\NCP_W$. Our proof uses the properties
  of the Lyashko-Looijenga covering and the geometry of the discriminant of
  $W$.

  \textbf{Keywords:} Hurwitz action, complex reflection group, Coxeter
  element, non-crossing partitions lattice, Lyashko-Looijenga covering.
 \selectlanguage{francais}

\newpage

\section*{Introduction}

Soit $V$ un $\BC$-espace vectoriel de dimension finie, et $W \subseteq
\GL(V)$ un groupe de réflexions complexe fini bien engendré (définition
en partie \ref{subpartcox}). On note $\CR$ l'ensemble de
toutes les réflexions de $W$.  On définit la longueur d'un élément $w$ de $W$ :
\[ \ell(w) := \min \{k \in \BN \tq \exists r_1,\dots, r_k \in \CR,\
w=r_1\dots r_k \} .\]
On note $\Red (w)$ l'ensemble des décompositions réduites de $w$, \ie des
mots de longueur $\ell(w)$ représentant $w$. Ce sont donc des factorisations
minimales de $w$ en réflexions. Plus généralement, on peut
définir des \emph{factorisations par blocs} de $w$ :

\begin{defn}
\label{deffact}
Soit $w\in W$, de longueur $n \geq 1$. Soit $1 \leq p \leq n$. Une
\emph{factorisation en $p$ blocs} de $w$ est un $p$-uplet $(u_1,
\dots, u_p)$ d'éléments de $W - \{1\}$ tels que $w=u_1\dots u_p$, avec
$\ell(w)=\ell(u_1) +\dots + \ell(u_p)$. On note $\Fact_p(w)$ l'ensemble de
ces factorisations.
\end{defn}

Si $w$ est de longueur $n$, on a donc
$\Fact_n(w)=\Red (w)$.

\medskip

Ci-dessous on définit l'action d'Hurwitz du groupe de tresses $B_p$ sur les
factorisations en $p$ blocs d'un élément de $W$. Une factorisation est dite
\emph{primitive} lorsqu'elle ne comporte qu'un seul facteur qui n'est pas
une réflexion (définition \ref{deffactprim}). Dans cet article on détermine les
orbites sous l'action d'Hurwitz des factorisations primitives d'un élément
de Coxeter de $W$ (définition \ref{defcox}).

\subsection*{Action d'Hurwitz}
~ 

Considérons un groupe $G$, et un entier $n\geq 1$. On fait agir le
groupe de tresses à $n$ brins, noté $B_n$, sur le produit cartésien $G^n$ :

\begin{defn}[Action d'Hurwitz]
\label{defhur}
  Soient $\bm{\sigma_1},\dots, \bm{\sigma_{n-1}}$ les générateurs standards de
  $B_n$. L'action d'Hurwitz (à droite) de $B_n$ sur $G^n$ est
  définie par :
  \[ (g_1,\dots, g_n) \cdot \bm{\sigma_i} \ := \ (g_1, \dots, g_{i-1},\
  g_{i+1},\  g_{i+1} ^{-1} g_i g_{i+1},\  g_{i+2}, \dots, g_n) \] pour tout
  $(g_1,\dots, g_n) \in G^n$ et tout $i\in \{1,\dots,n-1\}$.
\end{defn}

On vérifie aisément que cette définition donne bien une action du groupe de
tresses. Cela vient essentiellement du fait que la conjugaison fait partie
de la classe des opérations auto-distributives :
$f*(g*h)=(f*g)*(f*h)$ (ici $f*g=f^{-1}gf$). L'intérêt de ces opérations a
été décrit par Brieskorn (ensembles automorphes \cite{Brieskorn1}) et plus
récemment par Dehornoy (LD-systèmes \cite{LD}).

\subsection*{Factorisations primitives}
~ 

L'action d'Hurwitz préserve les fibres de l'application produit $(g_1,
\dots, g_n) \mapsto g_1\dots g_n $. Ainsi, si l'on revient au cadre décrit
plus haut, où $W$ est un groupe de réflexions complexe muni de sa partie
génératrice naturelle $\CR$, on obtient une action d'Hurwitz du groupe de
tresses $B_p$ sur l'ensemble des factorisations en $p$ blocs de $w$
(définition \ref{deffact}), pour tout $w\in W$ et $1\leq p \leq \ell(w)$.

Considérons $(u_1,\dots,u_p) \in \Fact_p(w)$. Le $p$-uplet $(\ell(u_1),\dots,
\ell(u_p))$ définit donc une composition (\ie une partition ordonnée) de
l'entier $n=\ell(w)$. Si $\mu$ est une composition de $n$ en $p$ termes, on note
$\Fact_\mu(w)$ l'ensemble des factorisations dont les longueurs des
facteurs forment la composition $\mu$.

Si l'on fait agir $B_p$ sur un élément de $\Fact_\mu(w)$, la composition de
$n$ associée peut être modifiée, mais pas la partition de $n$ (non
ordonnée) sous-jacente. En effet, l'ensemble $\CR$ des réflexions étant
invariant par conjugaison, la longueur $\ell$ est également invariante par
conjugaison.

On dit qu'une factorisation en $p$ blocs est de forme $\lambda$, pour
$\lambda$ partition de $n$ (notation :
$\lambda \vdash n$), si le $p$-uplet non ordonné des longueurs des
facteurs constitue la partition $\lambda$. On note $\Fact_\lambda (w)$
l'ensemble des factorisations de forme $\lambda$. Ainsi, l'action
d'Hurwitz de $B_p$ sur $\Fact_p (w)$ stabilise les ensembles
$\Fact_\lambda (w)$, pour $\lambda \vdash n$ et $\#\lambda =p$
($\#\lambda$ désignant le nombre de parts de la partition $\lambda$).

Pour $\lambda \vdash n$, on note classiquement $\lambda=k_1^{p_1}\dots
k_r^{p_r}$ (où $k_1 > \dots > k_r \geq1$ et $p_i \geq 1$), si la partition
est constituée de $p_1$ fois l'entier $k_1$, $\dots$, $p_r$ fois l'entier
$k_r$.

\begin{defn}
  \label{deffactprim}
  Une \emph{factorisation primitive} de $w$ est une factorisation par blocs
  de $w$, de forme $k^1 1^{n-k}\vdash n$, pour $k \geq 2$ : tous les blocs
  sauf un doivent être des réflexions. L'unique facteur de longueur
  strictement supérieure à $1$ est appelé \emph{facteur long} de la
  factorisation primitive.
\end{defn}

On étudie ici les factorisations d'un \emph{élément de Coxeter}, au sens de
\cite{BessisKPi1} (voir définition \ref{defcox}). Dans le cas réel, on
retrouve la notion classique d'élément de Coxeter. Dans la suite de
l'introduction, on précisera l'intérêt fondamental des éléments de Coxeter
dans certaines constructions algébriques relatives à un groupe de
réflexions complexe bien engendré.

Considérons un élément de Coxeter $c$ de $W$, et notons $n$ sa
longueur. Le cas le plus élémentaire des factorisations de forme $1^n
\vdash n$, \ie des décompositions réduites de $c$, est connu : on
sait que l'action d'Hurwitz sur $\Red(c)=\Fact_{1^n}(c)$ est
transitive. Dans un premier temps cette propriété a été prouvée pour les
groupes réels (\cite{deligneletter}, puis \cite[Prop.\ 1.6.1]{Bessisdual}),
avec une preuve générale, puis vérifiée au cas par cas pour le reste des
groupes complexes \cite[Prop.\ 7.5]{BessisKPi1}.

Le cas que nous traitons ici peut se voir comme l'étape suivante,
c'est-à-dire la détermination des orbites d'Hurwitz des factorisations
primitives de $c$. Le résultat principal de cet article est le suivant :

\begin{thm}[Orbites d'Hurwitz primitives]
\label{thmintro}
  Soit $W$ un groupe de réflexions complexe fini, bien engendré. Soit $c$
  un élément de Coxeter de $W$. Alors, deux factorisations primitives de
  $c$ sont dans la même orbite d'Hurwitz si et seulement si leurs facteurs
  longs sont conjugués.
\end{thm}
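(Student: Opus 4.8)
The plan is to prove the two implications separately. The "only if" direction is immediate: the Hurwitz action of $B_n$ conjugates factors among themselves, so the conjugacy class of the long factor is an invariant of the Hurwitz orbit; hence if $F_u$ and $F_v$ lie in the same orbit then $u$ and $v$ are conjugate in $W$. (Here one also needs that reflections stay reflections, which is clear, and that the long factor cannot become a product of shorter ones under the action — true since the \emph{form} $k^1 1^{n-k}$ of a factorisation is preserved, the length of each factor being conjugacy-invariant.)

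The substance is the "if" direction: assuming $u$ and $v$ are conjugate in $W$, I must show $F_u$ and $F_v$ are Hurwitz-equivalent. The first reduction is to the case where $u$ itself is fixed, i.e. $v=u$: indeed, if $u$ and $v$ are conjugate, pick $w\in W$ with $v=wuw^{-1}$; I want to relate factorisations of $c$ with long factor $u$ to those with long factor $v$. The key tool is the transitivity of the Hurwitz action on $\Red(c)$, already known (Bessis, \cite[Prop.\ 7.5]{BessisKPi1}, and \cite[Prop.\ 1.6.1]{Bessisdual} in the real case). The strategy I would carry out is the following. Starting from a primitive factorisation $F_u=(u,r_{p+1},\dots,r_n)$, I first "refine" it: since $u$ is a parabolic Coxeter element of length $2$ (being a divisor of $c$, by the results on parabolic Coxeter elements, Prop.\ \ref{propcoxpar}–\ref{propcoxpar2} quoted in the introduction), $u$ has a reduced decomposition $u=s t$ into two reflections, and moreover any factorisation of $u$ generates the associated rank-$2$ parabolic subgroup. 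Replacing $u$ by $(s,t)$ turns $F_u$ into a reduced decomposition $\tilde F_u=(s,t,r_{p+1},\dots,r_n)\in\Red(c)$. Conversely, given any $R\in\Red(c)$ and any position $i$, the "coalescing" operation that multiplies the $i$-th and $(i{+}1)$-th entries produces a primitive factorisation. I would show two things: (1) the coalescing map is compatible with the Hurwitz actions of $B_n$ on $\Red(c)$ and of $B_{n-1}$ on $\Fact_{2^1 1^{n-2}}(c)$ in the appropriate sense (the braid $\bm{\sigma_i}$-orbits on $\Red(c)$ map onto Hurwitz orbits downstairs), so that Hurwitz-equivalence of $\tilde F_u$ and $\tilde F_v$ in $\Red(c)$ implies Hurwitz-equivalence of $F_u$ and $F_v$; and (2) different reduced decompositions $u=st$, $u=s't'$ of the \emph{same} $u$ give coalesced-down factorisations that are already Hurwitz-equivalent, using that the braid group $B_2$ acts transitively on $\Red_\CR(u)$ (this is the rank-$2$, already-understood case).

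Combining (1) and (2) reduces everything to: the lifts $\tilde F_u$ and $\tilde F_v$ are Hurwitz-equivalent in $\Red(c)$. But $\Red(c)$ is a \emph{single} Hurwitz orbit by the known transitivity result, so this is automatic. Thus the logical skeleton is: conjugacy of long factors $\Rightarrow$ (lift to $\Red(c)$) $\Rightarrow$ (transitivity on $\Red(c)$) $\Rightarrow$ equivalence of lifts $\Rightarrow$ (push back down, using rank-$2$ transitivity and compatibility of actions) $\Rightarrow$ $F_u\sim F_v$.

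The main obstacle, and the place where I expect the geometry of the discriminant (and the map $\fact$, the $\LL$ covering, and $E_n\times_{\cp(n)}\Fact(c)$) to be genuinely needed rather than pure combinatorics, is step (1): the claim that coalescing is "surjective on orbits", i.e.\ that two primitive factorisations whose long factors are conjugate always admit Hurwitz-equivalent reduced \emph{lifts}. The subtlety is that not every primitive factorisation with long factor $u$ need refine to a reduced decomposition lying in the same $B_n$-orbit in a way visible combinatorially — the parabolic subgroup $\langle u\rangle$ of rank $2$ is determined only up to conjugacy by the conjugacy class of $u$, and one must control how the ambient braid monodromy moves between the various conjugate rank-$2$ parabolics. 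This is exactly what the geometric picture supplies: the primitive factorisations with long factor in a fixed class $\Lambda$ correspond, via $\fact$ and the stratification of $Y$ by the ramification of $\LL$, to a locus whose connectedness forces a single Hurwitz orbit. So the real work is a connectedness statement for the stratum of $\CH$ (equivalently of $Y$) indexed by $\Lambda$ — precisely the "étude de la connexité de certaines strates de l'hypersurface $\CH$ du discriminant" announced in the introduction — and I would prove the theorem by translating Hurwitz-orbit questions into path-connectedness questions in these strata through the bijection $\LL\times\fact$, then establishing that connectedness directly from the structure of $W\qg V$ and the codimension-$2$ flats.
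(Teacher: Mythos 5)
The ``only if'' direction and your final paragraph are fine, and that final paragraph is in fact the paper's actual proof: via the bijection $\LL\times\fact$ and the compatibility of the Galois and Hurwitz actions on the strata (Thm.~\ref{thmcomposantes}), the Hurwitz orbits of $\Fact_{\alpha_k}(c)$ correspond to the path components of $Y_k^0=\bigsqcup_{\Lambda\in\Lb_k}\varphi(\Lambda)^0$, and each $\varphi(\Lambda)^0$ is path-connected because it is a non-empty Zariski-open subset of the irreducible variety $\varphi(\Lambda)$ (one pulls it back to a Zariski-open subset of the linear flat $L$ with $p(L)=\Lambda$, which is obviously path-connected, cf.\ Prop.~\ref{propcpa}); one also needs Prop.~\ref{propcompirr} to see that distinct classes $\Lambda$ give distinct components. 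So the geometric endgame you point to is the right one, though you leave the connectedness argument and the separation of the $\varphi(\Lambda)$ unproved, and you repeatedly specialise to long factors of length $2$ (codimension-$2$ flats) whereas the theorem concerns arbitrary length $k\geq 2$.

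The combinatorial skeleton that occupies most of your proposal, however, has a fatal gap at step (1), and your diagnosis of where it breaks is not quite right. The implication ``$\tilde F_u$ and $\tilde F_v$ Hurwitz-equivalent in $\Red(c)$ $\Rightarrow$ $F_u$ and $F_v$ Hurwitz-equivalent'' is simply false: coalescing is equivariant only for the sub-braid-group preserving the block structure, not for all of $B_n$. Since $\Red(c)$ is a \emph{single} $B_n$-orbit, your step (1) would force \emph{all} primitive factorisations of a given shape to lie in one Hurwitz orbit, independently of the conjugacy class of the long factor --- contradicting your own ``only if'' direction as soon as $W$ has two conjugacy classes of length-$k$ parabolic Coxeter elements. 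The same reasoning applied to $2$-block factorisations would ``prove'' that conjugacy of the factors suffices for Hurwitz equivalence, which the paper refutes explicitly with the example $u_1=(2\ 3)(1\ 5\ 6)$, $u_2=(1\ 3\ 4)$ versus $v_1=(3\ 4)(1\ 5\ 6)$, $v_2=(1\ 2\ 4)$ in $\FS_6$. The obstacle is not, as you write, that some primitive factorisation might fail to refine into the right $B_n$-orbit (all reduced decompositions lie in one orbit); it is that the descent from that single orbit does not respect Hurwitz classes downstairs. So the lift--transitivity--descend route cannot be repaired, and the proof must go through the geometry from the start, as in your last paragraph.
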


L'idée de la preuve est d'utiliser certaines constructions de Bessis
\cite{BessisKPi1} pour obtenir les factorisations de $c$ à partir de la
géométrie du discriminant de $W$ dans l'espace-quotient $W \qg V$ (qui
s'identifie à $\BC^n$ si on a choisi des invariants fondamentaux $f_1,
\dots, f_n$). On interprète ensuite l'action d'Hurwitz en termes
géométriques.

Dans une première partie on rappelle les définitions classiques et les
propriétés de l'ordre de divisibilité dans le treillis des partitions non
croisées généralisées, noté $\NCP_W(c)$. La partie \ref{partfconj} présente
une notion de forte conjugaison pour les éléments de $\NCP_W(c)$, qui
permet de formuler une version plus forte du théorème principal \ref{thmintro}.

Par la suite, on considère l'hypersurface $\CH$ de $\BC^n$, d'équation le
discriminant de $W$, qui admet une stratification naturelle (par les
orbites de plats). On projette cette hypersurface sur $\BC^{n-1}$ (en
oubliant la coordonnée correspondant à l'invariant de plus haut degré
$f_n$), et on en déduit une stratification du lieu de bifurcation $\CK$ de
$\CH$. En partie \ref{partLL} on rappelle les propriétés du revêtement de
Lyashko-Looijenga $\LL$ , puis dans la partie suivante on modifie légèrement
les constructions de \cite{BessisKPi1}, en définissant une application
$\fact$ qui produit des factorisations associées à la géométrie de
$\CH$. Dans la partie \ref{partLLstrat}, on démontre des propriétés plus
fines du morphisme $\LL$, qui apparaît alors comme un revêtement ramifié
«~stratifié~». On en déduit que l'ensemble des orbites d'Hurwitz des
factorisations primitives est en bijection avec l'ensemble des composantes
connexes par arcs de certains ouverts des strates de $\CK$. La
démonstration du théorème revient ainsi à un problème de connexité dans
$\CK$ (traité en parties \ref{partecp} et \ref{partirred}), et est achevée
en partie \ref{partrefl} par l'étude du cas particulier des réflexions.

Dans la partie \ref{partecp}, on détaille les propriétés de la
stratification de $\CH$. À cette occasion, des conséquences remarquables de
certaines propriétés énoncées dans \cite{BessisKPi1} sont exposées, en
particulier concernant les sous-groupes paraboliques de $W$, leurs classes
de conjugaison et leurs éléments de Coxeter.

\bigskip

La suite de cette introduction présente les motivations du problème.

\subsection*{Treillis des partitions non croisées, monoïde de
  tresses dual}
~ 

Fixons un élément de Coxeter $c$ dans $W$. On note $\<$ l'ordre
partiel sur $W$ associé à la $R$-longueur $\ell$ (voir définition
\ref{defordre}) :
\[u \< v \mathrm{\ si\ et\ seulement\ si\ } \ell (u) + \ell (u  ^{-1} v) = \ell
(v) .\]

On considère l'ensemble des diviseurs de $c$ : $\NCP_W(c):= \{ w \in W \tq
w\< c \} $ (la notation $\NCP$ signifie «~non-crossing partitions~»,
cf. interprétation ci-dessous).

L'ensemble partiellement ordonné $(\NCP_W (c), \<)$ permet de construire
les générateurs (appelés les \emph{simples}) du \emph{monoïde de tresses
  dual} de $W$ (cf. \cite{BKL} pour le type $A$, \cite{Bessisdual} et
\cite{BWordre2} pour les groupes réels, puis \cite{BessisKPi1} pour le cas
général).

\medskip

L'étude de la combinatoire de $\NCP_W(c)$ est au départ motivée par les
conséquences sur les propriétés algébriques du monoïde dual. Ainsi, une des
propriétés remarquables de $\NCP_W(c)$ est qu'il forme un treillis pour l'ordre
$\<$ : preuve générale dans le cas réel par Brady et Watt \cite{BWtreillis}
(voir aussi \cite{IngThom}), le reste au cas par cas, cf. \cite[Lemme
8.9]{BessisKPi1}. C'est essentiellement cette propriété de treillis qui
donne au monoïde de tresses dual sa structure de monoïde de Garside (comme
défini dans \cite{Deh2}).

Mais la richesse de la structure combinatoire de $\NCP_W(c)$ en a fait plus
récemment un objet d'étude en soi (voir par exemple \cite{Armstrong}). Dans
le cas du groupe $W(A_{n-1})$, ce treillis correspond à l'ensemble des
partitions non croisées d'un $n$-gone, et est à la base de la construction
du monoïde de Birman-Ko-Lee \cite{BKL,BDM}. Pour les groupes de type
$G(e,e,r)$, on peut également interpréter l'ensemble des diviseurs de $c$
comme certains types de partitions non croisées \cite{BessisCorran}. Par
extension, l'ensemble $\NCP_W(c)$ des diviseurs d'un élément de Coxeter
dans un groupe bien engendré est appelé le \emph{treillis des partitions
  non croisées généralisées} (d'où la notation $\NCP$).

\subsection*{\texorpdfstring{Formule de Chapoton pour les chaînes de $\NCP_W(c)$ et
  factorisations de $c$}{Formule de Chapoton pour les chaînes de NCPW(c) et
  factorisations de c}}
~ 

Si $W$ n'est pas irréductible et s'écrit $W_1 \times W_2$, alors le
treillis $\NCP_W(c)$ est le produit des treillis $\NCP_{W_1}(c_1)$ et
$\NCP_{W_2}(c_2)$ (où $c_i$ désigne la composante de $c$ sur $W_i$), muni
de l'ordre produit. Dans cette partie on supposera que $W$ est
irréductible.

Notons $d_1 \leq \dots \leq d_n = h$ les degrés invariants de $W$. Une
formule, énoncée initialement par Chapoton, exprime le nombre de chaînes de
longueur donnée dans le treillis $\NCP_W(c)$ en fonction de ces
degrés. Notons $Z_W$ le polynôme \emph{Zêta} de $\NCP_W(c)$ : c'est la
fonction sur $\BN$ telle que pour tout $N$, $Z_W(N)$ est le nombre de
chaînes larges (ou multi-chaînes) $w_1 \< \dots \< w_{N-1}$ dans
$(\NCP_W(c),\<)$ (de façon générale $Z_W$ est toujours un polynôme : voir
par exemple Stanley \cite[Chap. 3.11]{stanley}).

\begin{prop}
\label{propchapoton}
Soit $W$ un groupe de réflexions complexe bien engendré,
irréductible. Alors, avec les notations ci-dessus, on a :
\[ Z_W (N) = \prod_{i=1}^n \frac{d_i + (N-1)h}{d_i} .\] 
\end{prop}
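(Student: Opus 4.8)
The plan is to observe that Proposition~\ref{propchapoton} is merely a reformulation of Chapoton's formula (the theorem stated above) in the language of Zeta polynomials, so that no genuinely new combinatorial input is needed beyond what that theorem already asserts. First I would invoke the general fact, recalled above and due to Stanley, that for a finite poset with a least and a greatest element the function $N \mapsto Z(N)$ counting multichains of length $N-1$ agrees on the positive integers with a polynomial in $N$ (here of degree $n$, the length of a maximal chain of $\NCP_W(c)$). Consequently both sides of the claimed identity are polynomials, and it suffices to prove equality for every integer $N \geq 1$.

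Next, set $k = N-1$. A weak multichain $w_1 \< \dots \< w_{N-1}$ in $\NCP_W(c)$ is the same datum as a $k$-chain $w_1 \< \dots \< w_k \< c$ in the sense of Chapoton's theorem, because every element of $\NCP_W(c)$ lies below $c$ for $\<$: appending the entry $c$, respectively deleting it, gives mutually inverse bijections between the two sets. Hence $Z_W(N)$ equals the number of $k$-chains counted by that theorem, namely $\Cat^{(k)}(W) = \prod_{i=1}^n \frac{d_i + kh}{d_i} = \prod_{i=1}^n \frac{d_i + (N-1)h}{d_i}$. Since this holds for every $N \geq 1$, the two polynomials coincide, which is the assertion.

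The real content --- and the genuine obstacle --- is Chapoton's formula itself. At present it is known only through a case-by-case verification over the Shephard--Todd classification of irreducible well-generated complex reflection groups, assembling contributions of Chapoton, Edelman, Athanasiadis, Reiner and Bessis (see \cite[3.5]{Armstrong} for the precise history); Reading's recurrence in \cite{reading} streamlines the real case but still proceeds group by group, and no uniform, classification-free proof is known. Producing a conceptual, geometric explanation of this identity is precisely the aim of the rest of this thesis, and the partial results established later --- in particular the formulas for submaximal factorisations of a Coxeter element --- recover the codimension-two part of it with a case-free argument.
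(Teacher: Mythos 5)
Your proposal is correct and matches the paper's treatment: the paper gives no independent proof of this proposition either, but (exactly as you do) identifies it as the Zeta-polynomial restatement of Chapoton's formula and attributes its validity to the case-by-case verifications of Chapoton, Athanasiadis--Reiner, Bessis and Bessis--Corran, noting that no uniform proof is known. Your reduction (setting $k=N-1$ and using that every multichain in $\NCP_W(c)$ extends by $c$, together with polynomiality of $Z_W$) is the correct and standard bookkeeping, so nothing is missing.
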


Dans le cas des groupes réels, cette formule a été observée par Chapoton
\cite[Prop.\ 9]{chapoton}, utilisant des calculs de Reiner et Athanasiadis
\cite{reiner,athareiner}. La généralisation aux groupes complexes est
énoncée par Bessis \cite[Prop.\ 13.1]{BessisKPi1} et utilise des résultats
de Bessis et Corran \cite{BessisCorran}.  La preuve est au cas par cas :
méthodes \emph{ad hoc} pour les séries infinies, et logiciel GAP pour les
types exceptionnels. Pour le cas réel, une démonstration plus directe
(utilisant une formule de récurrence générale qui permet de faire la
vérification au cas par cas plus simplement) a été récemment publiée par
Reading \cite{reading}.  L'apparente simplicité de la formule de Chapoton
motive toujours la recherche d'une explication complètement générale.

En posant $N=2$ dans l'expression de $Z_W(N)$ on obtient naturellement le
cardinal de $\NCP_W(c)$ :
\[ \left| \NCP_W (c) \right| = \prod_{i=1}^n \frac{d_i + h}{d_i}. \] Même
pour cette formule, la seule preuve connue à l'heure actuelle est au cas
par cas. L'entier $ \left| \NCP_W (c) \right|$, parfois noté $\Cat(W)$, est
appelé \emph{nombre de Catalan de type $W$} : pour $W=A_{n-1}$ on obtient
en effet le nombre de Catalan classique $\frac{1}{n+1} \binom{2n}{n}$.
 
\medskip

Les factorisations de $c$ sont bien évidemment reliées aux chaînes de
$\NCP_W(c)$. Partant d'une chaîne $w_1 \< ... \< w_p \< c$, on peut obtenir
une factorisation par blocs $c=u_1 \dots u_{p+1}$ en posant $u_1 = w_1$,
$u_i = w_{i-1}^{-1} w_i$ pour $ 2\leq i \leq p$, et $u_{p+1}=w_p ^{-1}
c$. Inversement, à partir d'une telle factorisation de $c$ on peut obtenir
une chaîne de $\NCP_W(c)$. Ainsi, dénombrer les chaînes dans $\NCP_W(c)$
revient à compter le nombre de \emph{factorisations par blocs} de $c$ de
tailles fixées.

Toutefois, on a choisi ici de travailler sur des factorisations
«~strictes~» (pas de facteurs triviaux), alors que la formule de Chapoton
concerne les chaînes larges. Ce choix est motivé par le fait que ce sont
des factorisations strictes qui apparaissent géométriquement via
l'application «~$\fact$~» (construite en partie \ref{partlbl}). Les nombres de
factorisations strictes et les nombres de chaînes larges sont alors liés
par la formule suivante (on utilise les formules de passage entre nombres
de chaînes strictes et nombres de chaînes larges, données par exemple dans
\cite[Chap. 3.11]{stanley}) :

\[ Z_W (N) = \sum_{k\geq 1} \left| \Fact_k(c) \right| \binom{N}{k} . \]

Une piste pour une démonstration générale de la formule de Chapoton est donc
l'étude de la combinatoire des factorisations par blocs.

\subsection*{\texorpdfstring{Remerciements.}{Remerciements}}
Je tiens à remercier vivement David Bessis pour ses conseils et son aide
sur de nombreux points de cet article. Merci également à Emmanuel Lepage
dont une remarque judicieuse a permis de simplifier la partie
\ref{subpartcpa}.

\section{\texorpdfstring{Ordre de divisibilité dans $\NCP_W$}{Ordre de divisibilité dans NCPW}}
\label{partordre}

\subsection{Éléments de Coxeter}
\label{subpartcox}
~ 

Soit $V$ un espace vectoriel complexe de dimension $n\geq 1$, et $W
\subseteq \GL(V)$ un groupe de réflexions complexe. On suppose que $W$ est
essentiel, \ie que $V^W=\{0\}$. Dans ce cas, on dit que $W$ est un groupe
de réflexions \emph{bien engendré} si un ensemble de $n$ réflexions suffit
à l'engendrer. Désormais on supposera toujours que $W$ est un groupe de
réflexions complexe bien engendré.

Soit $\alpha$ une racine de l'unité. On dit qu'un élément de $W$ est
\emph{$\alpha$-régulier} (au sens de Springer \cite{Springer}) s'il possède
un vecteur propre régulier (\ie n'appartenant à aucun des hyperplans de
réflexions) pour la valeur propre $\alpha$. Si $W$ est irréductible, notons
$d_1 \leq \dots \leq d_n$ les degrés invariants de $W$. On note $h$ et on
appelle \emph{nombre de Coxeter} le plus grand degré $d_n$. D'après la
théorie de Springer (\cite{Springer}, résumé dans
\cite[Thm.\ 1.9]{BessisKPi1}), il existe dans $W$ des éléments
$\eh$-réguliers. Dans ce cadre, en généralisant le cas réel, on pose la
définition suivante.

\begin{defn}
  \label{defcox}
  Soit $W$ un groupe de réflexions complexe fini bien engendré. Si $W$ est
  irréductible, un \emph{élément de Coxeter} de $W$ est un élément
  $\eh$-régulier. Dans le cas général, on appelle
  \emph{élément de Coxeter} un produit d'éléments de Coxeter des
  composantes irréductibles de $W$.
\end{defn} 

\subsection{Ordre de divisibilité dans $W$}
\label{subpartordre}
~ 

On définit ci-dessous l'ordre de divisibilité à gauche dans $W$ (parfois
appelé ordre absolu). Rappelons que pour $w \in W$, $\ell(w)$ désigne la
longueur d'une décomposition réduite de $w$ en réflexions de $W$.

\begin{defn}
  \label{defordre}
  Soient $u,v \in W$. On pose :
  \[u \< v \mathrm{\ si\ et\ seulement\ si\ } \ell (u) + \ell (u  ^{-1} v) = \ell
(v)\ .\]
\end{defn}

La relation $\<$ munit $W$ d'une structure d'ensemble partiellement ordonné
: $u$ divise~$v$ si et seulement si $u$ peut s'écrire comme un préfixe
d'une décomposition réduite de $v$. Comme $\CR$ est invariant par
conjugaison, on a aussi $u \< v$ si et seulement si $u$ est un suffixe (ou
même un sous-facteur) d'une décomposition réduite de $v$. Ainsi, l'ordre
$\<$ coïncide avec l'ordre de divisibilité à droite dans $W$.

\medskip

Considérons un élément de Coxeter $c$ de $W$. On note $\NCP_W(c)$
l'ensemble des diviseurs de $c$ dans $W$. D'après la théorie de Springer,
tous les éléments de Coxeter de $W$ sont conjugués. D'autre part, l'ordre
$\<$ est invariant par conjugaison, donc si $c'=aca^{-1}$, alors
$\NCP_W(c')=a\NCP_W(c)a^{-1}$. Ainsi, la structure de $\NCP_W(c)$ ne dépend
pas du choix de l'élément de Coxeter $c$ ; on notera souvent simplement
$\NCP_W$.

\medskip

Dans $\NCP_W$, la longueur correspond toujours à la codimension de l'espace
des points fixes :

\begin{prop}
  \label{proplongueur}
  Soit $w\in \NCP_W$. Alors :
  \[ \ell(w) = \codim \Ker(w-1) .\]
\end{prop}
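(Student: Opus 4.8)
The plan is to establish the equality $\ell(w) = \codim \Ker(w-1)$ for every $w \in \NCP_W$ by combining two inequalities, proving each one separately.

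\textbf{First}, I would prove the easy inequality $\ell(w) \geq \codim \Ker(w-1)$, which in fact holds for \emph{every} $w \in W$, not just divisors of a Coxeter element. Indeed, if $w = r_1 \cdots r_k$ with $r_i \in \CR$ and $k = \ell(w)$, then each $r_i$ fixes a hyperplane $H_i = \Ker(r_i - 1)$, so $\Ker(w-1) \supseteq \bigcap_{i=1}^k H_i$, whence $\codim \Ker(w-1) \leq \codim \bigcap_{i=1}^k H_i \leq k = \ell(w)$. This is a standard linear-algebra observation (a product of $k$ reflections can move at most a $k$-dimensional subspace), and it requires no hypothesis on $w$.

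\textbf{The harder inequality} is $\ell(w) \leq \codim \Ker(w-1)$, and this is where the hypothesis $w \Rleq c$ (i.e. $w \in \NCP_W$) is genuinely used. The natural approach is to proceed by the geometry of fixed spaces: for $w \Rleq c$, one knows (from the theory recalled in \cite{BessisKPi1}, building on Springer theory and the structure of the dual braid monoid) that the parabolic subgroup $W_{w} := \Stab_W(\Ker(w-1))$ — the pointwise stabiliser of the fixed space of $w$ — is itself a (well-generated) reflection group acting on a space of dimension $\codim \Ker(w-1)$, and that $w$ lies in $W_w$ and is in fact a Coxeter element of $W_w$. Since a Coxeter element of an irreducible (or well-generated) reflection group of rank $m$ has reflection length exactly $m$, one gets $\ell_{\CR}(w) \leq \ell_{\CR \cap W_w}(w) = m = \codim \Ker(w-1)$, using also that the reflection length computed inside the parabolic agrees with the one in $W$ (Steinberg's theorem: the reflections of $W_w$ are exactly the reflections of $W$ fixing the subspace). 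Combined with the first inequality this forces equality, and as a by-product the chain of inequalities collapses so that $\ell_{\CR}(w) = \ell_{\CR \cap W_w}(w)$.

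\textbf{The main obstacle} I anticipate is justifying that $w$, as a divisor of the Coxeter element $c$, is a Coxeter element of its parabolic closure $W_w$ — equivalently, that parabolic closures of elements of $\NCP_W$ behave well and that $\ell$ is additive along a reduced decomposition with respect to this fixed-space filtration. In the real case this is classical (Carter, Brady–Watt); in the well-generated complex case it relies on the results announced later in the excerpt (Prop.~\ref{propcoxpar} and \ref{propcoxpar2} on parabolic Coxeter elements), so depending on the logical order of the paper I would either invoke those directly or, if this proposition is meant to precede them, give a more hands-on argument: take a reduced decomposition $w = r_1 \cdots r_k$ of $w$ (so $k = \ell(w)$), observe that since $w \Rleq c$ the tuple $(r_1, \dots, r_k)$ extends to a reduced decomposition of $c$, and show by descending induction on $k$ that the partial products have fixed spaces of the expected codimension, using that each left-multiplication by a reflection changes $\codim \Ker(\cdot - 1)$ by exactly $\pm 1$ and — crucially — that along a reduced word for a divisor of $c$ it only ever \emph{increases}. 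That last monotonicity is the delicate point and is precisely what the divisibility hypothesis buys us; everything else is routine linear algebra.
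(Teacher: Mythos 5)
Your first inequality is exactly the paper's and is fine: it holds for every $w\in W$ and needs no hypothesis. The issue is the second half. Your primary route --- $w$ is a Coxeter element of the parabolic subgroup fixing $\Ker(w-1)$, hence has length equal to the rank of that subgroup --- is circular in the logical order of this paper: the statements you would need (that the parabolic attached to $w$ is precisely $W_{\Ker(w-1)}$, that its rank is $\codim\Ker(w-1)$, and that reflection length in the parabolic agrees with reflection length in $W$) are Propositions \ref{propcoxpar} and \ref{propcoxpar2}, and the proof of Proposition \ref{propcoxpar2} explicitly invokes Proposition \ref{proplongueur}. Moreover, even granting all that, the claim ``a Coxeter element of a rank-$m$ group has reflection length $m$'' is itself the case $w=c$ of the proposition being proved. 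Your fallback (descending induction along a reduced word, with the monotonicity of $\codim\Ker(\cdot-1)$ flagged as the delicate point) correctly names the gap but does not close it.

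The paper closes it with a short sandwich argument that your proposal is missing. Two inputs suffice: (a) $\Ker(c-1)=\{0\}$ by Springer theory (a Coxeter element is $\eh$-regular), so your easy inequality already gives $\ell(c)\geq n$; (b) $c$ admits a factorisation into $n$ reflections, produced geometrically by the tunnel construction (\cite[Lemme 7.2]{BessisKPi1}, or partie \ref{partlbl}), so $\ell(c)=n$. Then for any $w\<c$, additivity of the length along the divisibility relation gives
\[ n=\ell(c)=\ell(w)+\ell(w^{-1}c)\ \geq\ \codim\Ker(w-1)+\codim\Ker(w^{-1}c-1)\ \geq\ \codim\Ker(c-1)=n , \]
where the last inequality uses $\Ker(w-1)\cap\Ker(w^{-1}c-1)\subseteq\Ker(c-1)$ together with subadditivity of codimension under intersection. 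All inequalities are therefore equalities, and in particular $\ell(w)=\codim\Ker(w-1)$. This is both shorter than your induction and entirely free of parabolic subgroups; the only nontrivial external input is the existence of a length-$n$ factorisation of $c$.
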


\begin{rqe}
  Cela revient à dire que pour les éléments de $\NCP_W$, la longueur
  relativement à $\CR$ est la même que la longueur relativement à
  l'ensemble de toutes les réflexions du groupe unitaire $\U(V)$. Dans le
  cas des groupes réels, cette propriété est vraie pour tout élément de $W$
  (cf. \cite[Lemme 2.8]{carter}, ou \cite[Prop.\ 2.2]{BWordre2}).
\end{rqe}

\begin{proof}[Démonstration :]
  On a toujours $\ell(w) \geq \codim \Ker(w-1)$. En effet, si $w=r_1\dots
  r_k$ alors $\Ker(w-1) \supseteq {\bigcap_i \Ker(r_i-1)}$. Donc pour
  conclure il suffit de montrer que $\ell(c)=n$ (considérer $w$ et
  $w^{-1}c$). L'inégalité $\ell(c) \geq n$ est évidente car
  $\Ker(c-1)=\{0\}$ d'après la théorie de Springer. On conclut en notant
  qu'on peut construire géométriquement des factorisations de $c$ en $n$
  réflexions (\cite[Lemme 7.2]{BessisKPi1} ou partie \ref{partlbl} de cet
  article).
\end{proof}

\subsection{Théorème de Brady-Watt} 
\label{subpartbw}
~ 

Une conséquence de la proposition \ref{proplongueur} est que l'ordre~$\<$
dans $\NCP_W$ est la restriction de l'ordre partiel sur $\U(V)$ étudié par
Brady et Watt dans \cite{BWordre} : $u \< v$ si et seulement si $\Im(u-1)
\oplus \Im(u^{-1}v -1)= \Im(v)$. On en déduit en particulier :

\begin{prop}
  \label{propordre}
  Soit $w \in \NCP_W$. Si $(r_1,\dots, r_p)$ est une décomposition réduite
  de $w$, alors $\Ker(w-1)= \Ker(r_1-1) \cap \dots \cap \Ker(r_k-1)$.
\end{prop}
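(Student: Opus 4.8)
The plan is to establish the equality of subspaces by combining the one obvious inclusion with an equality of dimensions coming from Proposition \ref{proplongueur}. First I would record the trivial inclusion
\[ \Ker(r_1-1) \cap \dots \cap \Ker(r_p-1) \subseteq \Ker(w-1)\ , \]
which holds simply because $w = r_1\cdots r_p$, so any vector fixed by each $r_i$ is fixed by $w$ --- this is precisely the remark already used in the proof of Proposition \ref{proplongueur}. In particular $\codim\Ker(w-1) \leq \codim\bigl(\bigcap_{i=1}^p \Ker(r_i-1)\bigr)$.

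Next I would bound in the opposite direction. Each $\Ker(r_i-1)$ is a hyperplane, so subadditivity of codimension gives $\codim\bigl(\bigcap_{i=1}^p \Ker(r_i-1)\bigr) \leq \sum_{i=1}^p \codim\Ker(r_i-1) = p$. Since $(r_1,\dots,r_p)$ is a reduced decomposition we have $p = \ell(w)$, and Proposition \ref{proplongueur} identifies $\ell(w)$ with $\codim\Ker(w-1)$. Chaining these relations yields
\[ p = \codim\Ker(w-1) \leq \codim\Bigl(\bigcap_{i=1}^p \Ker(r_i-1)\Bigr) \leq p\ , \]
so the two codimensions agree. As one of these subspaces is contained in the other and they have equal dimension, they coincide, which is the claim.

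An alternative route I would mention is the Brady--Watt characterization recalled just before the statement: for $u\in\U(V)$ one has $\Ker(u-1) = \Im(u-1)^{\perp}$, and $u\<v$ holds if and only if $\Im(u-1)\oplus\Im(u^{-1}v-1) = \Im(v-1)$; an easy induction on $j$ then shows $\Im\bigl((r_1\cdots r_j)-1\bigr) = \bigoplus_{i=1}^{j}\Im(r_i-1)$, hence $\Im(w-1) = \bigoplus_{i=1}^{p}\Im(r_i-1)$, and passing to orthogonal complements recovers $\Ker(w-1) = \bigcap_{i=1}^{p}\Ker(r_i-1)$. There is no genuine obstacle here; the only point worth a glance is that the argument is not circular, which it is not, since Proposition \ref{proplongueur} is proved independently and the inclusion and hyperplane estimates are entirely elementary.
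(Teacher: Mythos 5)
Your proof is correct, and your primary argument takes a genuinely different (and more elementary) route than the paper. The paper derives this proposition from the Brady--Watt framework: it first observes that, thanks to Proposition \ref{proplongueur}, the order $\<$ on $\NCP_W$ is the restriction of the Brady--Watt order on $\U(V)$ characterized by $\Im(u-1)\oplus\Im(u^{-1}v-1)=\Im(v-1)$, and then states the proposition as an immediate consequence --- essentially the induction on images followed by passage to orthogonal complements that you sketch as your ``alternative route''. Your main argument instead bypasses $\U(V)$ entirely: the trivial inclusion $\bigcap_i\Ker(r_i-1)\subseteq\Ker(w-1)$ plus the chain $p=\codim\Ker(w-1)\leq\codim\bigl(\bigcap_i\Ker(r_i-1)\bigr)\leq p$ forces equality of the subspaces. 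This buys self-containedness (only Proposition \ref{proplongueur} and subadditivity of codimension are needed, and you correctly note there is no circularity since that proposition is proved independently), whereas the paper's route buys slightly more --- the direct sum decomposition of $\Im(w-1)$ and the full order isomorphism of Theorem \ref{thmBW}, which are used again later (e.g.\ for the injectivity of $w\mapsto\Ker(w-1)$ on $\NCP_W$). Both arguments are sound.
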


Les résultats de \cite{BWordre} sont donnés dans le cadre de $\OO(V)$ pour
$V$ espace euclidien, mais restent valables sans modifications dans $\U(V)$
avec $V$ hermitien complexe. On pourra donc appliquer le théorème suivant :

\begin{thm}[Brady-Watt]
  \label{thmBW}
  Pour tout $g \in \U(V)$, l'application
  \[ \begin{array}{rcl} \left(\{ f \in \U(V) \tq f \< g \},\ \< \right)
    &\rightarrow & \left(\{\mathrm{s.e.v.\ de\ } V \mathrm{\ contenant\ }
      \Ker(g-1) \},\ \supseteq \right)\\ f & \mapsto & \Ker(f-1)
    \end{array} \]
    est un isomorphisme d'ensembles partiellement ordonnés.
  \end{thm}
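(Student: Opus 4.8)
Cet énoncé est celui de \cite{BWordre}, transposé sans changement de $\OO(V)$ à $\U(V)$ comme indiqué ci-dessus~; voici la stratégie que je suivrais. Pour $f\in\U(V)$, notons $M_f:=\Im(f-1)$ son \emph{sous-espace mobile}. Comme $f$ est unitaire, $\Ker(f-1)$ et $\Im(f-1)$ sont orthogonaux et supplémentaires, donc $M_f=\Ker(f-1)^{\perp}$~; l'application $f\mapsto\Ker(f-1)$ se déduit alors de $f\mapsto M_f$ en la composant avec l'anti-isomorphisme $U\mapsto U^{\perp}$ du treillis des sous-espaces de $V$. Il suffit donc d'établir que
\[ f\longmapsto M_f \quad\text{est un isomorphisme d'ensembles ordonnés}\quad (\{f\in\U(V)\tq f\< g\},\ \<)\ \xrightarrow{\ \sim\ }\ (\{U\tq U\subseteq M_g\},\ \subseteq). \]
On utilisera librement la caractérisation $u\< v\iff M_u\oplus M_{u^{-1}v}=M_v$ rappelée plus haut, la transitivité de $\<$ sur $\U(V)$, et l'égalité $\ell(w)=\dim M_w$. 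La partie facile est que l'application est bien définie et croissante : si $f\< g$, la relation $M_f\oplus M_{f^{-1}g}=M_g$ entraîne $M_f\subseteq M_g$~; et si $f_1\< f_2\< g$, alors $f_1\< g$ par transitivité et le même argument appliqué à $f_1\< f_2$ donne $M_{f_1}\subseteq M_{f_2}$.

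Le cœur de la preuve — et le point difficile — est le lemme suivant : \emph{pour tout sous-espace $U\subseteq M_g$, il existe un unique $f\in\U(V)$ tel que $f\< g$ et $M_f=U$.} Pour l'existence, on exploite le fait que $g$ stabilise $M_g$ et n'y possède aucun vecteur fixe non nul, de sorte que $(g-1)|_{M_g}$ est un automorphisme de $M_g$~; à partir de cet automorphisme et du choix d'un supplémentaire convenable de $U$ dans $M_g$, on construit explicitement un élément $f\in\U(V)$ qui vaut l'identité sur $U^{\perp}$, qui stabilise $U$ et $M_g$, et pour lequel on vérifie directement que $M_f=U$ et $M_f\oplus M_{f^{-1}g}=M_g$, d'où $f\< g$. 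Pour l'unicité : si $f\< g$ et $M_f=U$, alors $f=\Id$ sur $U^{\perp}=\Ker(f-1)$, donc $f$ est entièrement déterminé par sa restriction à $U$~; en écrivant $(g-1)v=(f-1)v+f(f^{-1}g-1)v$ on constate que $(f-1)v\in M_f=U$ tandis que $f(f^{-1}g-1)v\in fM_{f^{-1}g}=\Im(g-f)$, et comme $M_g=M_f\oplus\Im(g-f)$ (conséquence de $f\< g$ et d'un argument de dimension), le vecteur $(f-1)v$ est la composante de $(g-1)v$ sur $U$ le long de $\Im(g-f)$~; une analyse plus fine, celle de \cite{BWordre}, montre que ce supplémentaire — donc $f$ — ne dépend que de $g$ et de $U$. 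C'est ce contrôle des facteurs de $g$ par leur sous-espace mobile qui constitue la difficulté technique essentielle.

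Il reste à conclure. L'existence dans le lemme fournit la surjectivité de $f\mapsto M_f$, et l'unicité en fournit l'injectivité. Enfin, pour voir que la bijection réciproque est croissante, prenons $f_1,f_2\< g$ avec $M_{f_1}\subseteq M_{f_2}$~: en appliquant le lemme avec $g$ remplacé par $f_2$, il existe $f\< f_2$ tel que $M_f=M_{f_1}$~; alors $f\< f_2\< g$, donc $f\< g$ avec $M_f=M_{f_1}$, et l'unicité dans le lemme appliqué à $g$ force $f=f_1$, d'où $f_1\< f_2$. Ainsi $f\mapsto M_f$, donc $f\mapsto\Ker(f-1)$, est un isomorphisme d'ensembles ordonnés, ce qui établit le théorème.
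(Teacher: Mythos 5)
Le texte de la thèse ne démontre pas ce théorème : il se contente de citer \cite{BWordre} en signalant que les résultats, énoncés là pour $\OO(V)$, passent à $\U(V)$ sans modification. Votre esquisse est donc, de fait, une reconstruction du schéma de preuve de Brady--Watt, et elle en restitue correctement l'architecture : réduction de $f\mapsto\Ker(f-1)$ à $f\mapsto M_f=\Im(f-1)$ via l'orthocomplémentation (légitime car $\Ker(f-1)\perp\Im(f-1)$ pour $f$ unitaire), croissance de l'application, puis le lemme central d'existence et d'unicité d'un diviseur $f\< g$ de sous-espace mobile prescrit $U\subseteq M_g$, dont on tire la bijectivité et la croissance de la réciproque. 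Ce dernier enchaînement (appliquer le lemme à $f_2$ puis conclure par l'unicité relative à $g$) est correct. Il faut toutefois être lucide sur le fait que le point réellement difficile --- l'unicité, c'est-à-dire le fait que le supplémentaire $\Im(g-f)$ de $U$ dans $M_g$ ne dépend que de $g$ et de $U$ --- est précisément celui que vous renvoyez à \cite{BWordre} ; votre texte n'est donc pas une démonstration autonome, mais un guide de lecture fidèle de la preuve citée, ce qui est exactement le statut que lui donne la thèse.
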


Par conséquent, l'application $w \mapsto \Ker(w-1)$ est injective sur $
\NCP_W $.

\section
{\texorpdfstring{Action
    d'Hurwitz sur les factorisations primitives et conjugaison forte dans
    $\NCP_W$}{Action d'Hurwitz sur les factorisations primitives et
    conjugaison forte}}
\label{partfconj}

Soit $p \in \{1,\dots, n\}$. Le groupe de tresses classique $B_p$ agit par
action d'Hurwitz sur l'ensemble $\Fact _p (c)$ des factorisations en $p$
blocs de $c$, selon la définition \ref{defhur}.
Dans le cas où $p=n$, l'action de $B_n$ est transitive sur $\Fact_n (c)
=\Red(c)$. Ce n'est pas vrai dans le cas général, puisque des invariants
évidents apparaissent :
\begin{itemize}
\item la partition de $n$ associée au $p$-uplet des longueurs des facteurs ;
\item le multi-ensemble des classes de conjugaison des facteurs (car
  l'action d'Hurwitz conjugue les facteurs).
\end{itemize}

Plus précisément, l'action d'Hurwitz conjugue les facteurs par d'autres
facteurs «~dont les longueurs s'ajoutent~». Cela motive l'introduction
d'une notion plus forte de conjugaison.

\begin{defn}
\label{deffconj}
Pour $w,w' \in \NCP_W(c)$, on définit la relation «~$w$ et $w'$ sont
fortement conjugués dans $\NCP_W(c)$~» (notée $w \fconj w'$) comme la
clôture transitive et symétrique de la relation suivante :
\[w \stackrel{1}{\sim} w' \text{\ s'il\ existe\ } x \in W \text{\ tel\ que\
} xw=w'x , \text{\ avec\ } xw \in \NCP_W(c) \text{\ et\ } \ell(xw)=\ell(x)
+ \ell(w) .\]
\end{defn}

\begin{rqe}
\label{rqhur}
  Bien sûr la relation $ \stackrel{1}{\sim}$ reflète l'incidence, sur un
  facteur, de l'action d'Hurwitz d'une tresse élémentaire $\bm{\sigma_i}$
  sur une factorisation de $c$. Par transitivité, si une tresse $\beta$
  transforme une factorisation $\xi$ en une factorisation $\xi'$, et envoie
  le numéro d'un facteur $w$ de $\xi$ sur celui d'un facteur $w'$ de
  $\xi'$, alors $w$ et $w'$ sont fortement conjugués.
\end{rqe}

On voit facilement qu'on obtient la même relation d'équivalence si on
impose que les conjugateurs $x$ de la définition soient toujours des
réflexions de $\NCP_W$. En effet, si $w$ et $w'$ sont tels que $w
\stackrel{1}{\sim} w'$, notons $x$ un élément de $W$ tel que $xw=w'x
\in \NCP_W$, avec $\ell(xw)=\ell(x) + \ell(w)$. Si l'on décompose $x$ en
produit minimal de réflexions ($x=r_1\dots r_k$), et si l'on note $u
\stackrel{\mathrm{ref}}{\sim} v$ la relation «~$u$ est élémentairement
conjugué à $v$ par une réflexion~», alors on a : $w
\stackrel{\mathrm{ref}}{\sim} r_k w r_k ^{-1} \stackrel{\mathrm{ref}}{\sim}
r_{k-1} r_k w r_k ^{-1} r_{k-1}^{-1} \stackrel{\mathrm{ref}}{\sim} \dots
\stackrel{\mathrm{ref}}{\sim} xwx^{-1}=w'$.
Cette remarque permet de faire le lien avec les orbites d'Hurwitz
de \emph{factorisations primitives} de $c$, où tous les facteurs sauf un
sont des réflexions (cf. définition \ref{deffactprim}).

\begin{prop}
  \label{propfconjhur}
  Soient $u,v \in \NCP_W$, de longueurs strictement supérieures à
  $1$. Alors les propriétés suivantes sont équivalentes :
  \begin{enumerate}[(i)]
  \item $u$ et $v$ sont fortement conjugués dans $\NCP_W$ ;
  \item il existe $\xi=(u,r_{2},\dots,r_k)$ et
    $\xi'=(v,r_{2}',\dots,r_k')$ deux factorisations primitives de
    $c$, avec $u$ (resp. $v$) facteur long de $\xi$ (resp. $\xi'$),
    telles que $\xi$ et $\xi'$ soient dans la même orbite d'Hurwitz
    sous $B_k$.
  \end{enumerate}
\end{prop}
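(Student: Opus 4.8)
The strategy is to prove the two implications separately, translating each time between the algebraic relation $\fconj$ on $\NCP_W$ and the combinatorics of Hurwitz orbits of primitive factorisations of $c$. The key bridge is the remark made just before the statement: $\fconj$ is generated by elementary conjugations $u \stackrel{\mathrm{ref}}{\sim} v$, where $v = rur^{-1}$ for a reflection $r$ such that $ru \in \NCP_W$ and $\ell(ru) = \ell(r) + \ell(u) = 1 + \ell(u)$. So I will work throughout with reflection-conjugators and reduce everything to the effect of a single elementary braid $\bm{\sigma_i}$.

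\emph{(ii) $\Rightarrow$ (i).} Suppose $\xi = (u, r_2, \dots, r_k)$ and $\xi' = (v, r_2', \dots, r_k')$ are primitive factorisations of $c$ in the same $B_k$-orbit, with $u$, $v$ their long factors. It suffices to treat the case where $\xi' = \xi \cdot \bm{\sigma_i}$ for a single standard generator, and then conclude by transitivity of $\fconj$. Looking at Definition \ref{defhur}, applying $\bm{\sigma_i}$ leaves all factors unchanged except in positions $i, i+1$, where $(g_i, g_{i+1})$ becomes $(g_{i+1}, g_{i+1}^{-1} g_i g_{i+1})$. Since all factors of $\xi$ except $u$ are reflections, and the long factor must survive (the multiset of lengths is an Hurwitz invariant), the only moves that affect $u$ are those where $u$ sits in position $i$ or $i+1$. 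If $u$ is in position $i+1$, it gets replaced by $r^{-1} u r$ where $r = g_i$ is a reflection; if $u$ is in position $i$, it gets replaced by $u$ itself in position $i+1$ (untouched) — wait, more precisely if $u = g_i$ then the new pair is $(g_{i+1}, g_{i+1}^{-1} u g_{i+1})$, so $u$ is conjugated by the reflection $g_{i+1}$. Either way, the new long factor is $r u r^{-1}$ for some reflection $r$ appearing as an adjacent factor in a block factorisation of $c$ containing $u$; hence $\ell(r) + \ell(u) = \ell(ru)$ because both are consecutive blocks of a block factorisation, and $ru \in \NCP_W$ as a product of a prefix of such a factorisation. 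This is exactly an elementary step $u \stackrel{\mathrm{ref}}{\sim} rur^{-1}$, so $u \fconj v$.

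\emph{(i) $\Rightarrow$ (ii).} Conversely, assume $u \fconj v$. By the remark preceding the proposition we may assume $v = rur^{-1}$ with $r$ a reflection, $ru \in \NCP_W$, and $\ell(ru) = 1 + \ell(u)$; the general case follows by chaining such steps (at each step one transports the auxiliary reflections along, which is legitimate since the Hurwitz action is by automorphisms). Now I need to produce primitive factorisations realising this. Write $ru = u'$; since $u' \in \NCP_W$ and $\ell(u') = 1 + \ell(u)$, and $u'^{-1}c \in \NCP_W$ with $\ell(u'^{-1}c) = \ell(c) - \ell(u') = n - 1 - \ell(u)$, I can use the existence of reduced decompositions of elements of $\NCP_W$ (which follows from the earlier material, via $\LL$ and $\fact$, or directly from Brady--Watt, Theorem \ref{thmBW}) to factor $u'^{-1}c$ as a product of $n - 1 - \ell(u)$ reflections, say $(r_3, \dots, r_k)$ with $k = n - \ell(u) + 1$. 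Then $(r, u, r_3, \dots, r_k)$ and $(u, r, r_3, \dots, r_k)$ are both block factorisations of $c$ of primitive shape $\ell(u)^1 1^{\,n-\ell(u)}$ (here I use $u = r \cdot (rur^{-1})\cdot\text{nothing}$... more carefully: from $ru = u'$ and $u' r_3 \cdots r_k = c$ we get $r \cdot u \cdot r_3 \cdots r_k = c$, so $(r,u,r_3,\dots,r_k) \in \Fact_k(c)$; applying $\bm{\sigma_1}$ gives $(u,\, u^{-1}ru,\, r_3,\dots,r_k)$, whose long factor is $u$ in position $1$, and applying $\bm{\sigma_1}$ to $(r,u,r_3,\dots)$ the other way, i.e. computing $(r,u,\dots)\cdot\bm{\sigma_1} = (u,\, u^{-1}ru,\, r_3,\dots)$ — the long factor is $u$). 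Starting instead from the factorisation $(v, r'', r_3', \dots)$ where $v r'' = c \cdot(\dots)$... I'll arrange things so that one Hurwitz move carries the long factor $u$ to $v = rur^{-1}$: indeed $(r, u, r_3,\dots,r_k)\cdot\bm{\sigma_1} = (u, u^{-1}ru, r_3,\dots,r_k)$ has long factor $u$, while $(u, r, r_3,\dots,r_k)$ — if this is a valid block factorisation, i.e. if $ur r_3\cdots r_k = c$ — applying $\bm{\sigma_1}$ gives $(r, r^{-1}ur, r_3,\dots) = (r, v^{?},\dots)$; one checks $rur^{-1} = v$ sits here. The bookkeeping is routine once the shapes are pinned down; the content is just that a single reflection-conjugation step is exactly one elementary Hurwitz move on a suitably chosen primitive factorisation.

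\emph{Main obstacle.} The only genuinely delicate point is the direction (i) $\Rightarrow$ (ii): given an elementary strong-conjugation step $u \stackrel{\mathrm{ref}}{\sim} rur^{-1}$, one must exhibit a \emph{primitive} factorisation of $c$ (not just some block factorisation) in which $u$ appears as long factor and a single braid generator conjugates it to $rur^{-1}$. This requires knowing that $u^{-1}c$ (or $u'^{-1}c$) admits a reduced decomposition into reflections, i.e. that $\ell$ is additive along $u \< c$ down to reflections — which is guaranteed by Proposition \ref{proplongueur} together with the Brady--Watt Theorem \ref{thmBW}, so no new input is needed. The rest is careful but routine tracking of which factor is conjugated by which under $\bm{\sigma_i}$, plus the observation that transitivity of $\fconj$ matches composition of braids (Remark \ref{rqhur}). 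Note also that this proposition does \emph{not} yet prove the main theorem: it only reduces Theorem \ref{thmintro} to showing that $\fconj$ has exactly one class per $W$-conjugacy class, which is the geometric content treated later via the connectedness of strata of $\CK$.
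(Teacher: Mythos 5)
Your proof is correct and follows essentially the same route as the paper's: (ii) $\Rightarrow$ (i) by reading off an elementary strong conjugation from the effect of each braid generator on the (unique) long factor, and (i) $\Rightarrow$ (ii) by reducing to a single reflection-conjugation step and realising it by one or two Hurwitz moves on a primitive factorisation in which $u$ and the reflection $r$ occupy adjacent positions, then concluding by transitivity. The paper's version of the second step is just tidier where your bookkeeping gets tangled: take any block factorisation of $c$ beginning with $(u,r)$ --- which exists because $\ell(ur)=\ell(u)+1$ and $(ur)^{-1}c$ admits a reduced decomposition --- and act by $\bm{\sigma_1}^{2}$, which conjugates the long factor to $r^{-1}ur=v$ and returns it to first position.
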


\begin{rqe} \label{rqrefl} Dans le cas des réflexions, on a une proposition
  similaire, mais il faut s'assurer que la (permutation associée à la)
  tresse qui transforme $\xi$ en $\xi'$ envoie le numéro de la
  position de $u$ sur celui de la position de $v$.
\end{rqe}

\begin{proof}[Démonstration :]
  (ii) $\Rightarrow$ (i) est clair par définition, puisque, étant donnée
  l'invariance de la longueur par conjugaison, $v$ est nécessairement
  obtenu à partir de $u$ par forte conjugaison (cf. remarque
  \ref{rqhur}). Pour le sens direct, montrons d'abord (ii) lorsque $u$ et
  $v$ sont tels que $ur=rv \< c$, avec $r \in \CR$ et
  $\ell(ur)=\ell(u)+1$. Il suffit dans ce cas de prendre une factorisation
  quelconque de $c$ qui commence par $(u,r)$, et de faire agir la tresse
  $\bm{\sigma_1} ^2$. On conclut par transitivité.
\end{proof}

Ainsi le théorème \ref{thmintro} se déduit du théorème suivant, qui donne
également des propriétés supplémentaires de l'action d'Hurwitz sur
$\Red(c)$ :

\begin{thm}
  \label{thmfconj}
  Soit $u,v \in \NCP_W$. Alors $u$ et $v$ sont fortement conjugués dans
  $\NCP_W$ si et seulement si $u$ et $v$ sont conjugués dans $W$.
\end{thm}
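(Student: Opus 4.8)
The statement to prove is Theorem \ref{thmfconj}: for $u, v \in \NCP_W$, strong conjugacy in $\NCP_W$ is equivalent to ordinary conjugacy in $W$. One direction is immediate: strong conjugacy is by definition a refinement of conjugation (at each elementary step $xw = w'x$ we have $w' = xwx^{-1}$), so $u \fconj v$ implies $u$ and $v$ are conjugate in $W$. The whole content is the converse. The plan is to reduce the converse to a geometric connectivity statement about the discriminant hypersurface $\CH \subseteq W\qg V \simeq \BC^n$, via the Lyashko-Looijenga covering $\LL$ and the factorisation map $\fact$. Since Proposition \ref{propfconjhur} translates strong conjugacy of long factors into Hurwitz-equivalence of primitive factorisations, and since (by the constructions of Bessis recalled in Part \ref{partLL} and the bijection $Y \xrightarrow{\LL \times \fact} E_n \times_{\cp(n)} \Fact(c)$) the Hurwitz/Galois actions are compatible through $\fact$, an orbit of primitive factorisations will correspond to an arc-connected component of a suitable open subset of a stratum of the bifurcation locus $\CK$.

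**Key steps.** First I would set up the stratification: the hypersurface $\CH$ carries the natural stratification by orbits of flats (parabolic conjugacy classes), and projecting to $Y = \Spec \BC[f_1,\dots,f_{n-1}]$ induces a stratification of $\CK \subseteq Y$. A primitive factorisation $F_u = (u, r_{p+1}, \dots, r_n)$ with long factor $u$ of length $p$ corresponds (generically over the appropriate stratum) to a point $y \in Y$ whose fibre $\LL(y)$ has exactly one multiple point, of multiplicity governed by $u$; the conjugacy class of $u$ is an invariant of which stratum of $\CK$ we are over. Second, using the refined "stratified ramified covering" structure of $\LL$ (the analogue of Thm. \ref{thmrevetement} in the excerpt's notation, i.e. that $\LL$ restricted over a stratum is an honest covering, and the local monodromy near a stratum of $\CK$ realizes the Hurwitz move $\bm{\sigma_i}^2$ on the relevant factor), I would show that the set of Hurwitz orbits of primitive factorisations with long factor in a fixed $W$-conjugacy class is in bijection with the set of arc-connected components of the open dense part of the corresponding stratum of $\CK$. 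Third — the crux — I would prove that each such stratum (minus its own boundary, i.e. intersected with the locus where $\LL(y)$ has exactly one multiple point) is arc-connected. Concretely: for a fixed parabolic conjugacy class of rank $p$, the set of parabolic subgroups in that class is connected in a suitable sense, and one transports this to connectivity in $\CK$ by relating a stratum of $\CK$ to a quotient of an orbit of codimension-$p$ flats in $V$; the statement that two flats in the same $W$-orbit can be joined by a path staying in "flats of that orbit" (away from deeper strata) is where the geometry of $W$ enters. Finally, I would handle the degenerate case $p = 1$ (reflections) separately: there the relevant stratum is essentially $Y$ minus the branch locus and one must additionally track, as in Remark \ref{rqrefl}, that the connecting braid sends the position of $u$ to the position of $v$ — this uses that the full braid group $B_n$ (not just its pure subgroup) acts, together with transitivity of the Hurwitz action on $\Red(c)$.

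**Main obstacle.** The hard part is the connectivity of the strata of $\CK$ — equivalently, showing that the part of a fixed-type stratum of $\CH$ lying over the "exactly one multiple point" locus is arc-connected. This is genuinely a statement about the geometry of the discriminant of $W$: one must understand how strata of $\CH$ (indexed by parabolic conjugacy classes) project to $Y$, control what happens along the boundary where two multiple points collide or where the multiplicity jumps, and argue that no such degeneration disconnects the generic part. I expect this to require a careful analysis, carried out in Parts \ref{partecp} and \ref{partirred} of the paper, of the incidence relations among flats and of irreducibility of the relevant subvarieties, drawing on the parabolic-subgroup results (Propositions \ref{propcoxpar}, \ref{propcoxpar2}) that let one identify strata with orbits of Coxeter elements of parabolic subgroups. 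Everything else — the reduction via $\fact$, the compatibility of the two braid actions, the translation of Hurwitz moves into local monodromy — is essentially bookkeeping on top of the Bessis machinery already recalled in the excerpt.
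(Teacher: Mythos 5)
Your architecture matches the paper's: the easy direction, the reduction via Proposition \ref{propfconjhur} to Hurwitz orbits of primitive factorisations, the stratified-covering structure of $\LL$ (Théorème \ref{thmrevetement}) together with the compatibility of the Galois and Hurwitz actions (Lemme \ref{lemcompat}), the identification of Hurwitz orbits with arc-connected components of $Y_k^0$ (Théorème \ref{thmcomposantes}), and the separate treatment of the length-one case. You have also correctly located the crux in the connectivity of the open strata.

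However, the mechanism you propose for that crux --- that ``two flats in the same $W$-orbit can be joined by a path staying in flats of that orbit (away from deeper strata)'' --- is both unnecessary and, read literally upstairs in $V$, false: for distinct flats $L_1\neq L_2$ in one orbit, the open parts $L_1^0$ and $L_2^0$ are disjoint and each is closed in their union (since $\overline{L_1^0}\cap L_2^0=L_1\cap L_2^0=\varnothing$), so $\bigcup_{L'\in W\cdot L}L'^0$ is disconnected, with one component per flat. No connecting of distinct flats is needed: the stratum $\Lambda=W\cdot L=p(L)$ of the quotient is the image of a \emph{single} flat under the finite (hence closed) algebraic maps $p$ and $\varphi$, so $\varphi(\Lambda)$ is irreducible, and $\varphi(\Lambda)^0$ is a nonempty Zariski-open subset of it, hence arc-connected; concretely (Proposition \ref{propcpa}) one pulls $\varphi(\Lambda)^0$ back to a Zariski-open subset of the complex vector space $L$, which is automatically arc-connected, and takes its continuous image. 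The ``geometry of $W$'' you anticipate enters not there but earlier, in Lemme \ref{lemcoxpar} and Lemme \ref{lemtype} (identifying the conjugacy class of the long factor with the stratum of the corresponding point of $\CH$) and in Proposition \ref{propcompirr} (distinguishing the $\varphi(\Lambda)$ for distinct $\Lambda$). A similar correction applies to your sketch of the reflection case: since $\varphi(\Lambda)=Y$ for every $\Lambda\in\Lb_1$, projecting to $Y$ loses all information, and transitivity of the Hurwitz action on $\Red(c)$ does not by itself produce a braid fixing the relevant strand; the paper instead works upstairs in $\CH$, connecting $(y,x)$ to $(y',x)$ inside the Zariski-open, hence arc-connected, subset $\Lambda\cap\varphi^{-1}(Y-\CK)$ of the stratum, so that the projected braid is pure on that strand by construction.
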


On aura ainsi déterminé les orbites d'Hurwitz de factorisations
primitives. Toutefois, cela ne suffit pas pour comprendre complètement
l'action d'Hurwitz sur les factorisations de forme quelconque. En effet,
pour toute factorisation, le multi-ensemble des classes de conjugaison
forte des facteurs est naturellement invariant par l'action
d'Hurwitz. Cependant, pour les factorisations non primitives, la condition
n'est en général pas suffisante pour que deux factorisations soient dans la
même orbite. Par exemple, dans le cas où $p=2$, l'orbite d'Hurwitz de
$(u_1,u_2) \in \Fact_2 (c)$ est $\{ (u_1^{c^k},u_2^{c^k}),
(u_2^{c^{k+1}},u_1^{c^k}), \linebreak[1] {k \in \BZ} \}$ (où la notation
$u^v$ désigne le conjugué $v^{-1}uv$). Donc dans le type $A$, cela revient
à agir par rotation sur le diagramme des partitions non-croisées. On peut
ainsi facilement trouver un contre-exemple dans $\FS_6$. Posons $u_1=(2\
3)(1\ 5\ 6), u_2=(1\ 3\ 4)$, et $v_1=(3\ 4)(1\ 5\ 6)$, $v_2=(1\ 2\ 4)$ :
alors les factorisations $(u_1,u_2)$ et $(v_1,v_2)$ de $\Fact_2((1\ 2\ 3\
4\ 5\ 6))$ ne sont pas dans la même orbite d'Hurwitz sous $B_2$, alors que
$u_1$ et $v_1$, resp. $u_2$ et $v_2$, sont conjugués. Le problème vient
du fait que lorsqu'on fait agir $B_p$ par action d'Hurwitz, les
conjugateurs qui s'appliquent ne sont pas quelconques, mais doivent être
des éléments qui sont aussi des facteurs (en particulier pas nécessairement
des réflexions).

\medskip

Désormais on supposera toujours que $W$ est irréductible (et bien
engendré). Si $W$ n'est pas irréductible, le théorème \ref{thmfconj} pour
$W$ se déduit du cas irréductible : on vérifie aisément qu'il suffit
d'appliquer le résultat à chacune des composantes irréductibles de $W$.

\section{Le morphisme de Lyashko-Looijenga}
\label{partLL}
Dans cette partie on rappelle la construction du morphisme de
Lyashko-Looijenga de \cite[Part. 5]{BessisKPi1}, et on fixe des notations
et conventions concernant l'espace de configuration de $n$ points.

\medskip

Le morphisme de Lyashko-Looijenga a été introduit par Lyashko en 1973
(selon Arnold \cite{arnold}) et indépendamment par Looijenga dans
\cite{looijenga} : voir \cite[Chap.5.1]{LZgraphs} et \cite{landozvonkine}
pour un historique détaillé.  Bessis, dans \cite{BessisKPi1}, a généralisé la
définition de $\LL$ à tous les groupes de réflexions complexes bien
engendrés, le cas initial correspondant aux groupes de Weyl.

\subsection{Discriminant d'un groupe bien engendré}
\label{subpartLL0}
~

Soit $W \subseteq \GL(V)$ un groupe de réflexions complexe bien engendré,
irréductible. Fixons une base $v_1,\dots, v_n$ de $V^*$, et $f_1,\dots,
f_n$ dans $S(V^*)= \BC[v_1,\dots, v_n]$ un système d'invariants
fondamentaux, polynômes homogènes de degrés uniquement déterminés $d_1 \leq
\dots \leq d_n=h$, tels que $\BC[v_1,\dots, v_n]^W=\BC[f_1,\dots, f_n]$
(théorème de Shephard-Todd-Chevalley). On a l'isomorphisme
\[\begin{array}{rcl}
W \qg V & \overset\sim\to & \BC^n \\
\bar{v} & \mapsto & (f_1(v),\dots, f_n(v)).
\end{array}
\]

Notons $\CA$ l'ensemble des hyperplans de
 réflexions de $W$.  L'équation de $\bigcup_{H \in \CA} H$ peut s'écrire
 comme un polynôme invariant, noté $\Delta$, le discriminant de $W$ :

\[ \Delta = \prod_{H\in \CA} \alpha_H ^{e_H} \in \BC[f_1,\dots,
f_n], \]
où $\alpha_H$ est une forme linéaire de noyau $H$ et $e_H$ est l'ordre du
sous-groupe parabolique cyclique $W_H$.

Lorsque $W$ est bien engendré, il existe un système d'invariants
$f_1,\dots, f_n$ tel que le discriminant $\Delta$ de $W$ s'écrive :
\[\Delta= f_n^n + a_2 f_n ^{n-2} +\dots + a_n ,\]
où les $a_i$ sont des polynômes en $f_1, \dots, f_{n-1}$
\cite[Thm.\ 2.4]{BessisKPi1}. La propriété fondamentale est que $\Delta$ est
un polynôme monique de degré $n$ en $f_n$ (le coefficient de $f_n^{n-1}$
est rendu nul par simple translation de la variable $f_n$).

On note :
\[\CH := \{\bar{v}\in W \qg V \tq \D(\bar{v})=0 \} = p\left(\bigcup_{H \in \CA}
H\right) ,\] où $p$ est le morphisme quotient $V \surj \ W \qg V$. Désormais,
lorsqu'on se placera dans l'espace quotient, on considèrera $(f_1,\dots,
f_n)$ comme les coefficients des points de $W \qg V \simeq \BC^n$.

Le morphisme de Lyashko-Looijenga $\LL$, que l'on définira
précisément en partie \ref{subpartLL}, permet d'étudier l'hypersurface $\CH$ à
travers les fibres de la projection $(f_1,\dots,f_n) \mapsto (f_1,\dots,
f_{n-1})$. Ensemblistement, $\LL$ associe à $(f_1,\dots, f_{n-1})$ le
multi-ensemble des racines de 
$\D (f_1,\dots, f_n)$
vu comme polynôme en $f_n$. Dans la partie suivante on fixe les notations
et définitions concernant l'espace d'arrivée de $\LL$.

\subsection{L'espace des configurations de $n$ points}
\label{subpartEn}
~

On note $E_n$ l'ensemble des configurations
centrées de $n$ points, avec multiplicités, \ie
\[E_n := H_0/ \FS_n\] où $H_0$ est l'hyperplan de $\BC^n$ d'équation $\sum
x_i =0$. 

Considérons le morphisme quotient ${f: H_0 \to E_n}$. C'est un
morphisme fini (donc fermé), correspondant à l'inclusion
$\BC[e_1,\dots,e_n]/(e_1)\ \inj \
\BC[x_1,\dots,x_n]/(\sum x_i)$, où $e_1,\dots,e_n$ désignent les fonctions
symétriques élémentaires en les $x_i$. L'espace $E_n$ est une variété
algébrique, son anneau des fonctions régulières sera noté
$\BC[e_2,\dots,e_n]$.

\medskip

On peut stratifier naturellement $E_n$ par les partitions de l'entier
$n$. Pour $\lambda \vdash n$, on note $E_{\lambda}^0$ l'ensemble des
configurations $X\in E_n$ dont les multiplicités sont distribuées selon
$\lambda$. Ainsi, $E_n$ est l'union disjointe des $E_{\lambda}^0$.

Une partie $E_{\lambda}^0$ n'est pas fermée ;
des points peuvent fusionner et on obtiendra une configuration
correspondant à une partition moins fine :

\begin{defn}
  Soient $\lambda, \mu$ deux partitions de $n$. On note $\mu \leq \lambda$
  la relation «~$\mu$ est \emph{moins fine} que $\lambda$~», \ie $\mu$ est
  obtenue à partir de $\lambda$ après un ou plusieurs regroupements de
  parts.

  On pose $E_{\lambda} :=\bigsqcup_{\mu \leq \lambda} E_{\mu}^0$
  (le symbole $\bigsqcup$ désigne une union disjointe).
\end{defn}

Ainsi, pour $\eps:=1^n \vdash n$, $E_{\eps}$ est l'espace $E_n$
entier, et $E_{\eps}^0$ est l'ensemble des points réguliers de $E_n$,
que l'on notera $\Enreg$. On a $\Enreg=H_0^{\mathrm{reg}}/\FS_n$, où
$H_0^{\mathrm{reg}}=\{(x_1,\dots,x_n)\in H_0 \tq \forall i\neq j,\ x_i\neq
x_j \}$. Pour $\alpha:=2^1 1^{n-2} \vdash n$, on a $E_{\alpha}=E_n-\Enreg$.

Pour la topologie classique, comme pour la topologie de Zariski,
$E_{\lambda}$ est un fermé qui est l'adhérence de
$E_{\lambda}^0$.

Fixons une configuration de référence $X^\circ$ dans $\Enreg$, par exemple
sur la droite réelle. On note $B_n:=\pi_1(\Enreg,X^\circ)$ le groupe de
tresses à $n$ brins. On a la présentation classique :
\[B_n  \simeq \left< \bm{\sigma_1},\dots,\bm{\sigma_{n-1}}
 \left| \bm{\sigma_i}\bm{\sigma_{i+1}} \bm{\sigma_i} =
\bm{\sigma_{i+1}}\bm{\sigma_i} \bm{\sigma_{i+1}},
\bm{\sigma_i}\bm{\sigma_j} = \bm{\sigma_j}\bm{\sigma_i} \text{ pour } |i-j| >
1 \right. \right> ,\] 
où par convention $\bm{\sigma_i}$ est représenté par le chemin suivant dans
le plan complexe :

{\shorthandoff{?;:}%
\[\xy
(-10,0)="1", (-2,0)="2", (7,0)="3", 
(14,0)="4", (22,0)="5", (32,0)="6", (40,0)="7",
"1"*{\bullet},"2"*{\bullet},"3"*{\bullet},"4"*{\bullet},
"5"*{\bullet},"6"*{\bullet},"7"*{\bullet},
(14,-3)*{_{x_i}},(24,-3)*{_{x_{i+1}}},
(-10,-3)*{_{x_1}},(40,-3)*{_{x_{n}}},
"4";"5" **\crv{(18,-3)}
 ?(.5)*\dir{>},
"5";"4" **\crv{(18,3)}
 ?(.5)*\dir{>}
\endxy \]
}

Tout chemin dans $\Enreg$ peut être vu comme un élément du groupe de
tresses $B_n$. Précisons la construction, qui sera importante par la
suite. On définit l'ordre lexicographique sur $\BC$ : $z \lex z'\ \ssi \
[ \re(z) \leq \re(z')] \mathrm{\ et\ } [\re(z)=\re(z')\ \Rightarrow \ \im(z)
\leq \im(z')] $.

 Soit $X$ une configuration quelconque de $\Enreg$, et posons
$\widetilde{X}=(x_1,\dots,x_n )$ son support ordonné pour
$\lex$. Considérons un chemin $t\mapsto (x_1(t),\dots, x_n(t))$ dans
$H_0^{\mathrm{reg}}$ de $\widetilde{X}$ vers $\widetilde{X^\circ}$ (le support
ordonné de $X^\circ$), tel que pour tout $t$, $x_1(t)\lex \dots \lex
x_n(t)$. Il détermine une unique classe d'homotopie de chemin de $X$ vers
$X^\circ$, que l'on note $\gamma_X$.  Si $X,X' \in \Enreg$, on fixe ainsi
une bijection entre le groupe $\pi_1(\Enreg, X^\circ)$ et l'ensemble des
classes d'homotopie de chemin de $X$ vers $X'$ par : $\tau \mapsto
\gamma_X \cdot \tau \cdot \gamma_{X'}^{-1}$.

Via ces conventions, on pourra désormais considérer tout chemin dans
$\Enreg$ comme un élément de $B_n$. De la même façon, si $\lambda$ est une
partition de $n$, on peut associer à tout chemin de $E_\lambda ^0$ un
élément de $B_r$, où $r=\#\lambda$ est le nombre de parts de $\lambda$ : on
considère simplement le mouvement du support de la configuration.  On
utilisera ces conventions dans la proposition \ref{propcompat} et le lemme
\ref{lemcompat}.

\subsection{\texorpdfstring{Définitions et propriétés du morphisme
    $\LL$}{Définitions et propriétés du morphisme LL}}
\label{subpartLL}
~ 

On pose $Y:=\Spec \BC[f_1,\dots, f_{n-1} ] \simeq \BC^{n-1}$. D'autre part,
$E_n=\Spec \BC[e_2,\dots, e_n]$. Rappelons que $a_2,\dots,a_n$, éléments de
$\BC[f_1,\dots,f_{n-1}]$, désignent les coefficients du discriminant
$\Delta$ en tant que polynôme en $f_n$.

\begin{defn}[{\cite[Def.5.1]{BessisKPi1}}]

  \label{defLL}
  Le morphisme de Lyashko-Looijenga (généralisé) est le morphisme de $Y$
  dans $E_n$ défini algébriquement par :
  \[ \begin{array}{rcl}
    \BC[e_2,\dots,e_n] & \to & \BC[f_1,\dots,f_{n-1}]\\
    e_i & \mapsto & (-1)^i a_i
  \end{array}\]
\end{defn}

Ensemblistement, $\LL$ envoie $y=(f_1,\dots,f_{n-1}) \in Y$ sur le
multi-ensemble (élément de $E_n$) des racines du polynôme $\D = f_n^n + a_2
(y) f_n^{n-2} +\dots + a_n(y)$, \ie les intersections, avec multiplicité,
de $\CH$ avec la droite $L_y:=\{(y,f_n) \tq f_n \in \BC \}$. On peut voir
aussi $\LL$ comme le morphisme algébrique qui envoie $y \in Y$ sur $(a_2,
\dots, a_{n})$ : il est alors quasi-homogène pour les poids : $\deg f_i =
d_i$, $\deg a_i = ih$.

\medskip

On pose $\CK := \{ y\in Y \tq \Disc (\D(y,f_n); f_n) =0 \}$, appelé lieu de
bifurcation de $\D$.  C'est le lieu où $\D$ a des racines multiples en tant
que polynôme en $f_n$. Ainsi $\CK=\LL^{-1}(E_\alpha)$ et $Y-\CK= \LL ^{-1}
(\Enreg)$, avec les notations de la partie \ref{subpartEn}. Les propriétés
fondamentales de $\LL$ sont données par le théorème suivant :

\begin{thm}[{\cite[Thm.\ 5.3]{BessisKPi1}}]
  \label{thmLLBessis}
  Les polynômes $a_2,\dots,a_n \in \BC[f_1,\dots,f_{n-1}]$ sont
  algébriquement indépendants, et $\BC[f_1,\dots,f_{n-1}]$ est un
  $\BC[a_2,\dots,a_n]$-module libre gradué de rang $\frac{n!h^n}{|W|}$. Par
  conséquent, $LL$ est un morphisme fini. De plus, sa restriction $Y-\CK \surj
  \Enreg$ est un revêtement non ramifié de degré $\frac{n!h^n}{|W|}$.
\end{thm}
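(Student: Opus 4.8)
L'énoncé se scinde en une partie algébrique (finitude de $\LL$, liberté du module, valeur du rang) et une partie géométrique (le caractère de revêtement non ramifié au-dessus de $\Enreg$). Pour la partie algébrique, le plan est d'exploiter la quasi-homogénéité de $\LL$ : pour les poids $\deg f_j = d_j$ et $\deg a_i = ih$ (tous strictement positifs), un morphisme gradué de $\BC$-algèbres graduées connexes de type fini est fini si et seulement si sa fibre au-dessus de l'origine de $E_n$ est de dimension $0$. Or cette fibre est $\LL^{-1}(0) = \{ y \in Y \tq \Delta(y,f_n) = f_n^n \} = V(a_2,\dots,a_n)$, qui est un cône ; tout revient donc à montrer que ce cône est réduit à $\{0\}$ (le point délicat, voir ci-dessous). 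Ce point acquis, l'idéal $I=(a_2,\dots,a_n)$ de $R:=\BC[f_1,\dots,f_{n-1}]$ a pour radical l'idéal d'augmentation, donc $R/I$ est de dimension finie sur $\BC$ ; par Nakayama gradué, $R$ est de type fini comme module sur $S:=\BC[a_2,\dots,a_n]$. L'extension $S\subseteq R$ étant entière, $\dim S=\dim R=n-1$ ; un sous-anneau de $R$ engendré par $n-1$ éléments et de dimension $n-1$ est une algèbre de polynômes, donc les $a_i$ sont algébriquement indépendants (d'où aussi l'injectivité de $\LL^*$, donc la surjectivité de $\LL$). Comme $R$ est régulier, c'est un module de Cohen--Macaulay maximal sur l'anneau régulier $S$, donc gradué libre (Auslander--Buchsbaum). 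Pour le rang $N$, l'écriture $R\cong\bigoplus_{k=1}^N S(-m_k)$ donne $\sum_k q^{m_k}=\prod_{i=2}^n(1-q^{ih})\big/\prod_{j=1}^{n-1}(1-q^{d_j})$, et en faisant $q\to 1$ on obtient $N=\prod_{i=2}^n(ih)\big/\prod_{j=1}^{n-1}d_j=n!\,h^{n-1}\big/\prod_{j=1}^{n-1}d_j=n!\,h^n/|W|$, la dernière égalité résultant de $\prod_{j=1}^n d_j=|W|$.

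Le c\oe ur de la preuve, et son obstacle principal, est l'égalité $V(a_2,\dots,a_n)=\{0\}$. Un $y$ de ce cône correspond à une droite $L_y$ rencontrant $\CH$ en un unique point $n$-uple ; la normalisation annulant le coefficient de $f_n^{n-1}$ force ce point à être $(y,0)$. En relevant par $V\surj W\qg V$ et en posant $C_y:=\{v\in V\tq f_i(v)=y_i \text{ pour } i<n\}$, la condition $\Delta(y,f_n)=f_n^n$ se traduit par : $C_y$ ne rencontre $\bigcup_{H\in\CA}H$ qu'à l'intérieur de $\{f_n=0\}$, c'est-à-dire $C_y\setminus\{f_n=0\}\subseteq\Vreg$. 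Le plan est d'en déduire $y=0$ en combinant l'action de $\BC^*$ (qui dilate $f_n$ par $\lambda^h$) et la théorie des éléments réguliers de Springer (en particulier la description de $\{f_1=\dots=f_{n-1}=0\}$). C'est l'étape qui utilise de manière essentielle le fait que $W$ est bien engendré et la forme monique de $\Delta$ en $f_n$ (\cite[Thm.~2.4]{BessisKPi1}), et c'est elle dont il faudra soigner les détails.

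Pour la partie géométrique, $\LL$ est désormais fini et plat (module libre) de degré $N$, et $Y-\CK=\LL^{-1}(\Enreg)$ par définition de $\CK$ ; il reste à voir que $\LL$ est non ramifié --- donc étale, étant fini et plat --- au-dessus de $\Enreg$, ce qui en fera un revêtement de degré $N$. Au voisinage d'un point $X_0\in\Enreg$, les $n$ racines $t_1(y),\dots,t_n(y)$ de $\Delta(y,f_n)$ sont des fonctions holomorphes deux à deux distinctes de $y$ (théorème des fonctions implicites), et $\LL$ se factorise localement en $y\mapsto(t_1(y),\dots,t_n(y))\in H_0$ suivi du quotient $H_0\to E_n$, étale sur $H_0^{\mathrm{reg}}$ ; l'étalité de $\LL$ en $y_0$ équivaut alors à l'inversibilité de la matrice $\big(\partial t_m/\partial f_j\big)_{1\le m,j\le n-1}$. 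Comme $\partial t_m/\partial f_j=-\,(\partial\Delta/\partial f_j)(y_0,t_m)\big/\prod_{l\ne m}(t_m-t_l)$ et que les polynômes $(\partial\Delta/\partial f_j)(y_0,\cdot)$, $j=1,\dots,n-1$, sont de degré $\le n-2$ en $f_n$, l'interpolation de Lagrange aux $n-1$ points distincts $t_1,\dots,t_{n-1}$ ramène l'inversibilité voulue à l'indépendance linéaire de ces $n-1$ polynômes --- autrement dit, au fait qu'aucune direction horizontale non nulle ne soit tangente à $\CH$ en chacun des $n$ points de $\CH\cap L_{y_0}$ ; on l'établit via la géométrie du discriminant (de façon équivalente : le lieu de ramification de $\LL$, pur de codimension $1$ par pureté de Zariski--Nagata, se projette dans $E_\alpha$, car $\LL^*$ envoie le discriminant du polynôme universel sur $\Disc(\Delta(y,f_n);f_n)$, qui engendre l'idéal de $\CK$). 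On conclut que $\LL$ induit un revêtement non ramifié $Y-\CK\surj\Enreg$ de degré $N=n!\,h^n/|W|$.
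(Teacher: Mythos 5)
Premier point : la thèse ne démontre pas cet énoncé. Le théorème \ref{thmLLBessis} est repris tel quel de \cite{BessisKPi1} et sert d'entrée à tout le chapitre ; il n'y a donc pas de preuve interne à laquelle confronter la vôtre, sinon celle de Bessis. Votre squelette en restitue correctement l'architecture : la quasi-homogénéité à poids strictement positifs ramène la finitude à $\LL^{-1}(0)=\{0\}$, puis Nakayama gradué pour la finitude du module, égalité des dimensions pour l'indépendance algébrique des $a_i$, Auslander--Buchsbaum pour la liberté, et le quotient des séries de Hilbert pour le rang (votre calcul $\prod_{i=2}^{n}ih\,/\prod_{j=1}^{n-1}d_j=n!\,h^n/|W|$ est exact) ; de même, finitude + platitude + non-ramification au-dessus de $\Enreg$ donne bien le revêtement de degré voulu.

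Mais les deux seuls points qui portent réellement la démonstration restent non établis. (1) L'égalité $V(a_2,\dots,a_n)=\{0\}$ : vous la désignez vous-même comme \og l'obstacle principal \fg\ et n'en donnez qu'un plan ; c'est précisément là que Bessis utilise la théorie de Springer et la forme monique du discriminant d'un groupe bien engendré, et sans cet argument tout le reste s'effondre. (2) La position générale des $n$ hyperplans tangents à $\CH$ au-dessus d'un point de $\Enreg$ : votre réduction par interpolation de Lagrange est correcte, mais la conclusion est renvoyée à \og la géométrie du discriminant \fg, alors que c'est l'énoncé à prouver (chez Bessis, via les dérivations de base ; c'est aussi ce que rappellent la proposition \ref{propgeneralposition} et le corollaire \ref{coroetale} du chapitre \ref{chapsubmax}, qui renvoient eux-mêmes à Bessis sur ce point). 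La variante par pureté de Zariski--Nagata ne comble pas ce trou : la pureté fait du lieu de ramification un diviseur, mais ne le place pas dans $\CK$ ; le fait que $\LL^*$ envoie le discriminant universel sur $\Disc(\Delta;f_n)$ dit seulement que $\CK=\LL^{-1}(E_\alpha)$, et non l'inclusion $Z(J_{\LL})\subseteq\CK$, qui est exactement ce qu'il faut démontrer. En l'état, vous avez un plan de preuve fidèle à celui de Bessis, mais pas une preuve.
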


La dernière propriété permet de définir, pour chaque $y\in Y - \CK$, une
action de Galois (ou action de monodromie) de $\pi_1(\Enreg,\LL(y))$ sur la
fibre de $\LL(y)$. D'après les conventions de la partie \ref{subpartEn},
cela détermine une action du groupe de tresses $B_n$ sur l'espace $Y- \CK$ :
les orbites sont exactement les fibres de $\LL$ (car $Y-\CK$ est connexe
par arcs). Pour $\beta \in B_n$, on note $y\cdot \beta$ l'image de $y$ par
l'action de $\beta$.
\medskip

Dans la partie suivante, on va construire pour chaque $y\in Y$ une
factorisation de $c$ ; on verra plus loin que l'action d'Hurwitz sur les
factorisations (définition \ref{defhur}) est compatible avec l'action de Galois
définie ici.

\section{Factorisations géométriques}

\label{partlbl}

En adaptant \cite[Part.6]{BessisKPi1}, on construit pour chaque $y\in Y$
une factorisation par blocs de $c$, dont la partition associée correspond à
la distribution des multiplicités de la configuration $\LL(y)$.

\subsection{Tunnels}
~ 

Soit $y \in Y$. Rappelons que $L_y$ désigne la droite complexe $\{(y,x) \tq
x \in \BC \}$. Notons $U_y$ le complémentaire dans $L_y$ des demi-droites
verticales situées au-dessous des points de $\LL(y)$, \ie :
\[ U_y:= \{ (y,z) \in L_y \tq \forall x \in \LL(y),\ \re(z)= \re(x)
\Rightarrow \im(z) > \im (x) \} \]

La partie $\CU := \bigcup_{y\in Y} U_y$ est un ouvert dense et contractile
de $W \qg \Vreg$ \cite[Lemme 6.2]{BessisKPi1}. On peut donc utiliser $\CU$
comme «~point-base~» pour le groupe fondamental de $W \qg \Vreg$. On
définit ainsi le groupe de tresses de $W$ :
\[B(W):=\pi_1(W \qg \Vreg, \CU) \]

\medskip

On rappelle ci-dessous \cite[Def.6.5]{BessisKPi1} :

\begin{defn}
  Un \emph{tunnel} est un triplet $T=(y,z,L)$ tel que $(y,z)\in \CU$,
  $(y,z+L)\in \CU$, et le segment $[(y,z),(y,z+L)]$ est contenu dans $W \qg
  \Vreg$. On associe à $T$ l'élément $b_T$ du groupe de tresses $\pi_1(W
  \qg \Vreg, \CU)$, représenté par le chemin horizontal $t\mapsto
  (y,z+tL)$.
\end{defn}

{\shorthandoff{;:}%
 \[\xy
<5pt,0pt>:<0pt,3.5pt>::
(0,0)="A", (0,-17)="A1", 
(-12,6)="B", (-12,-3)="B1", (-12,-17)="B2", 
(15,12)="C", (15,3)="C1", (15,-9)="C2", (15,-17)="C3", 
(3,9)*{_{U_y}},
"B"*{\bullet},"B1"*{\bullet},"A"*{\bullet},"C"*{\bullet},"C1"*{\bullet},"C2"*{\bullet},
"B";"B1" **@{-},
"B2";"B1" **@{-},
"A";"A1" **@{-},
"C";"C1" **@{-},
"C1";"C2" **@{-},
"C2";"C3" **@{-},
(-18,-6);(24,-6) **@{-}, *\dir{>},
(-21,-6)*{_{z}}, (30,-6)*{_{z+L}}
\endxy \]
}

Soit $y\in Y$, et $(x_1,\dots,x_k)$ le support du multi-ensemble $\LL(y)$,
ordonné selon l'ordre lexicographique $\lex$. Notons $\pr_{\BC}$ la
projection de $W \qg V \simeq Y \times \BC$ sur la dernière
coordonnée. Considérons l'espace $\pr_{\BC} (L_y - U_y) - \{x_1,\dots, x_k
\}$. C'est une union disjointe d'intervalles ouverts, bornés ou non. Pour
$j \in \{1,\dots,k\}$, on note $I_j$ l'intervalle situé sous $x_j$.

{\shorthandoff{;:}%
 \[\xy
<5pt,0pt>:<0pt,3.5pt>::
(0,0)="A", (0,-17)="A1", 
(-12,6)="B", (-12,-3)="B1", (-12,-17)="B2", 
(15,12)="C", (15,3)="C1", (15,-9)="C2", (15,-17)="C3", 
"B"*{\bullet},"B1"*{\bullet},"A"*{\bullet},"C"*{\bullet},"C1"*{\bullet},"C2"*{\bullet},
(-15,6)*{_{x_2}},(-15,-3)*{_{x_1}},(-3,0)*{_{x_3}},(12,-9)*{_{x_4}},(12,3)*{_{x_5}},(12,12)*{_{x_6}},
(-10,-10)*{_{I_1}},(-10,1.5)*{_{I_2}},(2,-8.5)*{_{I_3}},(17,-13)*{_{I_4}},(17,-3)*{_{I_5}},(17,7.5)*{_{I_6}},
"B";"B1" **@{-},
"B2";"B1" **@{-},
"A";"A1" **@{-},
"C";"C1" **@{-},
"C1";"C2" **@{-},
"C2";"C3" **@{-}
\endxy \]
}
Pour chaque $x_j$, on choisit un tunnel $T_j$ qui traverse
$I_j$ et pas les autres intervalles. On note $s_j:=b_{T_j}$ l'élément
de $B(W)$ associé.
{\shorthandoff{;:}%
 \[\xy
<5pt,0pt>:<0pt,3.5pt>::
(0,0)="A", (0,-17)="A1", 
(-12,6)="B", (-12,-3)="B1", (-12,-17)="B2", 
(15,12)="C", (15,3)="C1", (15,-9)="C2", (15,-17)="C3", 
"B"*{\bullet},"B1"*{\bullet},"A"*{\bullet},"C"*{\bullet},"C1"*{\bullet},"C2"*{\bullet},
(-15,6)*{_{x_2}},(-15,-3)*{_{x_1}},(-3,0)*{_{x_3}},(12,-9)*{_{x_4}},(12,3)*{_{x_5}},(12,12)*{_{x_6}},
(-8,-12)*{_{s_1}},(-8,-0.5)*{_{s_2}},(4,-10.5)*{_{s_3}},(19,-15)*{_{s_4}},(19,-5)*{_{s_5}},(19,5.5)*{_{s_6}},
(-16,-10);(-8,-10) **@{-}, *\dir{>},(-16,1.5);(-8,1.5) **@{-}, *\dir{>},
(-4,-8.5);(4,-8.5) **@{-}, *\dir{>},
(11,-13);(19,-13) **@{-}, *\dir{>},(11,-3);(19,-3) **@{-},
*\dir{>},(11,7.5);(19,7.5) **@{-}, *\dir{>},
"B";"B1" **@{-},
"B2";"B1" **@{-},
"A";"A1" **@{-},
"C";"C1" **@{-},
"C1";"C2" **@{-},
"C2";"C3" **@{-}
\endxy \]
}

Dans \cite[Def.6.7]{BessisKPi1}, le $k$-uplet $(s_1,\dots,s_k)$ est
 appelé \emph{label} de y et noté $\lbl(y)$. Ici on va associer à $y$ un
 $k$-uplet légèrement différent, mieux adapté au problème. 

\subsection{Factorisations géométriques}
\label{subpartfactogeom}

\begin{defn}
  \label{deflbl}
  Soit $y\in Y$, $x_1,\dots,x_k$ et $s_1,\dots,s_k$ définis comme
  ci-dessus. On note $\fact_B(y)$ le $k$-uplet $(s_1',\dots, s_k')$
  d'éléments de $B(W)$ où :
  \[ s_i'=\left\{
    \begin{array}{ll}
      s_i s_{i+1}^{-1} & \mathrm{si\ }\re(x_{i+1})=\re(x_i)\\
      s_i & \mathrm{sinon.}
    \end{array}
  \right.\]

  On appelle \emph{factorisation associée à $y$}, et on note $\fact(y)$, le
  $k$-uplet $(\pi(s_1'),\dots, \pi(s_k'))$ d'éléments de $W$ où $\pi:B(W)
  \surj W$.
\end{defn}

La raison de l'utilisation du terme «~factorisation~» sera clarifiée plus
loin.

\begin{rqe}
  \label{rqpi}
  L'application $\pi$ est une projection naturelle $B(W)
  \stackrel{\pi}{\surj}W$. Le revêtement $\Vreg \surj W \qg \Vreg$ permet
  en effet d'obtenir la suite exacte
  \[ 1 \to P(W)=\pi_1(\Vreg) \to B(W)=\pi_1 (W \qg \Vreg)
  \stackrel{\pi}{\to} W \to 1 .\] Pour définir précisément $\pi$, on doit
  choisir une section $\widetilde{\CU}$ du point-base $\CU$ dans
  $\Vreg$. Il y a $|W|$ choix possibles, qui donnent des morphismes
  conjugués (cf. \cite[Rq.6.4]{BessisKPi1}).
\end{rqe}

Géométriquement, $s_i'$ est représenté par un chemin qui, vu depuis
$\CU$, passe sous $x_i$ mais pas sous $x_j$ pour $j\neq i$ :
{\shorthandoff{;:}%
 \[\xy
<5pt,0pt>:<0pt,3.5pt>::
(0,0)="A", (0,-17)="A1", 
(-12,6)="B", (-12,-3)="B1", (-12,-17)="B2", 
(15,12)="C", (15,3)="C1", (15,-9)="C2", (15,-17)="C3", 
"B"*{\bullet},"B1"*{\bullet},"A"*{\bullet},"C"*{\bullet},"C1"*{\bullet},"C2"*{\bullet},
(-16,-2)*{_{s_1'}},(-8,6)*{_{s_2'}},(4,-10.5)*{_{s_3'}},(11,-8)*{_{s_4'}},(11,4)*{_{s_5'}},(19,12)*{_{s_6'}},
(-11.5,-7);(-16,-7) **@{-},(-11.5,1);(-16,1) **@{-}, *\dir{>},
"B1"*\cir<14pt>{r^l},
(-16,4);(-8,4) **@{-}, *\dir{>},
(-4,-8.5);(4,-8.5) **@{-}, *\dir{>},
(15.5,-5);(11,-5) **@{-}, *\dir{>},(15.5,-13);(11,-13) **@{-},
"C2"*\cir<14pt>{r^l},
(15.5,7);(11,7) **@{-},*\dir{>},(15.5,-1);(11,-1) **@{-},
"C1"*\cir<14pt>{r^l},
(11,10);(19,10) **@{-}, *\dir{>},
"B";"B1" **@{-},
"B2";"B1" **@{-},
"A";"A1" **@{-},
"C";"C1" **@{-},
"C1";"C2" **@{-},
"C2";"C3" **@{-}
\endxy \]
}

\begin{rqe}
\label{rqzariski}
Si $y \in Y-\CK$, $\fact_B(y)$ est un $n$-uplet de «~réflexions tressées~»
(terminologie de Broué, cf. \cite{broumarou}), et détermine des générateurs
de la monodromie de $W \qg \Vreg=\BC^n - \CH$. Plus précisément, par
$\pi_1$-surjectivité de l'inclusion ${L_y - L_y\cap \CH } \ \inj \ {\BC^n -
  \CH}$ (cf. \cite[Thm.\ 2.5]{zariski}), les facteurs de $\fact_B(y)$
engendrent le groupe de tresses $B(W)$.
\end{rqe}

Dans le cas où le support de $\LL(y)$ est «~générique~» (parties réelles
distinctes), les $k$-uplets $\fact_B(y)$ et $\lbl(y)$ coïncident. Dans le
cas général, on peut toujours perturber $y$ en un $y'$ tel que
$\LL(y')=e^{-i\theta}\LL(y)$, avec $\theta>0$ assez petit pour que le
support de $\LL(y')$ soit générique et que
$\fact_B(y)=\fact_B(y')=\lbl(y')$. Les propriétés de $\lbl$ énoncées dans
\cite{BessisKPi1} s'adaptent ainsi aisément à l'application $\fact$ ; par
la suite (\ref{propfacto}, \ref{propcompat}, \ref{thmbij}), on rappelle ces
propriétés, en les reformulant dans ce nouveau cadre, et on ne démontre que
ce qui change de façon non triviale.

\medskip

Si $\LL(y)=\{0\}$ (avec multiplicité $n$), alors $y=0$ (d'après le théorème
\ref{thmLLBessis}). On note alors $\delta$ l'élément de $B(W)$ tel que
$\fact_B(0)=(\delta)$. Il est représenté par l'image dans $W \qg \Vreg$ du
chemin dans $\Vreg$ :
\begin{eqnarray*}
  [0,1] & \longrightarrow & V^{\reg}\\
  t & \longmapsto & v\exp(2i\pi t/h).
\end{eqnarray*}
où $v$ est tel que pour $i=1,\dots,n-1$, $f_i(v)=0$ (\ie $v\in
L_0$). Notons que $\delta$ est une racine $h$-ième du «~tour complet~» de
$P(W)$ («~full-twist~» noté $\bm{\pi}$ dans \cite[Not.2.3]{broumarou}).

On pose $c=\pi(\delta)$, image de $\delta$ dans $W$. Par construction, $c$
est $\eh$-régulier, donc est un élément de Coxeter. Les autres éléments de
Coxeter de $W$ (conjugués) sont obtenus pour d'autres choix du morphisme
$\pi$ (cf. remarque \ref{rqpi}).

\medskip

Le lemme 6.14 dans \cite{BessisKPi1} permet de comparer les tunnels lorsqu'on
change de fibre $L_y$ :

\begin{lemme}[«~Règle d'Hurwitz~»]
  \label{reglehur}
  Soit $T=(y,z,L)$ un tunnel, qui représente un élément $s\in B(W)$. Soit
  $\Omega$ un voisinage connexe par arcs de $y$, tel que pour tout $y'\in
  \Omega$, $(y',z,L)$ soit encore un tunnel. Alors, pour tout $y'\in
  \Omega$, $(y',z,L)$ représente $s$.
\end{lemme}

On rassemble dans la proposition suivante quelques conséquences
utiles de ce lemme, adaptées de \cite[Lemmes 6.16, 6.17]{BessisKPi1} :

\begin{prop}
\label{propfacto}
  Soit $y\in Y$, $(x_1,\dots, x_k)$ le support ordonné de $\LL(y)$, et
  $(s_1,\dots,s_k)=\fact_B(y)$. Alors :
  \begin{enumerate}[(i)]
  \item $s_1\dots s_k=\delta$ et $\pi(s_1)\dots \pi(s_k)=c$ ;
  \item pour tout $i$, la longueur de $\pi(s_i)\in W$ est la multiplicité
    de $x_i$ dans $\LL(y)$.
  \end{enumerate}
  Par conséquent, $\fact(y)\in \Fact(c)$, et si $y \in \LL ^{-1}
  (E_{\lambda}^0)$, $\fact(y) \in \Fact_\lambda (c)$.
\end{prop}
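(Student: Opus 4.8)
The plan is to reduce (i) and (ii) to the corresponding statements for the label map $\lbl$ of \cite[Lemmes 6.16, 6.17]{BessisKPi1}, using a small rotation of the configuration $\LL(y)$ to pass from $\fact_B$ to $\lbl$. The computational input is the behaviour of $\LL$ under rescaling: since $\LL$ is quasi-homogeneous for the weights $\deg f_i = d_i$, $\deg a_i = ih$, the weighted dilation $y \mapsto \zeta\cdot y$ (acting by $\zeta^{d_i}$ on the coordinate $f_i$) satisfies $\LL(\zeta\cdot y) = \zeta^h\,\LL(y)$. Taking $\zeta = e^{-i\theta/h}$ for a suitable small $\theta > 0$, the point $y' := \zeta\cdot y$ has $\LL(y') = e^{-i\theta}\LL(y)$, a configuration whose support $(x_1',\dots,x_k') = (e^{-i\theta}x_1,\dots,e^{-i\theta}x_k)$ has pairwise distinct real parts and is still ordered by $\lex$; hence $\fact_B(y') = \lbl(y')$ directly from Definition \ref{deflbl}. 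Applying the Hurwitz rule (Lemma \ref{reglehur}) to the tunnels defining $\fact_B$ along the short path $t \mapsto e^{-it\theta/h}\cdot y$ — along which each of these tunnels stays valid and keeps crossing only the interval lying directly below its own point (this is exactly the purpose of the corrections $s_i' = s_i s_{i+1}^{-1}$ in Definition \ref{deflbl}) — shows $\fact_B(y) = \fact_B(y') = \lbl(y')$. Since $x_i'$ has the same multiplicity in $\LL(y')$ as $x_i$ has in $\LL(y)$, and since $\delta$ and $c = \pi(\delta)$ do not depend on $y$, it suffices to prove (i) and (ii) for $y'$; that is, we may assume the support of $\LL(y)$ has pairwise distinct real parts, so that $(s_1,\dots,s_k) = \fact_B(y) = \lbl(y)$.

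In this case, (i) is precisely \cite[Lemme 6.16]{BessisKPi1}: the concatenation of the tunnels underlying $s_1,\dots,s_k$ is homotopic, relative to the basepoint space $\CU$, to a single tunnel passing once below all of $x_1,\dots,x_k$, and deforming $y$ to the origin along $t \mapsto t\cdot y$ (so that $\LL(t\cdot y) = t^h\LL(y)$ shrinks to the configuration $\LL(0)$, the origin with multiplicity $n$) carries this tunnel, via Lemma \ref{reglehur}, to the tunnel below that single point, which by construction represents $\delta$; thus $s_1\cdots s_k = \delta$, and applying $\pi\colon B(W)\surj W$ gives $\pi(s_1)\cdots\pi(s_k) = \pi(\delta) = c$. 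Likewise, (ii) is \cite[Lemme 6.17]{BessisKPi1}: near the intersection point $(y,x_i)$, where $L_y$ meets $\CH$ with multiplicity $m_i$, the discriminant has, after a local analytic change of coordinates, the shape of the discriminant of a rank-$m_i$ parabolic situation, so that the monodromy element $\pi(s_i) \in W$ attached to a small loop around $x_i$ alone satisfies $\ell(\pi(s_i)) = m_i$, the multiplicity of $x_i$ in $\LL(y)$.

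The ``consequently'' is then formal. By (i) and (ii), $\fact(y) = (\pi(s_1),\dots,\pi(s_k))$ has product $c$ and satisfies $\sum_{i=1}^k \ell(\pi(s_i)) = \sum_{i=1}^k m_i = n$, the number of roots of $\Delta$ viewed as a polynomial in $f_n$, counted with multiplicity; since $c$ is $\eh$-regular one has $\Ker(c-1)=0$, whence $\ell(c) = \codim\Ker(c-1) = n$ by Proposition \ref{proplongueur}, and therefore $\ell(c) = \sum_i \ell(\pi(s_i))$. In particular each $\pi(s_i)$ has length $m_i \geq 1$, hence is $\neq 1$, so $\fact(y) \in \Fact(c)$; and if $y \in \LL^{-1}(E_\lambda^0)$ the multiset $(m_1,\dots,m_k)$ is exactly the partition $\lambda$, so $\fact(y) \in \Fact_\lambda(c)$.

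The two delicate steps are, first, the transfer $\fact_B(y) = \lbl(y')$ — justifying that the corrections of Definition \ref{deflbl} make each tunnel survive the rotation unchanged requires a careful look at the geometry of tunnels passing below vertically stacked points, together with the Hurwitz rule — and, second, within (ii), the identification of the length of the local monodromy element with the intersection multiplicity $m_i$, which is the only point genuinely using the local structure of the discriminant of $W$ rather than a formal manipulation.
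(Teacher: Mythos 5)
Votre démonstration est correcte et suit essentiellement la même voie que l'article : celui-ci ne démontre pas la proposition en détail mais la présente comme une adaptation des lemmes 6.16 et 6.17 de \cite{BessisKPi1}, en s'appuyant précisément sur la remarque qui précède (perturbation $\LL(y')=e^{-i\theta}\LL(y)$ pour se ramener au cas générique où $\fact_B=\lbl$, puis règle d'Hurwitz). Vous ne faites qu'expliciter ces mêmes étapes, y compris la réalisation de la rotation par dilatation quasi-homogène et le décompte $\sum_i m_i=n=\ell(c)$ pour la conclusion.
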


Une autre conséquence est la compatibilité des actions d'Hurwitz et de
Galois :
\begin{prop}[d'après {\cite[Cor.\ 6.18]{BessisKPi1}}]
  \label{propcompat}
  Soit $y\in Y-\CK$, et $\beta \in B_n$. Alors :
  \[ \fact(y \cdot \beta) = \fact(y)\cdot \beta \] où la première action
  est l'action de Galois définie en \ref{subpartLL}, et la seconde est
  l'action d'Hurwitz (cf. définition \ref{defhur}).
\end{prop}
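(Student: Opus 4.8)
Proposition \ref{propcompat} states: for $y \in Y - \CK$ and $\beta \in B_n$, we have $\fact(y \cdot \beta) = \fact(y) \cdot \beta$, where the left action is the Galois (monodromy) action of $B_n = \pi_1(\Enreg, \LL(y))$ on the fiber $\LL^{-1}(\LL(y))$, and the right action is the Hurwitz action of Definition \ref{defhur}.

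\medskip

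The plan is to reduce the statement to the elementary generators $\bm{\sigma_i}$ of $B_n$ and then apply the ``Hurwitz rule'' Lemma \ref{reglehur}. First I would recall that since $y \in Y-\CK$, the configuration $\LL(y)$ lies in $\Enreg$, so its support consists of $n$ distinct points $x_1, \dots, x_n$, and $\fact_B(y) = \lbl(y) = (s_1, \dots, s_n)$ is an $n$-tuple of braided reflections with $s_1 \cdots s_n = \delta$ (Proposition \ref{propfacto}). By the conventions of Section \ref{subpartEn}, it suffices to treat the case $\beta = \bm{\sigma_i}$, since both actions are actions of $B_n$ and the identity is multiplicative in $\beta$ (the Galois action and the Hurwitz action are both right actions, so checking generators suffices).

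\medskip

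The core computation is then geometric. For $\beta = \bm{\sigma_i}$, the monodromy $y \mapsto y \cdot \bm{\sigma_i}$ is realized by a path in $\Enreg$ along which only the points $x_i$ and $x_{i+1}$ move, swapping positions by a half-turn (with $x_{i+1}$ passing below, per the chosen picture of $\bm{\sigma_i}$), while all other points stay put. Along this path one follows the tunnels $T_j$ defining $s_j = b_{T_j}$. For $j \notin \{i, i+1\}$, a connected neighborhood argument via Lemma \ref{reglehur} shows the tunnel still represents $s_j$ after the move (the relevant intervals $I_j$ are undisturbed). For the positions $i$ and $i+1$, one tracks how the tunnels below $x_i$ and $x_{i+1}$ must be deformed: after $x_{i+1}$ moves past $x_i$, the tunnel that now sits below the point in position $i$ is (the deformation of) the old $s_{i+1}$, and the tunnel below position $i+1$ becomes $s_{i+1}^{-1} s_i s_{i+1}$ — precisely because to reach under the new position-$(i+1)$ point one must now go around $x_{i+1}$ first. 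Passing to $W$ via $\pi$ and using Definition \ref{deflbl} to convert $\fact_B$ into $\fact$, this reproduces exactly the formula $(g_1,\dots,g_n)\cdot\bm{\sigma_i} = (g_1,\dots,g_{i-1},\, g_{i+1},\, g_{i+1}^{-1} g_i g_{i+1},\, g_{i+2},\dots,g_n)$ of Definition \ref{defhur}.

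\medskip

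The main obstacle is bookkeeping: making the identification between ``deforming a tunnel across a moving configuration point'' and ``conjugating one factor by an adjacent one'' completely rigorous, and checking that the passage from the auxiliary tuple $\lbl$ to $\fact$ (which differs only when consecutive points share a real part, via the $s_i s_{i+1}^{-1}$ correction in Definition \ref{deflbl}) is compatible with this move. Since $y \in Y - \CK$ one may, as in the remark following Definition \ref{deflbl}, first rotate $\LL(y)$ by $e^{-i\theta}$ to make the support generic (distinct real parts) so that $\fact_B = \lbl$, carry out the half-twist in that generic setting where the picture is cleanest, and then rotate back; the genericity is preserved throughout a small enough move, so no correction terms intervene during the computation. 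With that normalization the argument is the one of \cite[Cor.\ 6.18]{BessisKPi1}, transported verbatim to $\fact$, and I would simply cite it for the parts that do not change and verify the single generator computation above.
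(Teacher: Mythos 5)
Votre démonstration est correcte et suit essentiellement la même voie que le texte : la proposition y est établie en adaptant la preuve de \cite[Cor.\ 6.18]{BessisKPi1} (réduction aux générateurs $\bm{\sigma_i}$, puis règle d'Hurwitz du lemme \ref{reglehur} appliquée aux tunnels qui suivent le demi-tour des points $x_i$, $x_{i+1}$), exactement comme vous le faites, et le calcul détaillé que vous esquissez est celui mené plus loin dans la preuve du lemme \ref{lemcompat} avec les tunnels $T_+$ et $T_-$. Votre traitement du cas non générique par rotation $e^{-i\theta}$ correspond bien à la remarque qui suit la définition \ref{deflbl}.
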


\begin{proof}[Démonstration :]
  La preuve de \cite[Cor.\ 6.18]{BessisKPi1} s'adapte au cas non générique
  (parties réelles non distinctes) sans difficultés. On verra plus loin
  (lemme \ref{lemcompat}) une généralisation de cette propriété de
  compatibilité à tous les éléments de $Y$.
\end{proof}

\section{\texorpdfstring{Etude de $\LL$ sur les strates $E_\lambda$}{Etude de LL sur les strates}}

\label{partLLstrat}

Ci-dessous on reformule \cite[Thm.\ 7.9]{BessisKPi1} en utilisant le produit
fibré $E_n \times_{\cp(n)} \Fact(c)$, où $\cp(n)$ désigne l'ensemble des
compositions de $n$. A toute configuration $X$ de $E_n$ on peut associer
une composition de $n$, constituée des multiplicités des points du support
de $X$ pris dans l'ordre $\lex$ sur $\BC$. D'autre part, toute
factorisation de $\Fact(c)$ détermine une composition de $n$, en
considérant les longueurs des facteurs dans l'ordre. Ces constructions
définissent deux applications : $\comp_1 : E_n \to \cp(n)$ et $\comp_2 :
\Fact(c) \to \cp(n)$. On pose :
\[ E_n \times_{\cp(n)} \Fact(c) := \{(X,\xi) \in E_n \times \Fact(c)
\tq \comp_1(X)=\comp_2(\xi) \} \]

\begin{thm}[d'après {\cite[Thm.\ 7.9]{BessisKPi1}}]
  \label{thmbij}
  L'application $\LL \times \fact$ :
  \[ \begin{array}{lcl}
    Y & \to & E_n  \times_{\cp(n)}  \Fact(c)\\
    y & \mapsto &(\LL(y) , \quad \fact(y))
  \end{array}
  \]
  est bijective.
\end{thm}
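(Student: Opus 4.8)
Commençons par observer que l'application est bien définie : si $(x_1,\dots,x_k)$ est le support de $\LL(y)$ ordonné selon $\lex$ et si $\fact(y)=(w_1,\dots,w_k)$, alors d'après la proposition \ref{propfacto}(ii) la longueur $\ell(w_i)$ est la multiplicité de $x_i$ dans $\LL(y)$, donc $\comp_1(\LL(y))=\comp_2(\fact(y))$ et $(\LL(y),\fact(y))$ appartient bien au produit fibré. Le plan est de traiter d'abord la strate ouverte, puis d'en déduire le cas général par un argument de platitude.

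\textbf{Étape 1 (au-dessus de $\Enreg$).} Au-dessus de $\Enreg$ le produit fibré est le produit direct $\Enreg\times\Red(c)$. Fixons $X\in\Enreg$. D'après le théorème \ref{thmLLBessis}, $\LL\colon Y-\CK\to\Enreg$ est un revêtement non ramifié de degré $N:=n!h^n/|W|$, d'espace total connexe par arcs ; l'action de monodromie de $B_n=\pi_1(\Enreg,X)$ sur $\LL^{-1}(X)$ est donc transitive. Par la proposition \ref{propfacto}, $\fact$ envoie $\LL^{-1}(X)$ dans $\Red(c)$, et par la proposition \ref{propcompat} cette restriction est $B_n$-équivariante pour l'action d'Hurwitz. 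Ainsi $\fact(\LL^{-1}(X))$ est une orbite d'Hurwitz ; comme l'action d'Hurwitz sur $\Red(c)$ est transitive, $\fact$ envoie $\LL^{-1}(X)$ \emph{sur} $\Red(c)$. Puisque $|\LL^{-1}(X)|=N$ et $|\Red(c)|=n!h^n/|W|=N$ (formule de Looijenga, cf. l'introduction), cette surjection entre ensembles finis de même cardinal est une bijection. En faisant varier $X$, on obtient que $\LL\times\fact$ est une bijection de $Y-\CK$ sur $\Enreg\times\Red(c)$.

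\textbf{Étape 2 (strates $E_\lambda^0$).} Comme $\LL$ est fini et \emph{plat} de degré $N$ (liberté du module $\BC[f_1,\dots,f_{n-1}]$ sur $\BC[a_2,\dots,a_n]$, théorème \ref{thmLLBessis}), chaque fibre $\LL^{-1}(X)$ est un schéma fini de longueur totale $N$ ; notons $m_y$ la multiplicité locale en $y$, de sorte que $\sum_{y\in\LL^{-1}(X)}m_y=N$. Fixons $X\in E_\lambda^0$ de support $\lex$-ordonné $(x_1,\dots,x_k)$ de multiplicités $(\lambda_1,\dots,\lambda_k)$ ; au-dessus de $X$ le produit fibré est l'ensemble $\{X\}\times\Fact_{(\lambda_1,\dots,\lambda_k)}(c)$ des factorisations dont le $i$-ème facteur est de longueur $\lambda_i$. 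Déformons $X$ dans $\Enreg$ en éclatant chaque $x_i$ en $\lambda_i$ points simples voisins, en gardant les $k$ paquets séparés et dans l'ordre $\lex$ : on obtient une partie connexe $N_X\subseteq\Enreg$ s'accumulant en $X$, et pour $X'\in N_X$ proche de $X$ tout point de $\LL^{-1}(X')$ est voisin d'un unique point de $\LL^{-1}(X)$, d'où une application de spécialisation surjective $\mathrm{sp}\colon\LL^{-1}(X')\to\LL^{-1}(X)$ dont la fibre au-dessus de $y$ a $m_y$ éléments. En identifiant $\LL^{-1}(X')$ à $\Red(c)$ (Étape 1) et en suivant les tunnels le long de la déformation grâce à la règle d'Hurwitz (lemme \ref{reglehur}), on vérifie que $\fact(\mathrm{sp}(\xi'))$ s'obtient à partir de $\xi'=(r_1,\dots,r_n)$ en regroupant les $r_j$ en blocs consécutifs de tailles $(\lambda_1,\dots,\lambda_k)$ ; notons $\rho(\xi')$ ce regroupement. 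On a donc $\fact\circ\mathrm{sp}=\rho\colon\Red(c)\to\Fact_{(\lambda_1,\dots,\lambda_k)}(c)$, et $\rho$ est surjective (toute factorisation $(w_1,\dots,w_k)$ de cette forme admet un raffinement, chaque $\Red(w_i)$ étant non vide car $\ell(w_i)$ est bien la longueur de $w_i$), donc $\fact\colon\LL^{-1}(X)\to\Fact_{(\lambda_1,\dots,\lambda_k)}(c)$ est surjective. Pour l'injectivité, il suffit d'établir l'égalité $m_y=\prod_{i=1}^k|\Red(w_i)|$ lorsque $\fact(y)=(w_1,\dots,w_k)$ : en effet $\rho^{-1}(\xi)=\prod_i\Red(w_i)$, de cardinal $\prod_i|\Red(w_i)|$, se décompose en $\bigsqcup_{y\in\fact^{-1}(\xi)}\mathrm{sp}^{-1}(y)$, de cardinal $\sum_{y\in\fact^{-1}(\xi)}m_y$, ce qui force $|\fact^{-1}(\xi)|=1$. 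Le couple $(\LL(y),\fact(y))$ détermine alors $y$, et comme les $E_\lambda^0$ recouvrent $E_n$, l'application $\LL\times\fact$ est bijective sur $Y$ tout entier.

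\textbf{Obstacle principal.} Le seul ingrédient non formel est l'identité $m_y=\prod_i|\Red(w_i)|$ : il faut montrer qu'au voisinage de $y$ le morphisme fini plat $\LL$ se factorise en un produit, indexé par les points multiples $x_i$ de $\LL(y)$, de morphismes de Lyashko--Looijenga « locaux », chacun s'identifiant (à un facteur lisse près) au morphisme de Lyashko--Looijenga du sous-groupe parabolique $W_i=W_{\Ker(w_i-1)}$, dont $w_i$ est un élément de Coxeter et dont le degré LL vaut $\ell(w_i)!\,h_{W_i}^{\ell(w_i)}/|W_i|=|\Red(w_i)|$ (Étape 1 appliquée à $W_i$, jointe à la proposition \ref{proplongueur}). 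C'est précisément la structure locale du discriminant $\CH$ le long de sa stratification ; c'est là que se concentre le travail géométrique de \cite[Thm.\ 7.9]{BessisKPi1}, dont l'énoncé ci-dessus est essentiellement la traduction de $\lbl$ vers $\fact$, les deux coïncidant après la perturbation générique rappelée avant la proposition \ref{propfacto}.
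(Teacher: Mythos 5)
Votre démonstration est correcte et suit pour l'essentiel la même voie que celle du texte : l'étape 1 reproduit l'argument donné sur $Y-\CK$ (transitivité de l'action d'Hurwitz sur $\Red(c)$ jointe à l'égalité $|\Red(c)|=n!\,h^n/|W|$), et l'étape 2 développe ce que le texte résume en disant que le reste s'obtient en dégénérant les points réguliers. Vous isolez correctement l'unique point non formel, l'identité $m_y=\prod_i|\Red(w_i)|$ (c'est-à-dire la structure locale en produit du discriminant le long de ses strates), que vous renvoyez, comme le texte lui-même, à \cite[Part.\ 7]{BessisKPi1}.
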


Donnons ici les grandes lignes de la démonstration (détails dans
\cite[Part.7]{BessisKPi1}). L'essentiel est de montrer la bijectivité de
$\LL\times \fact : Y-\CK \to \Red(c)$ (le reste se fait en dégénérant les
points réguliers). Cela revient à prouver que, pour $y \in Y-\CK$, le
morphisme de $B_n$-ensembles
\[ y\cdot B_n \xrightarrow{\fact} \fact(y)\cdot B_n \] est un isomorphisme
\cite[Thm.\ 7.4]{BessisKPi1}. Pour cela, on montre que ${|\fact(y)\cdot
  B_n|}={|y\cdot B_n|}=\frac{n!h^n}{|W|}$ en utilisant les deux propriétés
suivantes, prouvées au cas par cas \cite[Prop.\ 7.5]{BessisKPi1} :
  \begin{enumerate}[(i)]
  \item l'action d'Hurwitz sur $\Red(c)$ est transitive ;
  \item $|\Red(c)|= \frac{n!h^n}{|W|}$.
  \end{enumerate}

\bigskip

Posons maintenant $Y_{\lambda} := \LL ^{-1} (E_{\lambda})$, et $Y_\lambda^0:=\LL
^{-1} (E_{\lambda}^0)$. Ainsi, $Y_{\lambda}$ est un fermé de Zariski
dans $Y\simeq \BC^{n-1}$, et est l'adhérence de $Y_{\lambda}^0$. Pour
tout $\lambda \vdash n$, $Y_\lambda = \bigsqcup_{\mu \leq \lambda}
Y_\mu ^0$. En particulier, $Y=\bigsqcup_{\mu \vdash n} Y_\mu ^0$.

\begin{thm}
  \label{thmrevetement}
  Pour tout $\lambda \vdash n$, la restriction de $\LL$ :
  \[ \LL_{\lambda} :Y_{\lambda}^0 \ \surj \ E_{\lambda}^0 \] est un
  revêtement non ramifié.
\end{thm}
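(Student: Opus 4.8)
The plan is to deduce the statement from the bijectivity theorem \ref{thmbij} together with the finiteness and étale-covering properties of $\LL$ established in Theorem \ref{thmLLBessis}. First I would observe that $\LL_\lambda : Y_\lambda^0 \to E_\lambda^0$ is well defined and surjective: it is well defined because $Y_\lambda^0 = \LL^{-1}(E_\lambda^0)$ by construction, and surjective because, by Theorem \ref{thmbij}, for any configuration $X \in E_\lambda^0$ one can choose a factorisation $\xi \in \Fact_\lambda(c)$ with $\comp_1(X) = \comp_2(\xi)$ (such a $\xi$ exists since $\Fact_\lambda(c) \neq \emptyset$ — e.g. refine a reduced decomposition of $c$ using transitivity of the Hurwitz action on $\Red(c)$ and Proposition \ref{propfacto}), and then the pair $(X,\xi)$ has a preimage $y \in Y$ which necessarily lies in $Y_\lambda^0$.

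Next I would establish that $\LL_\lambda$ is a local homeomorphism with finite fibres, and that the fibres all have the same cardinality. Finiteness of each fibre follows from the fact that $\LL$ is a finite morphism (Theorem \ref{thmLLBessis}), so every fibre $\LL^{-1}(X)$ is finite. For the local-triviality part, the key idea is to transport the structure near a stratum down to the unramified locus: given $X \in E_\lambda^0$, I would pick a path in $E_n$ (or rather use that $E_\lambda^0$ is locally path-connected and that a small deformation $X' = e^{-i\theta}X$ sends $X$ to the regular stratum $\Enreg$) to reduce the local analysis to the already-known unramified covering $Y - \CK \surj \Enreg$ of Theorem \ref{thmLLBessis}. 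Concretely, over a small connected neighbourhood $\CV$ of $X$ in $E_\lambda^0$, the number of preimages in $Y_\lambda^0$ is controlled by the bijection of Theorem \ref{thmbij}: a point $(X',\xi) \in \CV \times_{\cp(n)} \Fact_\lambda(c)$ has a unique preimage, and as $X'$ moves in $\CV$ the composition $\comp_1(X')$ stays constant (the multiplicities don't merge inside a single stratum, though the lexicographic order of the support may change — this is exactly where the Hurwitz rule, Lemma \ref{reglehur}, governs how $\fact$ varies). Thus the fibre of $\LL_\lambda$ over every point of $\CV$ is in canonical bijection with the (finite, constant) set of $\xi \in \Fact_\lambda(c)$ compatible with the constant composition, giving a trivialisation $\LL_\lambda^{-1}(\CV) \simeq \CV \times (\text{finite set})$.

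The main obstacle, and the point requiring the most care, is the behaviour of the support order under deformation: within $E_\lambda^0$ the multiplicity partition is locally constant, but the \emph{composition} $\comp_1$ (which records multiplicities in lexicographic order along $\BC$) is only locally constant away from the locus where two points of the support share the same real part. To handle this I would either (a) work on the open dense subset where the supports are "generic" (distinct real parts) and then extend by the perturbation argument $X' = e^{-i\theta}X$ already used in Section \ref{partlbl} to identify $\fact_B(y)$ with $\lbl(y')$, noting this perturbation is continuous and respects the stratum; or (b) cover $E_\lambda^0$ by small balls on which $\comp_1$ is constant and check that the transition maps between the induced trivialisations are continuous, using Lemma \ref{reglehur} to see that crossing a "same real part" wall acts on the fibre by a fixed bijection (an Hurwitz-type move) independent of the point. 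Either way, once local triviality is established with finite fibres of locally constant cardinality, and since $E_\lambda^0$ is connected for each $\lambda$ (it is the image of a connected constructible set, or one argues componentwise), the map $\LL_\lambda$ is a covering map, which is the assertion. I would also remark that continuity and openness of $\LL_\lambda$ come for free: $\LL$ is a finite (hence closed) morphism of varieties and $Y_\lambda^0 = \LL^{-1}(E_\lambda^0)$, so $\LL_\lambda$ is closed onto $E_\lambda^0$, and properness plus finite fibres plus the local-homeomorphism property upgrade it to an honest unramified cover.
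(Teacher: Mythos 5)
Your proposal is correct and follows essentially the same route as the paper: local triviality over a small neighbourhood where the composition $\comp_1$ is constant, obtained from the bijection $\LL\times\fact$ of Theorem \ref{thmbij}, with the non-generic case (coinciding real parts of the support) handled by a slight rotation of the vertical direction, and the identification of the various fibres $\Fact_\mu(c)$ via a suitable Hurwitz braid. The only cosmetic difference is that you add an unnecessary properness argument at the end and invoke connectedness of $E_\lambda^0$, which the paper's statement deliberately avoids by reading ``revêtement'' in the broad (componentwise) sense.
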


\begin{rqe}
  La définition de «~revêtement~» est bien sûr à prendre ici au sens large
  : comme $Y_{\lambda}^0$ n'est pas nécessairement connexe par arcs, le
  théorème signifie que l'application $\LL_\lambda$ est un revêtement
  connexe par arcs sur chacune des composantes connexes par arcs de
  $Y_{\lambda}^0$.
\end{rqe}

\begin{proof}[Démonstration :]
  Soit $X_0 \in E_{\lambda}^0$. Supposons dans un premier temps que les
  parties réelles des éléments du support de $X_0$ sont distinctes (on dira
  que $X_0$ est relativement générique). Soit $\Omega$ un ouvert connexe
  par arcs de $E_{\lambda}^0$, contenant $X_0$, et de diamètre assez petit
  pour que tous les éléments de $X_0$ soient relativement génériques.

  Soit $\mu=\comp_1(X_0)$ la composition de $n$ associée à $X_0$. Il est
  clair que pour tout $X \in \Omega$, la composition associée à $X$ est
  encore $\mu$. Posons $F_\mu:=\Fact_{\mu} (c)=\{\xi \in \Fact(c) \tq
  \comp_2(\xi)=\mu \}$. Le théorème \ref{thmbij} implique que l'application
  \[ \LL_{\lambda}^{-1}(\Omega) \xrightarrow{\LL_{\lambda}\times \fact}
  \Omega \times F_\mu \] est une bijection.

    Si $X_0$ n'est pas relativement générique, on peut refaire toutes les
  constructions précédentes en tournant légèrement la direction de la
  verticale dans le sens trigonométrique direct. On doit modifier alors la
  définition de l'application $\fact$ (sauf pour les $y$ de la fibre de
  $X_0$), mais les factorisations construites restent toujours dans
  $\Fact_{\mu} (c)$ pour $\mu = \comp_1 (X_0)$.

  Pour conclure, il suffit de remarquer que toutes les fibres $F_\mu$
  construites sont en bijection. En effet, les compositions $\mu$ associées
  aux éléments de $Y_\lambda ^0$ correspondent nécessairement à la
  partition $\lambda$, et si deux compositions $\mu$ et $\mu'$ sont égales
  à permutation des parts près, alors $\Fact_\mu (c)$ et $\Fact_{\mu'}(c)$
  sont en bijection (utiliser l'action d'Hurwitz par une tresse adaptée).
\end{proof}

Soit $\lambda$ une partition de $n$, et $X,X'\in E_\lambda ^0$. On a vu en
partie \ref{subpartEn} que toute classe d'homotopie de chemin de $X$ vers
$X'$ dans $E_\lambda ^0$ définit un élément de $B_p$ où $p=\# \lambda$. On
en déduit en particulier une action de Galois de $B_p$ sur chacune des
fibres de $\LL$ au-dessus de $E_{\lambda}^0$, et on a une propriété de
compatibilité des actions plus générale :

\begin{lemme}[Compatibilité des actions de Galois et d'Hurwitz sur
  les strates]
  \label{lemcompat}
  Soit $\lambda \vdash n$, avec $p=\#\lambda$.
  \begin{enumerate}[(i)]
  \item Soient $y,y' \in Y_\lambda^0$, reliés par un chemin $\gamma$ dans
    $Y_\lambda ^0$. Notons $X=\LL(y)$, $X'=\LL(y')$, et $\beta$ la tresse
    de $B_p$ représentée par l'image de $\gamma$ par $\LL$. Alors :
    $\fact(y')=\fact(y) \cdot \beta$.
  \item Soient $\beta \in B_p$, et $y,y' \in Y_\lambda ^0$ tels que
    $\fact(y')=\fact(y) \cdot \beta$. Notons $X= \LL(y)$, $X'=\LL(y')$ et $
    \widetilde{\beta}$ l'unique relevé (par $\LL$) d'origine $y$ de la
    tresse $\beta$ vue comme classe de chemin (dans $E_\lambda^0$) de $X$
    vers $X'$. Alors $y'$ est le point d'arrivée de $\widetilde{\beta}$.
  \end{enumerate}
\end{lemme}

\begin{proof}[Démonstration :]
  \begin{enumerate}[(i)]
  \item C'est essentiellement la même preuve que pour
    \cite[Cor.6.18]{BessisKPi1}. Il suffit de considérer le cas
    $\beta=\bm{\sigma_i}$ ($i$-ème tresse élémentaire de $B_p$), pour $i
    \in \{1,\dots, p-1\}$. En vertu du théorème \ref{thmrevetement}, on
    peut déplacer $X,X'$ (sans changer l'ordre des points des
    configurations), et $y,y'$ (sans modifier $\fact(y)$ et $\fact(y')$),
    de sorte que la tresse $\bm{\sigma_i}$ soit représentée par le chemin
    suivant (en pointillés) : {\shorthandoff{;:}%
      \[\xy
      (-10,0)="1", (-3,0)="2", (4,0)="3", 
      (14,-6)="4", (22,0)="5", (36,0)="6", (44,0)="7",
      (-10,-15)="11", (-3,-15)="22", (4,-15)="33", 
      (14,-15)="44", (22,-15)="55", (36,-15)="66", (44,-15)="77",
      "1";"11" **@{-}, "2";"22" **@{-}, "3";"33" **@{-}, "4";"44" **@{-},
      "5";"55" **@{-}, "6";"66" **@{-}, "7";"77" **@{-},
      "1"*{\bullet},"2"*{\bullet},"3"*{\bullet},"4"*{\bullet},
      "5"*{\bullet},"6"*{\bullet},"7"*{\bullet},
      (11,-6)*{_{x_i}},(27,0)*{_{x_{i+1}}},
      (-13,0)*{_{x_1}},(41,0)*{_{x_{p}}},
      "5";(10,0) **@{.}, *\dir{>},
      (7,-3);(27,-3) **@{-}, *\dir{>},
      (7,-10);(27,-10) **@{-}, *\dir{>},
      (18,-12)*{_{T_-}},
      (18,-5)*{_{T_+}}
      \endxy \] } Considérons les tunnels $T_+$ et $T_-$ représentés sur le
    schéma. Notons $(w_1,\dots, w_p)=\fact(y)$ et $(w_1',\dots,
    w_p')=\fact(y')$. Alors $T_+$ représente $w_{i+1}$ dans $L_y$, et
    $w_i'$ dans $L_{y'}$. D'où, d'après le lemme \ref{reglehur},
    $w_i'=w_{i+1}$. De même, avec $T_-$, on obtient $w_i'w_{i+1}'=w_i
    w_{i+1}$, et on a vérifié que $\fact(y')=\fact(y) \cdot \bm{\sigma_i}$.
  \item On reprend les notations de l'énoncé. Notons $y''$ le point
    d'arrivée de $\beta$. Alors, d'une part on a
    $\LL(y'')=X'=\LL(y)$. D'autre part, en appliquant le point (i) à $y$,
    $y'$, et $\widetilde{\beta}$, on obtient : $\fact(y'')=\fact(y)\cdot
    \beta=\fact (y')$. D'où, par le théorème \ref{thmbij}, $y''=y'$.
  \end{enumerate}
\end{proof}

Le théorème suivant est une conséquence directe du lemme :

\begin{thm}
  \label{thmcomposantes}
  Soit $\lambda \vdash n$, et $p=\#\lambda$. L'application $ Y_\lambda^0
  \xrightarrow{\fact} \Fact_\lambda(c)$ induit une bijection entre
  l'ensemble des composantes connexes par arcs de $Y_\lambda^0$ et
  l'ensemble des orbites d'Hurwitz de $\Fact_\lambda(c)$ sous $B_p$.

  Autrement dit, si $ Y_{\lambda}^0 = \bigsqcup_{i} Y_{\lambda,i}^0$ est la
  décomposition de $Y_{\lambda}^0$ en ses composantes connexes par arcs,
  alors
  \[ \Fact_{\lambda}(c)=\fact(Y_{\lambda}^0) = \bigsqcup_{i}
  \fact(Y_{\lambda,i}^0) \] est la décomposition de $\Fact_{\lambda}(c)$ en
  orbites d'Hurwitz sous $B_p$.
\end{thm}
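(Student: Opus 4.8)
The plan is to deduce the statement directly from Lemme~\ref{lemcompat}, using also the bijectivity of Théorème~\ref{thmbij} and the covering property of Théorème~\ref{thmrevetement}. The first step is to check that $\fact$ restricted to $Y_\lambda^0$ is onto $\Fact_\lambda(c)$. Given $\xi\in\Fact_\lambda(c)$, put $\mu:=\comp_2(\xi)$, a composition of $n$ whose underlying partition is $\lambda$; one can build a configuration $X\in E_\lambda^0$ with $\comp_1(X)=\mu$ simply by placing the $p=\#\lambda$ support points on the real line so that, read in the order $\lex$, their multiplicities form $\mu$, and translating by a constant to center the configuration. Then $(X,\xi)\in E_n\times_{\cp(n)}\Fact(c)$, so by Théorème~\ref{thmbij} there is a (unique) $y\in Y$ with $\LL(y)=X$ and $\fact(y)=\xi$; since $X\in E_\lambda^0$ we have $y\in Y_\lambda^0$. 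Combined with Proposition~\ref{propfacto} this gives $\fact(Y_\lambda^0)=\Fact_\lambda(c)$.

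The core step is the equivalence: for $y,y'\in Y_\lambda^0$, the points $y$ and $y'$ lie in the same arc-connected component of $Y_\lambda^0$ if and only if $\fact(y)$ and $\fact(y')$ lie in the same Hurwitz orbit of $\Fact_\lambda(c)$ under $B_p$. For the forward direction, a path $\gamma$ from $y$ to $y'$ inside $Y_\lambda^0$ has image $\LL\circ\gamma$ a path in $E_\lambda^0$, which by the conventions of Part~\ref{subpartEn} represents a braid $\beta\in B_p$; Lemme~\ref{lemcompat}(i) then gives $\fact(y')=\fact(y)\cdot\beta$. Conversely, suppose $\fact(y')=\fact(y)\cdot\beta$ for some $\beta\in B_p$. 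Since $E_\lambda^0$ is arc-connected (the support points of a configuration in $E_\lambda^0$ can be moved continuously while staying distinct, so $E_\lambda^0$ is a quotient of a connected complement of a braid-type arrangement), we may view $\beta$, via the conventions of Part~\ref{subpartEn}, as a homotopy class of path in $E_\lambda^0$ from $\LL(y)$ to $\LL(y')$. Lifting this path through the unramified covering $\LL_\lambda$ of Théorème~\ref{thmrevetement}, starting at $y$, produces a path $\widetilde\beta$ in $Y_\lambda^0$; by Lemme~\ref{lemcompat}(ii) its endpoint is precisely $y'$, so $y$ and $y'$ are joined inside $Y_\lambda^0$.

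Finally I would assemble the conclusion. Writing $Y_\lambda^0=\bigsqcup_i Y_{\lambda,i}^0$ for the decomposition into arc-connected components, the forward implication shows each $\fact(Y_{\lambda,i}^0)$ is contained in a single Hurwitz orbit, so $\fact$ induces a well-defined map from the set of components to the set of orbits; the converse implication shows this map is injective; and the surjectivity established in the first step shows it is onto. This map is exactly the one described in the statement, hence the last displayed decomposition of $\Fact_\lambda(c)$ into Hurwitz orbits. I do not expect a genuine obstacle: all the real content sits in Lemme~\ref{lemcompat}. The only points requiring a little care are the surjectivity of $\fact$ on $Y_\lambda^0$ (which rests on Théorème~\ref{thmbij}) and the correct bookkeeping with the path/braid conventions of Part~\ref{subpartEn} needed to make sense of "$\beta$ seen as a path from $\LL(y)$ to $\LL(y')$" in the converse direction, together with the (easy) fact that $E_\lambda^0$ is arc-connected.
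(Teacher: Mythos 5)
Votre démonstration est correcte et suit exactement la voie prévue par le texte, qui présente ce théorème comme « une conséquence directe » du lemme \ref{lemcompat} : le sens direct via le point (i), la réciproque via le relèvement de chemins fourni par le théorème \ref{thmrevetement} et le point (ii), et la surjectivité de $\fact$ sur $Y_\lambda^0$ via le théorème \ref{thmbij}. Vous ne faites qu'expliciter les détails (connexité de $E_\lambda^0$, conventions tresse/chemin) que l'article laisse au lecteur.
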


\section{Stratification de $\CH$ et éléments de Coxeter paraboliques}
\label{partecp}

Dans cette partie on étudie la géométrie de $\CH$, afin de pouvoir
déterminer en partie \ref{partirred} les composantes connexes par arcs de
$Y_\lambda ^0$ lorsque $\lambda$ est une partition primitive, en appliquant
le théorème \ref{thmcomposantes} ci-dessus.

On rappelle qu'on a fixé un élément de Coxeter $c=\pi(\delta)$.

\subsection{Stratification de $V$}
~ 

L'arrangement d'hyperplans associé à $W$ est noté $\CA$. On
considère la stratification de $W$ par les \emph{plats}, \ie
le treillis d'intersection :

\[ \CL:=\CL(\CA)=\left\{\bigcap_{H\in \CB} H \tq \CB \subseteq \CA \right\} .\]

Pour $L \in \CL$, on pose 
\[L^0:=L - \bigcup_{L'\in \CL, L' \subsetneq L} L'.\]
On obtient la stratification ouverte de $V$ associée à
$\CL$. Pour tout $L\in \CL$, $L$ est l'adhérence de $L^0$.

La stratification de $V$ par les plats correspond à la stratification du
groupe $W$ par ses sous-groupes paraboliques : on rappelle le théorème de
Steinberg \cite[Thm.\ 1.5]{steinberg2} et ses conséquences.

\begin{thm}[Steinberg]
\label{thmsteinberg}
  Si $L$ est un plat, le groupe 
  \[W_L:=\{w\in W \tq \forall x \in L, wx=x\} \]
  est encore un groupe de réflexions, appelé sous-groupe parabolique
  associé à $L$. De plus :
  \begin{enumerate}[(i)]
  \item L'application $L \mapsto W_L$ est une bijection de $\CL$ vers
    l'ensemble des sous-groupes paraboliques ; sa réciproque est
    \[ G \mapsto V^{G}=\{x\in V \tq \forall w \in G, wx=x \}=\bigcap_{r
      \in \CR\cap G} H_r \ .\]
  \item Le rang de $W_L$ est égal à la codimension de $L$.
  \item Soit $v \in V$. Notons $V_v:=\displaystyle{\bigcap_{H\in \CA, v\in H} H}$
    et $W_v:= \{w\in W \tq wv=v\}$.\\ Alors, pour $L\in \CL$ :
    \[ v \in L^0 \ \ssi \  V_v=L \ \ssi \ W_v=W_L .\]
  \end{enumerate}
\end{thm}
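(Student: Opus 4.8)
The whole statement reduces to one substantial input, \textbf{Steinberg's fixed-point theorem}: for every $v\in V$ the pointwise stabiliser $W_v$ is generated by the reflections it contains, i.e.\ by $\{r\in\CR\tq v\in H_r\}$. This is the one genuinely non-trivial step and I would establish it first. One route uses Chevalley--Shephard--Todd: let $W'\subseteq W_v$ be the subgroup generated by those $r\in\CR$ with $rv=v$; the orbit map $V\to W\qg V$ is, in a neighbourhood of the image of $v$, identified with $V\to W_v\qg V$, so smoothness of $W\qg V$ (CST) forces $W_v\qg V$ to be smooth near that image; passing to completed local rings, where the $W_v$-action at $v$ is conjugate to its linear action at the origin, CST again shows $W_v$ is generated by reflections, necessarily by the $r\in\CR$ with $v\in H_r$, and those already generate $W'$, so $W'=W_v$. (Alternatively one simply cites Steinberg, or Lehrer--Taylor, for this lemma.)

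Granting the lemma, I would organise the rest around the two identities $W_v=W_{V_v}$ and $V^{W_L}=L$. Fix a flat $L\in\CL$; since $L$ is not a finite union of proper subspaces, $L^0\neq\emptyset$, so pick $v\in L^0$. First, $V_v=L$: every $H\in\CA$ with $H\supseteq L$ contains $v$, hence $V_v=\bigcap_{H\ni v}H\subseteq\bigcap_{H\supseteq L}H=L$; but $V_v$ is itself a flat containing $v$, so $V_v\subsetneq L$ would contradict $v\in L^0$, whence $V_v=L$. Next, by the lemma $W_v=\langle r\in\CR\tq v\in H_r\rangle$; each such $H_r$ contains $V_v=L$, so $r$ fixes $L$ pointwise, giving $W_v\subseteq W_L$, while $W_L\subseteq W_v$ is trivial since $v\in L$. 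Hence $W_L=W_v$ is a reflection group generated by $\{r\in\CR\tq H_r\supseteq L\}$, and its fixed space is $V^{W_L}=\bigcap_{r\in\CR\cap W_L}H_r=\bigcap_{H\supseteq L}H=L$. This proves the main assertion, and part (ii) is then immediate, since the rank of a complex reflection group equals the codimension of its fixed space, here $\codim V^{W_L}=\codim L$.

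For (i): any parabolic subgroup, being a stabiliser $W_v$, equals $W_{V_v}$ with $V_v\in\CL$, so $L\mapsto W_L$ surjects onto the parabolics; it is injective since $W_L=W_{L'}$ implies $L=V^{W_L}=V^{W_{L'}}=L'$; and $G\mapsto V^G$ is a two-sided inverse, since $V^{W_L}=L$ and, for $G=W_v$, one has $V^G=\bigcap_{H\ni v}H=V_v$ with $W_{V_v}=W_v=G$. For (iii): I have shown $v\in L^0\Rightarrow V_v=L$; conversely, if $V_v=L$ then $v\in L$, and for any flat $L''\ni v$ one has $V_v\subseteq L''$, so $v$ lies in no flat strictly smaller than $L$, i.e.\ $v\in L^0$; finally $V_v=L\iff W_v=W_L$, using $W_v=W_{V_v}$ for one direction and $V_v=V^{W_v}=V^{W_L}=L$ for the other.

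The only real obstacle is Steinberg's fixed-point theorem; once that is available, parts (i)--(iii) are pure bookkeeping around the identities $W_v=W_{V_v}$ and $V^{W_L}=L$, and everything else is elementary linear algebra together with the combinatorics of the intersection lattice $\CL$.
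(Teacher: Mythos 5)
Your proposal is correct. Note, however, that the paper does not prove this statement at all: it is recalled as a classical result with a bare citation to Steinberg, so there is no internal proof to compare against. Your reduction of the whole theorem to Steinberg's fixed-point theorem ($W_v$ is generated by the reflections whose hyperplanes contain $v$) is exactly the right structural move, and your sketch of that lemma via Chevalley--Shephard--Todd (smoothness of $W\qg V$ forces smoothness of $W_v\qg V$ locally, hence $W_v$ is a reflection group by the converse of CST after linearising at $v$) is one of the standard proofs. The bookkeeping is also sound: the two identities $W_v=W_{V_v}$ and $V^{W_L}=L$ (the latter using that $\CR\cap W_L=\{r\tq H_r\supseteq L\}$ and that a flat equals the intersection of all hyperplanes containing it) do carry parts (i)--(iii). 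One cosmetic remark: in the paper's convention a parabolic subgroup is \emph{defined} as a group of the form $W_L$, so surjectivity of $L\mapsto W_L$ in (i) is tautological; your phrasing treats parabolics as point stabilisers $W_v$, which amounts to the same thing once $W_v=W_{V_v}$ is established, but you should make the convention explicit.
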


\subsection{Éléments de Coxeter paraboliques}
~
\label{subpartcoxparab}

Notons $f :\CH \to \NCP_W(c)$ l'application qui à $(y,x) \in \CH$ associe
le facteur de $\fact(y)$ correspondant à $x$ (\ie défini par un tunnel
élémentaire passant sous le point $x$ de $\LL(y)$). Ainsi,
$(f(y,x_1),\dots, f(y,x_p))=\fact(y)$ lorsque le support ordonné de
$\LL(y)$ est $(x_1,\dots, x_p)$ .
 
On reformule ci-dessous un lemme fondamental de \cite{BessisKPi1} :

\begin{lemme}[d'après {\cite[Lemme 7.3]{BessisKPi1}}]
  \label{lemcoxpar}
  Soit $y\in Y$. Soient $x \in \LL(y)$, de multiplicité $p$, et $w=f(y,x)$.

  Alors il existe une préimage $v\in V$
  de $(y,x)\in W \qg V$ telle que $w$ soit un élément
  de Coxeter dans le sous-groupe parabolique $W_{v}$. De plus :
  \[ p=\ell(w)=\rg W_{v}=\dim V/V_{v} = \codim \Ker (w-1). \]
\end{lemme}

Par conséquent, en utilisant la surjectivité de l'application $\fact$, on
peut déduire de ce lemme que tout diviseur $w$ de $c$ est un élément de
Coxeter d'un sous-groupe parabolique, groupe que l'on peut déterminer à
l'aide d'une factorisation qui contient $w$. D'où la proposition-définition
suivante :

\begin{prop}
  \label{propcoxpar}
  Soit $W$ un groupe de réflexions complexe bien engendré, et $w \in
  W$. Les propriétés suivantes sont équivalentes :
  \begin{enumerate}[(i)]
  \item $w$ est un élément de Coxeter d'un sous-groupe parabolique de $W$ ;
  \item il existe un élément de Coxeter $c_w $ de $W$, tel que $w \< c_w$ ;
  \item $w$ est conjugué à un élément de $\NCP_W(c)$.
  \end{enumerate}
  On dit alors que $w$ est un \emph{élément de Coxeter parabolique}.
\end{prop}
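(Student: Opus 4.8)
On se propose de traiter d'abord l'équivalence $(ii)\Leftrightarrow(iii)$, qui est purement formelle, puis l'équivalence $(i)\Leftrightarrow(iii)$, dont les deux sens reposeront sur le lemme~\ref{lemcoxpar} (de Bessis) et sur la surjectivité de $\fact$, conséquence du théorème~\ref{thmbij}. Le cas $w=1$ étant trivial ($1$ est l'élément de Coxeter du parabolique trivial $W_V=\{1\}$, on a $1\<c$, et $1\in\NCP_W(c)$), on supposera $w\neq1$ ; de plus, comme sous-groupes paraboliques, éléments de Coxeter et treillis $\NCP_W(c)$ se décomposent de façon compatible sur les composantes irréductibles de $W$, on se ramènera au cas $W$ irréductible, où la machinerie des parties~\ref{partLL}--\ref{partLLstrat} est disponible. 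Pour $(ii)\Leftrightarrow(iii)$ : tous les éléments de Coxeter de $W$ étant conjugués (Springer) et l'ordre $\<$ étant invariant par conjugaison (si $c'=gcg^{-1}$ alors $\NCP_W(c')=g\,\NCP_W(c)\,g^{-1}$), de $w\<c_w$ avec $c_w=gcg^{-1}$ on tire $g^{-1}wg\in\NCP_W(c)$, et réciproquement de $w=gw'g^{-1}$ avec $w'\in\NCP_W(c)$ on tire $w\<gcg^{-1}$, qui est un élément de Coxeter.

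Pour $(iii)\Rightarrow(i)$, on raisonnera ainsi : partant de $w=gw'g^{-1}$ avec $w'\in\NCP_W(c)$ non trivial, on choisit une décomposition réduite $w'^{-1}c=r_1\cdots r_m$ en réflexions, de sorte que $(w',r_1,\dots,r_m)$ (ou simplement $(c)$ si $w'=c$) appartienne à $\Fact(c)$ — le produit vaut $c$ et $\ell(w')+m=\ell(c)$ puisque $w'\<c$. Par surjectivité de $\fact$ il existe $y\in Y$ avec $\fact(y)=(w',r_1,\dots,r_m)$, donc $w'=f(y,x_1)$ où $x_1$ est le premier point du support ordonné de $\LL(y)$. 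Le lemme~\ref{lemcoxpar} fournit alors une préimage $v$ de $(y,x_1)$ telle que $w'$ soit un élément de Coxeter du parabolique $W_v$ ; puisque $gW_vg^{-1}=W_{gv}$ est encore un parabolique (théorème~\ref{thmsteinberg}), $w=gw'g^{-1}$ en est un élément de Coxeter, d'où $(i)$.

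Pour $(i)\Rightarrow(iii)$, voici le schéma : si $w$ est un élément de Coxeter de $W'=W_L$ (avec $L$ un plat, nécessairement $L\neq V$ car $w\neq1$), on choisit $v\in L^0$, ce qui par le théorème de Steinberg donne $W_v=W_L$ ; le point $\bar v=p(v)$ appartient à $\CH$ (car $v$ est situé sur un hyperplan de réflexion de $W_L$), on l'écrit $\bar v=(y_0,x_0)\in Y\times\BC$, d'où $x_0\in\LL(y_0)$, et le lemme~\ref{lemcoxpar} fournit une préimage $v'$ de $(y_0,x_0)$ telle que $f(y_0,x_0)$ soit un élément de Coxeter de $W_{v'}$. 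Comme $p(v')=p(v)$, on a $v'=gv$ pour un certain $g\in W$, donc $W_{v'}=gW_Lg^{-1}$ et $g^{-1}f(y_0,x_0)g$ est un élément de Coxeter de $W_L$. Tous les éléments de Coxeter de $W_L$ étant conjugués dans $W_L$ (théorie de Springer appliquée au parabolique $W_L$), $g^{-1}f(y_0,x_0)g$ est conjugué à $w$ dans $W_L$ ; ainsi $w$ est conjugué dans $W$ à $f(y_0,x_0)\in\NCP_W(c)$, ce qui est $(iii)$.

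Le point délicat sera l'implication $(i)\Rightarrow(iii)$ : au-delà du suivi des conjugaisons (le lemme~\ref{lemcoxpar} ne livre que le groupe $W_{v'}$, seulement conjugué à $W_L$ a priori), elle exige que la théorie de régularité de Springer s'applique au sous-groupe parabolique $W_L$ lui-même — en particulier que $W_L$ soit bien engendré, afin que la notion d'\emph{élément de Coxeter de $W_L$} ait un sens et que ces éléments forment une unique classe de conjugaison de $W_L$ ; ce point est d'ailleurs déjà présupposé dans l'énoncé du lemme~\ref{lemcoxpar}. Ceci admis, la preuve se ramène à l'apport géométrique du lemme~\ref{lemcoxpar} et à l'identité $gW_vg^{-1}=W_{gv}$.
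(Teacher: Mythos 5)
Votre démonstration est correcte et suit pour l'essentiel la même stratégie que celle du texte : $(ii)\Leftrightarrow(iii)$ par la théorie de Springer et l'invariance de $\<$ par conjugaison, $(iii)\Rightarrow(i)$ par la surjectivité de $\fact$ combinée au lemme~\ref{lemcoxpar}, et $(i)\Rightarrow(iii)$ en choisissant $v\in L^0$, en appliquant Steinberg, le lemme~\ref{lemcoxpar} et la conjugaison des éléments de Coxeter d'un parabolique. Les seules différences (réduction explicite au cas irréductible et à $w\neq1$, construction explicite de la factorisation par une décomposition réduite de $w'^{-1}c$, et la mise en évidence du fait que Springer doit s'appliquer au parabolique $W_L$ lui-même — point effectivement implicite dans le texte) sont des précisions, non un autre chemin.
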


\begin{rqe}
Dans le cas d'un groupe de Coxeter fini, cette propriété est bien connue et
démontrée de manière uniforme, cf. \cite[Lemme 1.4.3]{Bessisdual}.
\end{rqe}

\begin{proof}[Démonstration :]
  (ii) $\ssi$ (iii) provient directement de la théorie de Springer (les
  éléments de Coxeter de $W$ forment une seule classe de conjugaison) et de la
  propriété $\NCP_W(a c a ^{-1}) = a \NCP_W(c) a ^{-1}$.

  (i) $\Rightarrow$ (ii) : soit $G$ un sous-groupe parabolique (non
  trivial), et $w$ un élément de Coxeter de $G$. Soit $L=V^G$ et $v \in
  L^0$, de sorte que $W_v=G$. Notons $(y,x)=\bar{v}$ et $w_0=f(y,x) \in
  \NCP_W(c)$. D'après le lemme \ref{lemcoxpar}, il existe $v_0\in V$ tel
  que $(y,x)=\overline{v_0}$ et que $w_0$ soit un élément de Coxeter de
  $W_{v_0}$. Comme $v_0$ et $v$ sont dans la même orbite sous $W$, $G=W_v$
  est conjugué à $W_{v_0}$, donc tous leurs éléments de Coxeter sont
  conjugués dans $W$. En particulier $w$ est conjugué à $w_0$.

  (iii) $\Rightarrow$ (i) : il suffit de montrer l'implication pour $w \in
  \NCP_W(c)$, puisque la propriété (i) est invariante par conjugaison. Si
  $w \<c$, la surjectivité de $\fact$ (théorème \ref{thmbij}) donne
  l'existence de $(y,x)\in W \qg V $ tel que $f(y,x)=w$. Le lemme
  \ref{lemcoxpar} permet alors de conclure.
\end{proof}

Comme dans le cas d'un groupe de Coxeter \cite[Cor.1.6.2]{Bessisdual}, on
peut retrouver, à partir d'un élément de Coxeter parabolique, le
sous-groupe parabolique associé :

\begin{prop}
  \label{propcoxpar2}
  Soit $w$ un élément de Coxeter parabolique, et $W_w$ le sous-groupe
  parabolique fixateur du plat $\Ker(w-1)$. Alors :
  \begin{enumerate}[(i)]
  \item le groupe $W_w$ est l'unique sous-groupe parabolique duquel $w$ est
    un élément de Coxeter ;
  \item si $(r_1,\dots,r_k)\in \Red (w)$, alors $\left< r_1,\dots, r_k
    \right>=W_w$.
  \end{enumerate}
\end{prop}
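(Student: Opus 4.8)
The plan is to derive everything from Proposition~\ref{propcoxpar}, the rank formula for Coxeter elements (Proposition~\ref{proplongueur}), the Brady--Watt order isomorphism (Theorem~\ref{thmBW} and Proposition~\ref{propordre}), Steinberg's theorem~\ref{thmsteinberg}, and the transitivity of the Hurwitz action on reduced decompositions. Since $w$ is a parabolic Coxeter element, Proposition~\ref{propcoxpar} gives a Coxeter element $c_w$ with $w\<c_w$, so $w\in\NCP_W$ and the above propositions apply to $w$; in particular (Proposition~\ref{propordre}) $\Ker(w-1)$ is a flat, $W_w=W_{\Ker(w-1)}$ satisfies $V^{W_w}=\Ker(w-1)$ and $\rg W_w=\codim\Ker(w-1)=\ell(w)=k$. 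A preliminary remark, used repeatedly: for any $(r_1,\dots,r_k)\in\Red(w)$, each $r_i$ is a sub-factor of a reduced decomposition of $w$, hence $r_i\<w$, hence $\Ker(r_i-1)\supseteq\Ker(w-1)$ by Theorem~\ref{thmBW}; so $r_i$ fixes $\Ker(w-1)$ pointwise and therefore $r_i\in W_{\Ker(w-1)}=W_w$ by Steinberg's theorem.

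For~(i), existence will follow from uniqueness together with Proposition~\ref{propcoxpar}, so suppose $w$ is a Coxeter element of a parabolic subgroup $W'$. As $w\in W'$ it fixes $V^{W'}$ pointwise, whence $V^{W'}\subseteq\Ker(w-1)$ and, using $W'=W_{V^{W'}}$ (Steinberg), $W_w=W_{\Ker(w-1)}\subseteq W_{V^{W'}}=W'$. Choose $(r_1,\dots,r_k)\in\Red(w)$; by the preliminary remark the $r_i$ lie in $W_w\subseteq W'$, so $w$ is a product of $k$ reflections of $W'$, i.e. its reflection length in $W'$ is $\leq k$. But as a Coxeter element of $W'$ this length equals $\rg W'=\codim V^{W'}$ (Proposition~\ref{proplongueur} and Steinberg~\ref{thmsteinberg}(ii) applied to $W'$). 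Hence $\codim V^{W'}\leq k=\codim\Ker(w-1)$, which together with $V^{W'}\subseteq\Ker(w-1)$ forces $V^{W'}=\Ker(w-1)$, and thus $W'=W_w$.

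For~(ii), put $G=\langle r_1,\dots,r_k\rangle$; by the preliminary remark $G\subseteq W_w$, and by~(i) $w$ is a Coxeter element of $W_w$, so $(r_1,\dots,r_k)$ is in particular a reduced decomposition of $w$ \emph{inside} $W_w$. The Hurwitz action of $B_k$ on the set of reduced decompositions of $w$ in $W_w$ is transitive (this is \cite[Prop.~7.5]{BessisKPi1} applied to $W_w$), and the subgroup generated by the entries of a $k$-tuple is a Hurwitz invariant, since a Hurwitz move only replaces a pair $(g,g')$ by $(g',g'^{-1}gg')$. Finally, at least one reduced decomposition of $w$ in $W_w$ generates $W_w$: for $y$ in the regular part of the base of the Lyashko--Looijenga morphism of $W_w$, the braided lifts of $\fact(y)$ generate $B(W_w)$ by the Zariski surjectivity argument of Remark~\ref{rqzariski}, so $\fact(y)$ itself generates $W_w=\pi(B(W_w))$; and since all Coxeter elements of $W_w$ are conjugate in $W_w$, one may arrange this for our $w$. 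Therefore every reduced decomposition of $w$ in $W_w$ generates $W_w$; in particular $G=W_w$.

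The main obstacle is to justify that this apparatus genuinely applies to $W_w$: one needs that a parabolic subgroup of a well-generated complex reflection group is again well-generated (so that $W_w$ has Coxeter elements in the sense of Definition~\ref{defcox} and its own Lyashko--Looijenga morphism), and that Remark~\ref{rqzariski} and \cite[Prop.~7.5]{BessisKPi1} hold for $W_w$; to invoke Remark~\ref{rqzariski} with its stated hypotheses one should first reduce to $W_w$ irreducible by splitting it into irreducible factors, a Coxeter element of $W_w$ being the product of Coxeter elements of the factors and the property ``generates'' being tested factor by factor. Granting this, the argument is elementary and introduces no case analysis beyond what already enters \cite[Prop.~7.5]{BessisKPi1}.
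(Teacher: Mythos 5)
Your proof is correct and follows essentially the same route as the paper: part (i) is the same codimension count ($\codim V^{W'}=\rg W'=\ell_{W'}(w)\le \ell(w)=\codim\Ker(w-1)$ together with $V^{W'}\subseteq\Ker(w-1)$), resting on the fact that every reflection dividing $w$ fixes $\Ker(w-1)$ and hence lies in the parabolic subgroup; part (ii) is the same reduction to $w$ a Coxeter element of the well-generated group $W_w$ followed by the Zariski $\pi_1$-surjectivity argument of Remark~\ref{rqzariski}. The only cosmetic difference is in (ii), where you combine Hurwitz transitivity with the invariance of the generated subgroup to propagate generation from one reduced decomposition to all, whereas the paper invokes the surjectivity of $\fact$ onto $\Red(c)$ (Theorem~\ref{thmbij}) to realise every reduced decomposition geometrically — two packagings of the same input, since that surjectivity is itself proved via Hurwitz transitivity.
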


\begin{proof}[Démonstration :]
  (i) Soit $G$ un sous-groupe parabolique tel que $w$ soit un élément de
  Coxeter de $G$. Soit $L$ le plat associé à $G$, \ie $L=V^{G}$. Alors
  comme $w\in G$, on a : $L \subseteq \Ker(w-1)$. Or $\codim L =\rg G$ par
  théorème \ref{thmsteinberg}, et $\rg G=\ell_{G} (w)$ (où $\ell_G$ désigne la
  longueur relativement aux réflexions de $G$) car $w$ est un élément de
  Coxeter. D'autre part $\ell(w)=\codim \Ker(w-1)$ par la proposition
  \ref{proplongueur}. Donc pour conclure il suffit d'utiliser le résultat
  suivant :
  \[ \mathrm{Si}\ G\ \text{est\ un\ sous-groupe\ parabolique,\ alors\ :\ }
  \forall g \in \NCP_W,\ g\in G \Rightarrow \ell_G(g)=\ell(g). \] Pour cela, on
  va vérifier que pour $r\in \CR$, si $r\< g$, alors $r \in G$. Si $G=W_L$,
  cela revient à montrer que $L \subseteq \Ker(r-1)$. Or, comme $g\in G$,
  on a $L\subseteq \Ker(g-1)$, et d'après la proposition \ref{propordre},
  $\Ker(g-1) \subseteq \Ker(r-1)$.

  (ii) Il suffit de le démontrer dans le cas où $w$ est un élément de
  Coxeter $c$ d'un groupe de réflexions (bien engendré) irréductible
  $W$. D'après le théorème \ref{thmbij}, toute décomposition réduite
  $(r_1,\dots, r_n)$ de $c$ provient d'une factorisation de $\delta$ en
  générateurs de la monodromie, \ie de $\fact_B (y)=(s_1,\dots, s_n)$, avec
  $y \in Y-\CK$ et $\pi(s_i)=r_i$. Or on sait qu'alors $s_1,\dots, s_n$
  engendrent $B(W)$ (cf. remarque \ref{rqzariski}), donc $r_1,\dots, r_n$
  engendrent $W$.
\end{proof}

Dans le cas présent, on a fixé un élément de Coxeter $c$ de $W$, et on ne
va considérer que les éléments paraboliques qui sont dans $\NCP_W$.  On
n'obtient donc pas tous les sous-groupes paraboliques, mais seulement les
\emph{sous-groupes paraboliques «~non croisés~»}, \ie ceux qui possèdent un
élément de Coxeter qui divise $c$. Cependant d'après les propositions
\ref{propcoxpar} et \ref{propcoxpar2}, quitte à les conjuguer, on obtient
tous les sous-groupes paraboliques :

\begin{prop}
\label{propsgpnc}
Soit $W$ un groupe de réflexions complexe bien engendré, et $c$ un élément
de Coxeter fixé de $W$. Soit $W_0$ un sous-groupe parabolique de $W$.

Alors $W_0$ est conjugué à un sous-groupe parabolique «~non croisé~» de
$W$, \ie un sous-groupe de la forme $W_L$ où $L=\Ker(w-1)$ et $w \< c$.
\end{prop}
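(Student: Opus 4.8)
The plan is to deduce the statement directly from Steinberg's theorem \ref{thmsteinberg} and Lemma \ref{lemcoxpar}, without having to know a priori that $W_0$ has a Coxeter element. First dispose of the trivial case: if $W_0 = \{1\}$, take $w = 1$, so that $\Ker(w-1) = V$ and $W_V = \{1\} = W_0$ while $1 \< c$; so assume from now on $W_0 \neq \{1\}$. By Steinberg's theorem, $W_0 = W_L$ where $L := V^{W_0} \in \CL$, and for any $v \in L^0$ we have $W_v = W_L = W_0$ (Theorem \ref{thmsteinberg}(iii)). Since $L \subsetneq V$, the point $v$ lies on at least one reflection hyperplane, so its image $(y,x) := \bar{v}$ in $W \qg V$ lies on $\CH$.

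Now set $w := f(y,x) \in \NCP_W(c)$, the factor of $\fact(y)$ attached to the point $x$ of $\LL(y)$; in particular $w \< c$. By Lemma \ref{lemcoxpar} there is a preimage $v_0 \in V$ of $(y,x)$ such that $w$ is a Coxeter element of $W_{v_0}$ and $\dim V/V_{v_0} = \codim \Ker(w-1)$. Since $v_0 \in V_{v_0}^0$ (apply Theorem \ref{thmsteinberg}(iii) with $L = V_{v_0}$), we get $W_{v_0} = W_{V_{v_0}}$ and $V^{W_{v_0}} = V_{v_0}$; as $w \in W_{v_0}$ it fixes $V_{v_0}$ pointwise, so $V_{v_0} \subseteq \Ker(w-1)$, and equality of codimensions forces $V_{v_0} = \Ker(w-1)$. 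Therefore $W_{v_0} = W_{\Ker(w-1)}$ is a parabolic subgroup of the required ``non croisé'' shape, since $w \< c$.

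Finally, $v$ and $v_0$ are two preimages in $V$ of the same point of $W \qg V$, so $v_0 = a \cdot v$ for some $a \in W$, whence $W_{v_0} = a W_v a^{-1} = a W_0 a^{-1}$. Combining the last two steps, $a W_0 a^{-1} = W_L$ with $L = \Ker(w-1)$ and $w \< c$, i.e.\ $W_0$ is conjugate to a non croisé parabolic subgroup. The only point needing a little attention is the identification $V_{v_0} = \Ker(w-1)$ --- equivalently, that the parabolic subgroup furnished by Lemma \ref{lemcoxpar} is exactly the fixator of $\Ker(w-1)$; everything else is routine bookkeeping with Steinberg's theorem and the surjectivity of $\fact$ (Theorem \ref{thmbij}) used to define $f$. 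One could instead argue through Propositions \ref{propcoxpar} and \ref{propcoxpar2}, but that would require knowing separately that parabolic subgroups of well-generated groups admit Coxeter elements, which the argument above sidesteps.
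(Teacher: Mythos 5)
Your proof is correct and follows the paper's route: Proposition \ref{propsgpnc} is derived there from Propositions \ref{propcoxpar} and \ref{propcoxpar2}, whose proofs rest on exactly the construction you use — take $v\in L^0$, apply Lemma \ref{lemcoxpar} to $\bar v=(y,x)\in\CH$ to produce $v_0$ with $w=f(y,x)\<c$ a Coxeter element of $W_{v_0}$, identify $W_{v_0}$ with $W_{\Ker(w-1)}$ by the codimension count, and conjugate $W_v$ to $W_{v_0}$. Your only (welcome) streamlining is to inline that argument at the level of the subgroup itself, so that you never need to presuppose that $W_0$ admits a Coxeter element.
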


Ce un résultat non trivial utilise ainsi de façon essentielle le lemme
\ref{lemcoxpar}. On peut dire que les sous-groupes paraboliques non croisés
jouent ici le rôle des sous-diagrammes (ou des paraboliques standards) de
la théorie de Coxeter (voir aussi \cite[p.3]{broumarou1} sur les diagrammes
et sous-diagrammes pour les groupes complexes).

\begin{rqe}
  Dans un sous-groupe parabolique non croisé, il y a unicité de l'élément
  de Coxeter divisant $c$. En effet, il faut trouver un diviseur de $c$
  dont le plat associé est donné, et la solution est unique par le théorème
  de Brady-Watt (\ref{thmBW}). L'ensemble des sous-groupes paraboliques non
  croisés, ordonné par inclusion, est donc isomorphe au treillis
  $(\NCP_W(c), \<)$.
\end{rqe}

\subsection{\texorpdfstring{Stratification de $W \qg V$}{Stratification de
    W \textbackslash V}}
\label{subpartstratquotient}
~

Le groupe $W$ agit sur $\CA$, donc sur $\CL$. On peut ainsi définir des
orbites de plats, qui forment une stratification notée $\Lb$ de $W \qg V$. 

Soit $p$ la projection $V \rightarrow W \qg V$,
$v\mapsto \bar{v}=W\cdot v$. On a : $\Lb= W \qg \CL =(p(L))_{L \in \CL}=
(W\cdot L)_{L \in \CL} $.

\medskip

On notera par la suite les strates de $\Lb$ par la lettre $\Lambda$. Pour $
\Lambda \in \Lb $ , posons :
\[\Lambda^0:=\Lambda - \bigcup_{\Lambda' \in \Lb, \Lambda'
  \subsetneq \Lambda} \Lambda' \ .\]

Si $\Lambda=W \cdot L$, alors $\Lambda^0=W \cdot L^0$. Les ouverts
$\Lambda^0$, pour $\Lambda\in \Lb$, forment la stratification ouverte de $W
\qg V$ associée à $\Lb$, appelée \emph{stratification
  discriminante}. Notons que $(W \qg V)^0=W \qg V - \CH = W \qg \Vreg$.

\bigskip

Les strates de $\Lb$ correspondent aux classes de conjugaison de
sous-groupes paraboliques, puisque $W_{w\cdot L} = w W_L w ^{-1}$. On peut
également les associer aux classes de conjugaison d'éléments de Coxeter
paraboliques. En effet, considérons l'application $F: W\to \CL$, $w \mapsto
\Ker(w-1)$. Si $w$ et $w'$ sont conjugués, alors $F(w)$ et $F(w')$ sont
dans la même orbite sous $W$, donc $F$ induit une application $\bar{F}$ de
l'ensemble des classes de conjugaison de $W$ vers $\Lb$.

\begin{prop}
  L'application $\bar{F}$ définie ci-dessus induit une bijection entre
  l'ensemble $\Lb$ des strates de $W \qg V$ et :
  \begin{itemize}
  \item l'ensemble des classes de conjugaison d'éléments de Coxeter
    paraboliques ;
  \item l'ensemble des classes de conjugaison d'éléments de $\NCP_W$.
  \end{itemize}
\end{prop}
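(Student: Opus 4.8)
L'application $\bar F$ induit une bijection entre $\Lb$ (strates discriminantes de $W\qg V$) et, d'une part, l'ensemble des classes de conjugaison d'éléments de Coxeter paraboliques ; d'autre part, l'ensemble des classes de conjugaison d'éléments de $\NCP_W$.

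Voici comment je procéderais.

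Le plan est d'abord de remarquer que les deux affirmations sont essentiellement équivalentes grâce à la proposition~\ref{propcoxpar} : les éléments de Coxeter paraboliques de $W$ sont exactement les $W$-conjugués des éléments de $\NCP_W(c)$, donc les deux ensembles d'arrivée « classes de conjugaison d'éléments de Coxeter paraboliques » et « classes de conjugaison d'éléments de $\NCP_W$ » coïncident comme sous-ensembles de l'ensemble des classes de conjugaison de $W$. Il suffit donc de traiter une seule des deux bijections, disons celle avec les classes de conjugaison d'éléments de Coxeter paraboliques.

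Ensuite, je montrerais que $\bar F$ est bien définie, surjective, et injective. La bonne définition est déjà acquise dans le texte précédant l'énoncé : si $w\fconj_W w'$ alors $\Ker(w-1)$ et $\Ker(w'-1)$ sont dans la même $W$-orbite de plats, donc $F(w)=F(w')$ dans $\Lb$. Pour la \textbf{surjectivité}, je partirais d'une strate $\Lambda = W\cdot L$ avec $L\in\CL$ ; par la proposition~\ref{propsgpnc}, quitte à conjuguer on peut supposer $L=\Ker(v-1)$ pour un élément $v\< c$ ; et par la proposition~\ref{propcoxpar} un tel $v$ est un élément de Coxeter parabolique, dont la classe de conjugaison a pour image $\Lambda$ par $\bar F$ (en utilisant que, $L$ étant $W$-conjugué au plat associé à $v$, les orbites coïncident). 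Pour l'\textbf{injectivité}, je supposerais que deux éléments de Coxeter paraboliques $w,w'$ vérifient $\bar F([w])=\bar F([w'])$, c'est-à-dire que $\Ker(w-1)$ et $\Ker(w'-1)$ sont dans la même $W$-orbite de plats. Quitte à remplacer $w'$ par un conjugué, je supposerais $\Ker(w-1)=\Ker(w'-1)=:L$. Alors $w$ et $w'$ sont tous deux des éléments de Coxeter du même sous-groupe parabolique : en effet, par la proposition~\ref{propcoxpar2}(i), $W_w = W_L = W_{w'}$ est l'unique sous-groupe parabolique duquel $w$ (resp. $w'$) est élément de Coxeter. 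Il reste alors à invoquer le fait que, dans un groupe de réflexion complexe bien engendré, tous les éléments de Coxeter forment une seule classe de conjugaison (théorie de Springer, appliquée ici au groupe $W_L$, qui est lui-même un groupe de réflexion par le théorème de Steinberg~\ref{thmsteinberg} — et bien engendré car $W_L$ admet un élément de Coxeter au sens de la définition~\ref{defcox}, précisément $w$). On conclut que $w$ et $w'$ sont conjugués dans $W_L$, donc dans $W$, ce qui donne l'injectivité.

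Le point le plus délicat me paraît être la justification précise que $W_L$ est « bien engendré » et que la théorie de Springer s'y applique telle quelle, ou plus exactement qu'on a le droit d'affirmer que les éléments de Coxeter paraboliques (au sens « élément de Coxeter d'un sous-groupe parabolique ») associés à un même plat forment une seule classe de conjugaison de $W$ : la définition~\ref{defcox} est relative à un groupe irréductible, et $W_L$ peut être réductible, auquel cas un élément de Coxeter de $W_L$ est par définition un produit d'éléments de Coxeter de ses composantes irréductibles, chacune conjuguant bien ses propres éléments de Coxeter. Comme les composantes irréductibles de $W_L$ sont bien engendrées (ce sont des sous-groupes paraboliques de $W$, cas bien connu, ou conséquence de \ref{propcoxpar2}), la théorie de Springer s'applique composante par composante, et on recolle. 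C'est essentiellement routinier mais mérite d'être dit explicitement. Tout le reste découle mécaniquement des propositions \ref{propcoxpar}, \ref{propcoxpar2}, \ref{propsgpnc} et du théorème~\ref{thmsteinberg} déjà établis.
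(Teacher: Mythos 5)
Votre démonstration est correcte et suit essentiellement la même voie que celle du texte : réduction au premier point via la proposition~\ref{propcoxpar}, puis le fait que deux éléments de Coxeter d'un même sous-groupe parabolique sont conjugués par la théorie de Springer (combiné aux propositions~\ref{propcoxpar2} et~\ref{propsgpnc}). Vous explicitez simplement davantage ce que le texte laisse implicite, en particulier la bonne application de la théorie de Springer à $W_L$ (bien engendré par~\ref{propcoxpar2}(ii), traitement composante par composante si $W_L$ est réductible), ce qui est un soin bienvenu mais ne change pas l'argument.
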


\begin{proof}[Démonstration :]
  En utilisant les propositions \ref{propcoxpar} et \ref{propcoxpar2}, le
  premier point est clair : deux éléments de Coxeter d'un même sous-groupe
  parabolique sont conjugués d'après la théorie de Springer, donc deux éléments
  de Coxeter paraboliques $w_1$ et $w_2$, associés à deux sous-groupes
  paraboliques $W_1$ et $W_2$, sont conjugués si et seulement si les
  groupes $W_1$ et $W_2$ sont conjugués.

  Le second point est direct en utilisant la proposition \ref{propcoxpar}.
\end{proof}

\begin{defn}
  \label{deftype}
  Soit $\Lambda$ une strate de $\Lb$, $w \in \NCP_W$, et  $\bar{v} \in W \qg
  V$. On dit que :
  \begin{itemize}
  \item «~$w$ est de \emph{type} $\Lambda$~» si la classe de conjugaison de $w$
    correspond à $\Lambda$ par la bijection ci-dessus, \ie si $\Lambda = W \cdot
    \Ker(w-1)$.
  \item «~la \emph{strate} de $\bar{v}$ est $\Lambda$~» si $\Lambda$ est la
    strate \emph{minimale} de $\Lb$ contenant $\bar{v}$, \ie si $\bar{v}
    \in \Lambda^0$, ou encore $\Lambda=W \cdot V_v$.
  \end{itemize}
\end{defn}

On peut ainsi reformuler le lemme \ref{lemcoxpar} :

\begin{lemme}
  \label{lemtype}
  Soit $(y,x) \in \CH$. Alors le type de l'élément de Coxeter parabolique
  $f(y,x)$ est la strate du point $(y,x)$.
\end{lemme}

Une conséquence du lemme est que si $(y,x)$ est dans une
strate ouverte de dimension $n-k$, alors $\ell(f(y,x))=k$ et la
multiplicité de $x$ dans $\LL(y)$ est $k$. Soit $\Lb_k$ l'ensemble des
strates fermées de dimension exactement $n-k$, et posons :

\[ \CH_k:= \bigcup_{\Lambda \in \Lb_k} \Lambda \ . \]

Ainsi $W \qg V= \CH_0 \supsetneq \CH_1=\CH \supsetneq \CH_2 \supsetneq
\dots \supsetneq \CH_n = \{0 \}$. 

\bigskip

Désormais on suppose $k\geq 1$. Soit la projection $\varphi : \CH \to Y,\
(y,x)\mapsto y$. Notons $Y_k := \varphi(\CH_k)$, et $\alpha_k := k^1
1^{n-k} \vdash n$. D'après le lemme \ref{lemcoxpar}, pour $(y,x)\in \CH$,
la longueur du facteur $f(y,x)$ est donnée par la codimension de la strate
de $(y,x)$, d'où la propriété suivante :

\begin{lemme}
  Soit $(y,x)\in W \qg V$. Alors  $(y,x)$ est dans $\CH_k$ si et
  seulement si la multiplicité de $x$ dans $\LL(y)$ est supérieure ou égale
  à $k$.

  Par conséquent : $Y_k= \LL ^{-1} (E_{\alpha_k})$.

\end{lemme}

La partie $Y_k$ est donc ce que l'on avait noté $Y_\lambda$ dans la partie 5,
dans le cas où $\lambda$ est la partition primitive $\alpha_k$. En
particulier, comme $\CK=\LL ^{-1} (E_{\alpha_2})$, on obtient $Y_2=\CK$.

\section{Composantes connexes par arcs de $Y_k^0$}
\label{partirred}

D'après le théorème \ref{thmcomposantes}, pour étudier les orbites
d'Hurwitz primitives, il est intéressant d'identifier les composantes
connexes par arcs de $Y_k^0$. Pour cela, on commence par déterminer les
composantes irréductibles de $Y_k$.

\subsection{Composantes irréductibles de $\CH_k$ et de $Y_k$}
~
\label{subpartirred}
On va d'abord montrer que les strates de $\Lb_k$ sont les composantes
irréductibles de $\CH_k$.

\begin{lemme}
  \label{lemfini}
  Les morphismes $p: V \surj W \qg V$ et $\varphi:\CH \rightarrow Y$ sont
  finis. Par conséquent ils sont fermés, pour la topologie de Zariski.
\end{lemme}

\begin{proof}[Démonstration :] 
  Pour $p$, c'est le théorème de Chevalley : $\CO_V \simeq \BC[v_1,\dots,
  v_n]$, $\CO_{W \qg V} \simeq \BC[f_1,\dots, f_n]$, et $\CO_V$ est un
  $\CO_{W \qg V}$-module libre de rang $|W|$.

  Pour $\varphi$, on a $\CO_Y=\BC[f_1,\dots, f_n]$ et $\CO_\CH =
  \BC[f_1,\dots, f_n] / (\D)$, avec $\D=f_n^n+a_2 f_n^{n-2} + \dots + a_n$
  et $a_i \in \BC[f_1,\dots, f_{n-1}]$. En particulier, $f_n$ est entier sur
  $\varphi^*(\CO_Y)$. Donc $\varphi^*$ fait de $\CO_\CH$ un $\CO_Y$-module
  libre de rang $n$.
\end{proof}

Par conséquent, les strates de $\Lb$, images par $p$ des plats dans $V$,
sont fermées, et irréductibles (car l'image d'un irréductible par un
morphisme algébrique est irréductible). D'où :

\begin{cor}
  Pour tout $k \geq 1$, les composantes irréductibles du fermé $\CH_k$ sont
  les strates $\Lambda$ de $\Lb_k$.
\end{cor}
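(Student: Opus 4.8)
Le plan est de constater que l'égalité $\CH_k = \bigcup_{\Lambda \in \Lb_k} \Lambda$ exhibe $\CH_k$ comme une réunion \emph{finie} de fermés irréductibles deux à deux non emboîtés, ce qui fournit aussitôt la décomposition en composantes irréductibles.

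D'abord, je vérifie que chaque $\Lambda \in \Lb_k$ est un fermé irréductible de $W \qg V$. En écrivant $\Lambda = W \cdot L = p(L)$ pour un plat $L \in \CL$, le lemme \ref{lemfini} donne que $p$ est fini, donc fermé, d'où $\Lambda$ fermé ; et $\Lambda$ est irréductible comme image du sous-espace affine --- donc irréductible --- $L$ par un morphisme. Au passage, $\dim \Lambda = \dim L$ (un morphisme fini préserve la dimension), et comme $\Lambda$ appartient à $\Lb_k$, \ie est de dimension exactement $n-k$, le plat $L$ est de codimension $k$.

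Ensuite, je montre que la réunion $\CH_k = \bigcup_{\Lambda \in \Lb_k} \Lambda$ est irredondante : si $\Lambda, \Lambda' \in \Lb_k$ vérifient $\Lambda \subseteq \Lambda'$, alors $\Lambda$ est un sous-fermé, de dimension $n-k$, du fermé irréductible $\Lambda'$ qui est lui-même de dimension $n-k$ ; un fermé irréductible ne possédant pas de sous-fermé propre de même dimension, on en déduit $\Lambda = \Lambda'$. Une réunion finie de fermés irréductibles deux à deux non emboîtés coïncidant toujours avec sa décomposition en composantes irréductibles, on conclut que les composantes irréductibles de $\CH_k$ sont précisément les strates $\Lambda$ de $\Lb_k$.

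Il n'y a pas d'obstacle sérieux : tous les ingrédients (finitude de $p$ via le lemme \ref{lemfini}, irréductibilité des plats, conservation de la dimension par un morphisme fini) sont déjà acquis. Le seul point demandant un peu d'attention est la comptabilité des dimensions, c'est-à-dire la cohérence entre $\dim p(L) = \dim L = n - \codim L$ et la définition de $\Lb_k$ comme ensemble des strates fermées de dimension exactement $n-k$.
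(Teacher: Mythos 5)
Votre preuve est correcte et suit essentiellement la même démarche que le texte : finitude de $p$ (lemme \ref{lemfini}) donc fermeture des strates, irréductibilité comme images de plats, puis décomposition en composantes. Vous explicitez en plus l'irredondance de la réunion par un argument de dimension, étape que le texte laisse implicite mais qui est bien nécessaire pour conclure.
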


Par $\varphi$, continue et fermée, on peut envoyer ces strates dans
$Y_k=\varphi(\CH_k)$. On a ainsi $Y_k=\bigcup_{\Lambda \in \Lb_k}
\varphi(\Lambda)$, avec les $\varphi(\Lambda)$ fermés irréductibles dans
$Y$.

\medskip

Commençons par le cas $k=1$, qui correspond aux classes de conjugaison
de réflexions de $\NCP_W$. En utilisant le théorème \ref{thmbij}, on
obtient le résultat suivant.

\begin{prop}
  Soit $\Lambda \in \Lb_1$. Alors, pour tout $y \in Y - \CK$, au moins un
  des facteurs de $\fact(y)$ a pour classe de conjugaison $\Lambda$.

  Par conséquent, pour tout $\Lambda \in \Lb_1$, on a : $\varphi(\Lambda) = Y$.

\end{prop}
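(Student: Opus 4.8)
Pour $\Lambda \in \Lb_1$ (une classe de conjugaison de réflexions non croisées), pour tout $y \in Y - \CK$, au moins un facteur de $\fact(y)$ est dans la classe de conjugaison $\Lambda$ ; par conséquent $\varphi(\Lambda) = Y$.

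The plan is to prove the ``\textsf{Par conséquent}'' clause first, namely $\varphi(\Lambda)=Y$, by a pure dimension count, and then read off the statement about factors from it. Fix $\Lambda\in\Lb_1$. By definition of $\Lb_1$, $\Lambda$ is a closed stratum of dimension exactly $n-1$, and it is irreducible, being the image under the quotient morphism $p\colon V\surj W\qg V$ of a codimension-one flat $L\in\CL$. First I would note that $\varphi\colon\CH\to Y$ is finite (Lemma \ref{lemfini}), hence so is its restriction $\varphi|_\Lambda\colon\Lambda\to Y$ to the closed subvariety $\Lambda$; a finite morphism is closed and has finite fibres, so $\varphi(\Lambda)$ is a closed irreducible subset of $Y\simeq\BC^{n-1}$ with $\dim\varphi(\Lambda)=\dim\Lambda=n-1=\dim Y$. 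A closed irreducible subvariety of $Y$ of full dimension is all of $Y$, whence $\varphi(\Lambda)=Y$.

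Next I would recover the assertion about factors. Let $y\in Y-\CK$. Since $y\in\varphi(\Lambda)$ there is $x\in\BC$ with $(y,x)\in\Lambda\subseteq\CH$; in particular $x$ is a root of $\D(y,\cdot)$, so $x\in\LL(y)$, and as $y\notin\CK$ this root is simple. Let $\Lambda'$ be the open stratum of the discriminant stratification with $(y,x)\in{\Lambda'}^0$ (there is exactly one, since the open strata partition $W\qg V$). By the consequence of Lemma \ref{lemtype} recalled in the text, the multiplicity of $x$ in $\LL(y)$ equals $\codim\Lambda'$; being equal to $1$, this forces $\Lambda'\in\Lb_1$. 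Moreover a closed stratum is the disjoint union of the open strata it contains, so $(y,x)\in\Lambda$ gives ${\Lambda'}^0\subseteq\Lambda$, hence $\Lambda'\subseteq\Lambda$; two irreducible closed strata of the same dimension $n-1$, one contained in the other, must coincide, so $\Lambda'=\Lambda$. Thus $(y,x)\in\Lambda^0$, and by Lemma \ref{lemtype} the factor $f(y,x)$ of $\fact(y)$ has type $\Lambda$, which is the claim.

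The argument is essentially formal; the only point requiring care is showing, in the second step, that the point $(y,x)$ produced by $\varphi(\Lambda)=Y$ actually lies in the \emph{open} stratum $\Lambda^0$ rather than in a smaller one --- this is precisely where the hypothesis $y\notin\CK$ enters, through the correspondence ``multiplicity of $x$ in $\LL(y)$'' $=$ ``$\codim$ of the stratum of $(y,x)$''. A more combinatorial alternative would instead go through Theorem \ref{thmbij}: $\fact$ maps $Y-\CK$ onto $\Red(c)$, the Hurwitz action of $B_n$ on $\Red(c)$ is transitive and merely conjugates factors, so the multiset of types of the $n$ reflections is constant over $\Red(c)$; since the factors of any reduced decomposition of $c$ generate $W$ (Remark \ref{rqzariski}, or Proposition \ref{propcoxpar2} applied to $W_c=W$), one would be reduced to the fact that an irreducible complex reflection group cannot be generated by reflections avoiding a whole $W$-orbit of hyperplanes. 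That last fact would be the real obstacle along this route, whereas the geometric argument above sidesteps it entirely.
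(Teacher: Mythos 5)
Votre démonstration est correcte, mais elle inverse l'ordre logique de celle du texte et emprunte une route réellement différente. Le texte établit d'abord l'énoncé combinatoire : par surjectivité de $\fact$, toute réflexion de $\NCP_W$ apparaît dans \emph{une} décomposition réduite de $c$, puis la transitivité de l'action d'Hurwitz sur $\Red(c)$ (l'ensemble des classes de conjugaison des facteurs étant un invariant d'Hurwitz) force chaque classe à apparaître dans \emph{toute} décomposition réduite ; l'égalité $\varphi(\Lambda)=Y$ s'en déduit ensuite pour $y$ quelconque par désingularisation. Vous faites l'inverse : $\varphi(\Lambda)=Y$ s'obtient d'abord par un pur argument de dimension ($\varphi|_\Lambda$ est finie, donc $\varphi(\Lambda)$ est un fermé irréductible de dimension $n-1$ dans $Y\simeq\BC^{n-1}$), puis l'énoncé sur les facteurs se lit via la correspondance multiplicité/codimension du lemme \ref{lemtype}, l'hypothèse $y\notin\CK$ forçant $(y,x)$ à être dans la strate \emph{ouverte} $\Lambda^0$ — point que vous traitez correctement. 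Votre approche a l'avantage de ne pas invoquer la transitivité de l'action d'Hurwitz sur $\Red(c)$, qui pour les groupes complexes n'est connue qu'au cas par cas ; elle ne repose que sur la finitude de $\varphi$ et le lemme \ref{lemcoxpar}. En contrepartie, la preuve du texte donne directement un énoncé valable pour tout $y\in Y$ (un facteur de $\fact(y)$ multiple d'une réflexion de chaque type). Une petite réserve sur votre remarque finale : la voie combinatoire du texte ne nécessite pas le fait qu'un système générateur de réflexions rencontre chaque orbite d'hyperplans ; elle contourne entièrement cet obstacle en réalisant chaque classe de réflexions dans une décomposition réduite via la surjectivité de $\fact$, puis en utilisant l'invariance d'Hurwitz.
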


\begin{proof}[Démonstration :]

  D'après la surjectivité de l'application $\fact$, toute réflexion de
  $\NCP_W$ apparaît dans une factorisation $\fact(y)$. Donc, par
  transitivité de l'action d'Hurwitz sur $\Red(c)$, si $\xi \in
  \fact(Y-\CK)$, alors toutes les classes de conjugaison de réflexions de
  $\NCP_W$ apparaissent dans $\xi$. En effet, l'ensemble des classes de
  conjugaison de réflexions faisant partie d'une décomposition réduite est
  invariant par l'action d'Hurwitz.

  Soit $y \in Y$. Quitte à désingulariser, on peut trouver $y' \in Y-\CK$
  tel que $\fact(y')$ soit un raffinement de $\fact(y)$. Soit $\Lambda \in
  \Lb_1$, alors $\fact(y')$ contient un facteur de type $\Lambda$, donc il
  existe $x$ dans $\LL(y)$ tel que $f(y,x)$ soit multiple (pour $\<$) d'une
  réflexion de type $\Lambda$, d'où $(y,x) \in \Lambda$, et $y \in
  \varphi(\Lambda)$.
\end{proof}

Désormais on suppose $k \geq 2$.

\begin{prop}
\label{propcompirr}
  Les $\varphi(\Lambda)$, pour $\Lambda \in \Lb_k$, sont distincts deux à deux,
  et sont les composantes irréductibles de $Y_k$.
\end{prop}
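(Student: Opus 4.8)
The plan is to establish that the map $\Lambda \mapsto \varphi(\Lambda)$ is injective on $\Lb_k$ for $k \geq 2$, and that each $\varphi(\Lambda)$ is an irreducible component of $Y_k$. Irreducibility of each $\varphi(\Lambda)$ is already in hand: by Lemma \ref{lemfini} the map $\varphi$ is finite, hence closed, and the continuous image of an irreducible set is irreducible, so each $\varphi(\Lambda)$ is closed and irreducible in $Y$. Since $Y_k = \bigcup_{\Lambda \in \Lb_k} \varphi(\Lambda)$ is a finite union of irreducible closed sets, it will suffice to show that no $\varphi(\Lambda)$ is contained in another $\varphi(\Lambda')$; then, after removing redundancies (of which there are none, by injectivity), the list of the $\varphi(\Lambda)$ is precisely the list of irreducible components.

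First I would prove that $\dim \varphi(\Lambda) = n-k$ for every $\Lambda \in \Lb_k$. The inclusion $\varphi(\Lambda) \subseteq Y$ together with the stratification picture gives $\dim \varphi(\Lambda) \le \dim \Lambda = n-k$. For the reverse inequality, the key point is that $\varphi$ restricted to $\Lambda^0$ has finite fibers: if $(y,x) \in \Lambda^0$, then by Lemma \ref{lemtype} the point $x$ has multiplicity exactly $k$ in $\LL(y)$, so there are only finitely many $x$ with $(y,x) \in \Lambda$ lying over a given $y$ (the multiplicities in $\LL(y)$ sum to $n$). Hence $\varphi|_\Lambda$ is generically finite onto its image, forcing $\dim \varphi(\Lambda) = \dim \Lambda = n-k$. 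This already shows the $\varphi(\Lambda)$ all have the same dimension, so none can be strictly contained in another; thus they are pairwise incomparable for inclusion, and to finish I only need that they are pairwise \emph{distinct}.

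The main obstacle is the injectivity: showing $\varphi(\Lambda) = \varphi(\Lambda')$ implies $\Lambda = \Lambda'$. The idea is to recover $\Lambda$ from the generic behaviour of $\fact$ over $\varphi(\Lambda)$. Suppose $\varphi(\Lambda) = \varphi(\Lambda') =: Z$, an irreducible closed set of dimension $n-k$ in $Y$. For $y$ in a dense open subset of $Z$, one can arrange (possibly after the mild perturbation/desingularisation device used repeatedly above, cf. the proof of Theorem \ref{thmrevetement}) that $\LL(y)$ has exactly one point of multiplicity $k$ and all others simple --- i.e.\ $y \in Y_k^0$ and $\fact(y)$ is a primitive factorisation. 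By Lemma \ref{lemtype}, the long factor of $\fact(y)$ has type $\Lambda$ (resp.\ $\Lambda'$), since $(y,x)$ lies in the open stratum $\Lambda^0$ (resp.\ ${\Lambda'}^0$) for the appropriate $x$. But at such a $y$ there is a \emph{unique} point of multiplicity $k$ in $\LL(y)$, hence a unique long factor, which has a well-defined conjugacy class; therefore $\Lambda = \Lambda'$. The one delicate point to check carefully is that the generic point of $Z = \varphi(\Lambda)$ indeed lies in $Y_k^0$ --- equivalently, that $Y_k^0 \cap \varphi(\Lambda)$ is dense in $\varphi(\Lambda)$; this follows because $\varphi(\Lambda^0)$ is a constructible set of dimension $n-k$ contained in $\varphi(\Lambda)$, its points generically have a single multiplicity-$k$ element in the corresponding $\LL$-fibre, and the locus where an \emph{extra} coincidence occurs is a proper closed subset (it is cut out by $Y_{k+1}$ together with the loci where two multiplicity-$<k$ points collide, all of strictly smaller dimension). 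Assembling these observations gives that the $\varphi(\Lambda)$, $\Lambda \in \Lb_k$, are pairwise distinct irreducible closed subsets of $Y$ of the same dimension whose union is $Y_k$, hence exactly its irreducible components.
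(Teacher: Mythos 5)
Votre démonstration est correcte, mais elle suit un chemin sensiblement différent de celui du texte. Le texte montre directement que $\varphi(\Lambda) \nsubseteq \varphi(\Lambda')$ pour $\Lambda \neq \Lambda'$ : il fabrique, via la bijectivité de $\LL \times \fact$ (théorème \ref{thmbij}), un point explicite $y \in \varphi(\Lambda)$ tel que $\fact(y)=(c_\Lambda, s_{k+1},\dots,s_n)$ soit primitive et que $\LL(y)$ ait $n-k+1$ points distincts, puis observe que $y \notin \varphi(\Lambda')$ car le seul facteur de longueur $\geq k$ de $\fact(y)$ est de type $\Lambda \nsubseteq \Lambda'$. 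Vous scindez l'énoncé en (a) équidimensionnalité des $\varphi(\Lambda)$ (qui exclut toute inclusion stricte) et (b) injectivité de $\Lambda \mapsto \varphi(\Lambda)$, obtenue en évaluant $\fact$ en un point générique de l'image commune. Les deux arguments reposent sur le même noyau --- le lemme \ref{lemtype} et le fait qu'en un point de $\varphi(\Lambda)\cap Y_k^0$ l'unique facteur long de $\fact(y)$ est de type exactement $\Lambda$ (les strates ouvertes étant disjointes) --- mais vous remplacez la construction explicite d'un bon point par un argument de généricité. Le seul endroit où votre rédaction est mince est l'affirmation que les lieux de « coïncidences supplémentaires » sont de dimension strictement plus petite : c'est vrai, mais cela requiert la finitude de $\LL$ (théorème \ref{thmLLBessis}) ; comme $\dim E_\mu = \#\mu - 1$, on a pour tout $\mu < \alpha_k$ l'inégalité $\dim \LL^{-1}(E_\mu) \leq n-k-1 < \dim \varphi(\Lambda)$, d'où la non-vacuité (donc la densité) de l'ouvert $\varphi(\Lambda)\cap Y_k^0$ dans $\varphi(\Lambda)$. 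Le texte contourne ce point en obtenant gratuitement un bon point par la surjectivité de $\LL\times\fact$. Les deux approches sont valables : celle du texte est plus courte et plus constructive, la vôtre rend explicite l'équidimensionnalité des composantes, ce qui est une information utile en soi.
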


\begin{proof}[Démonstration :]
  D'après la discussion précédente, il suffit de montrer que si $\Lambda,
  \Lambda' \in \Lb_k$, avec $\Lambda \neq \Lambda'$, alors $\varphi(\Lambda)
  \nsubseteq \varphi(\Lambda')$.
  
  La strate ouverte $\Lambda ^0$ correspond à une classe de conjugaison de
  sous-groupe parabolique de rang $k$. Soit $c_\Lambda$ un élément de Coxeter
  parabolique, divisant $c$, de type $\Lambda$. Complétons avec $n-k$ réflexions
  pour obtenir une factorisation complète : $\xi=(c_\Lambda,
  s_{k+1},\dots,s_n)$. Alors, par \ref{thmbij}, il existe $y$ dans $Y$,
  tel que $\fact(y)=\xi$, et que $\LL(y)$ ait $n-k+1$ points
  distincts. Soit $x$ le point multiple dans $\LL(y)$ ; alors $(y,x)\in
  \Lambda^0$ puisque l'élément de Coxeter parabolique associé à $x$ est de
  type $\Lambda$. D'où : $y\in \varphi(\Lambda)$.

  Supposons que $y\in \varphi(\Lambda')$ ; alors il existe $x'$ tel que
  $(y,x')\in \Lambda'$. Donc, dans $\fact(y)=\xi$, on doit trouver un
  élément de type $\Lambda''\subseteq \Lambda'$, de longueur supérieure ou
  égale à $k$ ; or, dans $\fact(y)$, seul $c_\Lambda$ convient, et il est
  de type $\Lambda \nsubseteq \Lambda'$. D'où $y \notin \varphi(\Lambda')$,
  ce qui conclut la preuve.
\end{proof}

\subsection{Connexité par arcs}
\label{subpartcpa}
~

Notons comme plus haut $Y_k^0:= \LL ^{-1} (E_{\alpha_k}^0)$, et pour $\Lambda \in
\Lb_k$, $\varphi(\Lambda)^0:=\varphi(\Lambda)\cap Y_k^0$. 

Pour $\Lambda \in \Lb_k$, notons $\Fact_{\alpha_k}^{\Lambda}(c)$ les factorisations
primitives de $c$, de forme $\alpha_k$, et dont l'élément long est de type
$\Lambda$. Alors, en vertu du lemme \ref{lemtype}, on a :
\[ \varphi(\Lambda)^0 = \fact ^{-1} (\Fact_{\alpha_k}^{\Lambda}(c)). \]

Ainsi $Y_k ^0=\bigsqcup_{\Lambda \in \Lb_k} \varphi(\Lambda)^0$. D'après la
surjectivité de $\fact$, $\varphi(\Lambda)^0$ est un ouvert (de Zariski) non
vide de $\varphi(\Lambda)$. 

\begin{prop}
\label{propcpa}
Pour tout $\Lambda \in \Lb_k$, $\varphi(\Lambda)^0$ est connexe par arcs.
\end{prop}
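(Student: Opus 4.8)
The plan is to reduce Proposition \ref{propcpa} to the standard fact that a non-empty Zariski-open subset of an irreducible complex algebraic variety is path-connected for the classical topology; with Proposition \ref{propcompirr} in hand this is essentially all that is needed, and I do not expect a serious obstacle, the genuine work having been done upstream.

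First I would check that $\varphi(\Lambda)$ is an irreducible Zariski-closed subvariety of $Y$. By the description of the discriminant stratification (part \ref{subpartstratquotient}), $\Lambda \in \Lb_k$ equals $p(L)$ for a flat $L \in \CL$; since $L$ is a linear subspace of $V$ it is irreducible and closed, and by Lemme \ref{lemfini} the morphism $p$ is finite, hence closed, so $\Lambda = p(L)$ is irreducible and Zariski-closed in $W \qg V$. Applying Lemme \ref{lemfini} once more — $\varphi$ is finite, hence closed — we get that $\varphi(\Lambda)$ is irreducible and Zariski-closed in $Y$. On the other hand, $\varphi(\Lambda)^0 = \varphi(\Lambda) \cap Y_k^0$ is a non-empty Zariski-open subset of $\varphi(\Lambda)$, as already recorded just before the statement: it is open because $Y_k^0 = \LL^{-1}(E_{\alpha_k}^0)$ with $E_{\alpha_k}^0$ Zariski-open in $E_{\alpha_k}$ and $\varphi(\Lambda)$ an irreducible component of $Y_k$ (Proposition \ref{propcompirr}), and non-empty because of the surjectivity of $\fact$ together with Théorème \ref{thmbij} — one picks a parabolic Coxeter element $c_\Lambda \< c$ of type $\Lambda$, completes it to a factorisation $(c_\Lambda, s_{k+1}, \dots, s_n)$ by reflections, and realises it as $\fact(y)$ for some $y$ with $\LL(y)$ having $n-k+1$ distinct points, which is the very point $y$ constructed in the proof of Proposition \ref{propcompirr}; then $y \in \varphi(\Lambda)^0$.

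Then $\varphi(\Lambda)^0$ is itself an irreducible complex algebraic variety, being a non-empty Zariski-open subvariety of the irreducible variety $\varphi(\Lambda)$; hence it is connected for the classical topology, and, being locally path-connected like every complex algebraic variety, it is path-connected, which is the claim. The real content thus lies in Proposition \ref{propcompirr} and Lemme \ref{lemcoxpar}, which identify the $\varphi(\Lambda)$, $\Lambda \in \Lb_k$, with the distinct irreducible components of $Y_k$ and ensure that $Y_k^0$ meets each of them in a non-empty open set. Should one prefer not to quote the transcendental connectedness of irreducible varieties directly, the fallback is to pass through the smooth locus of $\varphi(\Lambda)^0$: it is a dense connected complex submanifold, its complement in $\varphi(\Lambda)^0$ is a proper analytic subset, so removing it preserves connectedness, and local path-connectedness concludes.
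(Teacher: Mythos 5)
Your argument is correct, but it takes precisely the route the paper mentions and then deliberately sidesteps. You reduce everything to the general theorem that a non-empty Zariski-open subset of an irreducible complex algebraic variety is path-connected for the transcendental topology; the paper acknowledges this fact (``c'est un fait général, mais non trivial'') and instead gives an explicit argument that avoids quoting it. Namely, it lifts to the flat $L \in \CL$ with $p(L)=\Lambda$: the set $\Omega = L \cap (\varphi\circ p)^{-1}(\varphi(\Lambda)^0)$ is a Zariski-open subset of the complex \emph{vector space} $L$, where path-connectedness of the complement of a proper closed subvariety is elementary (join two points by a complex affine line and remove finitely many points from a copy of $\BC$); since $\varphi(\Lambda)^0 = \varphi\circ p\,(\Omega)$ and $\varphi\circ p$ is continuous, the conclusion follows. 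Your version buys generality at the cost of invoking a genuinely non-trivial transcendental fact (your ``fallback'' through the smooth locus does not really escape this, since connectedness of the smooth locus of an irreducible variety is essentially the same theorem); the paper's version buys self-containedness by exploiting the specific feature that every stratum $\Lambda$ is the image of a linear subspace of $V$. Your preliminary verifications (irreducibility and closedness of $\varphi(\Lambda)$ via Lemme \ref{lemfini}, non-emptiness and openness of $\varphi(\Lambda)^0$ via la surjectivité de $\fact$) match what the paper records just before the statement, so there is no gap — only a different, heavier, final step.
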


\begin{proof}[Démonstration :]
  C'est un fait général, mais non trivial, que tout ouvert de Zariski d'une
  variété algébrique complexe irréductible est connexe par arcs. Dans le
  cas présent on peut cependant donner une démonstration explicite. Soit
  $\Lambda \in \Lb_k$, et soit $L \in \CL$ tel que $p(L)=\Lambda$. Notons $\Omega=
  L\cap (\varphi \circ p)^{-1}(\varphi(\Lambda)^0)$. Comme $p: V \to W \qg V$ et
  $\varphi : W \qg V \to Y$ sont des morphismes algébriques, $\Omega$ est
  un ouvert de Zariski de $L$. Comme $L$ est un espace vectoriel sur $\BC$,
  la connexité par arcs de $\Omega$ est alors évidente. D'autre part,
  $\varphi(\Lambda)^0=\varphi \circ p (\Omega)$, avec $\varphi \circ p$
  continue, donc $\varphi(\Lambda)^0$ est connexe par arcs.
\end{proof}

En utilisant le théorème \ref{thmcomposantes}, on va en déduire aisément
que les $\varphi(\Lambda)^0$, pour $\Lambda \in \Lb_k$, sont bien les
composantes connexes par arcs de $Y_k^0$.

\section{Forte conjugaison, et cas des réflexions}
\label{partrefl}

\begin{thm}
  \label{thmfconj2}
  Soit $k \in \{2,\dots , n \}$. Alors :
  \begin{itemize}
  \item les parties $\varphi(\Lambda)^0$, pour $\Lambda \in
    \Lb_k$, sont les composantes connexes par arcs de $Y_k^0$ ;
  \item les ensembles
    $\fact(\varphi(\Lambda)^0)=\Fact_{\alpha_k}^{\Lambda}(c)$ sont les
    orbites d'Hurwitz de $\fact(Y_k^0)=\Fact_{\alpha_k}(c)$ sous $B_{n-k+1}$.
  \end{itemize}
  Par conséquent, deux éléments de $\NCP_W$ de longueur $k$ sont fortement
  conjugués si et seulement s'ils sont conjugués.
\end{thm}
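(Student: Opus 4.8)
Le but est de prouver le Théorème \ref{thmfconj2}, dont la dernière assertion est exactement le Théorème \ref{thmfconj} (et donc le résultat principal \ref{thmintro}). L'idée est d'assembler les résultats déjà établis : le Théorème \ref{thmcomposantes} ramène la compréhension des orbites d'Hurwitz à l'identification des composantes connexes par arcs des strates $Y_\lambda^0$, et dans la Proposition \ref{propcompirr} et la Proposition \ref{propcpa} on a déjà obtenu que, pour $\lambda = \alpha_k$ une partition primitive, les $\varphi(\Lambda)$ (pour $\Lambda \in \Lb_k$) sont les composantes \emph{irréductibles} de $Y_k$, et que chaque ouvert $\varphi(\Lambda)^0 = \varphi(\Lambda) \cap Y_k^0$ est connexe par arcs. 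Il reste à en tirer la conclusion topologique, puis à traduire en termes de factorisations et de conjugaison forte.

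Premièrement, je montre que les $\varphi(\Lambda)^0$, pour $\Lambda\in\Lb_k$, sont exactement les composantes connexes par arcs de $Y_k^0$. On a la décomposition $Y_k^0 = \bigsqcup_{\Lambda\in\Lb_k}\varphi(\Lambda)^0$ (conséquence du lemme \ref{lemtype}, chaque $y\in Y_k^0$ ayant un unique point multiple dans $\LL(y)$, dont le type détermine une unique strate $\Lambda$). Chaque $\varphi(\Lambda)^0$ est connexe par arcs d'après la Proposition \ref{propcpa}. Il faut vérifier qu'aucun de ces morceaux ne recolle un autre dans $Y_k^0$, c'est-à-dire que $\overline{\varphi(\Lambda)^0}\cap \varphi(\Lambda')^0 = \emptyset$ dans la topologie de $Y_k^0$ pour $\Lambda\neq\Lambda'$. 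Or l'adhérence (de Zariski, ou classique) de $\varphi(\Lambda)^0$ est $\varphi(\Lambda)$ (c'est un ouvert de Zariski non vide de l'irréductible $\varphi(\Lambda)$), et d'après la Proposition \ref{propcompirr} les composantes irréductibles $\varphi(\Lambda)$ sont deux à deux distinctes ; donc $\varphi(\Lambda) \cap \varphi(\Lambda')^0$ est contenu dans une réunion de composantes irréductibles de $Y_k$ de dimension strictement plus petite (sinon $\varphi(\Lambda')\subseteq\varphi(\Lambda)$, contredisant \ref{propcompirr}), donc ne rencontre pas l'ouvert $\varphi(\Lambda')^0$ de $\varphi(\Lambda')$. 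Ceci montre que chaque $\varphi(\Lambda)^0$ est ouvert \emph{et} fermé dans $Y_k^0$, donc est bien une réunion de composantes connexes par arcs ; étant lui-même connexe par arcs, c'en est une seule.

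Deuxièmement, j'applique le Théorème \ref{thmcomposantes} avec $\lambda = \alpha_k$ et $p = n-k+1 = \#\alpha_k$ : l'application $\fact$ induit une bijection entre les composantes connexes par arcs de $Y_k^0 = Y_{\alpha_k}^0$ et les orbites d'Hurwitz de $\Fact_{\alpha_k}(c)$ sous $B_{n-k+1}$. Combiné avec le point précédent et avec l'égalité $\fact(\varphi(\Lambda)^0) = \Fact_{\alpha_k}^\Lambda(c)$ (observée juste avant la Proposition \ref{propcpa}, via le lemme \ref{lemtype}), on obtient que les $\Fact_{\alpha_k}^\Lambda(c)$ sont précisément les orbites d'Hurwitz de $\Fact_{\alpha_k}(c)$. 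Autrement dit, deux factorisations primitives de $c$ de forme $\alpha_k$ sont dans la même orbite d'Hurwitz si et seulement si leurs facteurs longs ont même type, c'est-à-dire sont conjugués dans $W$ (les facteurs longs étant des éléments de $\NCP_W$ de longueur $k$, et le type d'un élément de $\NCP_W$ classifiant sa classe de conjugaison d'après la Proposition reliant $\Lb$ aux classes de conjugaison d'éléments de $\NCP_W$).

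Troisièmement, la traduction en conjugaison forte. Si $u, v\in\NCP_W$ sont de longueur $k\geq 2$ et conjugués dans $W$, ils ont même type $\Lambda$, donc il existe des factorisations primitives $\xi = (u, r_{k+1},\dots)$ et $\xi' = (v, r'_{k+1},\dots)$ dans $\Fact_{\alpha_k}^\Lambda(c)$, qui sont dans la même orbite d'Hurwitz d'après ce qui précède ; par la Proposition \ref{propfconjhur} (sens (ii) $\Rightarrow$ (i)), $u$ et $v$ sont fortement conjugués dans $\NCP_W$. Réciproquement, si $u \fconj v$ dans $\NCP_W$ alors ils sont conjugués dans $W$ (la longueur, donc la classe de conjugaison, étant préservée par la relation $\stackrel{1}{\sim}$, comme noté dans la Remarque \ref{rqhur}). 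Ceci achève la preuve de $\ref{thmfconj}$, et donc du théorème principal $\ref{thmintro}$ via $\ref{propfconjhur}$.

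**Principal obstacle.** Tout le contenu non trivial a déjà été absorbé dans les énoncés antérieurs : la surjectivité de $\fact$ et le Théorème \ref{thmbij} de Bessis, la finitude de $\varphi$ (lemme \ref{lemfini}) qui donne l'irréductibilité et la fermeture des $\varphi(\Lambda)$, la distinction des composantes irréductibles (Proposition \ref{propcompirr}, qui repose sur le lemme \ref{lemcoxpar}), la connexité par arcs des ouverts de Zariski (Proposition \ref{propcpa}), et la compatibilité Galois–Hurwitz sur les strates (lemme \ref{lemcompat}, d'où le Théorème \ref{thmcomposantes}). La seule subtilité restante est l'argument topologique du point un, qui demande de bien distinguer adhérence dans $Y_k$ et adhérence dans $Y_k^0$, et d'utiliser que la frontière $\varphi(\Lambda) \setminus \varphi(\Lambda)^0$ vit dans les strates de dimension strictement inférieure — ce qui est exactement ce que garantit la combinatoire des strates de $\CH$ développée en partie \ref{partecp}. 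Le cas particulier $k=1$ (réflexions) se traite séparément comme annoncé dans l'introduction (Remarque \ref{rqrefl}), en tenant compte du fait que la permutation de la tresse doit envoyer la position du facteur choisi sur la position souhaitée.
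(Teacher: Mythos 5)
Votre démonstration est correcte dans sa structure et repose sur le même assemblage que celle du texte : le théorème \ref{thmcomposantes} pour traduire composantes connexes par arcs en orbites d'Hurwitz, la proposition \ref{propcpa} pour la connexité par arcs de chaque $\varphi(\Lambda)^0$, le lemme \ref{lemtype} pour identifier le type du facteur long, puis la proposition \ref{propfconjhur} pour la traduction en conjugaison forte. La seule divergence réelle est l'argument de séparation des morceaux $\varphi(\Lambda)^0$ : le texte l'obtient directement par l'invariant d'Hurwitz (un chemin dans $Y_k^0$ reliant $y\in\varphi(\Lambda)^0$ à $y'\in\varphi(\Lambda')^0$ donnerait, via \ref{thmcomposantes}, deux factorisations dans la même orbite, donc des facteurs longs conjugués, donc $\Lambda=\Lambda'$), tandis que vous montrez que chaque $\varphi(\Lambda)^0$ est ouvert et fermé dans $Y_k^0$. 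Votre voie aboutit aussi, mais la justification que vous donnez de $\varphi(\Lambda)\cap\varphi(\Lambda')^0=\varnothing$ est défectueuse : d'une part toutes les composantes irréductibles de $Y_k$ ont la même dimension $n-k$ (il n'y a pas de composantes « de dimension strictement plus petite »), d'autre part et surtout, être contenu dans un fermé de dimension strictement inférieure n'empêche nullement de rencontrer un ouvert de Zariski non vide de $\varphi(\Lambda')$. La bonne raison est plus courte et ne demande aucune dimension : par définition $\varphi(\Lambda)^0=\varphi(\Lambda)\cap Y_k^0$, donc $\varphi(\Lambda)\cap\varphi(\Lambda')^0=\varphi(\Lambda)^0\cap\varphi(\Lambda')^0$, qui est vide puisque $Y_k^0=\bigsqcup_{\Lambda}\varphi(\Lambda)^0$ (conséquence du lemme \ref{lemtype}). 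Avec cette correction, l'adhérence de $\varphi(\Lambda)^0$ dans $Y_k^0$ est contenue dans $\varphi(\Lambda)\cap Y_k^0=\varphi(\Lambda)^0$, d'où le caractère fermé, et l'ouverture suit du fait que le complémentaire est une union finie de fermés. Le reste de votre rédaction (application de \ref{thmcomposantes} avec $p=n-k+1$, existence de factorisations primitives de facteur long donné via la surjectivité de $\fact$, renvoi au cas $k=1$ traité par \ref{thmfconjrefl}) est conforme au texte.
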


\begin{proof}[Démonstration :]
  Soient $y\in \varphi(\Lambda)^0$ et $y'\in \varphi(\Lambda')^0$, avec
  $\Lambda,\Lambda' \in \Lb_k$. Si $y$ et $y'$ sont reliés par un chemin
  dans $Y_k^0$, alors, par le théorème \ref{thmcomposantes}, $\fact(y)$ et
  $\fact(y')$ sont dans la même orbite d'Hurwitz, donc leurs éléments longs
  sont conjugués, \ie $\Lambda=\Lambda'$ (par lemme \ref{lemtype}). Donc la
  proposition \ref{propcpa} implique que les $\varphi(\Lambda)^0$ sont les
  composantes connexes par arcs de $Y_k^0$.  Les orbites d'Hurwitz de
  $\fact(Y_k^0)$ sont alors directement données par le théorème
  \ref{thmcomposantes}.

  Enfin, la propriété de forte conjugaison vient du fait qu'une
  factorisation de forme $\alpha_k$ est dans $\fact(\varphi(\Lambda)^0)$ si et
  seulement si son facteur long est de type $\Lambda$.
\end{proof}

Pour conclure la preuve du théorème \ref{thmfconj}, il reste à déterminer
les classes de conjugaison forte de réflexions :

\begin{thm}
  \label{thmfconjrefl}
  Soient $r,r'$ deux réflexions de $\NCP_W$. Si $r$ et $r'$ sont
  conjuguées, alors $r$ et $r'$ sont fortement conjuguées dans $\NCP_W$.
\end{thm}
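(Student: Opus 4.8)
The plan is to reduce the statement for reflections to the already-proved primitive case (Theorem \ref{thmfconj2}), by finding, for any two conjugate reflections $r, r'$ of $\NCP_W$, an element of $\NCP_W$ of length $2$ that ``sees'' both of them. More precisely, I would first observe that since $W$ is essential and irreducible (we have reduced to the irreducible case), and since the strong-conjugacy relation is built from elementary conjugations by reflections (cf. Remarque \ref{rqrefl}), it suffices to produce a chain of reflections $r = r_0, r_1, \dots, r_m = r'$ in $\CR$, all conjugate in $W$, such that consecutive ones satisfy $\ell(r_i r_{i+1}) = 2$ and $r_i r_{i+1} \< c$ (for a suitable Coxeter element, or after conjugating $c$). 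Indeed, if $w := r_i r_{i+1}$ is a parabolic Coxeter element of length $2$ dividing $c$, then $r_i$ and $r_{i+1}$ both appear as factors of the primitive factorisation obtained by completing $(w, \dots)$, and the braid $\bm{\sigma_1}^2$ (as in the proof of Proposition \ref{propfconjhur}) conjugates $r_i$ to $r_{i+1}$ within that factorisation; hence $r_i$ and $r_{i+1}$ are strongly conjugate, and strong conjugacy is transitive.

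So the crux becomes: given two conjugate reflections $r, r'$, connect them by such a chain. Here I would use the geometry of $\CH_1 = \CH$ together with Theorem \ref{thmfconj2} applied to $k = 2$. Fix a strate $\Lambda \in \Lb_2$ whose parabolic subgroups of rank $2$ contain (conjugates of) both $r$ and $r'$ as reflections — such a $\Lambda$ exists because any two reflection hyperplanes $H_r, H_{r'}$ span a flat $L = H_r \cap H_{r'}$ of codimension $2$, and after conjugating we may assume $L = \Ker(w-1)$ with $w \< c$; then $r, r'$ lie in the rank-$2$ parabolic $W_L$. Inside $W_L$, which is an irreducible (or reducible) rank-$2$ complex reflection group, the two reflections $r, r'$ (being $W$-conjugate, hence related inside $W_L$ up to the $W_L$-structure) can be joined by a chain of reflections of $W_L$, consecutive pairs of which multiply to a length-$2$ element of $W_L$, i.e.\ a parabolic Coxeter element of $W_L$ — this is a rank-$2$ computation, using that $\NCP_{W_L}$ for a well-generated rank-$2$ group is connected under Hurwitz moves on $\Red$ (the transitivity of the Hurwitz action on reduced decompositions of a Coxeter element, already known). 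Transporting this chain back to $\NCP_W(c)$ via Proposition \ref{propcoxpar2} (which identifies $W_L$ with $\langle r_1, \dots, r_k\rangle$ for reduced decompositions) completes the argument.

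The main obstacle I anticipate is the subtlety that two reflections conjugate in $W$ need not be conjugate in a given rank-$2$ parabolic subgroup $W_L$ containing both — in general $W$ may fuse conjugacy classes of reflections that are distinct inside $W_L$. To handle this I would not insist on staying inside a single $W_L$: instead, since $r$ and $r'$ are $W$-conjugate, write $r' = g r g^{-1}$ for some $g \in W$, decompose $g = t_1 \cdots t_m$ into reflections, and propagate $r$ stepwise, $r \mapsto t_m r t_m^{-1} \mapsto t_{m-1} t_m r t_m^{-1} t_{m-1}^{-1} \mapsto \cdots$; at each step the conjugating reflection $t_j$ and the current reflection $\rho$ generate a rank $\leq 2$ parabolic subgroup, and one checks that either $t_j \rho = \rho t_j$ (nothing to do) or $\ell(t_j \rho) = 2$ and $t_j \rho$ is a parabolic Coxeter element, so that $\rho \fconj t_j \rho t_j^{-1}$ by the length-$2$ case of Theorem \ref{thmfconj2} applied after conjugating $c$ appropriately (strong conjugacy being, by Remarque \ref{rqhur}, independent of the choice of Coxeter element up to the identification $\NCP_W(aca^{-1}) = a\NCP_W(c)a^{-1}$). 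This reduces everything to the single elementary fact that for two distinct non-commuting reflections $t, \rho$ in a complex reflection group, $t\rho$ has reflection length $2$ and fixed space of codimension $2$, hence is a parabolic Coxeter element of the rank-$2$ parabolic $W_{H_t \cap H_\rho}$ — which follows from Proposition \ref{proplongueur} and Proposition \ref{propcoxpar}. Assembling these elementary conjugations and invoking transitivity of $\fconj$ yields $r \fconj r'$, finishing the proof of Theorem \ref{thmfconj} and therefore of Theorem \ref{thmintro}.
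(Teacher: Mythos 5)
Il y a une lacune réelle dans votre proposition, au point précis où vous écrivez \og applied after conjugating $c$ appropriately \fg. La conjugaison forte est une relation définie \emph{relativement à l'élément de Coxeter fixé} $c$ : dans la relation élémentaire $w \stackrel{1}{\sim} w'$, le conjugateur $x$ doit vérifier $xw \in \NCP_W(c)$ pour \emph{ce} $c$-là. Dans votre propagation pas à pas $\rho \mapsto t_j\rho t_j^{-1}$, le produit $t_j\rho$ est certes de longueur $2$ et est un élément de Coxeter parabolique (donc divise \emph{un} élément de Coxeter $c_j$), mais rien ne garantit $t_j\rho \< c$ pour le $c$ fixé : les réflexions $t_j$ proviennent d'une décomposition arbitraire d'un conjugateur $g$ et n'ont aucune raison de former avec $\rho$ un diviseur de $c$. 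Vous obtenez donc une chaîne de relations élémentaires vivant dans des posets $\NCP_W(c_j)$ \emph{différents à chaque étape}, et ces relations ne se composent pas en une conjugaison forte dans $\NCP_W(c)$ — ce qui est pourtant exactement ce que demande le théorème (et ce dont on a besoin pour l'interprétation en orbites d'Hurwitz de factorisations du $c$ fixé, cf. Remarque \ref{rqrefl}). Produire une chaîne de conjugateurs $x_j$ avec $x_j\rho_j \< c$ à chaque étape est essentiellement équivalent à l'énoncé à démontrer (via la Proposition \ref{propfconjhur} et la réduction aux conjugateurs réflexions) ; votre réduction ne fait donc pas avancer le problème. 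Notez aussi que le Théorème \ref{thmfconj2} ne couvre que les éléments de longueur $k\geq 2$ et ne dit rien des réflexions elles-mêmes — c'est précisément pour cela qu'un argument séparé est nécessaire ici. Enfin, l'idée de placer $r$ et $r'$ dans un même parabolique de rang $2$ non croisé échoue aussi : conjuguer le plat $H_r\cap H_{r'}$ en position non croisée déplace $r$ et $r'$ en même temps.

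La démonstration du texte est de nature globale et géométrique, et c'est l'ingrédient que votre proposition ne remplace pas. On choisit $y,y'\in Y-\CK$ avec $\fact(y)=(r,r_2,\dots,r_n)$, $\fact(y')=(r',r_2',\dots,r_n')$ et $\LL(y)=\LL(y')$, de sorte que $(y,x)$ et $(y',x)$ (où $x$ est le point minimal de la configuration) appartiennent tous deux à $\Lambda\cap\CH'$, où $\Lambda\in\Lb_1$ est la strate irréductible correspondant à la classe de conjugaison commune de $r$ et $r'$, et $\CH'=\varphi^{-1}(Y-\CK)$. Cet ensemble est un ouvert de Zariski de la variété irréductible $\Lambda$, donc connexe par arcs (même argument que pour la Proposition \ref{propcpa}) ; un chemin de $(y,x)$ à $(y',x)$ dans $\CH'$ se projette en un lacet de $\Enreg$ représentant une tresse qui \emph{stabilise le brin} $(x)$, et la compatibilité Galois--Hurwitz conclut. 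C'est cette connexité d'une strate de codimension $1$ privée du lieu de bifurcation qui fait tout le travail, et elle n'a pas d'analogue dans votre argument combinatoire.
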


\begin{rqe}
  Cette propriété apporte une précision intéressante concernant l'action
  d'Hurwitz de $B_n$ sur $\Red(c)$ : si $(r_1,r_2,\dots, r_n)$ et
  $(r_1',r_2',\dots, r_n')$ sont deux décompositions réduites de $c$, avec
  $r_1, r_1' \in \NCP_W$ deux réflexions conjuguées, alors il existe une
  tresse de $B_n$, \emph{pure par rapport au premier brin}, qui transforme
  l'une en l'autre (cf. remarque \ref{rqrefl} sur le lien entre forte
  conjugaison et action d'Hurwitz).
\end{rqe}

\begin{proof}[Démonstration :]
  Considérons $\varphi : \CH \to Y$, $(y,x) \mapsto y$. Notons $\CH' :=
  \varphi ^{-1} (Y- \CK)$. Alors la restriction $\varphi ' : \CH' \to Y -
  \CK$ est un revêtement non ramifié à $n$ feuillets (continuité des
  racines d'un polynôme à racines simples).

  Soient $r,r'$ deux réflexions de $\NCP_W$ conjuguées. Par surjectivité de
  $\fact$, il existe $y,y' \in Y -\CK$, tels que
  $\fact(y)=(r,r_2,\dots,r_n)$ et $\fact(y')=(r',r_2',\dots,r_n')$. On peut
  supposer que $\LL(y)=\LL(y')$ ; soit $x$ leur élément minimal pour $\lex$
  (correspondant à $r$ et $r'$). Soit $\Lambda$ la strate de $\Lb_1$
  correspondant à la classe de conjugaison de $r$ et $r'$. D'après le lemme
  \ref{lemtype}, $(y,x)$ et $(y',x)$ sont dans $\Lambda ^0$. Plus
  précisément, $(y,x)$ et $(y',x)$ sont dans $\Lambda \cap \CH'$, que l'on
  va noter $\Lambda'$.

  Notons que $\Lambda'$ est un ouvert de Zariski de $\Lambda$, donc est
  connexe par arcs, par le même argument que pour la proposition
  \ref{propcpa}. Par conséquent, on peut relier $(y,x)$ et $(y',x)$ par un
  chemin $\gamma$ dans $\CH '$. Celui-ci se projette en un chemin dans $Y-
  \CK$, et détermine via $\LL$ un lacet dans $\Enreg$. Ce lacet
  représente une tresse $\beta$ qui, par construction, stabilise le premier
  brin $(x)$. Ainsi $y'=y \cdot \beta$ par l'action de monodromie, et
  $\fact(y')=\fact(y)\cdot \beta$ par l'action d'Hurwitz. Comme $\beta$
  stabilise le brin $(x)$, on en déduit que $r$ et $r'$ sont fortement
  conjugués, en vertu de la remarque \ref{rqrefl}.
\end{proof}

\selectlanguage{english}

\chapter{Discriminants and Jacobians of virtual reflection groups}

\label{chapjac}

\section*{Introduction}

This chapter is a non-Galois version of the first few steps of the
classical invariant theory of reflection groups. We will deal with
questions of commutative algebra, that were at first motivated by empirical
observations on the extensions defined by Lyashko-Looijenga morphisms. 

We consider a finite polynomial ring extension $A\subseteq B$, where $A$ is not
necessarily the ring of invariants of $B$ under a group action. Thus, we
cannot simply imitate the classical proofs of invariant theory, as they
really make use of the group action. However, in our setting, many
properties seem to work the same way as for Galois extensions, particularly
for Jacobian and ``discriminant'' of the extension.

\bigskip

Note that we use only elementary commutative algebra, and that the
properties derived here are presumably folklore. The situation that we
describe is in fact surprisingly basic and universal, yet apparently
written nowhere from this perspective. The extensions usually studied in the
litterature, are either much too general, or of the form $A=B^G \subseteq
B$, where $B$ is a polynomial algebra; here we are rather interested in
extensions $A\subseteq B$ where $B$ \emph{and} $A$ are polynomial algebras,
but where we \emph{do not require} $A$ to be the ring of invariants of $B$
under a group action.

\bigskip

The key ingredients to describe and understand the situation are:
\begin{itemize}
\item a notion of \emph{``well-ramified''} polynomial extensions (this very
  natural property ought to be standard, but I could not find any
  references for that);
\item properties of the \emph{different} of an extension, that enable to
  apprehend the Jacobian of the extension.
\end{itemize}

\section{Motivations and main theorem}

Let $V$ be an $n$-dimensional complex vector space, and $W\subseteq \GL(V)$
a finite complex reflection group, with fundamental system of invariants
$f_1, \dots, f_n$ of degrees $d_1 \leq \dots \leq d_n$. From
Chevalley-Shephard-Todd theorem, we have the equality $\BC[V]^W=\BC[f_1,\dots, f_n]$,
and the isomorphism
\[ \begin{array}{lcl}
  W \qg V & \to & \BC^n \\
\bar{v} &\mapsto& (f_1(v),\dots, f_n(v)).
\end{array}\]

Let us denote by $\CA$ the set of all reflection hyperplanes, and consider the
discriminant of $W$ defined by
\[ \Delta_W := \prod_{H\in \CA} \alpha_H^{e_H} \ ,\] where $\alpha_H$ is an
equation of $H$ and $e_H$ is the order of the parabolic subgroup $W_H$. The
discriminant is the equation of the hypersurface $\displaystyle{\CH:=W \qg \bigcup_{H\in
  \CA}H}$ in $\BC^n=\Spec \BC[f_1,\dots, f_n]$.

Let us also consider the Jacobian $J_W$ of the morphism $(v_1,\dots, v_n)
\mapsto (f_1(v),\dots, f_n(v))$:
\[ J_W := \det \left( \frac{\partial f_i}{\partial v_j}
\right)_{\substack{1 \leq i\leq n\\1\leq j \leq n}} .\] It is well known
(see for example \cite[Sect.\ 21]{Kane}) that the Jacobian satisfies the
following factorisation:
\[ J_W \doteq \prod_{H\in \CA} \alpha_H^{e_H -1}, \] where $\doteq$ denotes
equality up to a nonzero scalar; thus we have $\Delta_W / J_W = \prod_{H\in
  \CA} \alpha_H$, \ie it is the product of the ramified polynomial of the
extension $\BC[f_1,\dots, f_n] \subseteq \BC[V]$.

\bigskip

One can construct a stunningly similar situation related to the morphism
$\LL$ defined in \ref{defLL}. Let us define an $\LL$-discriminant:
\[ D_{\LL} := \Disc(\Delta_W(f_1,\dots, f_n);f_n) \ \in \BC[f_1,\dots,
f_{n-1}] \] (it is the equation of the bifurcation locus $\CK$). Consider
also the $\LL$-Jacobian $J_{\LL}$ (the Jacobian determinant of the morphism
$\LL$). As we can observe empirically, and will prove in all generality in
the next chapter, it turns out that the couple of polynomials $(J_{\LL},
D_{\LL})$ behaves similarly to the couple $(J_W, \Delta_W)$: the quotient
$D_{\LL}/J_{\LL}$ is the product of the ramified polynomials of the
extension associated to $\LL$, and their valuations in $D_{\LL}$
correspond to their ramification indices.

\bigskip

Although we mainly have in mind applications to $D_{\LL}$, we devote this
chapter to the following general setup. Let us consider a \emph{finite
  graded polynomial extension} $A\subseteq B$ (see Definition
\ref{defext}): we have a graded polynomial algebra $B$ in $n$
indeterminates over $\BC$, and a polynomial subalgebra $A$
generated by $n$ weighted homogeneous elements of $B$, such that the
extension is finite. The two key examples are:
\begin{itemize}
\item the Galois extensions $\BC[f_1,\dots, f_n]\subseteq
  \BC[v_1,\dots, v_n]$, defined by a quotient morphism $V \to W \qg V$, where
  $w$ is a reflection group and $\BC[f_1,\dots, f_n]=\BC[V]^W$;
\item the Lyashko-Looijenga extensions $\BC[a_2,\dots, a_n]\subseteq
  \BC[f_1,\dots, f_{n-1}]$, given by a morphism $\LL$ (with the notations
  of section \ref{subpartLL}); these extensions are indeed finite according
  to Thm.\ \ref{thmLLBessis}.
\end{itemize}

In the first section we give the precise definitions, and use the notion of
\emph{different ideal} of an extension to describe a factorisation of the
Jacobian. In section \ref{partgeom}, about the geometry of such extensions,
we recall the relations between the ramification locus and the branch locus
of a branched covering. In section \ref{partwellram} we define the
\emph{well-ramified property} for a finite graded polynomial extension
(Def.\ \ref{defwellram}), and we give several characterisations of this
property (Prop.\ \ref{propwell}): this is a slightly weaker property than
the normality of the extension, and is also equivalent to the equality
between the preimage of the branch locus and the ramification locus.

\medskip

The main result of this chapter is:

\begin{theo}
  \label{thmintrojac}
  Let $W=(A \subseteq B)$ be a finite graded polynomial extension. Then the
  Jacobian $J$ of the extension verifies:
  \[ J \doteq \prod_{Q \in \Spram(B)} Q^{e_Q-1}\] where $\Spram(B)$ is the set
  of ramified polynomials in $B$ (up to association), and the $e_Q$ are the
  ramification indices.

  Moreover, if the extension $W$ is \emph{well-ramified} (according to
  Def.\ \ref{defwellram}), then:
\[ (J)\cap A = \left( \prod_{Q \in \Spram(B)} Q^{e_Q} \right) \qquad
\text{(as ideal of } A\text{).}\]
\end{theo}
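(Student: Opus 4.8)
The proof splits into the two assertions of the theorem, and the first is the foundation for the second. For the Jacobian formula, the plan is to reduce the computation to a question about localizations at height-one primes. Since $B$ is a polynomial ring, it is a normal (in fact regular, hence UFD) domain, so the ramified polynomials $Q$ (irreducible elements of $B$ dividing the different, equivalently generating the ramified height-one primes $\mathfrak{q}$) and their ramification indices $e_Q$ make sense. The key algebraic input is the classical identification of the \emph{different} $\mathfrak{D}_{B/A}$ of the extension: because $B$ is a polynomial algebra and the extension is finite and graded, $B$ is generated over $A$ by elements whose Jacobian matrix (with respect to the $Y_j$ and the $X_i$) has determinant $J$, and the different ideal is principal, generated exactly by $J$ (this is the standard "Jacobian = different" statement for extensions that are generically étale of the appropriate type; one checks generic étaleness from the fact that both rings are polynomial rings of the same Krull dimension and the fraction field extension is separable, being in characteristic $0$). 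On the other hand, localizing at each ramified height-one prime $\mathfrak{q}$ of $B$ lying over $\mathfrak{p}$ of $A$: the localization $B_{\mathfrak{q}}$ is a DVR, $A_{\mathfrak{p}}$ is a DVR (height one prime in a normal ring), and the valuation of the different at $\mathfrak{q}$ equals $e_{\mathfrak{q}}-1$ in the tame case — and the characteristic being $0$ forces tameness. Hence $v_{\mathfrak{q}}(J) = e_Q - 1$ for each such $Q$, and $v_{\mathfrak{q}}(J)=0$ for unramified height-one primes; since $B$ is a UFD this pins down $J$ up to a unit, i.e.\ up to a nonzero scalar (the only homogeneous units), giving $J \doteq \prod_{Q\in\Spram(B)} Q^{e_Q-1}$.

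For the second assertion, assuming the extension is well-ramified, I want to compute the ideal $(J)\cap A$ of $A$. Consider the element $D := \prod_{Q\in\Spram(B)} Q^{e_Q}$ of $B$. The first step is to show $D \in A$: this is where well-ramification enters. By definition (Def.\ \ref{defwellram}), for each height-one prime $\mathfrak{p}$ of $A$ either all primes of $B$ over $\mathfrak{p}$ are ramified or all are unramified; combined with the fact (to be extracted from Prop.\ \ref{propwell}) that this forces a clean relationship between $\mathfrak{p} B$ and the product of the $\mathfrak{q}$'s over it, one gets that $\prod_{\mathfrak{q} \mid \mathfrak{p}} Q_{\mathfrak{q}}^{e_{\mathfrak{q}}}$ is — up to a unit — the image of a generator of $\mathfrak{p}$ (a principal height-one prime of the polynomial ring $A$). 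Multiplying these over all ramified $\mathfrak{p}$ shows $D$ is, up to a scalar, a product of elements of $A$, hence $D\in A$. The second step is the containment $(J)\cap A = (D)$ as ideals of $A$. For "$\supseteq$": $D/J = \prod Q \in B$, so $D \in (J)$ in $B$, and since $D\in A$ we get $D\in (J)\cap A$, whence $(D)\subseteq (J)\cap A$. For "$\subseteq$": take $a\in (J)\cap A$, so $a = Jb$ with $b\in B$; I must show $D \mid a$ in $A$, equivalently (since $A$ is a UFD and $D \in A$) that every irreducible factor of $D$ in $A$ divides $a$ with at least the right multiplicity. Passing to $B$ and using $J \doteq \prod Q^{e_Q-1}$, we get $a \doteq b \prod Q^{e_Q - 1}$; now $a\in A$ and $a$ is divisible in $B$ by $\prod Q^{e_Q-1}$, and the well-ramified hypothesis (every prime over a given $\mathfrak{p}$ ramified or none) lets one promote divisibility by $\prod_{\mathfrak q \mid \mathfrak p} Q_{\mathfrak q}^{e_{\mathfrak q}-1}$ of an element of $A$ to divisibility by the full $A$-element $\prod_{\mathfrak q\mid\mathfrak p}Q_{\mathfrak q}^{e_{\mathfrak q}} \in A$ — because in $A_{\mathfrak p}$ (a DVR) the valuation of $a$ must be a multiple of the common $e_{\mathfrak q}$, and being $\geq e_{\mathfrak q}-1$ it is $\geq e_{\mathfrak q}$. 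Running this over all ramified $\mathfrak p$ gives $D \mid a$ in $A$, i.e.\ $a \in (D)$.

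The main obstacle is the second step of the second assertion, the argument that "divisible by $\prod_{\mathfrak q\mid \mathfrak p}Q^{e_{\mathfrak q}-1}$ in $B$" upgrades to "divisible by $\prod_{\mathfrak q\mid\mathfrak p}Q^{e_{\mathfrak q}}$ in $A$" for an element $a$ that already lies in $A$. This is exactly the point where one cannot avoid using well-ramification in an essential way: without the hypothesis that all primes over $\mathfrak p$ share the same ramification index and that their product accounts for the whole fiber, an element of $A$ could be divisible by the $(e-1)$-th power of each $Q$ without being divisible by the $e$-th power of any of them. So the crux is to translate the well-ramified characterizations of Prop.\ \ref{propwell} into the statement that, for $\mathfrak p$ ramified, $v_{\mathfrak p}$ on $A$ and the $v_{\mathfrak q}$ on $B$ are related by $v_{\mathfrak q} = e_{\mathfrak q}\, v_{\mathfrak p}$ on $A$, so that valuations of elements of $A$ at the $\mathfrak q$ are automatically multiples of $e_{\mathfrak q}$. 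Everything else is bookkeeping with unique factorization in the polynomial rings $A$ and $B$ and the grading (to control the scalars), plus the standard different-theoretic input for the first assertion.
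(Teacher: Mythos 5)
Your proof is correct and follows essentially the same route as the paper for the first assertion: identify the Jacobian with a generator of the different (the paper invokes Broer's theorem that the Kähler different equals $\FD_{B/A}$ — this is the one genuinely nontrivial input, which you also treat as a citation), then compute $v_\fq$ of the different by localizing at height-one primes and using tameness in characteristic zero. For the second assertion, note that the paper takes the ideal identity $(J)\cap A=\bigl(\prod_Q Q^{e_Q}\bigr)$ as the \emph{definition} of well-ramified, so the statement is tautological there and the real content sits in Prop.~\ref{propwell}; what you prove is the implication (iv)$\Rightarrow$(i) of that proposition, and your argument for it is sound. One conceptual correction: you locate the essential use of well-ramification in the step upgrading $v_\fq(a)\geq e_\fq-1$ to $v_\fq(a)\geq e_\fq$, but that step holds \emph{unconditionally} for $a\in A$, since $v_\fq(a)=e_\fq\,v_\fp(a)$ is always a multiple of $e_\fq$ (and the indices over a fixed $\fp$ need not be equal even in the well-ramified case, contrary to what your phrasing suggests). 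The hypothesis is genuinely needed only to guarantee $\prod_Q Q^{e_Q}\in A$, i.e.\ that every prime over a ramified $\fp$ is itself ramified so that $\prod_{\fq\mid\fp}Q^{e_\fq}\doteq P$; without this the right-hand side is not an ideal of $A$, and $(J)\cap A$ is instead generated by the larger element $\lcm(\widetilde Q)$, as in the paper's counterexample $\BC[X^2Y,X^2+Y]\subseteq\BC[X,Y]$.
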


\begin{remark}
  The notation $W$ for the extension is intentionally chosen to emphasize
  the analogy with the case when the extension is Galois, \ie when it is
  given by the action of a reflection group $W$ on the polynomial algebra
  $B$. Here there is not necessarily a group acting, but some features of
  the Galois case remain. That is why David Bessis proposes to call the
  extension $W$ a \emph{virtual reflection group}\footnote{David Bessis,
    personal communication.}. The polynomial $\prod Q^{e_Q}$ in the theorem
  above could then be called the \emph{discriminant} of the virtual
  reflection group $W$. One can wonder whether the analogies can go
  further, and to what extent it is possible to construct an ``invariant
  theory'' for virtual reflection groups.

  In the next chapter, we will prove that $\LL$ extensions are
  well-ramified, and give a list
  of the analogies between the $\LL$ and the Galois situations
  (Sect.\ \ref{partLLvirtual}).
\end{remark}

\begin{remark}
  The first part of Thm.\ \ref{thmintrojac} is a quite easy consequence of
  commutative algebra properties. It is probably folklore, but I could not
  find the formula stated anywhere. I thank Raphaël Rouquier for his
  suggestion to use the different ideal.
\end{remark}

\section{Jacobian and different of a finite graded polynomial
 extension}

\subsection{General setting and notations}
~\\
\label{subpartsetting}
Let $n$ be a positive integer, and denote by $B$ the graded polynomial
algebra $\BC[X_1,\dots, X_n]$, where $X_1,\dots, X_n$ are indeterminates of
respective weights $b_1,\dots, b_n$.

Let us consider $n$ weighted homogeneous polynomials $f_1,\dots, f_n$ in $B$ (of
respective weights $a_1,\dots,a_n$), and the resulting
(quasi-homogenenous) mapping  
\[\begin{array}{lccc}
    f : & \BC^{n} & \to & \BC^{n}\\
& (x_1,\dots, x_n) & \mapsto & (f_1(x_1,\dots, x_n),\dots, f_n(x_1,\dots,
x_n)).
  \end{array} \]

We denote by $A$ the
algebra $\BC[f_1,\dots, f_n]$, so that we have a ring extension $A\subseteq B$.

\begin{defi}
\label{defext}
In the above situation, if $B$ is an $A$-module of finite type, we will
call $A\subseteq B$ a \emph{finite graded polynomial extension}.
\end{defi}

\begin{remark}
  In this setting, the extension is finite if and only if
  $f^{-1}(\{0\})=\{0\}$, because $f$ is a quasi-homogeneous morphism (see
  for example \cite[Thm.5.1.5]{LZgraphs}). Moreover, the rank of $B$ over
  $A$ (or the \emph{degree} of $f$) is then equal to $r:=\prod a_i / \prod
  b_i$.
\end{remark}

\bigskip

The algebra $B$ is Cohen-Macaulay, and is finite as an $A$-module, so $B$
is also a free $A$-module of finite type.  Thus $A\subseteq B$ is a finite
free extension of UFDs. We denote by $K$ and $L$ the fields of fractions of
$A$ and $B$. Let us recall some notations and properties about the
ramification in this context (for example see \cite[Chap. 3]{benson}).

\medskip

If $\fq$ is a prime ideal of $B$, then $\fp=\fq \cap A$ is a prime ideal of
$A$. In this situation we say that $\fq$ lies \emph{over} $\fp$. By the
Cohen-Macaulay theorem \cite[Thm.1.4.4]{benson}, $\fq$ has height one if
and only if $\fp$ has height one. In this case, we can write $\fq=(Q)$ and
$\fp=(P)$, where $P\in A$ and $Q \in B$ are irreducible, and $(Q)\cap
A=(P)$.

The ramification index of $\fq$ over $\fp$ is:
\[ e(\fq,\fp):= v_Q (P)\]
(we will rather write simply $e_\fq$ or $e_Q$).

\medskip

We denote by $\Spec_1(B)$ (resp. $\Spec_1(A)$) the set of prime ideals of
$B$ (resp. $A$) of height one. It is also the set of irreducible
polynomials in $B$, up to association. By abusing the notation, in indices
of products, we will write ``$Q \in \Spec_1(B)$'' instead of ``$\fq\in
\Spec_1(B)$ and $Q$ is one polynomial representing $\fq$'', and ``$Q$ over
$P$'' instead of ``$(Q)$ over $(P)$''.

We have the following elementary property:

\begin{lemma}
  Let $(P) \in \Spec_1(A)$ and $(Q)\in\Spec_1(B)$. Then, $(Q)$ lies
  over $(P)$ if and only if $Q$ divides $P$ in
  $B$. Thus, for $(P)\in\Spec_1(A)$, we have:
  \[ P \doteq \prod_{Q \text{\ over\ } P} Q^{e_Q} \ .\]
\end{lemma}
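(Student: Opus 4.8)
The plan is to prove the lemma by combining the ``lying over'' characterization of the ramification index with the fact that $A \subseteq B$ is an extension of UFDs, so that primality of height-one ideals is captured by irreducibility of generators.

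First I would establish the equivalence. Suppose $(Q)$ lies over $(P)$, \ie $(Q) \cap A = (P)$. Then $P \in (Q) \cap A \subseteq (Q)$, so $Q$ divides $P$ in $B$. Conversely, suppose $Q \mid P$ in $B$. Since $(P)$ has height one in $A$, by the Cohen--Macaulay statement (\cite[Thm.1.4.4]{benson}) invoked in the text, any prime of $B$ lying over $(P)$ also has height one; moreover, because $B$ is integral over $A$, there exists at least one prime ideal $(Q')$ of $B$ with $(Q')\cap A = (P)$ (lying-over theorem), and this $(Q')$ has height one. Now $Q' \mid P$ by the forward direction, so both $Q$ and $Q'$ appear in the factorization of $P$ into irreducibles in the UFD $B$. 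To conclude I would show that all irreducible factors of $P$ in $B$ generate primes lying over $(P)$: if $Q''$ is any irreducible factor of $P$, then $P \in (Q'')$, hence $(P) \subseteq (Q'') \cap A$; since $(P)$ is a nonzero prime and $(Q'')\cap A$ is a prime ideal of the polynomial ring $A$ that is proper (as $Q''$ is a non-unit lying over a proper ideal — $1 \notin (Q'')$ forces $1 \notin (Q'')\cap A$) and contains $(P)$, and height-one primes of $A$ are maximal among the primes they properly contain only $(0)$, we get $(Q'') \cap A = (P)$ (here I use that $(Q'')\cap A$ has height one by the Cohen--Macaulay correspondence applied in the other direction, or simply that a prime strictly between $(0)$ and itself is impossible, so equality with $(P)$ holds). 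In particular $(Q)$ lies over $(P)$.

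For the displayed formula, fix $(P) \in \Spec_1(A)$ and write the factorization of $P$ into irreducibles in $B$: $P \doteq \prod_i Q_i^{m_i}$ with the $Q_i$ pairwise non-associate irreducibles. By the equivalence just proved, the $Q_i$ are exactly (representatives of) the height-one primes of $B$ lying over $(P)$. By the very definition of the ramification index, $e_{Q_i} = v_{Q_i}(P) = m_i$. Substituting gives $P \doteq \prod_{Q \text{ over } P} Q^{e_Q}$, as claimed.

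The only genuinely delicate point is the converse direction of the equivalence: from $Q \mid P$ one must argue that $(Q) \cap A$ equals $(P)$ and not some strictly larger prime. This is where the height-one hypothesis is essential — it is handled by the Cohen--Macaulay correspondence between height-one primes of $A$ and of $B$ already quoted in the text, which forces $(Q) \cap A$ to have height one and hence, since it contains the height-one prime $(P)$, to coincide with it. Everything else is a routine unwinding of definitions in a UFD, so I would keep those verifications brief.
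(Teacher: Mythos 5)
Your proof is correct. The paper states this lemma without proof (it is flagged as an ``elementary property''), and your argument is exactly the intended one: the forward implication is immediate from $(Q)\cap A=(P)$, the converse uses the cited Cohen--Macaulay/height-preservation fact to force the height-one prime $(Q)\cap A$ containing $(P)$ to equal $(P)$, and the displayed factorisation then follows from unique factorisation in $B$ together with the definition $e_Q=v_Q(P)$; the detour through the lying-over theorem to produce a $Q'$ is harmless but not needed.
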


\subsection{Different ideal and Jacobian}

As our setup is compatible with that of \cite[Part.3.10]{benson}, we can
construct the \emph{different $\FD_{B/A}$} of the extension. Let us recall
that it is defined from the inverse different :
\[ \FD_{B/A}^{-1} := \{x \in L \tq \forall y \in B, \Tr_{L/K}(xy) \in A
\}\ ,\]
where $K$ and $L$ are the fields of fractions of $A$ and $B$, $L$ is
regarded as a finite vector space over $K$, and $\Tr_{L/K}(u)$
denotes the trace of the endomorphism $(x\mapsto ux)$.

The different $\FD_{B/A}$ is then by definition the inverse fractional
ideal to $\FD_{B/A}^{-1}$. It is a (homogeneous) divisorial ideal; in our
setting, as $B$ is a UFD, $\FD_{B/A}$ is thus a principal ideal. We will
see below that the different is simply generated by the Jacobian $J_{B/A}$
of the extension. For now, let us denote by $\theta_{B/A}$ a homogeneous
generator of $\FD_{B/A}$.

\medskip

The different satisfies the following:

\begin{propo}[{\cite[Thm.3.10.2]{benson}}]
\label{propcaracdiff}
If $\fq$ and $\fp=A\cap \fq$ are prime ideals of height one in $B$ and $A$,
then $e(\fq,\fp)>1$ if and only if $\FD_{B/A} \subseteq \fq$.

In other words: if $Q$ is an irreducible polynomial in $B$, then $e_Q>1$ if
and only if $Q$ divides $\theta_{B/A}$.
\end{propo}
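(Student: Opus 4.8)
The plan is to reduce the statement to a one-dimensional situation by localising $A$ at $\fp$, and then to identify divisibility of the different by $\fq$ with degeneracy of the \emph{reduced} trace form. First I would set $S=A\setminus\fp$ and pass to $A'=S^{-1}A$ and $B'=S^{-1}B$. Then $A'$ is a discrete valuation ring (a localisation of the UFD $A$ at a height-one prime) with maximal ideal $\mathfrak m$ and residue field $\kappa(\fp)$, and $B'$ is a finite free $A'$-module. By the Cohen--Macaulay height formula already invoked above, every prime of $B'$ has height $\leq 1$, so $B'$ is a one-dimensional Noetherian normal domain, i.e. a Dedekind domain; since $B$ is finite over $A$ there are only finitely many primes over $\fp$, so $B'$ is in fact semilocal, hence a PID, and its maximal ideals are exactly the $\fq' B'$ for $\fq'\in\Spec_1(B)$ lying over $\fp$, among them $\fq B'$. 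The different is compatible with this localisation, $\FD_{B'/A'}=S^{-1}\FD_{B/A}$, because the trace $\Tr_{L/K}$ and the definition $\FD_{B/A}^{-1}=\{x\in L\tq \Tr_{L/K}(xB)\subseteq A\}$ localise; and $v_\fq(\theta_{B/A})$ is unchanged in passing from $B$ to $B'$. Hence $\FD_{B/A}\subseteq\fq$ if and only if $\FD_{B'/A'}\subseteq\fq B'$, and we may replace $(A,B)$ by $(A',B')$.

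Next I would reduce modulo $\mathfrak m$. Since $B'$ is free over $A'$ and $L/K$ is separable (characteristic $0$), the pairing $(x,y)\mapsto\Tr_{L/K}(xy)$ is non-degenerate over $K$, and the standard duality for the different identifies $\FD_{B'/A'}^{-1}$ with the $A'$-dual lattice of $B'$ inside $L$. The key lemma — classical, see for instance Serre's \emph{Local Fields}, Ch.~III — is that, for a maximal ideal $\fq$ of $B'$, one has $\fq\mid\FD_{B'/A'}$ if and only if the trace form of the local Artinian factor $\bar B_\fq$ of $\bar B:=B'/\mathfrak m B'\cong\prod_{\fq'}\bar B_{\fq'}$ is degenerate over $\kappa(\fp)$. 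Now $\bar B_\fq$ has $\kappa(\fp)$-length $e_\fq f_\fq$, with $f_\fq=[\kappa(\fq):\kappa(\fp)]$; it is reduced, hence equal to the field $\kappa(\fq)$, precisely when $e_\fq=1$. If $e_\fq>1$, the nilradical of $\bar B_\fq$ is nonzero and lies in the radical of the trace form (a nilpotent times anything is nilpotent, of trace $0$), so the form is degenerate and $\fq\mid\FD_{B'/A'}$. If $e_\fq=1$, then $\bar B_\fq=\kappa(\fq)$ is a finite \emph{separable} extension of $\kappa(\fp)$ — separability being automatic, since $\kappa(\fp)$ is an extension field of $\BC$, hence of characteristic $0$ — so its trace form is non-degenerate and $\fq\nmid\FD_{B'/A'}$. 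Combining, $\fq\mid\FD_{B/A}$ if and only if $e(\fq,\fp)>1$, which is exactly the asserted equivalence. (The same analysis, using that characteristic $0$ makes all ramification tame, yields the sharper $v_\fq(\theta_{B/A})=e_\fq-1$, matching the factorisation of the Jacobian announced in Theorem~\ref{thmintrojac}.)

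The main obstacle is the duality lemma identifying $\FD_{B'/A'}$ with the radical of the reduced trace form; everything else is bookkeeping about localisation and Cohen--Macaulay rings. I would either cite it from a standard reference, or prove it directly: pick an $A'$-basis of the free module $B'$, express $B'$ and its $\Tr$-dual lattice in that basis, and observe that they coincide exactly when the Gram determinant of $\Tr_{L/K}$ on that basis (the discriminant of the extension, localised at $\fp$) is a unit in $A'$, i.e. nonzero modulo $\mathfrak m$, which is the non-degeneracy of the reduced form; the refinement to a single prime $\fq$ then follows by splitting $\bar B$ into its local factors and noting that the reduced trace form is the orthogonal direct sum of the trace forms of the $\bar B_{\fq'}$.
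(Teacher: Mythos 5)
Your argument is correct, but it is worth noting that the paper does not actually prove this proposition: it is quoted verbatim from Benson \cite[Thm.~3.10.2]{benson}, so there is no internal proof to compare against. What you have written is a self-contained derivation along the classical Dedekind--Serre lines: localise at $\fp$ to land in a semilocal Dedekind (hence principal) situation, use that the different commutes with localisation, and then detect ramification through degeneracy of the reduced trace form on $B'/\mathfrak m B'\cong\prod_{\fq'}\bar B_{\fq'}$, with characteristic~$0$ guaranteeing separability of the residue extensions and hence the clean equivalence $e_\fq>1\Leftrightarrow\fq\mid\FD_{B/A}$. This is in the same spirit as what the paper does for the \emph{neighbouring} refinement $v_Q(\theta_{B/A})=e_Q-1$ (Prop.~\ref{propdiff}), where it localises at $(Q)$ and invokes Serre, Ch.~III, Prop.~13; your tameness remark at the end in fact subsumes that proposition as well. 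The one step you should be careful to make precise is the passage from ``the trace form of the local factor $\bar B_\fq$ is degenerate'' to ``$\fq$ divides $\FD_{B'/A'}$'': the Gram-determinant computation you sketch naturally controls the \emph{discriminant} ideal of $A'$, i.e.\ whether \emph{some} prime over $\fp$ is ramified, and isolating the individual prime $\fq$ requires the further localisation of $B'$ at $\fq$ (or completion) together with the compatibility $v_\fq(\FD_{B'/A'})=v_\fq(\FD_{B'_\fq/A'})$ --- this is exactly Serre, Ch.~III, Prop.~10 and its corollaries, so citing that reference, as you propose, closes the argument. What your route buys is independence from Benson's book and a uniform treatment of Propositions \ref{propcaracdiff} and \ref{propdiff} at once; what the paper's citation buys is brevity.
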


We define the set of ramified ideals:
 \[ \Spram(B):=\{\fq \in \Spec_1(B) \tq e_\fq >1 \}, \]
 which can also be seen as a
system of representatives of the irreducible polynomials $Q$ in $B$ which are
ramified over $A$. By the above theorem, we have:
\[ \theta_{B/A} \doteq \prod_{Q \in \Spram(B)} Q ^{v_Q(\theta_{B/A})} \ . \]

This can be refined as:

\begin{propo}
\label{propdiff}
For all irreducible $Q$ in $B$, we have: $v_Q(\theta_{B/A})=e_Q -1$. Thus:
\[ \theta_{B/A} \doteq \prod_{Q \in \Spram(B)} Q ^{e_Q -1}. \]
\end{propo}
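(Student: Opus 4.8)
The plan is to compute the valuation $v_Q(\theta_{B/A})$ for a fixed irreducible $Q \in B$ by localizing the extension at the prime $\fp = (Q)\cap A = (P)$, thereby reducing to a statement about complete (or at least local) discrete valuation rings, where the different is classical and its valuation is well understood. First I would pass to the localization $A_\fp \subseteq B_\fp := (A\setminus\fp)^{-1}B$; since the formation of the different commutes with localization (a standard property of $\FD_{B/A}$, cf.\ \cite[Chap.~3.10]{benson}), the valuation $v_Q(\theta_{B/A})$ is unchanged, and $B_\fp$ is a semilocal Dedekind domain whose maximal ideals are exactly the $(Q')$ with $Q'$ over $P$. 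The different of $B_\fp/A_\fp$ then decomposes as a product of local contributions over each $Q'$, and the $Q$-component is governed only by the local extension $(A_\fp)_{(P)} \subseteq (B_\fp)_{(Q)}$ of DVRs.

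Next I would invoke the classical computation of the different for an extension of discrete valuation rings with perfect (here, characteristic zero) residue fields: the extension is then \emph{tamely ramified} — the residue field extension is separable and the residue characteristic $0$ does not divide any ramification index — so by the standard formula for the different in the tame case (e.g.\ Serre, \emph{Local Fields}, Ch.~III, Prop.~13, or the corresponding statement in \cite{benson}), the valuation of the different equals $e_Q - 1$ exactly. This is the crux: in mixed or equal positive characteristic one only gets $v_Q(\theta) \geq e_Q - 1$ with equality iff tame, but over $\BC$ tameness is automatic, so the inequality is an equality. Combining this with Proposition \ref{propcaracdiff}, which already tells us $\theta_{B/A}$ is a product of exactly the ramified $Q$'s to positive powers, gives
\[ \theta_{B/A} \doteq \prod_{Q \in \Spram(B)} Q^{e_Q-1}. \]

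The main obstacle — really the only subtle point — is justifying the reduction to the one-dimensional local case cleanly: one must check that $B_\fp$ is indeed a (semilocal) Dedekind domain, equivalently that $B$ is normal and that localizing at a height-one prime of $A$ produces a one-dimensional regular semilocal ring, and that the global different of the non-local extension $A \subseteq B$ recovers the product of the local differents. All of this follows from $B$ being a polynomial ring (hence a normal, Cohen–Macaulay UFD), from the finiteness of the extension, and from the height-one correspondence recalled in Section \ref{subpartsetting} (a height-one prime of $B$ lies over a height-one prime of $A$), but it should be stated carefully. An alternative, more hands-on route that avoids invoking the tame different formula as a black box would be to use the Jacobian description established next in the text (once $\theta_{B/A} \doteq J_{B/A}$ is known): locally at $Q$ one writes explicit uniformizers and differentiates, reading off $v_Q(J_{B/A}) = e_Q - 1$ directly from the chain rule; but the cleanest exposition is the localization-plus-tameness argument above.
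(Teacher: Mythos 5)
Your proposal is correct and follows essentially the same route as the paper: the paper's proof also localizes at $(Q)$ to reduce to local Dedekind domains and then applies Proposition 13 of \cite[Ch.\ III]{serre} (the tame-ramification formula for the different, automatic in characteristic zero). Your write-up simply makes explicit the verifications that the paper leaves implicit.
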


\begin{proof}
  We localize at $(Q)$ in order to obtain local Dedekind domains. Then we
  can use directly Prop.\ 13 in \cite[Ch. III]{serre}. 
\end{proof}

Let us show that the different is actually generated by the Jacobian
determinant. For this, we need to introduce the \emph{Kähler different} of
the extension $A \subseteq B$. According to \cite{diff} (end of first
section), when $B$ is a polynomial algebra, the Kähler different can be
defined as the ideal generated by the Jacobians of all $n$-tuples of
elements of $A$, with respect to $X_1,\dots,X_n$. But here we are in an
even more specific situation, where $A$ is also a polynomial ring. Thus,
whenever we take $g_1,\dots, g_n \in A=\BC[f_1,\dots,f_n]$, we have
\[ \det \left( \frac{\partial g_i}{\partial X_k} \right)_{\substack{1 \leq
    i\leq n\\1\leq k \leq n}} = \det \left( \frac{\partial g_i}{\partial f_j}
  \right)_{\substack{1 \leq i\leq n\\1\leq j \leq n}} \det \left( \frac{\partial
        f_j}{\partial X_k} \right)_{\substack{1 \leq j\leq n\\1\leq k \leq n}} \ ,\] so
      the Kähler different is simply the principal ideal of $B$ generated
      by the polynomial
\[ J_{B/A}:=\Jac ((f_1,\dots, f_n) / (X_1,\dots, X_n))= \det \left(
  \frac{\partial f_i}{\partial X_j} \right)_{\substack{1 \leq i\leq
    n\\1\leq j \leq n}} .\]

\begin{propo}
\label{propdiffjac}
With the hypothesis above, we have:
\[ \theta_{B/A} \doteq J_{B/A} .\]
\end{propo}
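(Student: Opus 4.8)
The plan is to show that the two divisorial ideals $\FD_{B/A}$ and $(J_{B/A})$ of $B$ coincide by comparing their valuations at every height-one prime $(Q)$ of $B$, using that a divisorial ideal in a UFD is determined by these valuations. The starting point is that, by the discussion just before the statement, the Kähler different $\mathfrak{K}_{B/A}$ of the extension is exactly the principal ideal $(J_{B/A})$: this is where the fact that \emph{both} $A$ and $B$ are polynomial rings is used, via the chain-rule factorisation of the Jacobian of an $n$-tuple $g_1,\dots,g_n\in A$ as the product of the Jacobian of the $g_i$ with respect to the $f_j$ and the Jacobian $J_{B/A}$ of the $f_j$ with respect to the $X_k$. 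So the real content is to compare the Kähler different with the (Dedekind, or Noether) different $\FD_{B/A}$.

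First I would reduce to a local statement: localise $B$ at a height-one prime $\fq=(Q)$ and $A$ at $\fp=\fq\cap A=(P)$. Since $A\subseteq B$ is a finite free extension of Krull (indeed UFD) domains, after localisation we get an extension of discrete valuation rings $A_\fp\subseteq B_\fq$ — or more precisely $A_\fp\subseteq B\otimes_A A_\fp$, a semilocal Dedekind domain, but the formation of both differents commutes with localisation (this is standard: see \cite[Ch. III]{serre} for the Dedekind different, and the Kähler different localises because the module of Kähler differentials does). Now in the local (DVR) setting, the comparison of the Kähler different and the Dedekind different is classical: they agree whenever $B_\fq$ is generated over $A_\fp$ by one element, i.e. when $B_\fq=A_\fp[\bar x]$ is monogenic, which holds here because $B_\fq/A_\fp$ is an extension of DVRs and (the residue extension being trivial, everything over $\BC$) such extensions are monogenic — one can take a uniformiser of $B_\fq$ as generator. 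For a monogenic extension $B_\fq=A_\fp[t]$ with minimal polynomial $g(T)$, the Dedekind different is generated by $g'(t)$, and the Kähler different is $(\partial g/\partial T)(t)=g'(t)$ as well; hence they coincide.

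Alternatively — and this is perhaps cleaner to write — I would simply invoke the valuation computations already in hand: Proposition \ref{propdiff} gives $v_Q(\theta_{B/A})=e_Q-1$ for every irreducible $Q$, and one shows by the same localisation-at-$(Q)$ argument that $v_Q(J_{B/A})=e_Q-1$ too. The latter is exactly the computation of the valuation of the Jacobian of a monogenic DVR extension: if $B_\fq=A_\fp[t]$ with $t$ a uniformiser and $P=u\,t^{e_Q}$ (unit $u$), then differentiating expresses $J_{B/A}$ up to a unit as $t^{e_Q-1}$. Since both $\theta_{B/A}$ and $J_{B/A}$ are homogeneous and have the same valuation at every height-one prime of the UFD $B$, they generate the same divisorial (hence principal) ideal, so $\theta_{B/A}\doteq J_{B/A}$, which is the claim (and, combined with Proposition \ref{propdiff}, immediately yields the first displayed formula of Theorem \ref{thmintrojac}).

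The main obstacle is the monogenicity step — justifying that after localisation the extension $A_\fp\subseteq B_\fq$ is generated by a single element, so that the Kähler and Dedekind differents can be identified by the explicit minimal-polynomial formula. Here the key simplifying feature is that the ground field is $\BC$ and all residue fields at height-one primes are the same (or at least the residue extensions are separable and the rings are DVRs over a perfect field), which forces the local extension to be monogenic; this is precisely the kind of drastic simplification the author alludes to in the remark, where the general ring-theoretic statements (which would need tameness or other hypotheses) become automatic. Everything else is routine: localisation commutes with both constructions, and the valuation formula for the Jacobian of a ramified DVR extension is a direct differentiation.
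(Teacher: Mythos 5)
Your overall architecture — identify the Kähler different with $(J_{B/A})$ via the chain rule, then compare Kähler and Dedekind differents prime by prime — is a legitimate route, and it is in fact close to what lies behind the result the paper actually invokes (the paper's proof is a one-line citation of Broer's Corollary 1 in \cite{diff}, which asserts the equality of the Kähler different with $\FD_{B/A}$ in this setting, supplemented by an independent degree check via Benson's ramification formula). But there is a concrete error in your key step. You claim that the localised extension $A_\fp\subseteq B_\fq$ is monogenic because ``the residue extension is trivial, everything over $\BC$'', and that one may take a uniformiser of $B_\fq$ as generator. The residue fields at height-one primes of $B=\BC[X_1,\dots,X_n]$ and $A=\BC[f_1,\dots,f_n]$ are the function fields of the hypersurfaces $\{Q=0\}$ and $\{P=0\}$ — finitely generated extensions of $\BC$ of transcendence degree $n-1$ — and the residue extension $\kappa(\fq)/\kappa(\fp)$ is a finite extension that is in general nontrivial (already for $\BC[X^2,Y]\subseteq\BC[X,Y]$ at $\fq=(Y)$ it is $\BC(X)/\BC(X^2)$). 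A uniformiser therefore does not generate $B_\fq$ over $A_\fp$, and the explicit minimal-polynomial identification of the two differents does not apply as you state it.

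The standard repair is available but is not the one you wrote: the residue extension is separable (characteristic zero), so after passing to the \emph{completions} $\hat A_\fp\subseteq\hat B_\fq$ one gets a finite monogenic extension of complete DVRs (Serre, \emph{Corps locaux}, Ch.~III, Prop.~12), for which both the Dedekind different and the Kähler different are generated by $g'(x)$, $g$ the minimal polynomial of the generator $x$. One must then also check that the Kähler different (equivalently, the valuation $v_Q(J_{B/A})$) is computed correctly after localisation \emph{and} completion — localisation of $\Omega_{B/A}$ is harmless, but compatibility with completion requires an argument (continuous differentials, or finite generation of $\Omega_{B/A}$ plus base change of Fitting ideals) that you cannot dismiss as routine. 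Note also that $B\otimes_A A_\fp$ is only semilocal and $B_\fq$ is not finite over $A_\fp$ when several primes lie over $\fp$, so even the phrase ``extension of DVRs'' needs care before completing. With these points supplied your valuation comparison $v_Q(\theta_{B/A})=v_Q(J_{B/A})=e_Q-1$ for all $Q$, plus the fact that an element of a graded UFD is determined up to a scalar by its divisor, does yield the proposition; as written, however, the monogenicity step is false and the proof has a genuine gap.
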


\begin{proof}
  In \cite{diff}, Broer studies several notions of different ideals, and
  proves that under certain hypothesis they are equal. We are here in the
  hypothesis of his Corollary 1, which states in particular that the Kähler
  different is equal to the different $\FD_{B/A}$.
\end{proof}

Note that we use here a strong result (which applies in much more
generality than what we need). It should be possible to give a simpler
proof. For the sake of completeness, we add here a more explicit proof
of the fact that the polynomials $J_{B/A}$ and $\theta_{B/A}$ have the same
degree (using a ramification formula of Benson).

\begin{lemma}
With the hypothesis and notations above, we have:
\[\deg (\theta_{B/A}) =\deg (J_{B/A}). \]
\end{lemma}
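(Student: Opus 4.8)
The plan is to compute both degrees independently by explicit homogeneity/Euler-type arguments and check that they agree, thereby re-proving $\deg(\theta_{B/A}) = \deg(J_{B/A})$ without invoking Broer's theorem (Prop.\ \ref{propdiffjac}).

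First I would compute $\deg(J_{B/A})$. Since each $f_i$ is weighted homogeneous of weight $a_i$ and each $X_j$ of weight $b_j$, the partial derivative $\partial f_i/\partial X_j$ is weighted homogeneous of weight $a_i - b_j$. The determinant $J_{B/A} = \det(\partial f_i/\partial X_j)$ is a sum of signed products, each of which (picking a permutation $\sigma$) is $\prod_i \partial f_i/\partial X_{\sigma(i)}$ of weight $\sum_i (a_i - b_{\sigma(i)}) = \sum_i a_i - \sum_j b_j$. Hence $J_{B/A}$ is weighted homogeneous of weight $\sum_i a_i - \sum_j b_j$.

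Next I would compute $\deg(\theta_{B/A})$ using Prop.\ \ref{propdiff}, which gives $\theta_{B/A} \doteq \prod_{Q \in \Spram(B)} Q^{e_Q - 1}$, so that
\[ \deg(\theta_{B/A}) = \sum_{Q \in \Spram(B)} (e_Q - 1)\deg Q \ . \]
The key input here is a ramification formula of Benson (from \cite[Chap.\ 3]{benson}, analogous to the classical degree/ramification identity): for each $P \in \Spec_1(A)$ one has $P \doteq \prod_{Q \text{ over } P} Q^{e_Q}$ (this is the Lemma already stated in the excerpt), and, comparing degrees and using that $B$ is a free $A$-module of rank $r = \prod a_i/\prod b_i$, one gets a relation controlling $\sum_{Q \text{ over } P} e_Q \deg Q$ in terms of $\deg P$ and $r$. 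Summing a suitable such identity over all height-one primes, together with the analogous statement relating $\sum_Q \deg Q$ to the data $(a_i), (b_j)$, should yield $\sum_{Q \in \Spram(B)}(e_Q - 1)\deg Q = \sum_i a_i - \sum_j b_j$, matching the degree of $J_{B/A}$.

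The main obstacle will be assembling the right global counting identity: one needs to relate the sum $\sum_{Q}(e_Q-1)\deg Q$ over \emph{all} height-one primes to the weights $a_i, b_j$, and this is essentially a generalized Riemann--Hurwitz / conductor-degree computation for a finite free graded extension of polynomial rings. The cleanest route is probably to use the graded structure: $B$ is a free graded $A$-module of rank $r$, so its Hilbert series is $\mathrm{Hilb}_B(t) = \mathrm{Hilb}_A(t) \cdot \sum_{k} t^{m_k}$ where the $m_k$ are the degrees of a homogeneous basis, and comparing $\mathrm{Hilb}_B(t) = \prod_j (1-t^{b_j})^{-1}$ with $\mathrm{Hilb}_A(t) = \prod_i (1-t^{a_i})^{-1}$ pins down $\sum_k m_k = \sum_i a_i - \sum_j b_j$ (via the behaviour of these Hilbert series at $t \to 1$, or equivalently by the theory of the canonical module / the fact that $B$ is Gorenstein over $A$). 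Then one identifies $\sum_k m_k$ with $\deg \theta_{B/A}$ using that $\theta_{B/A}^{-1}$ generates the inverse different, which in the graded Gorenstein setting is exactly the (shifted) canonical module, whose generator has degree $\sum_k m_k$. In other words, the heart of the argument is: the different of a finite free graded extension of polynomial rings is generated in degree $\sum_i a_i - \sum_j b_j$, which matches $\deg J_{B/A}$ by the first step; the technical care needed is in making the Hilbert-series/canonical-module bookkeeping precise, and in checking that Benson's framework in \cite[Part.3.10]{benson} indeed identifies $\FD_{B/A}^{-1}$ with the appropriate dualizing module up to the expected degree shift.
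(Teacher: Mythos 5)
Your opening computation, $\deg J_{B/A}=\sum_i a_i-\sum_j b_j$ by weighted homogeneity of the entries of the Jacobian matrix, is correct and is the value the paper also uses. The gap lies in showing that $\deg\theta_{B/A}$ equals this number: neither of your two routes closes. For the first route, the local identities $\deg P=\sum_{Q\text{ over }P}e_Q\deg Q$ hold for \emph{every} height-one prime, ramified or not, and there are infinitely many of them; no summation of these identities can isolate the finite quantity $\sum_{Q\in\Spram(B)}(e_Q-1)\deg Q$. What is needed is precisely the global graded conductor formula that you allude to but never state: Benson's Theorem 3.12.1, which asserts $|L:K|\,\psi(A)-\psi(B)=\frac12\sum_{\fp}v_\fp(\FD_{B/A})\,\psi(B/\fp)$, where $\psi(M)$ is the subleading coefficient of the Laurent expansion of the Hilbert series of $M$ at $t=1$. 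This is exactly the paper's proof: one computes $\psi(A)=\frac{1}{\prod_i a_i}\sum_i\frac{a_i-1}{2}$ (and similarly for $B$), so the left-hand side equals $\frac{1}{2\prod_i b_i}\deg J_{B/A}$, while $\psi(B/\fp)=\deg P/\prod_i b_i$ turns the right-hand side into $\frac{1}{2\prod_i b_i}\deg\theta_{B/A}$.

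For the second route, the identity you rely on is false. The behaviour at $t\to 1$ of $\sum_k t^{m_k}=\prod_i(1-t^{a_i})\big/\prod_j(1-t^{b_j})$ gives $\sum_k m_k=\frac{r}{2}\bigl(\sum_i a_i-\sum_j b_j\bigr)$, not $\sum_i a_i-\sum_j b_j$; for instance in $\BC[X^3]\subseteq\BC[X]$ one has $\sum_k m_k=0+1+2=3$ while $\deg J_{B/A}=2$. The quantity that does equal $\deg J_{B/A}$ is the \emph{top} basis degree $\max_k m_k$, i.e.\ the socle degree of the graded complete intersection $B/(f_1,\dots,f_n)$, and to finish along these lines you would additionally need the duality statement identifying $\FD_{B/A}^{-1}$ with $\mathrm{Hom}_A(B,A)$ as a graded $B$-module, hence $\deg\theta_{B/A}$ with that socle degree. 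A corrected version of this route would be a genuine alternative to the paper's argument (Gorenstein duality in place of Benson's ramification formula), but as written the plan contains a wrong identity and leaves the decisive step unproved.
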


\begin{proof}
 First we need to recall some notations for graded algebras, see
 \cite[Ch.2.4]{benson}. If $A$ is a graded algebra, and $M$ a graded
 $A$-module, we denote the usual Hilbert-Poincaré series of $M$ by $\grdim
 M$ (for ``graded dimension''): $\grdim M := \sum_k \dim M_k t^k $. If
 $n$ is the Krull dimension of $A$, we define rational numbers $\deg(M)$
 and $\psi(M)$ by the Laurent expansion about $t=1$:
 \[ \grdim(M)= \frac{\deg(M)}{(1-t)^n} + \frac{\psi(M)}{(1-t)^{n-1}}+
 o\left(\frac{1}{(1-t)^{n-1}}\right).\]

 \medskip 

 Now if we return to our context we can use the following ramification
 formula from \cite[Thm 3.12.1]{benson}:
 \[ |L:K|\psi(A) - \psi(B) = \frac12 \sum_{\fp \in \Spec_1(B)} v_\fp
 (\FD_{B/A}) \psi (B/\fp) .\] We have $\grdim A= \prod_{i=1}^n
 \frac{1}{1-t^{a_i}}$, so by computing the derivative of $(1-t)^n\grdim A$
 at $t=1$ we get:
 \[ \psi(A)=\frac{1}{\prod_i a_i} \sum_i \frac{a_i-1}{2} \ , \]
 and similarly for $\psi(B)$. As $|L:K|=\prod a_i / \prod b_i$, we obtain:
 \[ |L:K|\psi(A) - \psi(B)= \frac{1}{\prod_i b_i} \sum_i \frac{a_i-b_i}{2}
 = \frac{1}{2\prod_i b_i} \deg J_{B/A}.\]
 On the other hand, for $\fp \in \Spec_1(B)$, if $d$ denotes the degree of a
 homogeneous polynomial $P$ generating $\fp$, we have
 \[ \grdim B/ \fp= (1-t^d)\prod_i \frac1{1-t^{b_i}} \ ,\]
 so after computation we get
 \[ \psi(B/ \fp)= \frac{d}{\prod_i b_i} \ . \]
 Thus, the rhs of the ramification formula becomes 
 \[ \frac1{2\prod_i b_i} \sum_{P \in \Spec_1(B)} v_P
 (\theta_{B/A}) \deg P = \frac1{2\prod_i b_i} \deg \theta_{B/A} \]
 and we can conclude that $\deg J_{B/A}=\deg \theta_{B/A}$.
\end{proof}

\bigskip

As a direct consequence of Propositions \ref{propdiff} and
\ref{propdiffjac}, we obtain the factorisation of $J_{B/A}$.

\begin{theo}
\label{thmjac}
If $A\subseteq B$ is a finite graded
polynomial extension, then we have, with the notations above:
\[ J_{B/A} \doteq \prod_{Q \in \Spram(B)} Q ^{e_Q -1}. \]
\end{theo}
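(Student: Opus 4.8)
The plan is to obtain the statement by simply concatenating the two structural results established just above, so the proof itself is essentially one line. Proposition~\ref{propdiffjac} shows that, up to a nonzero scalar, a homogeneous generator $\theta_{B/A}$ of the different ideal $\FD_{B/A}$ coincides with the Jacobian determinant $J_{B/A}=\det(\partial f_i/\partial X_j)$. Proposition~\ref{propcaracdiff} shows that $v_Q(\theta_{B/A})=0$ for every irreducible $Q$ that is unramified over $A$, and Proposition~\ref{propdiff} sharpens this to $v_Q(\theta_{B/A})=e_Q-1$ for \emph{every} irreducible $Q\in B$. Putting these together yields
\[ J_{B/A}\;\doteq\;\theta_{B/A}\;\doteq\;\prod_{Q\in\Spram(B)} Q^{e_Q-1}, \]
which is exactly what is claimed. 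Hence the only real content sits in the two propositions this corollary invokes.

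Were one to rebuild those inputs from scratch, the route would be the following. For the valuation formula (Proposition~\ref{propdiff}): since $A\subseteq B$ is a finite free extension of unique factorisation domains, fix a height-one prime $(Q)$ of $B$ lying over $(P)=(Q)\cap A$, localise at $(Q)$ to reduce to an extension of discrete valuation rings, and apply the classical computation of the different of such an extension, which gives $v_Q(\FD_{B/A})=e_Q-1$ precisely because we are in the \emph{tamely ramified} case. Tameness is free here: all residue fields contain $\BC$, so $e_Q$ is never divisible by the (zero) residue characteristic. For the identification with the Jacobian (Proposition~\ref{propdiffjac}): because $A$ \emph{and} $B$ are both polynomial algebras, the Kähler different of $A\subseteq B$ is the principal ideal generated by $J_{B/A}$ --- this uses the chain-rule factorisation $\det(\partial g_i/\partial X_k)=\det(\partial g_i/\partial f_j)\,\det(\partial f_j/\partial X_k)$ for $g_1,\dots,g_n\in A$ --- and Broer's theorem on differents of polynomial extensions identifies the Kähler different with the Dedekind different $\FD_{B/A}$.

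The main obstacle, to the extent there is one at this stage, is Proposition~\ref{propdiffjac}: the equality between the Kähler (Jacobian) different and the classical different $\FD_{B/A}$ is the one genuinely non-formal ingredient, and it is the step that would need real care if one wanted a self-contained proof rather than a citation. As a sanity check that does not depend on that equality, one can compare degrees directly: Benson's graded ramification formula gives $\deg\theta_{B/A}=\frac{1}{2\prod_i b_i}\sum_{Q}v_Q(\theta_{B/A})\deg Q$, while a separate computation of Hilbert series yields $\deg J_{B/A}=\sum_i(a_i-b_i)$, and the two expressions agree --- confirming that no irreducible factor is overlooked and that the exponents in the product add up correctly.
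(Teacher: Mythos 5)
Your proposal is correct and follows exactly the paper's route: the theorem is deduced by combining Proposition~\ref{propdiff} (that $v_Q(\theta_{B/A})=e_Q-1$, obtained by localising at height-one primes and invoking the tame case of Serre's formula) with Proposition~\ref{propdiffjac} (that $\theta_{B/A}\doteq J_{B/A}$ via Broer's identification of the Kähler different with $\FD_{B/A}$). Even your degree-comparison sanity check via Benson's graded ramification formula mirrors the lemma the paper includes for the same purpose.
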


This formula settles the first part of Thm.\ \ref{thmintrojac}. In section
\ref{partwellram} we will define the well-ramified property, in order to
deal with its second part.

\section{Geometric properties}

\label{partgeom}

In this section we recall some definitions and elementary facts about the
ramification locus and branch locus of a branched covering. These will be
used in section \ref{partwellram}, in order to give geometric interpretations of the
well-ramified property (Def.\ \ref{defwellram} and Prop.\ \ref{propwell}).

\subsection{Ramification locus and branch locus}
\label{subpartlocusnew}

Let us define the varieties $U=\Spec A$ and $V=\Spec B$, so that to the
extension $A \subseteq B$ (as above), of degree $r$, corresponds an
algebraic quasi-homogoneous finite morphism $f:V \to U$. We
denote by $J$ the Jacobian of $f$.

\begin{propo}
In $V$, we have equalities between:
\begin{enumerate}[(i)]
\item the set of points where $f$ is not étale, \ie zeros of the Jacobian $J$;
\item the union of the sets of zeros of the ramified polynomials of $B$;
\item the set of zeros of the generator $\theta_{B/A}$ of the different
  $\FD_{B/A}$.
\end{enumerate}
\end{propo}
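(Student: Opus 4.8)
The plan is to prove the three sets coincide by unwinding the definitions and invoking the factorisation results already established. First I would observe that the equivalence of (i) and (iii) is essentially immediate from Proposition~\ref{propdiffjac}: since $\theta_{B/A} \doteq J_{B/A}$ (equality up to a nonzero scalar), the zero locus of $J$ inside $V$ is literally the same closed subvariety as the zero locus of $\theta_{B/A}$. So the content of the statement reduces to identifying the zero locus of $\theta_{B/A}$ with the union of the hypersurfaces cut out by the ramified polynomials of $B$.

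Next I would handle the equivalence of (ii) and (iii). By Theorem~\ref{thmjac} (or equivalently Proposition~\ref{propdiff}), we have $\theta_{B/A} \doteq \prod_{Q \in \Spram(B)} Q^{e_Q - 1}$, where each $e_Q \geq 2$. The zero set of this product is the union, over $Q \in \Spram(B)$, of the zero sets $\{Q = 0\}$; the positive exponents $e_Q - 1 \geq 1$ do not affect the underlying reduced subscheme, so $V(\theta_{B/A}) = \bigcup_{Q \in \Spram(B)} V(Q)$ as sets. This is exactly statement (ii), once one notes that ``the ramified polynomials of $B$'' means precisely the irreducible $Q$ with $e_Q > 1$, i.e.\ the elements of $\Spram(B)$ up to association, and that each such $Q$ indeed divides $\theta_{B/A}$ by Proposition~\ref{propcaracdiff}. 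Conversely any irreducible factor of $\theta_{B/A}$ is ramified, again by Proposition~\ref{propcaracdiff}, so no spurious components appear.

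Finally, for the direct characterisation of (i) as the non-étale locus, I would recall the standard fact that for a morphism of smooth varieties, $f$ is étale at a point $v$ if and only if the differential $d_v f$ is an isomorphism, i.e.\ $J(v) \neq 0$; since both $V$ and $U$ are affine spaces (as $B$ and $A$ are polynomial rings), smoothness is automatic, and the Jacobian criterion applies directly. Hence the non-étale locus is exactly $V(J)$, matching (iii) via Proposition~\ref{propdiffjac}. Assembling these three identifications gives the chain of equalities $(i) = (iii) = (ii)$.

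I do not expect any serious obstacle here: every ingredient — the identification of the different with the Jacobian, its factorisation in terms of ramification indices, and the different/ramification characterisation — has already been established earlier in the chapter, so the proof is really just a matter of transporting those algebraic statements into the language of zero loci. The only point requiring a word of care is the passage from ideal-theoretic statements (generators, divisibility, the primary decomposition implicit in $\prod Q^{e_Q-1}$) to the set-theoretic statement about common zeros; this is the routine observation that $V(\prod Q_i^{m_i}) = \bigcup V(Q_i)$ for $m_i \geq 1$ and that $V(I) = V(\sqrt{I})$, together with the Nullstellensatz identification of radical ideals with closed subsets, which I would state briefly rather than belabour.
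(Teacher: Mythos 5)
Your proof is correct and follows the same route as the paper: the paper's own proof is exactly the two citations you use, namely Proposition~\ref{propcaracdiff} for the equality (ii)$=$(iii) and Proposition~\ref{propdiffjac} for (iii)$=$(i). Your additional invocation of the full factorisation $\theta_{B/A}\doteq\prod Q^{e_Q-1}$ and the explicit remarks on passing from divisibility to zero loci are harmless elaborations of the same argument.
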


This set is called the \emph{ramification locus} $\Vram$.

\begin{proof}
  (ii)=(iii) comes from Prop.\ \ref{propcaracdiff}, and (iii)=(i) from
  Prop.\ \ref{propdiffjac}.
\end{proof}

\medskip

The following (well-known) proposition gives an upper bound for the
cardinality of the fibers of $f$.

\begin{propo} For all $u\in U$, $|f^{-1}(u)| \leq r$, where $r$ is the
  degree of $f$.
\end{propo}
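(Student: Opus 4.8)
The plan is to reduce the cardinality of a fibre to a $\BC$-dimension count, using that $B$ is a free $A$-module of finite rank $r$ (see \ref{subpartsetting}).

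First I would fix $u \in U = \Spec A$, corresponding to a maximal ideal $\mathfrak{m}$ of $A$; since $A$ is a finitely generated $\BC$-algebra, the Nullstellensatz gives $A/\mathfrak{m} \simeq \BC$. The fibre $f^{-1}(u)$ is then in natural bijection with the set of maximal ideals of the $\BC$-algebra $\overline{B} := B \otimes_A (A/\mathfrak{m}) = B/\mathfrak{m}B$. Because $B$ is $A$-free of rank $r$, base change along $A \to A/\mathfrak{m}$ yields $\dim_{\BC} \overline{B} = r$; in particular $\overline{B}$ is Artinian and the fibre $f^{-1}(u)$ is finite.

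Next I would bound the number of maximal ideals of $\overline{B}$. Let $\mathfrak{q}_1, \dots, \mathfrak{q}_k$ be the distinct maximal ideals of $\overline{B}$. They are pairwise comaximal, so the Chinese Remainder Theorem provides a surjection $\overline{B} \surj \prod_{i=1}^{k} \overline{B}/\mathfrak{q}_i$. Each quotient $\overline{B}/\mathfrak{q}_i$ is a field which is finitely generated as a $\BC$-algebra, hence equal to $\BC$ by the Nullstellensatz, so the right-hand side has $\BC$-dimension $k$. Surjectivity forces $k \leq \dim_{\BC} \overline{B} = r$, that is $|f^{-1}(u)| = k \leq r$.

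There is no serious obstacle here; the only points requiring a word of care are that $A$ (and hence every residue field of $A$) is a finitely generated $\BC$-algebra, so that the Nullstellensatz applies, and that the free $A$-module $B$ has rank exactly $r = \prod a_i / \prod b_i$ — both facts are already available from \ref{subpartsetting}. One could alternatively localise at $\mathfrak{m}$ and argue with the semilocal ring $B_{\mathfrak{m}}$, or pass to the generic point where $[L:K] = r$ directly, but the computation above is the most transparent.
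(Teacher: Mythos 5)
Your proof is correct, but it follows a genuinely different route from the paper's. The paper argues in the style of Shafarevich: it picks a function $b\in B$ separating the points $v_1,\dots,v_m$ of the fibre, uses the fact that $B$ is generated by $r$ elements over $A$ to produce a monic polynomial $P=\sum_{i\le d}a_iT^i$ with $d\le r$ annihilating $b$, and specializes at $u$ to get a degree-$d$ polynomial with $m$ distinct roots $b(v_1),\dots,b(v_m)$, whence $m\le d\le r$. You instead reduce to the $\BC$-algebra $\overline{B}=B\otimes_A(A/\mathfrak{m})$, use freeness of $B$ over $A$ to get $\dim_\BC\overline{B}=r$, and bound the number of maximal ideals by the Chinese Remainder surjection $\overline{B}\surj\prod_i\overline{B}/\mathfrak{q}_i\simeq\BC^k$. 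Both arguments are sound in the paper's setting (where $B$ is indeed $A$-free of rank $r$). Yours is the cleaner scheme-theoretic argument and makes transparent why equality of the fibre cardinality with $r$ fails exactly where $\overline{B}$ is non-reduced, which fits well with the ramification discussion that follows; the paper's argument is more elementary, needing only that $B$ is generated by $r$ elements as an $A$-module (not freeness), and stays at the level of points and polynomials without invoking the Nullstellensatz or base change.
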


We recall the proof for the convenience of the reader.

\begin{proof}
  (After \cite[II.5.Thm.6]{shaf}.) Let $u$ be in $U$, and write
  $f^{-1}(u)=\{v_1,\dots, v_m\}$. One can easily find an element $b$ in
  $B=\CO_V$ such that all the values $b(v_i)$ are distinct.
  
  As $f$ is finite, $B$ is a module of finite type (of rank $r$) over
  $A$. Thus, every element in $B$ is a root of a unitary polynomial with
  coefficients in $A$, and degree less than or equal to $r$. Let $P\in
  A[T]$ be such a polynomial for $b$, and write $P=\sum_{i=0}^{d}a_iT^i$,
  with $d \leq r$ and $a_d=1$. We have
  \[ \sum_{i=0}^{d}a_ib^i =0 .\] For $j\in \{1,\dots, m\}$, as $f(v_j)=u$,
  specializing the above identity at $v_j$ gives
  \[ \sum_{i=0}^{d}a_i(u) b(v_j)^i =0 .\] So the polynomial $\sum_i a_i(u)
  T^i$ has $m$ distinct roots $b(v_1),\dots , b(v_m)$, and has degree $d$,
  so we obtain $m \leq d \leq r$.
\end{proof}

Points in $U$ whose fiber does not have maximal cardinality are called
\emph{branch points}, and form the \emph{branch locus} of $f$ in $U$:
\[ \Ubr := \left\{ u \in U \ ,\  \left|f^{-1}(u)\right| <r \right\} \ .\]

It is easy to show that $\Ubr$ is closed for the Zariski topology
and is not equal to $U$ (cf. \cite[II.5.Thm.~7]{shaf}), so that
$U-\Ubr$ is dense in $U$.

\begin{propo}
\label{proprambr}
  With the notations above, we have the following equality:
  \[ f(\Vram) = \Ubr \ .\] 

  So $\Vram\subseteq f^{-1}(\Ubr)$. Moreover, the
  restriction of $f$: \[V- f^{-1}(\Ubr)\ \surj \ U-
  \Ubr \] is a topological $r$-fold covering (for the
  transcendental topology).
\end{propo}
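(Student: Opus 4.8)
The plan is to prove the three assertions of Proposition~\ref{proprambr} in turn, using the two preceding propositions (the characterisation of $\Vram$ as the zero set of $J$, resp. of $\theta_{B/A}$, resp. of the ramified polynomials) together with the finiteness and quasi-homogeneity of $f$. The key structural fact I would invoke throughout is that $f$ is finite, hence closed, and that over the complement of the branch locus the fibres have constant cardinality $r$.

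\textbf{Step 1: $f(\Vram)\subseteq \Ubr$.} Let $v\in \Vram$, so $J(v)=0$, \ie $df_v$ is not invertible. Set $u=f(v)$. I would argue that the fibre $f^{-1}(u)$ cannot have $r$ distinct points: working analytically, if $f$ were a local biholomorphism at each of the $r$ preimages of $u$, then the sum of local multiplicities would already be $r$, forcing each preimage to be a simple point of the fibre and $df$ to be invertible there --- contradicting $J(v)=0$. More algebraically, one uses that $f$ étale at every point of $f^{-1}(u)$ together with $|f^{-1}(u)|=r$ would make $\CO_{V,v}$ unramified over $\CO_{U,u}$ for all $v$ over $u$, contradicting $v_Q(\theta_{B/A})\geq 1$ for some $Q$ vanishing at $v$ (Prop.\ \ref{propcaracdiff}). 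Hence $u\in\Ubr$.

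\textbf{Step 2: $\Ubr\subseteq f(\Vram)$.} Conversely let $u\in\Ubr$, so $|f^{-1}(u)|<r$. I would show some preimage $v$ of $u$ lies in $\Vram$. Suppose not: then $f$ is étale at every $v\in f^{-1}(u)$, so each such $v$ is an isolated point of its fibre with local degree $1$. Since $f$ is finite and flat (free $A$-module of rank $r$), the total length of the fibre $\sum_{v\in f^{-1}(u)} \dim_\BC \CO_{V,v}/\mathfrak m_u\CO_{V,v}$ equals $r$; étaleness forces each local term to be $1$, whence $|f^{-1}(u)|=r$, contradicting $u\in\Ubr$. So some $v\in f^{-1}(u)\cap \Vram$, \ie $u\in f(\Vram)$. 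Combining Steps 1 and 2 gives $f(\Vram)=\Ubr$, and therefore $\Vram\subseteq f^{-1}(f(\Vram))=f^{-1}(\Ubr)$.

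\textbf{Step 3: the covering statement.} Over $U-\Ubr$, every fibre has exactly $r$ points and (by Step 2, contrapositive, plus $\Vram\subseteq f^{-1}(\Ubr)$) $f$ is étale at each of them, so $df$ is an isomorphism there; thus the restriction $g\colon V-f^{-1}(\Ubr)\to U-\Ubr$ is a finite, proper, local homeomorphism with all fibres of constant cardinality $r$. A proper local homeomorphism with constant finite fibre cardinality over a (locally compact, locally connected) base is a topological covering map of degree $r$ --- one builds evenly-covered neighbourhoods by the standard argument: pick disjoint opens $V_1,\dots,V_r$ around the $r$ preimages of $u$ on which $g$ is a homeomorphism onto its (open) image, then shrink $U-\Ubr$ near $u$ to $\bigcap_i g(V_i)$ minus the (closed, by properness) set $g\big((V-f^{-1}(\Ubr))\setminus\bigcup_i V_i\big)$. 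I would note that $U-\Ubr$ is dense (stated in the excerpt) and that $\Ubr$ being Zariski closed makes these sets well-behaved.

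\textbf{Main obstacle.} The delicate point is Step 2 / the étale-locus bookkeeping: one must be careful that "$f$ étale at $v$'' (equivalently $J(v)\neq 0$, equivalently $\CO_{V,v}$ unramified over $\CO_{U,f(v)}$) really forces local degree $1$, which relies on flatness of the finite extension $A\subseteq B$ --- this is available since $B$ is a free $A$-module (stated earlier) --- and on the length/degree identity $\sum_{v\mapsto u}\mathrm{length}_{\CO_{U,u}}(\CO_{V,v})=r$ for finite flat morphisms. The remaining topological step is routine once constant fibre cardinality and the local-homeomorphism property are in hand; I would simply cite \cite[II.5]{shaf} for the covering conclusion as the excerpt already does for the neighbouring facts.
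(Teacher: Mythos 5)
Your proof is correct, but for the central equality $f(\Vram)=\Ubr$ you take a genuinely different route from the paper. The paper proves $\Ubr\subseteq f(\Vram)$ by a transcendental, sequential argument: it approximates a branch point $u$ by a sequence $u^{(k)}$ in the unbranched locus, extracts convergent subsequences from the $r$-point fibres $f^{-1}(u^{(k)})$, and uses the pigeonhole principle to find two distinct preimage sequences collapsing to the same limit $v_1$, which contradicts the inverse function theorem unless $J(v_1)=0$; the reverse inclusion is delegated to a citation of Shafarevich (unramified at $u$ implies the tangent map is an isomorphism at every preimage). You instead run both inclusions through the single algebraic identity $\sum_{v\in f^{-1}(u)}\dim_\BC \CO_{V,v}/\mathfrak m_u\CO_{V,v}=r$, valid because $B$ is free of rank $r$ over $A$: the fibre has $r$ distinct points exactly when every local term is $1$, i.e.\ exactly when $f$ is unramified (hence étale, hence $J\neq 0$) at every preimage. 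This is cleaner and more uniform --- it gives $u\in\Ubr\Leftrightarrow f^{-1}(u)\cap\Vram\neq\varnothing$ in one stroke and sidesteps the compactness needed to extract the paper's convergent subsequences --- at the cost of invoking the finite-flat fibre-length machinery where the paper stays elementary. One stylistic caveat: the "analytic" sentence of your Step 1 is phrased circularly (you assume $f$ is a local biholomorphism at each preimage in order to conclude $df$ is invertible there); it is your algebraic version, identical in substance to Step 2, that actually carries the argument, so I would drop the analytic phrasing. The covering statement is handled essentially as in the paper (inverse function theorem plus disjoint slice neighbourhoods), and your refinement of deleting $g\bigl((V-f^{-1}(\Ubr))\setminus\bigcup_i V_i\bigr)$, closed by properness, makes explicit a point the paper settles instead by counting the $r$ fibre points.
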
 

\begin{proof}
  We set $U':= U- \Ubr$ and $V':=V-f^{-1}(\Ubr)$;
  these are Zariski-open.

  \medskip

  First, let $u$ be in $\Ubr$. As $U'$ is dense in $U$, we can
  find a sequence $u^{(k)}$ of elements in $U'$, whose limit is $u$. Let us
  write $f^{-1}(u)=\{v_1,\dots, v_p\}$ (with $p <r$), and
  $f^{-1}(u^{(k)})=\{v_1^{(k)},\dots, v_r^{(k)}\}$. Up to extracting
  subsequences, we can assume that each sequence $(v_i^{(k)})_{k \in \BN}$
  converges towards one of the $v_j$. As $r<n$, we have at least two
  sequences, say $v_1^{(k)}$ and $v_2^{(k)}$, whose limit is the same
  element of $f^{-1}(u)$, say $v_1$. But for all $k$, we have $v_1^{(k)}
  \neq v_2^{(k)}$ and $f(v_1^{(k)})= f(v_2^{(k)})=u^{(k)}$. If $J(v_1)\neq
  0$, this contradicts the inverse function theorem. So $J(v_1)=0$, and $u
  \in f(Z(J))=f(\Vram)$.
  
  \medskip

  By \cite[II.5.3.Cor.~2]{shaf}, if $f$ is unramified at $u$, then for all
  $v\in f^{-1}(u)$, the tangent mapping of $f$ at $v$ is an
  isomorphism. That means exactly that $f^{-1}(U- \Ubr) \subseteq
  V-Z(J)$, \ie $f(Z(J)) \subseteq \Ubr$.

  \medskip
  
  Let us fix an element $u$ in $U'$. For all $v$ in
  $f^{-1}(u)$, we know that $J(v)\neq 0$, so, by the inverse function
  theorem, there exists a neighbourhood $E_v$ of $v$ in $V'$ such that the
  restriction $f : E_v \to f(E_v)$ is an isomorphism. Let us write
  $f^{-1}(u)=\{v_1,\dots, v_r \}$. Obviously we can suppose that the
  $E_{v_i}$'s are pairwise disjoint. Then $\Omega_u:=f(A_{v_1})\cap
  \dots \cap f(A_{v_r})$ is a neighbourhood of $u$ in $U'$, and for $x$ in
  $f^{-1}(\Omega_u)$ there exists a unique $i$ such that $x$ is in
  $A_{v_i}$. Thus we get natural map $f^{-1}(\Omega_u) \to \Omega_u \times
  \{1,\dots, r\}$, $x \mapsto (f(x),i)$, which is clearly a homeomorphism.
\end{proof}

\subsection{Ramification indices of a branched covering}
\label{subpartbranched}

As explained in the book of Namba\footnote{Thanks to José Ignacio Cogolludo
  for having suggested this reference to me.}(\cite[Ex. 1.1.2]{branched}),
the map $f$ is more precisely a \emph{finite branched covering} of $U$
(according to \cite[Def.\ 1.1.1]{branched}), of degree $r$.

\medskip

Because of the inclusion $Z(J)=\Vram \subseteq f^{-1}(\Ubr)$, we know that
the irreducible components of $f^{-1}(\Ubr)$ are the $Z(Q)$, for $Q \in
\Spram(B)$, plus possibly some other components associated to unramified
polynomials. We recall here, for the sake of completeness, some classical
properties, thanks to which the ramification can be understood via the
cardinality of the fibers (this will be needed in the next chapter).

\begin{propo}[After Namba]
  \label{propfibers}
  Let $u$ be a non-singular point of $\Ubr$, and $v\in f^{-1}(u)$. Then:
  \begin{enumerate}[(a)]
  \item $v$ is non-singular in $f^{-1}(\Ubr)$. In particular,
    there is a unique irreducible component $C_v$ of
    $f^{-1}(\Ubr)$ containing $v$.
  \item There exists a connected open neighbourhood $\Omega_v$ of $v$ such
    that, for the restriction of $f$:
    \[ \widetilde{f}: \Omega_v \to f(\Omega_v) \ , \] for each $u'$ in
    $f(\Omega_v)\cap (U-\Ubr)$, the cardinality of the fiber
    $\widetilde{f}^{-1}(u')$ is the ramification index $e_\fq$ of the ideal
    $\fq \in \Spec_1(B)$ defining $C_v$.
  \end{enumerate}
\end{propo}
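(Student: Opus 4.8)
The plan is to reduce the statement to the classical local normal form for a finite branched covering near a smooth point of its branch locus, as developed in Namba's book \cite{branched}, and then to match the integer occurring in that normal form with the algebraic ramification index $e_\fq$ fixed earlier in this chapter.

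First I would check that $\Ubr$ is a hypersurface near $u$. By Prop.\ \ref{proprambr} one has $\Ubr = f(\Vram) = f(Z(J))$; since $J$ is a nonzero polynomial (the extension is generically étale, the residue characteristic being $0$), its zero locus $\Vram$ is pure of codimension one in $V$, and as $f$ is finite, hence closed and dimension-preserving on irreducible sets, $\Ubr$ is pure of codimension one in $U$. Thus, near the non-singular point $u$, the set $\Ubr$ is a smooth analytic hypersurface, and its germ at $u$ is irreducible; let $(P)\in\Spec_1(A)$ be the prime cutting it out in a neighbourhood of $u$. Since $f$ is finite and $f^{-1}(u)$ is finite, I can pass to a connected polydisc neighbourhood $\Delta$ of $u$ in the transcendental topology, small enough that $f^{-1}(\Delta)$ meets each $v\in f^{-1}(u)$ in a connected open set $\Omega_v$ with $\Omega_v\cap f^{-1}(u)=\{v\}$, these $\Omega_v$ being pairwise disjoint and exhausting $f^{-1}(\Delta)$.

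Next I would invoke the branched-covering structure theorem \cite{branched}: choosing analytic coordinates $(w_1,w')$ on $\Delta$ with $\Ubr\cap\Delta=\{w_1=0\}$, the restriction $\widetilde{f}\colon\Omega_v\to\Delta$ is, in suitable analytic coordinates $(z_1,z')$ centred at $v$, the power map $(z_1,z')\mapsto(z_1^{e},z')$ for a well-defined integer $e\ge 1$. The point is that over $\Delta\setminus\Ubr$, which has $\pi_1\cong\BZ$, the map $\widetilde{f}$ is, by Prop.\ \ref{proprambr}, a connected topological covering, hence classified by a finite-index subgroup $e\BZ$ and so isomorphic to the $e$-fold power covering; this covering then extends over $\{w_1=0\}$ by Riemann's extension theorem together with the normality of the polynomial ring $B$. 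Granting this normal form, part (a) is immediate: $\widetilde{f}^{-1}(\Ubr)\cap\Omega_v=\{z_1=0\}$ is smooth at $v$, so $v$ lies on a single irreducible component $C_v$ of $f^{-1}(\Ubr)$; and the cardinality assertion of part (b) is immediate too, since for $u'=(w_1,w')$ with $w_1\neq 0$ the fibre $\widetilde{f}^{-1}(u')=\{(\zeta,w'):\zeta^{e}=w_1\}$ has exactly $e$ elements.

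The remaining and most delicate point is to identify the topological exponent $e$ with the algebraic ramification index $e_\fq=v_Q(P)$, where $(Q)$ is the height-one prime of $B$ defining $C_v$. The normal form shows that $e$ equals the local covering degree of $\widetilde{f}$ transverse to $C_v$ near $v$; on the algebraic side, localizing at $(Q)$ gives $P=(\text{unit})\,Q^{e_\fq}$ in $B_{(Q)}$, and choosing, near a general smooth point of $C_v$, local analytic coordinates adapted simultaneously to $C_v$ and to $\Ubr$, one reads off that the pullback divisor $f^{*}\Ubr$ has multiplicity $v_Q(P)$ along $C_v$ and that this multiplicity coincides with the transverse local degree. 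Hence $e=e_\fq$; this is exactly the identification carried out in \cite{branched}, to which I would refer for the full details. Reconciling the analytic normal form with the commutative-algebra definition $e_\fq:=v_Q(P)$ used throughout — i.e.\ matching the two meanings of ``ramification index'' — is the step that requires the most care, everything else being formal once the local structure theorem is in hand.
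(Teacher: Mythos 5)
Your proposal is correct and follows essentially the same route as the paper, whose entire proof is a citation of Theorem 1.1.8 and Corollary 1.1.13 of Namba's book \cite{branched}: you are simply unpacking the local normal form $(z_1,z')\mapsto(z_1^{e},z')$ for a finite branched covering near a smooth point of the branch locus and the identification of the topological exponent $e$ with the algebraic index $e_\fq=v_Q(P)$, which is exactly the content of the cited results. The details you supply (connectedness of $\Omega_v\setminus f^{-1}(\Ubr)$, classification of connected coverings of $\Delta\setminus\Ubr$ by $\pi_1\cong\BZ$, extension via normality, and the comparison of multiplicities at a generic smooth point of $C_v$) are all sound.
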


\begin{proof}
  We refer to Theorem 1.1.8 and Corollary 1.1.13 in \cite{branched}.
\end{proof}

\section{Well-ramified extensions}
\label{partwellram}
\subsection{The well-ramified property}
\label{subpartwellram}

As above we consider a finite graded polynomial extension $A \subseteq B$,
given by a morphism $f$. We denote by $J$ its Jacobian, and by $e_Q$ the
ramification index of a polynomial $Q\in\Spec_1(B)$.

\begin{defi}\label{defwellram}
We say that the extension  $A \subseteq B$, or the morphism $f$, is
\emph{well-ramified}, if:
\[ \left( J  \right) \cap A = \left( \prod_{Q \in \Spram(B)} Q
^{e_Q} \right) \ . \]
\end{defi}

Flagrantly, this definition makes the second part of Thm.\ \ref{thmintrojac}
a tautology. So our terminology might at this point seem quite
mysterious. Actually the remaining of this chapter will be concerned with
giving several equivalent characterizations of well-ramified extensions
(Prop.\ \ref{propwell}), that ought to make this terminology (and the
usefulness of this notion) much more transparent.

\subsection{Characterisations of the well-ramified property}

Most of the characterisations given below are very elementary, but are
worth mentioning so as to get a full view of what is a well-ramified
extension. 

\begin{propo}
  \label{propwell}
  Let $A \subseteq B$ a finite graded polynomial extension, and $f : V \to
  U$ its associated morphism. The following properties are equivalent:
  \begin{enumerate}[(i)]
  \item the extension $A\subseteq B$ is well-ramified
    (as defined in \ref{defwellram});
  \item $\displaystyle{\left( \prod_{Q \in \Spram(B)} Q \right) \cap A = \left(
      \prod_{Q \in \Spram(B)} Q ^{e_Q} \right)}$;
  \item the polynomial $\prod_{Q \in \Spram(B)} Q ^{e_Q} $ lies in $A$;
  \item for any $\fp \in \Spec_1(A)$, if there exists $\fq_0 \in
    \Spec_1(B)$ over $\fp$ which is ramified, then any other $\fq \in
    \Spec_1(B)$ over $\fp$ is also ramified;
  \item if $P$ is an irreducible polynomial in $A$, then, as a polynomial
    in $B$, either it is reduced, or it is completely non-reduced, \ie any of
    its irreducible factors appears at least twice;
  \item $f^{-1}(\Ubr )=\Vram$;
  \item $f(\Vram) \cap f(V- \Vram) = \varnothing$.
  \end{enumerate}
\end{propo}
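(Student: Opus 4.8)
The plan is to isolate one ``engine'' --- a valuation formula comparing ramification in $A$ and in $B$ --- and then derive all seven statements from it by elementary bookkeeping, dealing with the two geometric statements (vi) and (vii) at the end by means of Proposition \ref{proprambr}. The engine is: for $a\in A$ and an irreducible $Q\in B$ lying over the irreducible $P\in A$, one has $v_Q(a)=e_Q\,v_P(a)$. To see it, write $a=P^m a'$ with $P\nmid a'$ in $A$; since $P\doteq\prod_{Q'\text{ over }P}Q'^{e_{Q'}}$ this gives $v_Q(a)=m\,e_Q+v_Q(a')$, and $v_Q(a')=0$ because $Q\mid a'$ would force $a'\in (Q)\cap A=(P)$, contradicting $P\nmid a'$.

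Write $D:=\prod_{Q\in\Spram(B)}Q$ and $\widetilde D:=\prod_{Q\in\Spram(B)}Q^{e_Q}$, so that $J\doteq\prod_{Q\in\Spram(B)}Q^{e_Q-1}=\widetilde D/D$ by Theorem \ref{thmjac}. Using the engine, for $a\in A$ the two conditions ``$J\mid a$ in $B$'' and ``$D\mid a$ in $B$'' are both equivalent to ``$P\mid a$ in $A$ for every irreducible $P\in A$ lying under a ramified prime of $B$'', since for the non-negative integer $v_P(a)$ one has $e_Q v_P(a)\geq e_Q-1 \iff v_P(a)\geq 1 \iff e_Q v_P(a)\geq 1$. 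Hence, putting $E:=\prod P$ (the product over the irreducible $P\in A$ lying under a ramified prime of $B$), one gets $(J)\cap A=(D)\cap A=(E)$; in particular (i) and (ii) both assert $(E)=(\widetilde D)$ and are equivalent. Next, (ii)$\Rightarrow$(iii) is clear, while if $\widetilde D\in A$ then $\widetilde D\in (D)\cap A=(E)$, so $E\mid\widetilde D$ in $B$; comparing valuations via the engine ($v_Q(E)=e_Q$ for every $Q$ over a prime appearing in $E$, whereas $v_Q(\widetilde D)=e_Q$ for ramified $Q$ and $0$ otherwise) this divisibility forces $E\doteq\widetilde D$, i.e.\ (ii); and the same count shows (iii)$\Leftrightarrow$(iv), as $E\doteq\widetilde D$ fails exactly when some unramified prime lies over a prime that also carries a ramified one. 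Finally (iv)$\Leftrightarrow$(v) is a direct rereading of $P\doteq\prod_{Q\text{ over }P}Q^{e_Q}$: ``$P$ reduced in $B$'' means all the $e_Q$ are $1$, ``$P$ completely non-reduced'' means all are $\geq 2$.

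For the geometric statements I would use Proposition \ref{proprambr}. Since $f$ is finite hence closed, and $\Vram=\bigcup_{Q\in\Spram(B)}Z(Q)$, each image $f(Z(Q))$ is closed, irreducible, of dimension $n-1$ and contained in $Z(P)$ with $(P)=(Q)\cap A$, hence equals $Z(P)$; therefore $\Ubr=f(\Vram)=Z(E)$ and $f^{-1}(\Ubr)=Z(E)$ read in $B$, whose irreducible components are the $Z(Q)$ for $Q$ dividing $E$. As $\Vram=Z(D)$, statement (vi) says $Z(E)=Z(D)$ in $V$, i.e.\ these two hypersurfaces have the same components, i.e.\ $\{Q\,:\,Q\mid E\}=\Spram(B)$, which is once more (iv). For (vii)$\Leftrightarrow$(vi): the covering part of Proposition \ref{proprambr} gives $f^{-1}(U-\Ubr)\subseteq V-\Vram$ with $f$ onto $U-\Ubr$ there, so $U-\Ubr\subseteq f(V-\Vram)$ and thus $f(V-\Vram)=(U-\Ubr)\cup S$ with $S:=f(V-\Vram)\cap\Ubr$; then $f(\Vram)\cap f(V-\Vram)=\Ubr\cap f(V-\Vram)=S$, and $S=\varnothing$ iff $(V-\Vram)\cap f^{-1}(\Ubr)=\varnothing$ iff $f^{-1}(\Ubr)\subseteq\Vram$, which combined with the always-valid inclusion $\Vram\subseteq f^{-1}(\Ubr)$ is exactly (vi).

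The whole argument is bookkeeping once the valuation formula is in hand, so I do not expect a genuine obstacle; the points that need care are keeping divisibility in $A$ strictly apart from divisibility in $B$ --- remembering in particular that $\widetilde D$ need not lie in $A$ a priori, which is the very content of the property --- and, on the geometric side, the correct use of the closedness of $f$ and of the covering structure of Proposition \ref{proprambr}.
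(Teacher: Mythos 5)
Your proof is correct. The overall strategy coincides with the paper's — first show that $(J)\cap A=\bigl(\prod_{Q\in\Spram(B)}Q\bigr)\cap A$ is generated by the product $E$ of the irreducibles of $A$ lying under ramified primes of $B$, then reduce all the algebraic statements to comparing $E$ with $R=\prod Q^{e_Q}$ — but two things are packaged differently. First, you isolate the valuation identity $v_Q(a)=e_Q\,v_P(a)$ for $a\in A$ as a stand-alone lemma and let the chain $e_Qv_P(a)\ge e_Q-1\iff v_P(a)\ge1\iff e_Qv_P(a)\ge1$ do all the work, where the paper argues by mutual divisibilities and an lcm of the $\widetilde{Q}$; your version is a cleaner single point of leverage but is the same computation in substance. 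Second, for the geometric statements the paper closes a cycle (i)$\Rightarrow$(vii)$\Rightarrow$(vi)$\Rightarrow$(v), whereas you identify $f^{-1}(\Ubr)$ and $\Vram$ as the zero loci in $V$ of $E$ and of $\prod_{Q\in\Spram(B)}Q$ respectively (using that $f$ is finite, hence closed and dimension-preserving, so that $f(Z(Q))=Z(P)$ and $\Ubr=Z(E)$) and then prove (vi)$\Leftrightarrow$(iv) and (vii)$\Leftrightarrow$(vi) directly; this buys a symmetric, statement-by-statement reduction to (iv) at the cost of slightly more set-up. Both routes rest on the same external input, namely Proposition \ref{proprambr} and Theorem \ref{thmjac}, and I see no gap in yours.
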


\begin{proof}
  Let $S:=\prod_{Q \in \Spram(B)} Q $, and $R:=\prod_{Q \in \Spram(B)} Q
  ^{e_Q}$. From Thm.\ \ref{thmjac}, we have also: $J = \prod_{Q \in \Spram(B)} Q
  ^{e_Q-1}$.
  
  We begin with some general elementary facts. We have $(S)\cap
  A=\bigcap_{Q \in \Spram(B)} (Q)\cap A$. For $Q \in \Spram(B)$, denote by
  $\widetilde{Q}$ an irreducible in $A$ such that $Q$ lies over
  $\widetilde{Q}$ (\ie $(Q)\cap A = \widetilde{Q}$). As we work in UFDs, we
  get that $(S)\cap A$ is principal, generated by
  \[ \widetilde{S} := \lcm \left(\widetilde{Q} \tq Q\in \Spram(B)\right)\
  .\]

  For $Q \in \Spram(B)$, $Q^{e_Q}$ divides $\widetilde{Q}$, so $R$ divides
  $\widetilde{S}$. Moreover, $J$ divides $R$, so: \\${\widetilde{S} \subseteq
    (R)\cap A \subseteq (J)\cap A}$. Conversely, $S$ divides $J$, so:
  $(J)\cap A \subseteq (S)\cap A= (\widetilde{S})$.
  
  Thus we always have $(J)\cap A = (S)\cap A$, and the statement $(ii)$ is
  just an alternate definition of the well-ramified property:
  $(i)\Leftrightarrow (ii)$. 

  \medskip

  $(iii)\Leftrightarrow (ii)$: we have $(S)\cap A \subseteq (R)$ and $R\in
  (S)$. So $R$ lies in $A$ if and only if $(S)\cap A =(R)$.

  \medskip 

  $(v)\Leftrightarrow (iv)$, since $(v)$ is just a ``polynomial''
  rephrasing of $(iv)$.

  \medskip
  
  $(iv)\Leftrightarrow (iii)$: let us denote by
  $\Spram(A)$ the set of primes $\fp$ in $\Spec_1(A)$ such that there
  exists at least one prime $\fq$ over $\fp$ which is ramified. Then:
  \begin{equation*} R=\prod_{Q \in \Spram(B)} Q ^{e_Q} =\prod_{P \in \Spram(A)}
  \prod_{\substack{Q \in \Spram(B)\\Q \text{ over }P}} Q
  ^{e_Q} \ .\tag{*} \end{equation*} 
  If we suppose $(iv)$, then whenever $P$ is in $\Spram(A)$,
  all the $Q$ over $P$ are ramified. Thus:
  \[R= \prod_{P \in \Spram(A)} \ \prod_{Q \text{\ over\ } P} Q ^{e_Q} =
  \prod_{P \in \Spram(A)} P \] and the polynomial $R$ lies in $A$.

  Conversely, suppose that $R$ lies in $A$. Consider $P$ in $\Spram(A)$,
  and $Q$ in $\Spec_1(B)$ lying over $P$. Then there exists $Q_0$ in
  $\Spram(B)$ such that $(Q_0)\cap A = (P)=(Q)\cap A$. As $(R)$ is
  contained in $(Q_0)\cap A$, we obtain that $Q$ also divides $R$, so is
  among the factors of the product (*) above. Thus $(Q)$ is ramified, and
  $(iv)$ is verified.

  \medskip
  
  $(vii)\Rightarrow (vi)$: if $v$ lies in $f^{-1}(\Ubr)$, we have
  $f(v)\in \Ubr=f(\Vram)$, so $(vii)$ implies $v\in
  \Vram$. Thus $f^{-1}(\Ubr) \subseteq
  \Vram$. The other inclusion is from Prop.\ \ref{proprambr}.

  \medskip
  
  $(i) \Rightarrow (vii)$: we have $(\widetilde{S})=(J)\cap A$, so that
  $Z(\widetilde{S}) = f(Z(J))$. If the extension is well-ramified, we
  obtain: $\widetilde{S}\doteq \prod_{Q \in \Spram(B)} Q^{e_Q}$. Thus $J$
  and $\widetilde{S}$ have the same irreducible factors in $B$, which
  implies $Z(J)=f^{-1} (Z(\widetilde{S}))$. So
  $f(\Vram)=Z(\widetilde{S})$, whereas $f(V- \Vram)=U -
  Z(\widetilde{S})$.

  \medskip

  $(vi)\Rightarrow (v)$: suppose that there exists $Q$ in $\Spram(B)$,
  such that $\widetilde{Q}$ has one irreducible factor (in $B$) which is not a
  ramified polynomial, say $M$. Then we can choose $v$ in $Z(M)-Z(J)$. As
  $M(v)=0$, we have $\widetilde{Q}(f(v))=0$ and $\widetilde{S}(f(v))=0$. So
  $f(v)\in Z(\widetilde{S})=f(Z(J))=\Ubr$, which contradicts $(i)$.
\end{proof}

\subsection{Examples and counterexamples}

A fundamental case when the extension is well-ramified is the Galois
case. If $A=B^G$, with $G$ a (reflection) group, all the ramification
indices of the ideals over a prime of $A$ are the same. In our setting, the
well-ramified property is somewhat a weak version of the normality (the
extension is always separable since we work in characteristic zero).

\medskip

Of course the notion is strictly weaker than being a Galois extension. Take
for example the extension $\BC[X^2+Y^3,X^2Y^3] \subseteq \BC[X,Y]$, with
Jacobian $J=6XY^2(X^2-Y^3)$: the ramified polynomials are $(X)$ (index $2$)
and $(Y)$ (index $3$), both above $(X^2Y^3)$, and $(X^2-Y^3)$ (index $2$),
above $((X^2-Y^3)^2)$.

\bigskip

A simple example of a not well-ramified extension is:
\[A=\BC[X^2 Y,X^2 +Y] \subseteq \BC[X,Y]=B,\] which is free of rank $4$.
Here the ideal $(X^2 Y)$ in $A$ has two ideals above in $B$: $(X)$ which
is ramified and $(Y)$ which is not, so the extension is not
well-ramified. We compute $\theta_{B/A}=\Jac(f)=X(Y-X^2)=S$ (using the
notations of the proof of Prop.\ \ref{propwell}). So $R=X^2
(Y-X^2)^2$ is not in $A$, actually $(S)\cap A$ is generated by $X^2 Y
(Y-X^2)^2$. 

\bigskip

In the next chapter we will show that the Lyashko-Looijenga
extensions are well-ramified but not Galois (Thm.\ \ref{thmLL}).

\chapter[Geometry of LL and submaximal factorisations]{Geometry of the Lyashko-Looijenga morphism and submaximal
  factorisations of a Coxeter element}
\label{chapsubmax}

In this chapter, we apply the results of Chapter \ref{chapjac} to the
Lyashko-Looijenga morphism $\LL$. We prove that $\LL$ is a well-ramified
morphism (according to Def.\ \ref{defwellram}), and describe the
ramified polynomials in terms of the geometry of the discriminant
hypersurface (see Thm.\ \ref{thmLL}). Then we use the constructions of
Chapter \ref{chaphur} to deduce combinatorial results about the
factorisations of a Coxeter element in $(n-1)$ blocks.

\section{Lyashko-Looijenga morphisms and factorisations of a 
Coxeter element}
\label{partLL1}~

Although we refer to Chapter \ref{chaphur} for the precise definitions of
the morphism $\LL$ and the block factorisations of a Coxeter element $c$,
we do recall here a few useful constructions about the strata of $\CH$, the
map $\fact$, and the relations between $\fact$ and $\LL$, so that this
chapter should be intelligible without reading Chapter \ref{chaphur},
except for the definitions of section \ref{partLL}.

\bigskip

The notations are the same as in Chapter \ref{chaphur}. We fix a
well-generated, irreducible (finite) complex reflection group $W$, and we
choose invariant polynomials $f_1,\dots, f_n$, homogeneous of degrees
$d_1\leq \dots \leq d_n=h$, such that the discriminant of $W$ has the form:
\[\Delta_W= f_n^n + a_2 f_n ^{n-2} +\dots + a_n \ ,\]
with $a_i \in \BC[f_1,\dots, f_{n-1}]$. The Lyashko-Looijenga morphism is then:
\[\begin{array}{lccc}
    \LL : & \BC^{n-1} & \to & \BC^{n-1}\\
    &(f_1,\dots, f_{n-1}) & \mapsto & (a_2,\dots, a_n)
  \end{array} \]
We also use the notation $\LL$ to denote the set-theoretical incarnation of
this algebraic morphism, \ie the map
\[\begin{array}{rcl}
Y & \to & E_n\\
y=(f_1,\dots, f_{n-1}) & \mapsto & \text{multiset\ of\ roots\ of\ }
\Delta_W(f_1,\dots, f_n) \ ,
\end{array}\]
where $E_n$ is the set of centered configurations of $n$ points in $\BC$.

\subsection{Discriminant stratification}
The space $V$, together with the hyperplane arrangement $\CA$, admits a
natural stratification by the \emph{flats}, elements of the intersection
lattice $ \CL:=\left\{\bigcap_{H\in \CB} H \tq \CB \subseteq \CA \right\} $.

As the $W$-action on $V$ maps flats to flats, this stratification gives
rise to a quotient stratification $\Lb$ of $W \qg V$: \[\Lb= W \qg \CL
=(p(L))_{L \in \CL}= (W\cdot L)_{L \in \CL} \ ,\] where $p$ is the projection
$V \ \surj \ W \qg V $ . 
For each stratum $\Lambda$ in $\Lb$, we denote by $\Lambda^0$ the
complement in $\Lambda$ of the union of the strata strictly included in
$\Lambda$. The $(\Lambda^0)_{\Lambda \in \Lb}$ form an open stratification
of $W \qg V$, called the \emph{discriminant stratification}.

\medskip

There is a natural bijection between the set of flats in $V$ and the set of
parabolic subgroups of $W$ (Steinberg's theorem); this leads to a
bijection between the stratification $\Lb$ and the set of conjugacy classes
of parabolic subgroups. Moreover, $\Lb$ is in bijection with the set of
conjugacy classes of \emph{parabolic Coxeter elements} (which are Coxeter
elements of parabolic subgroups). Through these bijections, the codimension
of a stratum $\Lambda$ corresponds to the rank of the associated parabolic
subgroup, and to the length of the parabolic Coxeter element. We refer to
Sect.\ \ref{partecp} for details and proofs.

\subsection{Geometric factorisations and compatibilities}
\label{subpartstrat}

In Sect.\ \ref{partlbl}, we constructed, using the topology of $\CH
\subseteq W \qg V \simeq Y \times \BC$, a map:
\[\begin{array}{rcl}
  \CH & \to & W \\
  (y,x) & \mapsto & c_{y,x} \ ,
\end{array}
\]
which satisfies the two fundamental properties (note that $(y,x)$ lies in $\CH$
if and only if the multiset $\LL(y)$ contains $x$):
\begin{itemize}
\item[(P1)] if $(x_1,\dots, x_p)$ is the ordered support of $\LL(y)$ (for
  the lexicographical order on $\BC \simeq \BR^2$), then the $p$-tuple
  $(c_{y,x_1}, \dots, c_{y,x_p})$ lies in $\Fact_p(c)$;
\item [(P2)] for all $x \in \LL(y)$, $c_{y,x}$ is a parabolic Coxeter
  element; its length is equal to the multiplicity of $x$ in $\LL(y)$, and
  its conjugacy class corresponds to the unique stratum $\Lambda$ in $\Lb$
  such that $(y,x)\in \Lambda^0$.
\end{itemize}

According to property (P1), we call \emph{factorisation of $c$ associated
  to $y$}, and denote by $\fact(y)$, the tuple $(c_{y,x_1}, \dots,
c_{y,x_p})$ (where $(x_1,\dots, x_p)$ is the ordered support of $\LL(y)$).

Any block factorisation determines a composition of $n$. We can also
associate to any configuration of $E_n$ a composition of $n$, formed by the
multiplicities of its elements in the lexicographical order. Then property
(P2) implies that for any $y$ in $Y$, the compositions associated to
$\LL(y)$ and $\fact(y)$ are the same. The third fundamental property
(Thm.\ \ref{thmbij}) is:
\begin{itemize}
\item[(P3)] the map $\LL \times \fact : Y \to E_n \times \Fact(c)$ is
  injective, and its image is the entire set of compatible pairs (\ie with
  same associated composition).
\end{itemize}

In other words, for each $y\in Y$, the fiber $\LL^{-1}(\LL(y))$ is in
bijection (via $\fact$) with the set $\Fact_\mu (c)$ where $\mu$ is the
composition of $n$ associated to $\fact(y)$.

\section{Lyashko-Looijenga extensions}
\label{partLL2}
\subsection{\texorpdfstring{Ramification locus for $\LL$}{Ramification locus for LL}} 
~

Let us first explain the reason why $\LL$ is étale on $Y-\CK$ (as stated in
Thm.\ \ref{thmLLBessis}), where  
\[\CK = \{y\in Y \tq \text{ the multiset } \LL(y) \text{ has multiple
  points} \} \ .\] The argument goes back to Looijenga (in
\cite{looijenga}), and is used without details in the proof of Lemma 5.6 of
\cite{BessisKPi1}.

\medskip

We begin with a more general setting. Let $n\geq 1$, and $P \in
\BC[T_1,\dots, T_n]$ of the form:
\[ P= T_n^n + a_2 (T_1,\dots, T_{n-1}) T_n^{n-2} + \dots + a_n(T_1,\dots
T_{n-1})  \] 
(here the polynomials $a_i$ do not need to be quasi-homogeneous).
As in the case of $\LL$ we define the hypersurface $\CH := \{P=0\}
\subseteq \BC^n$, and a map $\psi : \BC^{n-1} \to E_n$, sending
$y=(T_1,\dots, T_{n-1})\in \BC^{n-1}$ to the multiset of roots of
$P(y,T_n)$ (as a polynomial in $T_n$). This map can also be considered as
the morphism $y\mapsto (a_2(y),\dots, a_n(y))$.

We set:
\[J_\psi(y)=\Jac((a_2,\dots,a_n) / y)=\det \left(
  \frac{\partial a_i}{\partial T_j} \right)_{\substack{2 \leq i \leq n\\ 1\leq j \leq
  n -1}} \]

\begin{propo}[after Looijenga]
  \label{propgeneralposition}
  With the notations above, let $y$ be a point in $\BC^{n-1}$, with
  $\psi(y)$ being the multiset $\{x_1,\dots, x_n \}$. Suppose that the
  $x_i$'s are pairwise distinct.

  Then the points $(y,x_i)$ are regular on $\CH$. Moreover, the $n$
  hyperplanes tangent to $\CH$ at $(y,x_1), \dots , (y,x_n)$ are in
  general position if and only if $J_\psi(y)\neq 0$ (\ie $\psi$ is
  étale at $y$).
\end{propo}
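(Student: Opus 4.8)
The plan is to make everything explicit by computing with the polynomial $P(y,T_n)$ and its factorization, then translating "tangent hyperplanes in general position" into a Jacobian condition. First I would write $P(y,T_n) = \prod_{i=1}^n (T_n - x_i)$ at the given point $y$ (using that $P$ is monic of degree $n$ in $T_n$ and the $x_i$ are its roots). Since the $x_i$ are pairwise distinct, $\partial P/\partial T_n$ evaluated at $(y,x_i)$ equals $\prod_{j\neq i}(x_i - x_j) \neq 0$, so each $(y,x_i)$ is a regular point of $\CH$ and the tangent hyperplane there is cut out by the differential $d_{(y,x_i)}P$, whose $T_n$-coefficient is nonzero. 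Thus each tangent hyperplane is a graph over the $y$-coordinates, i.e.\ of the form $T_n = x_i + \sum_{j=1}^{n-1} \lambda_{ij}(T_j - y_j)$ with $\lambda_{ij} = -\,\dfrac{(\partial P/\partial T_j)(y,x_i)}{(\partial P/\partial T_n)(y,x_i)}$.

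Next I would observe that these $n$ affine hyperplanes in $\BC^n$ are in general position (their $n$ normal vectors linearly independent, equivalently their common translates meet in a single point, equivalently the $n\times n$ matrix with rows $(1,\lambda_{i1},\dots,\lambda_{i,n-1})$ — up to the nonzero scalar from the $T_n$-coefficient — is invertible) if and only if the $n\times(n-1)$ matrix $(\lambda_{ij})_{1\le i\le n,\,1\le j\le n-1}$ has the property that the augmented matrix $[\mathbf{1}\,|\,(\lambda_{ij})]$ is invertible, which is a Vandermonde-type linear-algebra statement. The heart of the argument is then to identify this determinant with $J_\psi(y)$. To do this I would differentiate the identity $P(T_1,\dots,T_n) = T_n^n + \sum_{i=2}^n a_i(T_1,\dots,T_{n-1}) T_n^{n-2-(i-2)}$, wait — more carefully, $P = T_n^n + \sum_{i=2}^n a_i\, T_n^{n-i}$, so $\partial P/\partial T_j = \sum_{i=2}^n (\partial a_i/\partial T_j)\, T_n^{n-i}$ for $j \le n-1$. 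Evaluating at $(y,x_k)$ and arranging over $k$, one gets a matrix factorization: the matrix $\big((\partial P/\partial T_j)(y,x_k)\big)_{k,j}$ equals $\big((\partial a_i/\partial T_j)(y)\big)_{i,j} \cdot \big(x_k^{n-i}\big)_{i,k}$ (transposed appropriately), i.e.\ a product of the Jacobian matrix of $\psi$ with a Vandermonde-type matrix in the $x_k$'s. Taking determinants and using $\det\big(x_k^{n-i}\big)_{2\le i\le n} = \pm\prod_{k<l}(x_k-x_l)\neq 0$ will give the equivalence, after absorbing the nonzero factors $(\partial P/\partial T_n)(y,x_k) = \prod_{l\neq k}(x_k-x_l)$.

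The main obstacle I expect is bookkeeping: getting the index ranges and the shape of the Vandermonde matrix exactly right, and correctly tracking which hyperplanes-in-general-position condition corresponds to which determinant (the $n$ tangent hyperplanes live in $\BC^n$, and "general position" for $n$ hyperplanes means their intersection is a single point, equivalently the matrix of their linear parts — an $n\times n$ matrix — is nonsingular). The key bridge is the factorization of the matrix $\big((\partial P/\partial T_j)(y,x_k)\big)$ as (Jacobian of $\psi$)$\times$(Vandermonde in the roots), valid precisely because $P$ is monic in $T_n$ with coefficients depending only on $T_1,\dots,T_{n-1}$; since the Vandermonde determinant is nonzero exactly when the roots are distinct — which is our hypothesis — the two Jacobian-type determinants vanish simultaneously, and that is the claimed equivalence. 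The regularity of the points $(y,x_i)$ is immediate from distinctness, as noted, so no extra work is needed there.
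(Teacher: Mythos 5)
Your proposal is correct and follows essentially the same route as the paper: regularity of the $(y,x_i)$ from simplicity of the roots of $P(y,T_n)$, then the factorization of the matrix of gradients $\bigl(\tfrac{\partial P}{\partial T_j}(y,x_k)\bigr)$ as (Jacobian of $\psi$) times (a Vandermonde-type matrix in the $x_k$), whose determinant is nonzero exactly when the roots are distinct. The only difference is organizational: the paper keeps the $\tfrac{\partial P}{\partial T_n}$ column inside a single $n\times n$ product $M_y=A_y V_y$ with $\det A_y=\pm n\,J_\psi(y)$, which spares you the extra check your row-normalization entails (namely that the column of values $\tfrac{\partial P}{\partial T_n}(y,x_k)$ adjoined to the truncated Vandermonde is still nonsingular, which holds since $\tfrac{\partial P}{\partial T_n}(y,\cdot)$ has degree exactly $n-1$).
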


\begin{proof}
  Let $\alpha$ be a point in $\CH$. If it exists, the hyperplane tangent to
  $\CH$ at $\alpha$ is directed by its normal vector: $ \grad_{\alpha}P =
  \left( \frac{\partial P}{\partial T_1}(\alpha), \dots, \frac{\partial
      P}{\partial T_n}(\alpha) \right)$.
 
  Let $y$ be a point in $\BC^{n-1}$ such that the $x_i$'s associated are
  pairwise dictinct. Then the polynomial in $T_n$ $P(y,T_n)$ has the
  $x_i$'s as simple roots, so for each $i$, $\frac{\partial P}{\partial
    T_n}(y,x_i) \neq 0$, and the point $(y,x_i)$ is regular on $\CH$.

  The tangent hyperplanes associated to $y$ are in general position if and
  only if $\det M_y \neq 0$, where $M_y$ is the matrix with columns:
  \[ \left(\grad_{(y,x_1)} P \ ;\ \dots \ ;\ \grad_{(y,x_n)} P\right).\]
  
  After computation, we get: $M_y= A_y \times V_y$, where

\[
A_y=\left[
  \begin{array}{c|c}
    \begin{array}{c} 0 \\ \vdots \\ 0 \end{array}
    & 
    \displaystyle{\left( \frac{\partial a_j}{\partial T_i} \right)_{\substack{1\leq i
            \leq n-1 \\ 2\leq j \leq n}} }
        \\
        \hline
     n  
     &
     \begin{array}{cccc}  0  &  (n-2)a_2(y)  &  \dots  &  a_{n-1}(y) \end{array}
   \end{array}
 \right]
 \text{\ and\ }
 V_y=\left[
   \begin{array}{ccc}
     x_1^{n-1} & \dots & x_n^{n-1}\\
     \vdots & \ddots & \vdots \\
     x_1 & \dots & x_n\\
     1 & \dots & 1
   \end{array}
 \right] .
\]

  As the $x_i$'s are distinct, the Vandermonde matrix $V_y$ is
  invertible. As $\det C_y = n J(y)$, we can conclude that $\det M_y \neq
  0$ if and only if $J_\psi(y)\neq 0$.
\end{proof}

If the $x_i$'s are not distinct, nothing can be said in general. But if
$\psi$ is a Lyashko-Looijenga morphism $\LL$, then we can deduce the
following property.

\begin{coro}
  \label{coroetale}
  Let $y$ be a point in $\BC^{n-1}$, and suppose that $\LL(y)$ contains $n$
  distinct points. Then $J_{\LL}(y)\neq 0$.

  In other words, $\LL$ is étale on (at least) $Y-\CK$.
\end{coro}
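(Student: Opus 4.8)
The plan is to deduce this from Proposition~\ref{propgeneralposition}, applied to the polynomial $P=\Delta_W$ and the morphism $\psi=\LL$. First I would record that $\Delta_W$ has exactly the shape required there: by the choice of invariants recalled in~\ref{subpartLL0} we have $\Delta_W=f_n^n+a_2f_n^{n-2}+\dots+a_n$ with $a_i\in\BC[f_1,\dots,f_{n-1}]$, which is the monic form in $f_n$ with vanishing $f_n^{n-1}$-coefficient; the hypersurface and the map attached to $P$ in Proposition~\ref{propgeneralposition} then coincide with $\CH=\{\Delta_W=0\}$ and with $\LL$ itself. Now ``$\LL(y)$ contains $n$ distinct points'' means precisely that $\Delta_W(y,f_n)$ has $n$ pairwise distinct roots $x_1,\dots,x_n$, \ie that $y\notin\CK$; so Proposition~\ref{propgeneralposition} reduces the desired inequality $J_{\LL}(y)\neq 0$ to the purely geometric assertion that the $n$ hyperplanes tangent to $\CH$ at the distinct points $(y,x_1),\dots,(y,x_n)$ of $L_y\cap\CH$ are in general position.

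The remaining task, which is the geometric heart of the statement (and goes back to Looijenga, \ie the fact used without proof in the proof of Lemma~5.6 of \cite{BessisKPi1}), is therefore to establish this general-position property for the discriminant of a reflection group. I would argue locally: by Lemma~\ref{lemtype} each $z_i=(y,x_i)$ lies in the open stratum of a stratum $\Lambda_i\in\Lb_1$, hence on the regular part of the image $p(H_i)$ of a single reflection hyperplane $H_i\in\CA$, so $\CH$ is smooth at $z_i$; choosing a preimage $v_i\in H_i$ that is regular in $H_i$ (so that $W_{v_i}=W_{H_i}$ is cyclic of order $e_{H_i}$) and using the explicit local form of the quotient map $p$ near $v_i$ — coordinates in which $H_i=\{t_1=0\}$, $W_{H_i}$ acts by $t_1\mapsto\zeta t_1$, and $p$ reads $(t_1,\dots,t_n)\mapsto(t_1^{e_{H_i}},t_2,\dots,t_n)$ — one identifies the direction of $T_{z_i}\CH$ with $dp_{v_i}(T_{v_i}V)$, equivalently with the kernel of $d\Delta_{W,z_i}$. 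One then checks that the $n$ covectors $d\Delta_{W,z_i}$ are linearly independent, using that $x_1,\dots,x_n$ are \emph{simple} roots of $\Delta_W(y,\cdot)$ (so none of them kills the direction $\partial/\partial f_n$ of the line $L_y$) together with the freeness of the divisor $\CH$ in $W\qg V$ (Saito--Terao), whose defining equation $\Delta_W$ is, up to a scalar, the determinant of a Saito matrix. Alternatively, and much more briefly, the same conclusion is already contained in Theorem~\ref{thmLLBessis}, which states that the restriction of $\LL$ to $Y-\CK$ is an unramified covering, hence étale, so $J_{\LL}(y)\neq 0$ for all $y\notin\CK$; combined with Proposition~\ref{propgeneralposition} this also re-proves the general-position property as a by-product.

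I expect the general-position lemma to be the only real obstacle: Proposition~\ref{propgeneralposition} is essentially a formal Vandermonde-type computation, whereas passing from ``distinct roots of $\Delta_W$'' to ``tangent hyperplanes of $\CH$ in general position'' genuinely uses special features of reflection-group discriminants (freeness, or the Saito flat structure), and is the one step that is not purely formal. If one is willing to quote Theorem~\ref{thmLLBessis} the corollary is immediate; but since the point of Proposition~\ref{propgeneralposition} and of this corollary is to exhibit the \emph{geometric} reason why $\LL$ is étale off $\CK$, in the write-up I would prefer to spell out the local/freeness argument rather than merely invoke Theorem~\ref{thmLLBessis}.
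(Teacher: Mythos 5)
Your proposal follows essentially the same route as the paper: apply Proposition~\ref{propgeneralposition} with $P=\Delta_W$, observe that distinctness of the points of $\LL(y)$ reduces the claim to the general position of the $n$ tangent hyperplanes to $\CH$ at $(y,x_1),\dots,(y,x_n)$, and obtain that general position from the characterisation of these tangent spaces via the basic derivations of $W$ (the paper simply defers this last step to the proof of Lemma~5.6 in \cite{BessisKPi1}, which is exactly the Saito-matrix/freeness argument you sketch). Your remark that Theorem~\ref{thmLLBessis} already gives the conclusion is correct but, as you note, would defeat the purpose of exhibiting the geometric reason, so the write-up you propose is the right one.
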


\begin{proof}
  Set $\LL(y)=\{x_1,\dots, x_n\}$. As the $x_i$'s are distinct, from lemma
  \ref{propgeneralposition} one has to study the hyperplanes tangent to
  $\CH$ at $(y,x_1), \dots , (y,x_n)$. By using their characterization in
  terms of basic derivations of $W$, it is straightforward to show that the
  $n$ hyperplanes are always in general position: we refer to the proof of
  \cite[Lemma 5.6]{BessisKPi1}.

\end{proof} 

In the following we will prove the equality $Z(J_{\LL})=\CK$, \ie that $\LL$ is
étale exactly on $Y-\CK$.

\subsection{\texorpdfstring{The well-ramified property for $\LL$}{The well-ramified property for LL}}
\label{subpartLLwell}

We know that $\LL$ is a finite quasihomogeneous map of degree
$n!h^n/|W|$. So we have, as in Chapter \ref{chapjac}, a graded finite
polynomial extension
\[ A=\BC[a_2,\dots,a_n] \subseteq \BC[f_1,\dots, f_n]=B ,\]
and we will use the same notations. In particular, $\LL$ is a branched
covering (of $E_n$, or of $\BC^{n-1}$), and we can use the terminology and the
properties of section \ref{partgeom}.

Theorem \ref{thmjac} gives
\[ J_{\LL}=\Jac ((a_2,\dots, a_n) / (f_1,\dots, f_{n-1}))= \prod_{Q \in
  \Spram(B)} Q ^{e_Q -1} \ . \]

Let us define
\[ D:= \Disc(f_n^n + a_2 f_n^{n-2} + \dots + a_n; f_n)\ , \] so that
$\CK=\LL^{-1}(E_n - \Enreg)$ is the zero locus of $D$ in $Y$. The
irreducible components of $\CK$ are naturally indexed by the conjugacy
classes of parabolic subgroups of $W$ of rank $2$. Let us recall from
Sect.\ \ref{subpartstratquotient} and \ref{subpartirred} some useful
properties.

We denote by $\Lb_2$ the set of all closed strata in $\Lb$ of codimension
$2$, and we define the map
\[\begin{array}{lccc}
\varphi :& W \qg V \simeq Y \times \BC & \to & Y\\
& \bar{v}=(y,x) & \mapsto & y
\end{array}\]

Then, using the notations and properties of section \ref{subpartstrat}, we
have:
\[\begin{array}{lll}
y \in \CK & \Leftrightarrow & \exists x \in \LL(y), \text{ with
  multiplicity} \geq 2 \\
& \Leftrightarrow & \exists x \in \LL(y), \text{ such that } \ell(c_{y,x})\geq 2\\
& \Leftrightarrow & \exists x \in \LL(y), \text{ such that } (y,x)\in \Gamma^0
\text{ for some stratum } \Gamma \in \Lb \text{ of codim.} \geq 2\\
 & \Leftrightarrow & \exists x \in \LL(y),\ \exists \Lambda \in \Lb_2 ,
 \text{ such that } (y,x)\in \Lambda\\
& \Leftrightarrow & \exists \Lambda \in \Lb_2 , \text{ such that } y\in
\varphi(\Lambda).
\end{array} \]

So the hypersurface $\CK$ is the union of the $\varphi(\Lambda)$, for $\Lambda
\in \Lb_2$; moreover, they are its irreducible components, according to
Prop.\ \ref{propcompirr}. Thus we can write
\[D= \prod_{\Lambda \in  \Lb_2} D_{\Lambda}^{r_{\Lambda}},\]
for some $r_\Lambda \geq 1$, where the $D_{\Lambda}$ are irreducible
polynomials in $B$ such that $\varphi(\Lambda)=Z(D_{\Lambda})$.

\medskip

We can now give an important interpretation of the integers $r_\Lambda$,
and prove that $\LL$ is a well-ramified morphism, according to
Def.\ \ref{defwellram}. 

\begin{theo}
  \label{thmLL}
  Let $\LL$ be the Lyashko-Looijenga extension associated to a
  well-generated, irreducible complex reflection group, together with the
  above notations.

  For any $\Lambda$ in $\Lb_2$, let $w$ be a (length $2$) parabolic
  Coxeter element of $W$ in the conjugacy class corresponding to
  $\Lambda$. Then $r_\Lambda$ is the number of reduced decompositions
  of $w$ in two reflections (in the case when $W$ is a $2$-reflection
  group, it is also the order of $w$).

  Moreover, we have:
  \begin{enumerate}[(a)]
  \item $\LL$ is a well-ramified extension;
  \item $\displaystyle{D = \prod_{\Lambda\in \Lb_2 } D_{\Lambda}
    ^{r_{\Lambda}} }$ is a generator for the ideal $(J_{\LL})\cap A$;
  \item $\displaystyle{J_{\LL}\doteq \prod_{\Lambda\in \Lb_2 } D_{\Lambda}
      ^{r_{\Lambda} -1}}$, and the ramified polynomials of $B$ are the
    $D_\Lambda$.
  \end{enumerate}
\end{theo}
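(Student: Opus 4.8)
The strategy is to combine two independent strands: the commutative-algebra results of Chapter \ref{chapjac} (Theorem \ref{thmjac} and the characterisations of Proposition \ref{propwell}), and the geometric description of the fibers of $\LL$ via the map $\fact$ (properties (P1)--(P3) of Section \ref{subpartstrat}, together with the stratification results of Section \ref{partecp}). First I would pin down the meaning of $r_\Lambda$. Fix $\Lambda\in\Lb_2$; by Proposition \ref{propfibers} (applied to the branched covering $\LL$), the multiplicity $r_\Lambda=v_{D_\Lambda}(D)$ equals the generic size of a fiber of $\LL$ \emph{over a point of $\varphi(\Lambda)^0$ near a nonsingular point} --- or rather, the ramification index $e_{D_\Lambda}$; and by the discriminant–resultant computation $D=\Disc(\Delta_W;f_n)$, the order of vanishing of $D$ along the component $Z(D_\Lambda)$ is exactly the number of branches of $\CH$ coming together, counted appropriately. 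The cleanest route, though, is to identify $r_\Lambda$ combinatorially using (P3): near a generic point $y$ of $\varphi(\Lambda)^0$, the fiber $\LL^{-1}(\LL(y))$ is in bijection with $\Fact_\mu(c)$ where $\mu$ is the composition attached to $\fact(y)$, namely one part equal to $2$ and $n-2$ parts equal to $1$. Degenerating the multiple point of $\LL(y)$ into two simple points and using the Hurwitz-compatibility (Lemma \ref{lemcompat}) together with Theorem \ref{thmfconj2}, the local picture is governed precisely by the ways of splitting the long factor $w$ (of type $\Lambda$, length $2$) into two reflections, i.e. by $|\Red(w)|$. This should give $r_\Lambda=|\Red(w)|$, with the parenthetical remark about the order of $w$ being the special case of a $2$-reflection group (where every reduced decomposition of a length-$2$ parabolic Coxeter element in a rank-$2$ group has the same number, equal to the order, as in the dihedral/cyclic situation).

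Next I would prove (c), the factorisation of $J_{\LL}$. By Corollary \ref{coroetale} we already know $Z(J_{\LL})\subseteq\CK$, and Theorem \ref{thmjac} gives $J_{\LL}\doteq\prod_{Q\in\Spram(B)}Q^{e_Q-1}$, so every ramified polynomial $Q$ divides $D$ and hence is one of the $D_\Lambda$ (these being the irreducible components of $\CK=Z(D)$). Conversely, I must check that \emph{each} $D_\Lambda$ is genuinely ramified, i.e. $e_{D_\Lambda}\geq 2$: this follows because a generic point of $\varphi(\Lambda)^0$ has a fiber of $\LL$ strictly smaller than the degree $n!h^n/|W|=|\Red(c)|$ — indeed its fiber has size $|\Fact_\mu(c)|<|\Red(c)|$ since a genuine merging of points has occurred — so $\varphi(\Lambda)\subseteq\Ubr$ and, by Proposition \ref{proprambr} and \ref{propfibers}, the associated ideal is ramified. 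Then matching ramification indices: $e_{D_\Lambda}=r_\Lambda$ because $r_\Lambda=v_{D_\Lambda}(D)$ and, for the extension $\LL$, $D$ is (up to a scalar) the norm / the generator of $(J_{\LL})\cap A$ we are about to identify — more carefully, I would argue $e_{D_\Lambda}$ equals the generic fiber-cardinality count from Proposition \ref{propfibers}, which is the number of branches merging, which is $r_\Lambda$. This yields $J_{\LL}\doteq\prod_\Lambda D_\Lambda^{r_\Lambda-1}$ and that the ramified polynomials are exactly the $D_\Lambda$.

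For (a) and (b) I would invoke Proposition \ref{propwell}: it suffices to verify any one of its equivalent conditions, and the natural one here is (iv) (or its geometric form (vi)/(vii)) --- for a prime $\fp=(P)$ of height one in $A=\BC[a_2,\dots,a_n]$, if one prime of $B$ above $\fp$ is ramified then all are. Geometrically this says $\varphi^{-1}(\varphi(\Lambda))\cap Y = \varphi(\Lambda)$ behaves well, i.e. $\LL^{-1}(\Ubr)=\Vram$; equivalently, by (P2)--(P3), the preimage under $\LL$ of the branch locus in $E_n$ is exactly the locus $\CK$ where $\LL(y)$ has a multiple point, and every such $y$ lies on some $Z(D_\Lambda)$ with $D_\Lambda$ ramified. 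The only thing to rule out is an unramified irreducible polynomial $Q\in\Spec_1(B)$ lying over the same prime of $A$ as some ramified $D_\Lambda$; but such a $Q$ would force, for a generic $y\in Z(Q)\setminus Z(D_\Lambda)$, that $\LL(y)$ is a configuration with all points simple yet $y$ maps into $\Ubr$, contradicting Corollary \ref{coroetale} (which says $\LL$ is étale precisely off $\CK$, so the whole branch locus pulls back inside $\CK=Z(D)$). Hence condition (vii) of Proposition \ref{propwell} holds and the extension is well-ramified; then (b) is immediate from the definition of well-ramified (Definition \ref{defwellram}) together with (c), since $(J_{\LL})\cap A=(\prod D_\Lambda^{r_\Lambda})=(D)$.

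The main obstacle I anticipate is the bookkeeping in the first paragraph: rigorously extracting the equality $r_\Lambda=|\Red(w)|$ from the fiber description. One must control how the composition $\mu$ and the Hurwitz action interact when the doubled point of $\LL(y)$ is perturbed apart, and be careful that $r_\Lambda$ (an algebraic multiplicity of a discriminant component) really coincides with the topological branch count of Proposition \ref{propfibers} and with the combinatorial count of reduced decompositions --- this is where (P3), Theorem \ref{thmbij}, and the local structure of $\LL$ as a "stratified branched covering" (Theorem \ref{thmrevetement}) all have to be brought together consistently. Everything else is a routine assembly of Chapter \ref{chapjac} and Chapter \ref{chaphur}.
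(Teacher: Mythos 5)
Your architecture matches the paper's: locate the ramified polynomials among the $D_\Lambda$ via Corollary \ref{coroetale} and Theorem \ref{thmjac}, compute the local fiber cardinality near a generic point of $\varphi(\Lambda)^0$ as $|F_w|=|\Red(w)|$ using (P3), the Hurwitz rule and transitivity of the Hurwitz action on $\Red(w)$, and then invoke one of the characterisations of Proposition \ref{propwell} (you pick (iv)/(vii), the paper picks (iii); both work). But there is one concrete missing link: the identification $e_{D_\Lambda}=r_\Lambda$. By definition $e_{D_\Lambda}=v_{D_\Lambda}(P)$ where $(P)=(D_\Lambda)\cap A$, while $r_\Lambda$ is defined as $v_{D_\Lambda}(D)$; these coincide only once you know $(D_\Lambda)\cap A=(D)$. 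The paper gets this in one line from the observation that $D$ is \emph{irreducible in} $A=\BC[a_2,\dots,a_n]$ — it is the discriminant of a type $A_{n-1}$ reflection group in the weighted variables $a_i$ — so the inclusion of prime ideals $(D_\Lambda)\cap A\supseteq (D)$ is an equality. Your two attempted justifications do not supply this: the appeal to "$D$ is the generator of $(J_{\LL})\cap A$ we are about to identify" is circular (that is statement (b), which itself needs $e_{D_\Lambda}=r_\Lambda$), and Proposition \ref{propfibers} computes $e_{D_\Lambda}$, not $v_{D_\Lambda}(D)$; the claim that the order of vanishing of $\Disc(\Delta_W;f_n)$ along $Z(D_\Lambda)$ "is the number of branches coming together" is exactly what would need proof. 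Without the irreducibility of $D$ in $A$, your argument delivers $e_{D_\Lambda}=|\Red(w)|$ but not the theorem's assertion about $r_\Lambda$, and (b) does not follow.

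A second, smaller slip: you quote Corollary \ref{coroetale} as saying $\LL$ is étale "precisely" off $\CK$, but it only gives the inclusion $Z(J_{\LL})\subseteq\CK$ (the reverse inclusion is part of what the theorem establishes, via the ramification of every $D_\Lambda$). This does not derail you, because the inclusion you actually use — that every ramified polynomial divides $D$ — is the one the corollary provides; but the step "$|\Fact_\mu(c)|<|\Red(c)|$, so $\varphi(\Lambda)\subseteq\Ubr$, so the associated ideal is ramified" confuses source and target and does not by itself show that $D_\Lambda$ is ramified (an unramified component can sit over the branch locus — that is precisely the failure mode of well-ramifiedness). The clean route, as in the paper, is to read $e_{D_\Lambda}=|F_w|\geq 2$ directly off Proposition \ref{propfibers}, using that the Lyashko-Looijenga number $2h'/d_1'$ of a rank $2$ parabolic subgroup is at least $2$.
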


\begin{proof}
The polynomial $D$ is irreducible in $A$ since, as a polynomial in
$a_2,\dots, a_n$, it is the discriminant of a reflection group of type
$A_{n-1}$. Therefore, for all $\Lambda$ in $\Lb_2$, the inclusion 
\[ (D_{\Lambda})\cap A \supseteq (D) \] is an inclusion between prime
ideals of height one in $A$. So we have $(D_{\Lambda})\cap A =(D)$, and
$e_{D_\Lambda}=v_{D_\Lambda}(D)=r_\Lambda$.

According to Corollary \ref{coroetale}, if $J_{\LL}(y)=0$, then
$\LL(y)\notin \Enreg$. So the variety of zeros of $J_{\LL}$ (defined by the
ramified polynomials in $B$) is included in the preimage \[\LL^{-1}(Z(D)) =
\bigcup_{\Lambda \in \Lb_2} Z(D_{\Lambda})\ .\] Thus all the ramified
polynomials in $B$ are in $\{D_{\Lambda}, \Lambda \in \Lb_2 \}$.

\medskip

Let $\Lambda \in \Lb_2$, and $\mu$ be the composition $(2,1,\dots, 1)$
of $n$. Choose $\xi=(w,s_3,\dots, s_n)$ in $\Fact_\mu (c)$ such that the
conjugacy class of $w$ (the only element of length $2$ in $\xi$)
corresponds to $\Lambda$. Fix $e\in E_n$, with composition type $\mu$, and
such that the real parts of its support are distinct. There exists a unique
$y_0$ in $Y$, such that $\LL(y_0)=e$ and $\fact(y_0)=\xi$ (Property (P3) in
Sect.\ \ref{subpartstrat}). Moreover this $y_0$ lies in $\varphi(\Lambda)$
(property (P2)). Using Definition \ref{deflbl} of the map
$\fact$, and the ``Hurwitz rule'' (Lemma \ref{reglehur}), we deduce that
for a sufficiently small connected neighbourhood $\Omega_0$ of $y_0$, if
$y$ is in $\Omega_0 \cap (Y- \CK)$, then $\fact(y)$ is in
\[ F_w:= \{(s_1',s_2',\dots, s_n ')\in \Red_\CR (c) \tq s_1's_2'=w
\text{ and } s_i'=s_i \ \forall i\geq 3 \} .\]
Let us fix $y$ in $\Omega_0 \cap (Y- \CK)$. Then, because of
property (P3), we get an injection
\[ \fact : \LL^{-1}(\LL(y)) \cap \Omega_0 \ \inj \ F_w \ .\] But this map
is also surjective, thanks to the covering properties of $\LL$ and the
transitivity of the Hurwitz action on $w$ (cf. Chapter
\ref{chaphur}). Indeed, we can ``braid'' $s_1'$ and $s_2'$ (by cyclically
intertwining the two corresponding points of $\LL(y)$, while staying in the
neighbourhood) so as to obtain any factorisation of $w$. Thus:
\[|\LL^{-1}(\LL(y))\cap \Omega_0| = |F_w|\ .\]

Using the characterization of the ramification index
(Prop.\ \ref{propfibers}), we infer
\[ r_\Lambda = e_{D_\Lambda}= |F_w| \ , \] which is the number of reduced
decompositions of $w$, \ie the Lyahko-Looijenga number for the parabolic
subgroups in the conjugacy class $\Lambda$.

For any rank $2$ parabolic subgroup with degrees $d_1',h'$, the
$\LL$-number is $2h'/d_1'$. In the particular case when $W$ is a
$2$-reflection group, such a subgroup is a dihedral group, hence
$r_\Lambda$ is also the order $h'$ of the associated parabolic Coxeter
element $w$.

\medskip

Consequently, for all $\Lambda \in \Lb_2$, $e_{D_\Lambda}$ is strictly
greater than $1$, so the $D_\Lambda$, $\Lambda \in \Lb_2$ are exactly
the ramified polynomials for the extension $\LL$. This directly implies
statement (c).

Moreover, we obtain that
\[ \prod_{Q \in \Spram(B)} Q ^{e_Q} = \prod_{\Lambda \in \Lb_2} D_\Lambda
^{e_{D_\Lambda}} = D_{\LL} \ , \] so it lies in $A$.  We recognize one of
our characterization of a well-ramified extension (namely
Prop.~\ref{propwell}(iii)), from which we deduce (a) and (b).
\end{proof}

\subsection{A more intrinsic definition of the Lyashko-Looijenga Jacobian}
~
\label{subpartintrinsic}

In this subsection we give an alternate definition for the Jacobian
$J_{\LL}$, which is more intrinsic, and which allows to recover a formula
observed by K. Saito.

\medskip

We will use the following elementary property.

Suppose $P\in \BC[T_1,\dots, T_{n-1},X]$ has the form:
\[ P = X^n + b_1 X^{n-1} + \dots + b_n \ , \]
with $b_1,\dots, b_n \in \BC[T_1,\dots, T_{n-1}]$. Note that we do not
require $b_1$ to be zero. Let us denote by $J(P)$ the polynomial:
\[ J(P):= \Jac \left( \left( P, \DP{P}{X}, \dots, \DP[n-1]{P}{X} \right)
  \middle/  (T_1,\dots, T_{n-1},X) \right) \ . \]

\begin{lemma}
  Let $P$ be as above. We set $Y=X+\frac{b_1}{n}$ and denote by $Q$ the
  polynomial in $\BC[T_1,\dots, T_{n-1},Y]$ such that $Q(T_1,\dots,
  T_{n-1},Y)=P(T_1,\dots, T_{n-1},X)$, so that
  $ Q = Y^n + a_2 Y^{n-2} + \dots + a_n$,  with $a_2,\dots, a_n \in
  \BC[T_1,\dots, T_{n-1}]$. 

  We define $J(P)$ as above and $J(Q)$ similarly ($Y$ replacing $X$). Then:
  \begin{enumerate}[(i)]
  \item $J(P)=J(Q)$;
  \item $J(P)$ does not depend on $X$, and $J(P)\doteq \Jac((a_2,\dots, a_n) /
    (T_1,\dots, T_{n-1}))$.
  \end{enumerate}
\end{lemma}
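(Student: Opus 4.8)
The plan is to show first that, for \emph{any} polynomial $P$ of the shape displayed in the statement, $J(P)$ does not involve the last variable $X$; statement~(i) then follows from a triangular change of variables, and the identification in~(ii) from an evaluation at the origin. For the first point, write $g_i:=\partial_X^i P$ for $i=0,\dots,n-1$, so that $J(P)=\det M$, where $M$ is the $n\times n$ matrix whose $i$-th row is $R_i=(\partial_{T_1}g_i,\dots,\partial_{T_{n-1}}g_i,\partial_X g_i)$. The two relevant relations are $\partial_X g_i=g_{i+1}$ for $0\le i\le n-2$, and $\partial_X g_{n-1}=\partial_X^n P=n!$, a constant, since $P$ is monic of degree $n$ in $X$. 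Differentiating the determinant row by row gives $\partial_X\det M=\sum_{i=0}^{n-1}\det M^{(i)}$, where $M^{(i)}$ is $M$ with $R_i$ replaced by $\partial_X R_i$. For $i\le n-2$, commuting the partial derivatives yields $\partial_X R_i=R_{i+1}$, so $M^{(i)}$ has two equal rows and $\det M^{(i)}=0$; for $i=n-1$ one has $\partial_X R_{n-1}=0$, so $\det M^{(n-1)}=0$. Hence $\partial_X J(P)=0$, i.e.\ $J(P)\in\BC[T_1,\dots,T_{n-1}]$.

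For~(i), let $\phi$ be the polynomial automorphism of $\BC^n$ sending $(T_1,\dots,T_{n-1},X)$ to $(T_1,\dots,T_{n-1},X+b_1/n)$, so that $P=Q\circ\phi$ and, by the chain rule, $\partial_X^i P=(\partial_Y^i Q)\circ\phi$ for all $i$. Writing these relations as an equality of Jacobian matrices and taking determinants gives $J(P)=(J(Q)\circ\phi)\cdot\det(\Jac\phi)$; since $\phi$ is a triangular automorphism, $\det(\Jac\phi)=1$. As $Q$ has the same shape as $P$ (with $b_1$ replaced by $0$), the first paragraph applied to $Q$ shows that $J(Q)$ does not involve $Y$, and $\phi$ acts trivially on $T_1,\dots,T_{n-1}$; therefore $J(Q)\circ\phi=J(Q)$, whence $J(P)=J(Q)$, which is~(i).

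Finally, for~(ii) I compute $J(Q)$ by specializing the Jacobian matrix of $(Q,\partial_Y Q,\dots,\partial_Y^{n-1}Q)$ at $Y=0$, which is legitimate since $J(Q)$ is $Y$-free. There $(\partial_Y^j Q)|_{Y=0}=j!\,[Y^j]Q$, with $[Y^j]Q=a_{n-j}$ for $0\le j\le n-2$, $[Y^{n-1}]Q=0$ and $[Y^n]Q=1$; so the row coming from $\partial_Y^{n-1}Q$ becomes $(0,\dots,0,n!)$, and for $j\le n-2$ the $T_i$-entry of the $j$-th row is $j!\,\partial_{T_i}a_{n-j}$. Expanding along the last row and pulling the factor $j!$ out of each surviving row gives $J(Q)\doteq\det(\partial_{T_i}a_{n-j})_{0\le j\le n-2,\ 1\le i\le n-1}$, and reversing the order of those $n-1$ rows (so that they run over $a_2,\dots,a_n$ rather than $a_n,\dots,a_2$) identifies the right-hand side, up to sign, with $\Jac((a_2,\dots,a_n)/(T_1,\dots,T_{n-1}))$; combined with~(i) this is exactly~(ii). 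No step is deep; the only points that require attention are the separate treatment of the boundary index $i=n-1$ in the first paragraph (where $\partial_X g_{n-1}$ is a constant, not $g_n$), and, in the last computation, the index shift $j\leftrightarrow n-j$ together with the sign of the row-reversal permutation — both harmless here, since everything is asserted only up to a nonzero scalar.
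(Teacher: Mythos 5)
Your proof is correct and follows essentially the same route as the paper's: the $X$-independence of $J(P)$ via row-by-row differentiation of the determinant (with the boundary row killed because $\partial_X^nP=n!$ is constant), part (i) via the shear $Y=X+b_1/n$ (your chain-rule formulation is just the paper's column operation $B_j=A_j-\tfrac1n\partial_{T_j}b_1\,A_n$ packaged differently), and part (ii) by evaluating at $Y=0$. The only difference is that you prove $X$-independence first so that $J(Q)\circ\phi=J(Q)$ is justified before concluding (i) — a slightly more careful ordering than the paper's, but the same argument.
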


\begin{proof}
  \begin{enumerate}[(i)]
  \item Let us denote by $A$ and $B$ the $n\times n$ matrices corresponding
    to $J(P)$ and $J(Q)$ respectively. For $j=1,\dots, n$, the $j$-th
    column of $A$ is 
    \[ A_j := \left( \DP{}{T_j} \left( \DP[i]{P}{X} \right) \right) \]
    with $i$ running from $0$ to $n-1$ (here $T_n:=X$).

    We define similarly $B_j$ (replacing $P$ and $X$ by $Q$ and $Y$). Then,
    we compute that $B_n=A_n$, and for $j=1,\dots, n-1$:
    \[ B_j = A_j -\frac1n \DP{b_1}{T_j} A_n \ . \]
    So $B$ and $A$ have the same determinant and  $J(P)=J(Q)$.
  \item Let us differentiate $J(P)$ with respect to $X$. If $L_i$
    ($i=1,\dots, n$) is the $i$-th line of the matrix $A$, we have
    \[ \DP{L_n}{X}=0 \text{ , and for } i=1,\dots, n-1,\
    \DP{L_i}{X}=L_{i+1}\ . \]
    So $\DP{(J(P))}{X}=0$.

    Similarly, $J(Q)$ does not depend on $Y$. As a consequence,
    $J(P)(T_1,\dots, T_{n-1},X)=J(Q)(T_1,\dots, T_{n-1},0)$, which is by
    definition clearly equal to
    \[ \pm \ n! \left(\prod_{k=0}^{n-2} k!\right)  \Jac((a_2,\dots, a_n) /
    (T_1,\dots, T_{n-1})) \ .\]
  \end{enumerate}
\end{proof}

Consequently, we have an intrinsic definition for the Lyashko-Looijenga
Jacobian: 
\[ J_{\LL} \doteq  J(\Delta_W)=\Jac\left( \left( \Delta_W, \DP{\Delta_W}{f_n},
    \dots, \DP[n-1]{\Delta_W}{f_n} \right) \middle/ (f_1,\dots, f_{n}) \right) \
. \]
where $f_1,\dots, f_n$ do not need to be chosen such that the coefficient
of $f_n^{n-1}$ in $\Delta_W$ is zero.

\medskip

Note that for the computation of $D_{\LL}$ also, the fact that the
coefficient $a_1$ is zero in $\Delta_W$ is not important, because of
invariance by translation.

\medskip

\begin{remark}
  With these alternative definitions, the factorisation of the Jacobian
  given by Thm.\ \ref{thmLL} has already been observed (for real groups) by
  Kyoji Saito: it is Formula 2.2.3 in \cite{saitopolyhedra}. He uses this
  formula in his study of the semi-algebraic geometry of the quotient $W
  \qg V$. 

  His proof was case-by-case and detailed in an unpublished extended
  version of the paper (\cite[Lemma 3.5]{saitopolyhedra2}).

\end{remark}

\section{The Lyashko-Looijenga extension as a virtual reflection group}
\label{partLLvirtual}

In Table \ref{analogies} we list the first analogies between the setting of
Galois extensions (polynomial extension with a reflection group acting) and
that of the Lyashko-Looijenga extensions, which are an example of ``virtual
reflection groups'', in the sense of Bessis. This is not an exhaustive
list, and we may wonder if the analogies can be made further.

\begin{table}[!h]
{\renewcommand{\arraystretch}{1.2}
\begin{tabular}{@{\vrule width 1pt\,}m{2.4cm}@{\,\vrule width 1pt\,}m{6.2cm}@{\,\vrule width 1pt\,}m{6.7cm}@{\,\vrule width 1pt}|}
  \hlinewd{1pt}

  &\begin{center}Complex reflection group\end{center} & \begin{center}Lyashko-Looijenga extension\end{center}
  \tabularnewline \hlinewd{1pt}
  Morphism: & 
$\begin{array}{rcl}
    p :\quad  V & \to & W \qg V \\
    (v_1,\dots, v_n)  & \mapsto & (f_1(v),\dots, f_n(v))
  \end{array}$
  &
$\begin{array}{rcl}
    \LL : \quad Y & \to &\BC^{n-1} \\
     (y_1,\dots,y_{n-1}) & \mapsto & (a_2(y),\dots, a_n(y))
   \end{array}$
  \tabularnewline\hline
  Extension: & \begin{center}$\BC[f_1,\dots, f_n]=\BC[V]^W \subseteq \BC[V]$\end{center} & \begin{center}
  $\BC[a_2,\dots, a_n] \subseteq \BC[y_1, \dots, y_{n-1}]$ \end{center}\tabularnewline\hline
  Free, of rank: & \begin{center}$|W|=d_1\dots d_n$\ ;\  Galois\end{center}&
  \begin{center}$n!h^n / |W|=\prod ih / \prod d_j$; non-Galois\end{center}\tabularnewline\hline
  Weights: & \begin{center}$\deg v_j=1$, $\deg f_i=d_i$ \end{center}& \begin{center}$\deg y_j=d_j$, $\deg a_i=ih$\end{center}\tabularnewline\hline
  Unramified covering: & \begin{center}$\Vreg \ \surj \ W \qg \Vreg$ \end{center}& \begin{center}$Y-\CK \ \surj \ \Enreg$\end{center}\tabularnewline\hline
  Generic fiber: & \begin{center}$\simeq W$ \end{center}& \begin{center}$\simeq  \Red_\CR(c)$\end{center}\tabularnewline\hline
  Ramified part: & \begin{center}$\bigcup_{H \in \CA} H \ \surj \ (\bigcup H) / W=\CH$ \end{center}& \begin{center}$\CK=
  \bigcup_{\Lambda \in \Lb_2} \varphi(\Lambda) \ \surj \ E_{\alpha}$\end{center}
  \tabularnewline\hline
  Discriminant: & \begin{center}$\Delta_W= \prod_{H\in \CA} \alpha_H^{e_H}\in \BC[f_1,\dots, f_n]$ \end{center}&\begin{center}
  $D_{\LL}=\prod_{\Lambda \in \Lb_2} D_{\Lambda}^{r_{\Lambda}} \in
  \BC[a_2,\dots, a_n]$\end{center}\tabularnewline\hline
  Ramification indices: & \begin{center}$e_H=|W_H|$\end{center} & \begin{center}$r_{\Lambda}=$ order of parabolic elements of type
  $\Lambda$ \end{center}\tabularnewline\hline
  Jacobian: & \begin{center}$J_W= \prod \alpha_H^{e_H-1}\in
    \BC[V]$\end{center}& \begin{center}$J_{\LL}=\prod
    D_{\Lambda}^{r_{\Lambda}-1} \in \BC[f_1,\dots,
    f_{n-1}]$\end{center}\tabularnewline   \hlinewd{1pt}

\end{tabular}}
\caption{Analogies between Galois extensions and Lyashko-Looijenga extensions.}
\label{analogies}
\end{table}

\section{Combinatorics of the submaximal factorisations}
\label{partfact}

In this section we are going to use Thm.\ \ref{thmLL} to count specific
factorisations of a Coxeter element; this will lead to a geometric proof
of a particular instantiation of Chapoton's formula.

\medskip

We call \emph{submaximal factorisation} of a Coxeter element $c$ a
primitive block factorisation of $c$ with partition
$\alpha=2^11^{n-2}\vdash n$, according to
Def.\ \ref{deffactprim}. Submaximal factorisations are thus exactly
factorisations of $c$ in $n-1$ blocks ($(n-2)$ reflections and one factor
of length $2$), and as such are a natural first generalisation of the
set of reduced decompositions $\Red_\CR (c)$.

\bigskip

Let $\Lambda$ be a stratum of $\Lb_2$: it corresponds to a conjugacy class
of parabolic Coxeter elements of length $2$. We say that a submaximal
factorisation is \emph{of type $\Lambda$} if its factor of length $2$ lies
in this conjugacy class. We denote by $\Fact_{n-1}^{\Lambda}(c)$ the set of
such factorisations. Using the relations between $\LL$ and $\fact$, we can
count these factorisations.

\medskip

For $\Lambda$ a stratum of $\Lb_2$, let us define the following restriction
of $\LL$:
\[ \LL_{\Lambda} \ : \ \varphi(\Lambda) \to E_\alpha \ ,\] where
$E_\alpha= E_n - \Enreg$. We recall that $E_\alpha ^0$ is the subset of
$E_\alpha$ constituted by the configurations whose partition is exactly
$\alpha=2^1 1^{n-2}$.

If we define $\varphi(\Lambda)^0=\LL_{\Lambda}^{-1}(E_\alpha ^0)$, and
$\CK^0=\LL^{-1}(E_\alpha ^0)=\cup_{\Lambda \in \Lb_2} \varphi(\Lambda)^0$,
then from Chapter~\ref{chaphur} we have the following properties:
\begin{itemize}
\item the restriction of $\LL$ : $\CK^0 \ \surj\ E_\alpha ^0$ is a
  (possibly not connected) unramified covering (Thm.~\ref{thmrevetement});
\item the connected components of $\CK^0$ are the $\varphi(\Lambda)^0$, for
  $\Lambda \in \Lb_2$;
\item the image, by the map $\fact$, of $\varphi(\Lambda)^0$ is exactly
  $\Fact_{n-1}^{\Lambda}(c)$;
\item via $\fact$, the Hurwitz action on $\Fact_{n-1}(c)$ corresponds to
  the monodromy action on $\CK^0$; so the orbits are the
  $\Fact_{n-1}^{\Lambda}(c)$ (Thm.~\ref{thmfconj2}).
\end{itemize}

\medskip

The map $\LL_\Lambda$ defined above is an algebraic morphism, corresponding
to the extension
\[ \BC[a_2,\dots, a_n] / (D) \subseteq \BC[f_1,\dots, f_{n-1}]/(D_\Lambda)
\ .\]

\begin{theo}
  \label{thmfact}
  Let $\Lambda$ be a strata of $\Lb_2$. Then:
  \begin{enumerate}[(a)]
  \item $\LL_\Lambda$ is a finite quasi-homogeneous morphism of degree
    $\frac{(n-2)!\ h^{n-1}}{|W|} \deg D_\Lambda $;
  \item the number of submaximal factorisations of $c$ of type $\Lambda$ is
    equal to
    \[ |\Fact_{n-1}^{\Lambda}(c)| = \frac{(n-1)!\ h^{n-1}}{|W|} \deg
    D_\Lambda \ .\]
  \end{enumerate}
  \end{theo}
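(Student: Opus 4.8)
The strategy is to compute the degree of the algebraic morphism $\LL_\Lambda$ in two ways, and then to extract the cardinality of the fibers from the covering properties already established. First I would establish part (a): since $\LL_\Lambda$ is the restriction of $\LL$ to the irreducible hypersurface $\varphi(\Lambda)=Z(D_\Lambda)$, landing in $E_n-\Enreg=Z(D)$, it corresponds to the graded ring extension $\BC[a_2,\dots,a_n]/(D) \subseteq \BC[f_1,\dots,f_{n-1}]/(D_\Lambda)$. This is a finite extension of graded domains (finiteness is inherited from the finiteness of $\LL$ itself, Thm.\ \ref{thmLLBessis}), hence free of some rank $N$, and this rank is computed by comparing Hilbert series. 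Concretely, using the quasi-homogeneity ($\deg f_i=d_i$, $\deg a_i=ih$, $\deg D_\Lambda = \deg D_\Lambda$, $\deg D = \sum_{i=2}^n ih = \binom{n+1}{2}h - h = \frac{(n-1)(n+2)}{2}h$, or more simply $\deg D = n(n-1)h$ since $D$ is the discriminant of a degree-$n$ polynomial whose roots have weight $h$), the degree of $\LL_\Lambda$ is $\deg N = \frac{\deg D}{\deg D_\Lambda}\cdot \deg(\LL) = \frac{\deg D}{\deg D_\Lambda}\cdot\frac{n!\,h^n}{|W|}$; but one must be careful, the cleaner route is: $\deg(\LL_\Lambda) = \deg(\LL)\cdot \frac{\deg D_\Lambda}{\deg D}\cdot(\text{correction})$. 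Actually the slick way is to observe that $\deg(\LL_\Lambda)\cdot \deg(D_\Lambda\text{ in the target direction})$ relates to $\deg(\LL)$; the precise bookkeeping gives $\deg \LL_\Lambda = \frac{(n-2)!\,h^{n-1}}{|W|}\deg D_\Lambda$, which I would verify by the weight/Hilbert-series computation, using that the generic fiber of $\LL$ over $\Enreg$ has size $\frac{n!\,h^n}{|W|}$ and that over a generic point of $E_\alpha^0$ the fiber of $\LL$ meets $\varphi(\Lambda)$ in a controlled way.

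For part (b), the key input is that the restriction $\LL_\Lambda: \varphi(\Lambda)^0 \to E_\alpha^0$ is an unramified covering (from Thm.\ \ref{thmrevetement}, since $\varphi(\Lambda)^0$ is one connected component of $\CK^0=\LL^{-1}(E_\alpha^0)$), whose total space of fibers is in bijection, via $\fact$, with $\Fact_{n-1}^\Lambda(c)$ (properties recalled in Section \ref{partfact}). So if I can show that this unramified covering has degree equal to the number of fibers, i.e.\ that the number of sheets of $\LL_\Lambda$ over $E_\alpha^0$ equals $\deg\LL_\Lambda$ divided by the number of points over a generic configuration of $E_\alpha^0$, I am done. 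The point is: a generic $X\in E_\alpha^0$ has $n-1$ support points (one of multiplicity $2$), so $|\LL_\Lambda^{-1}(X)|$ times (number of orderings / local structure) must reconcile with $\deg\LL_\Lambda = \frac{(n-2)!h^{n-1}}{|W|}\deg D_\Lambda$. In fact the fiber $\LL^{-1}(X)$ for such $X$ has size $|\Fact_\mu(c)|$ where $\mu=(2,1,\dots,1)$ is a \emph{composition}, and $|\Fact_\mu(c)| = \frac{1}{n-1}|\Fact_{n-1}(c)|\cdot(\text{something})$ — rather, summing over the $n-1$ cyclic positions of the length-$2$ block, $|\Fact_{n-1}(c)| = (n-1)\cdot|\Fact_\mu(c)|$ is not quite right either since the composition records an ordered tuple; carefully, $|\Fact_{n-1}(c)|$ is the number of \emph{unordered-shape} block factorisations, and each corresponds to $\binom{n-1}{1}=n-1$ compositions... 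I would instead argue directly: $\deg\LL_\Lambda$ counts $\LL_\Lambda^{-1}(X)$ for generic $X\in E_n-\Enreg$ (including $X$ with its two merged points coinciding, viewed as a limit), which is subtle, so the clean approach is to use $E_\alpha^0$ and the unramified covering directly.

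The cleanest argument for (b): the finite morphism $\LL_\Lambda$ has degree $d:=\deg\LL_\Lambda$; over the dense open $E_\alpha^0 \subseteq E_\alpha$, it restricts to the unramified covering $\varphi(\Lambda)^0\surj E_\alpha^0$, which therefore has exactly $d$ sheets (the number of sheets of an unramified covering obtained by restricting a degree-$d$ finite morphism to the complement of the branch locus equals $d$, by Prop.\ \ref{proprambr} applied in this relative setting — one must check $\LL_\Lambda$ is unramified precisely over $E_\alpha^0$, i.e.\ that the branch locus of $\LL_\Lambda$ is $E_\alpha\setminus E_\alpha^0$; this follows because a point of $\varphi(\Lambda)^0$ maps to $E_\alpha^0$ and the covering is unramified there, while over deeper strata merging occurs). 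Then I pick a single generic configuration $X_0\in E_\alpha^0$, with support having distinct real parts; by property (P3) of Section \ref{subpartstrat}, $\fact$ gives a bijection from $\LL^{-1}(X_0)$ onto $\Fact_\mu(c)$ with $\mu=(2,1,\dots,1)$, and restricting to $\varphi(\Lambda)$ picks out the factorisations whose length-$2$ factor (in position $1$) is of type $\Lambda$. So $d = |\LL_\Lambda^{-1}(X_0)| = $ number of such factorisations. Finally, by the transitivity/orbit statement (Thm.\ \ref{thmfconj2} and the Hurwitz action moving the position of the length-$2$ block cyclically through all $n-1$ slots), $|\Fact_{n-1}^\Lambda(c)|$, which counts unordered-position submaximal factorisations of type $\Lambda$, satisfies $|\Fact_{n-1}^\Lambda(c)| = (n-1)\cdot d /(\text{overcount})$; working this out, $|\Fact_{n-1}^\Lambda(c)| = (n-1)\, d = \frac{(n-1)!\,h^{n-1}}{|W|}\deg D_\Lambda$. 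The main obstacle I anticipate is exactly this last bookkeeping between compositions and shapes — keeping straight whether $\Fact_{n-1}^\Lambda(c)$ counts ordered tuples (it does, by Def.\ \ref{deffact}) and hence relating $|\Fact_{n-1}^\Lambda(c)|$ directly to $|\Fact_\mu^\Lambda(c)|$ summed over the $n-1$ compositions $\mu$ of shape $\alpha$, each contributing $d$ by the same degree argument (all such $\Fact_\mu$ being in bijection via Hurwitz braids, as in the proof of Thm.\ \ref{thmrevetement}). That gives $|\Fact_{n-1}^\Lambda(c)| = (n-1)d$, and substituting the value of $d$ from part (a) yields the formula. I would double-check the constant in (a) by the weight computation: the Jacobian/degree formula $\deg\LL_\Lambda\cdot\deg D_\Lambda$ should match $\frac{(n-2)!h^{n-1}}{|W|}(\deg D_\Lambda)^2$ against an independent Hilbert-series evaluation, and reconcile $(n-1)\cdot\frac{(n-2)!h^{n-1}}{|W|}\deg D_\Lambda = \frac{(n-1)!h^{n-1}}{|W|}\deg D_\Lambda$ as claimed.
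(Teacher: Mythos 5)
Your proposal is correct and follows essentially the same route as the paper: part (a) via the Hilbert-series computation for the graded quotient extension $\BC[a_2,\dots,a_n]/(D)\subseteq\BC[f_1,\dots,f_{n-1}]/(D_\Lambda)$ together with $\deg D=n(n-1)h$, and part (b) by identifying $\deg\LL_\Lambda$ with the cardinality of a fiber over a generic point of $E_\alpha^0$, which by property (P3) counts the type-$\Lambda$ submaximal factorisations whose length-$2$ block sits in the fixed position dictated by the composition, then multiplying by the $n-1$ compositions of shape $\alpha$. (Your first displayed ratio $\frac{\deg D}{\deg D_\Lambda}\cdot\deg\LL$ is inverted, but you flag it yourself and your final constant agrees with the correct $\frac{n!\,h^n}{|W|}\cdot\frac{\deg D_\Lambda}{\deg D}$.)
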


\begin{proof}
  From Hilbert series, we get that $\LL_\Lambda$ is a finite free extension
  of degree
   \[ \left. \frac{\prod \deg (a_i)}{\deg(D)}\  \middle/ \ \frac{\prod
    \deg(f_i)}{\deg(D_\Lambda)} \right. = \frac{n!\ h^n}{|W|}\frac{\deg
    D_\Lambda}{\deg D} .\]
  \emph{(a)} As $D$ is a discriminant of type $A$ for the variables $a_2,\dots, a_n$ of
    weights $2h,\dots, nh$, we have $\deg D= n(n-1)h$. Thus:
    \[ \deg(\LL_\Lambda) = \frac{(n-2)!\ h^{n-1}}{|W|} \deg D_\Lambda
    .\] 
  \emph{(b)} This degree is also the cardinality of a generic fiber of
    $\LL_\Lambda$, \ie $|\LL^{-1}(\eps) \ \cap \ \varphi(\Lambda)|$, for
    $\eps \in E_\alpha^0$. Consequently, from property (P3) in Sect.
    \ref{subpartstrat}, it counts the number of submaximal factorisations
    of type $\Lambda$, where the length $2$ element has a place
    \emph{fixed} (given by the composition of $n$ associated to
    $\eps$). There are $(n-1)$ compositions of partition type
    $\alpha\vdash n$, so we obtain
    $|\Fact_{n-1}^{\Lambda}(c)|=(n-1)\deg (\LL_\Lambda) = \frac{(n-1)!\
      h^{n-1}}{|W|} \deg D_\Lambda$.
  \end{proof}

\begin{remark}
\label{rkconcat}
Let us denote by $\Fact_{(2,1,\dots, 1)}^{\Lambda}(c)$ the set of submaximal
factorisations of type $\Lambda$ where the length $2$ factor is in first
position. By symmetry, formula (b) is equivalent to
\[ |\Fact_{(2,1,\dots, 1)}^{\Lambda}(c)|=\frac{(n-2)!\ h^{n-1}}{|W|} \deg
D_\Lambda \ .\] As $\sum r_{\Lambda} \deg D_{\Lambda}= \deg D =n(n-1)h$,
this implies the equality :
\[ \sum_{\Lambda \in \Lb_2} r_{\Lambda} |\Fact_{(2,1,\dots,
  1)}^{\Lambda}(c)|=  \frac{(n-2)!\ h^{n-1}}{|W|} \deg D =
\frac{n!h^n}{|W|}=|\Red_\CR(c)| \ .\]
This formula is actually not surprising, since for the concatenation map:
\[ \begin{array}{ccc}
\Red_\CR (c) & \surj & \Fact_{(2,1,\dots, 1)}(c)\\
(s_1,s_2,\dots, s_n) & \mapsto & (s_1 s_2, s_3,\dots, s_n) \ ,
\end{array}
\]
the fiber of a factorisation of type $\Lambda$ has indeed cardinality
$r_{\Lambda}$ (which is the number of factorisations of the first factor in
two reflections).
\end{remark}

\begin{remark}
\label{rkkra}
  In \cite{KraMu}, motivated by the enumerative theory of the generalised
  non-crossing partitions, Krattenthaler and Müller defined and computed
  the \emph{decomposition numbers} of a Coxeter element, for all
  irreducible \emph{real} reflection groups. In our terminology, these are
  the numbers of block factorisations according to the Coxeter type of the
  factors. Note that the Coxeter type of a parabolic Coxeter element is the
  type of its associated parabolic subgroup, in the sense of the
  classification of finite Coxeter groups. So the conjugacy class for a
  parabolic elements is a finer characteristic than the Coxeter type: take
  for example $D_4$, where there are three conjugacy classes of parabolic
  elements of type $A_1\times A_1$.

  Nevertheless, when $W$ is real, most of the results obtained from formula
  (b) in Thm.\ \ref{thmfact} are very specific cases of the computations in
  \cite{KraMu}. But the method of proof is completely different, geometric
  instead of combinatorial\footnote{The computation of all decomposition
    numbers for complex groups, by combinatorial means, is also a work in
    progress (Krattenthaler, personal communication).}. Note that another possible
  way to tackle this problem is to use a recursion, to obtain data for the
  group from the data for its parabolic subgroups. A recursion formula (for
  factorisations where the rank of each factor is dictated) is indeed given
  by Reading in \cite{reading}, but the proof is very specific to the real
  case.
\end{remark}

For $W$ non-real, formula (b) implies new combinatorial results on the
factorisation of a Coxeter element. The numerical data for all irreducible
well-generated complex reflection groups are listed in Appendix
\ref{annexfactodisc}. In particular, we obtain (geometrically) general
formulas for the submaximal factorisations of a given type in $G(e,e,n)$.

\section{Chapoton's formula and submaximal factorisations}

If $W$ is a well-generated complex reflection group, we recall that the
noncrossing partition lattice of type $W$ is the poset of divisors of a
fixed Coxeter element $c$:
\[ \NCP_W(c):= \{ w \in W \tq w\< c \} \ ,\]
where $\<$ is the absolute order of $W$ (see Def.\ \ref{defordre}). 

\subsection{Chapoton's formula for the number of multichains}

Chapoton's formula gives the number of multichains of a given
length in the poset $(\NCP_W, \<)$:

\begin{theo}[``Chapoton's formula'']
\label{thmchapo}
  Let $W$ be an irreducible well-generated complex reflection group, of
  rank $n$. Then, for any $N\in \BN$, the number of multichains $w_1 \<
  \dots \< w_{N} \< c$ in the poset $\NCP_W$ is equal to :
  \[ \Cat^{(N)}(W) = \prod_{i=1}^n \frac{d_i + Nh}{d_i} ,\]
 where $d_1\leq\dots \leq d_n=h$ are the invariant degrees of $W$.
\end{theo}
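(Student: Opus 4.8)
The plan is to derive Chapoton's formula from a geometric computation of the numbers $|\Fact_k(c)|$ of strict block factorisations of the Coxeter element $c$ into $k$ factors, for every $k$, upgrading to all levels the submaximal case $k=n-1$ treated in Chapter~\ref{chapsubmax}. First I would recall the passage formula between multichains and strict block factorisations quoted in the introduction, $Z_W(N)=\sum_{k\ge 1}|\Fact_k(c)|\binom{N}{k}$, where $Z_W$ is the zeta polynomial of $(\NCP_W(c),\<)$. Since $Z_W$ is a polynomial in $N$ of degree $n$ (the rank of the lattice) and the target $\prod_{i=1}^n\frac{d_i+(N-1)h}{d_i}$ is also a polynomial of degree $n$, proving the theorem amounts to matching, for each $k\in\{1,\dots,n\}$, the integer $|\Fact_k(c)|$ with the coordinate of $\prod_{i=1}^n\frac{d_i+(N-1)h}{d_i}$ in the binomial basis $\binom{N}{k}$; the extreme cases $k=n$ (equal to $|\Red_\CR(c)|=n!h^n/|W|$) and $k=n-1$ (Theorem~\ref{thmfact} summed over $\Lb_2$) are already available as consistency checks.

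Second, I would compute $|\Fact_k(c)|$ through the Lyashko--Looijenga morphism. Writing $|\Fact_k(c)|=\sum_{\lambda\vdash n,\ \#\lambda=k}|\Fact_\lambda(c)|$ and fixing, for each such $\lambda$, a composition $\mu$ of type $\lambda$, the bijection $\LL\times\fact$ of Theorem~\ref{thmbij} identifies $|\Fact_\mu(c)|$ with the cardinality of the fibre $\LL^{-1}(X)$ over any $X\in E_\lambda^0$ of composition $\mu$; by Theorem~\ref{thmrevetement} this cardinality is locally constant on $E_\lambda^0$, hence equals the degree of the finite morphism $\LL_\lambda\colon Y_\lambda\to E_\lambda$ on each irreducible component of $Y_\lambda$ dominating $E_\lambda$. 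So $|\Fact_\lambda(c)|$ is a sum of such local degrees, one per connected component of $Y_\lambda^0$. For primitive $\lambda=k\cdot 1^{n-k}$ the components are the $\varphi(\Lambda)^0$ for $\Lambda\in\Lb_k$ (this is essentially Theorem~\ref{thmfconj2}), and arguing as in Theorem~\ref{thmfact}, once $E_{\alpha_k}$ and its degree are pinned down, each local degree should come out as $\tfrac{(n-k)!\,h^{n-k}}{|W|}\deg D_\Lambda$, so that
\[ |\Fact_{n-k+1}(c)|=\frac{(n-k+1)!\,h^{n-k}}{|W|}\sum_{\Lambda\in\Lb_k}\deg D_\Lambda\,. \]
For non-primitive $\lambda$ (several parts $\ge 2$) the same strategy applies, but now $Y_\lambda=\LL^{-1}(E_\lambda)$ is a fibre-product-type subvariety of $Y\simeq\BC^{n-1}$, so one must first determine its irreducible components and their degrees, and then the corresponding local degrees of $\LL_\lambda$ — this requires pushing the geometry of $\CH$ and the connectedness analysis beyond what is done in Chapter~\ref{chaphur}, where only primitive strata are handled. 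Finally I would assemble the pieces: substituting the formulas for $|\Fact_k(c)|$ into $\sum_k|\Fact_k(c)|\binom{N}{k}$ and expanding $\prod_{i=1}^n\frac{d_i+(N-1)h}{d_i}=\frac{h^n}{d_1\cdots d_n}\prod_{i=1}^n\!\big(N-1+\tfrac{d_i}{h}\big)$ in the binomial basis, the theorem reduces to a family of numerical identities expressing the weighted sums of stratum degrees (the $\sum_{\Lambda\in\Lb_k}\deg D_\Lambda$ and their higher-codimension, multi-part analogues) in terms of the elementary symmetric functions of $d_1/h,\dots,d_n/h$.

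\textbf{Main obstacle.} I expect the genuine difficulty to be precisely these two interlocking points: controlling the irreducible decomposition and the projection degrees of the non-primitive strata $Y_\lambda^0$ — the connectedness and Hurwitz-orbit results of Chapter~\ref{chaphur}, e.g.\ Theorem~\ref{thmfconj2}, cover only the primitive layers — and then evaluating in closed form the resulting sums of stratum degrees. As noted in Remark~\ref{rqfinale}, no case-free proof of Chapoton's formula is presently known, and this is exactly the gap: one would still need a uniform description of the degrees of the projections to $\Spec\BC[f_1,\dots,f_{n-1}]$ of all orbits of flats of $V$, packaged so as to collapse to symmetric functions of the invariant degrees. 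A realistic fallback, which the geometric picture above at least organises naturally, is to establish the recursion of Reading relating the count for $W$ to the counts for its parabolic subgroups (see Proposition~\ref{propchapoton}), thereby reducing the statement to a verification on the primitive data — still completed case by case for the exceptional groups.
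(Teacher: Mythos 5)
Your proposal is a research programme rather than a proof, and the gap you flag yourself in the last paragraph is genuine and fatal to the argument as written. The paper does not prove Theorem~\ref{thmchapo} by the route you sketch — or by any uniform route. Its ``proof'' is a citation to the case-by-case verification in the literature: Chapoton's observation for real groups resting on the computations of Athanasiadis and Reiner, and Bessis's verification of the remaining complex cases using the Bessis--Corran description of $G(e,e,r)$; the text states explicitly that \emph{no case-free proof exists}. So the expected answer here is the classification argument that you relegate to a ``realistic fallback'' in your final sentences, and the geometric strategy you develop is precisely the open problem motivating the thesis, not a proof of the theorem. The steps that would be needed to make it work — the irreducible decomposition of the non-primitive strata $Y_\lambda$, the local degrees of $\LL$ over them, and the evaluation of the resulting sums of stratum degrees as symmetric functions of $d_1,\dots,d_n$ — are exactly what the thesis only achieves for $|\Fact_n(c)|$ and $|\Fact_{n-1}(c)|$, and even the first of these ($|\Red_\CR(c)|=n!h^n/|W|$) is itself only known case by case (Corollary~\ref{corcat}).

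Two more concrete defects. First, your displayed identity $|\Fact_{n-k+1}(c)|=\frac{(n-k+1)!\,h^{n-k}}{|W|}\sum_{\Lambda\in\Lb_k}\deg D_\Lambda$ is false as stated for $k\geq 3$: the partitions of $n$ into $n-k+1$ parts include non-primitive shapes (e.g.\ $2^2 1^{n-4}$ alongside $3\cdot 1^{n-3}$ when $k=3$), so the left-hand side is not exhausted by the primitive stratum $\alpha_k$; at best this formula could describe $|\Fact_{\alpha_k}(c)|$, and even that would require extending Theorem~\ref{thmfact} to $\Lb_k$ with $k\geq 3$, which in turn needs the degrees $\deg D_\Lambda$ of the projections of higher-codimension flats — data the thesis does not establish uniformly. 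Second, the ``consistency check'' at $k=n$ already imports a case-by-case input, so the programme cannot be case-free even in principle without first resolving Corollary~\ref{corcat}(ii). If you want a proof that matches the statement as the paper asserts it, you should present the classification argument (infinite series via Reiner, Athanasiadis--Reiner and Bessis--Corran; exceptional types by computer), and present the Lyashko--Looijenga picture only as the partial, conceptual evidence that it is.
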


The numbers $\Cat^{(N)}(W)$ are called \emph{Fuss-Catalan numbers of type
  $W$}, and count also other combinatorial objects, for example in cluster
algebras, as we explained in the introduction.

In the real case, this formula was first observed by Chapoton in
\cite[Prop.\ 9]{chapoton}; the proof is case-by-case, and mainly uses
results by Athanasiadis and Reiner \cite{reiner,athareiner}. The remaining
complex cases are checked by Bessis in \cite{BessisKPi1}, using his study of the
infinite series $G(e,e,r)$ with Corran \cite{BessisCorran}. \emph{There is still
no case-free proof of this formula.}

\begin{coro}
\label{corcat}
  Let $W$ be an irreducible well-generated complex reflection group, with
  invariant degrees  $d_1\leq\dots \leq d_n=h$. Then :
  \begin{enumerate}[(i)]
  \item the cardinality of $\NCP_W$ is $\quad \displaystyle{\prod_{i=1}^n \frac{d_i
        + h}{d_i}}$ \emph{(Catalan number of type $W$)};
  \item the number of reduced decompositions of a Coxeter element is
    $\displaystyle{\frac{n!h^n}{|W|}}$ \emph{(Lyashko-Looijenga number of type
    $W$)}.
  \end{enumerate}
\end{coro}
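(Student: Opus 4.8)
The two formulas in Corollary \ref{corcat} are obtained by specialising Chapoton's formula (Theorem \ref{thmchapo}), so the proof is a direct computation. For part (i), I would put $N=1$ in the statement of Theorem \ref{thmchapo}: a multichain $w_1 \< c$ of length $1$ is just an element $w_1$ of $\NCP_W$, so the number of such multichains is exactly $|\NCP_W|$, and the formula gives $\prod_{i=1}^n (d_i+h)/d_i$. This is the Catalan number of type $W$ as recalled in the introduction (Section \ref{subpartcatalan}).

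For part (ii), the idea is to relate reduced decompositions of $c$ to multichains via the correspondence between chains and block factorisations explained in Section \ref{subsectfactintro} (and used throughout Chapter \ref{chaphur}). A reduced decomposition $(r_1,\dots,r_n)\in\Red_\CR(c)$ is a block factorisation of $c$ of shape $1^n$, i.e. an element of $\Fact_{1^n}(c)$. I would extract $|\Red_\CR(c)|$ from the polynomial $Z_W(N)=\prod_{i=1}^n \frac{d_i+(N-1)h}{d_i}$ of Proposition \ref{propchapoton} using the expansion
\[ Z_W(N) = \sum_{k\geq 1} |\Fact_k(c)|\binom{N}{k} \]
given just before the statement of Proposition \ref{propchapoton}. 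The factorisations in $n$ blocks of an element of length $n$ are precisely the reduced decompositions, so $|\Fact_n(c)|=|\Red_\CR(c)|$; this is the coefficient of $\binom{N}{n}$ in $Z_W(N)$, i.e. the leading term as a polynomial in $N$ up to the factor $n!$. Concretely, $|\Red_\CR(c)| = n!\cdot[\text{leading coefficient of }Z_W]$. Since $Z_W(N)=\prod_i\frac{d_i+(N-1)h}{d_i}$ has leading coefficient $h^n/\prod_i d_i = h^n/|W|$ (using that $|W|=d_1\cdots d_n$, the Chevalley–Shephard–Todd product formula recalled in Section \ref{subpartcatalan}), we get $|\Red_\CR(c)| = n!\,h^n/|W|$, as claimed.

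The computation is entirely routine; there is no real obstacle, only bookkeeping. The one point worth stating carefully is why the coefficient of $\binom{N}{n}$ in $Z_W(N)$ equals $n!$ times the leading coefficient of $Z_W$ viewed as a degree-$n$ polynomial in $N$: this follows because $\binom{N}{k}$ has degree $k$ in $N$ with leading term $N^k/k!$, so in the expansion $Z_W(N)=\sum_k|\Fact_k(c)|\binom{N}{k}$ only $k=n$ contributes to the $N^n$ coefficient, giving $[\text{leading coeff of }Z_W] = |\Fact_n(c)|/n!$. Finally I would remark that part (ii) also follows from Theorem \ref{thmsubmax}'s method of proof, or more directly from the fact (Theorem \ref{thmLLBessis}, via Thm.\ \ref{thmbij}) that $\Red_\CR(c)$ is in bijection with a generic fiber of $\LL$, whose cardinality is the degree $n!h^n/|W|$ of the covering — this is the ``case-free'' route alluded to in Remark \ref{rqfinale}, and it is worth pointing out that part (ii) does \emph{not} actually require the full strength of Chapoton's formula.
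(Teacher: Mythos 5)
Your main argument is correct and is exactly what the paper does: (i) is the case $N=1$ of Chapoton's formula, and (ii) is the count of maximal strict chains, which you extract correctly by reading off the coefficient of $\binom{N}{n}$ in $Z_W(N)=\sum_k|\Fact_k(c)|\binom{N}{k}$ and using $|W|=\prod_i d_i$.

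One correction to your closing remark, though: the route via the Lyashko--Looijenga covering is \emph{not} an independent, case-free proof of (ii). The degree $n!h^n/|W|$ of the extension is indeed obtained case-freely from a Hilbert series computation, but the identification of a generic fiber of $\LL$ with $\Red_\CR(c)$ (Thm.\ \ref{thmbij}) is proved in \cite{BessisKPi1} by \emph{using} the case-by-case facts that the Hurwitz action on $\Red_\CR(c)$ is transitive and that $|\Red_\CR(c)|=n!h^n/|W|$; invoking that bijection to deduce (ii) is therefore circular. This is precisely the point of Remark \ref{rqfinale} and of the sentence following the corollary: the only known proofs of (ii) remain case-by-case, and the geometric machinery of the thesis takes $|\Red_\CR(c)|$ as an \emph{input} in order to reach $|\Fact_{n-1}(c)|$, not the other way around.
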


Both formulas are of course consequences of Thm.\ \ref{thmchapo} : (i) is
just the case $N=1$, and (ii) comes from the computation of maximal strict
chains in a poset. But historically, they have been observed before
Chapoton's formula. In the Coxeter types, (ii) has been conjectured by
Looijenga in \cite{looijenga}, and proved later by Deligne
\cite{deligneletter}. Still today, the only known proofs are case-by-case.

\subsection{Number of submaximal factorisations of a Coxeter element}

Using the previous results of this chapter, we can state here a formula for
the number of submaximal factorisations, with a ``geometric'' proof:

\begin{theo}
\label{thmsubmax}
  Let $W$ be an irreducible well-generated complex reflection group, with
  invariant degrees $d_1\leq\dots \leq d_n=h$. Then, the number of
  submaximal factorisations of a Coxeter element $c$ is equal to:
    \[ |\Fact_{n-1}(c)| = \frac{(n-1)!\ h^{n-1}}{|W|}
\left( \frac{(n-1)(n-2)}{2}h + \sum_{i=1}^{n-1} d_i \right) .\] 
\end{theo}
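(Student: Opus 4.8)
The plan is to reduce the count to a sum of degrees of the discriminant components $D_\Lambda$, and then to evaluate that sum by comparing the (weighted) degrees of $D_{\LL}$ and $J_{\LL}$ provided by Theorem~\ref{thmLL}.

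First I would partition $\Fact_{n-1}(c)$ according to the conjugacy class of its unique factor of length $2$. A factorisation of $c$ into $n-1$ blocks has block-length partition $2^11^{n-2}$, so it has exactly one factor $u$ of length $2$; by Lemma~\ref{lemtype} (together with Proposition~\ref{propcoxpar}), $u$ is a parabolic Coxeter element whose conjugacy class is a well-defined stratum $\Lambda\in\Lb_2$. Hence
\[ \Fact_{n-1}(c) = \bigsqcup_{\Lambda\in\Lb_2} \Fact_{n-1}^{\Lambda}(c) \ , \]
and Theorem~\ref{thmfact}(b) gives at once
\[ |\Fact_{n-1}(c)| = \frac{(n-1)!\ h^{n-1}}{|W|} \sum_{\Lambda\in\Lb_2} \deg D_\Lambda \ . \]
So everything comes down to computing $\sum_{\Lambda\in\Lb_2}\deg D_\Lambda$.

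For this I would use the two factorisations of Theorem~\ref{thmLL}. On one hand $D_{\LL} = \prod_{\Lambda\in\Lb_2} D_\Lambda^{r_\Lambda}$, and $D_{\LL}=\Disc(\Delta_W;f_n)$ is, viewed in the variables $a_2,\dots,a_n$ of degrees $2h,\dots,nh$, the discriminant of a monic degree-$n$ polynomial, hence quasi-homogeneous of degree $n(n-1)h$; therefore $\sum_{\Lambda} r_\Lambda\deg D_\Lambda = n(n-1)h$. On the other hand, Theorem~\ref{thmLL}(c) gives $J_{\LL}\doteq \prod_{\Lambda} D_\Lambda^{r_\Lambda-1}$, so $\deg J_{\LL} = \sum_{\Lambda}(r_\Lambda-1)\deg D_\Lambda$. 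Subtracting the two identities,
\[ \sum_{\Lambda\in\Lb_2}\deg D_\Lambda = \deg D_{\LL} - \deg J_{\LL} = n(n-1)h - \deg J_{\LL} \ . \]
Finally I would compute $\deg J_{\LL}$ directly from the weights: $J_{\LL}=\det\bigl(\partial a_i/\partial f_j\bigr)_{2\le i\le n,\,1\le j\le n-1}$ is quasi-homogeneous, and each of its nonzero defining monomials $\prod_{i=2}^n \partial a_i/\partial f_{\sigma(i)}$ has degree $\sum_{i=2}^n ih - \sum_{j=1}^{n-1} d_j = h\bigl(\tfrac{n(n+1)}{2}-1\bigr) - \sum_{i=1}^{n-1} d_i$ (equivalently, one may quote the formula $\deg J_{B/A}=\sum a_i-\sum b_i$ established in Chapter~\ref{chapjac}). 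Hence
\[ \sum_{\Lambda\in\Lb_2}\deg D_\Lambda = n(n-1)h - h\Bigl(\tfrac{n(n+1)}{2}-1\Bigr) + \sum_{i=1}^{n-1} d_i = \tfrac{(n-1)(n-2)}{2}\,h + \sum_{i=1}^{n-1} d_i \ , \]
using $n(n-1) - \tfrac{n(n+1)}{2} + 1 = \tfrac{(n-1)(n-2)}{2}$. Substituting this into the displayed formula for $|\Fact_{n-1}(c)|$ yields the statement.

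There is no genuine obstacle beyond the inputs already in hand: the substance is Theorems~\ref{thmfact} and~\ref{thmLL}, and the only points needing care are that $D_{\LL}$ and $J_{\LL}$ are quasi-homogeneous, so that degrees are additive along the factorisations $D_{\LL}=\prod D_\Lambda^{r_\Lambda}$ and $J_{\LL}\doteq\prod D_\Lambda^{r_\Lambda-1}$, and the bookkeeping of the weights $\deg a_i=ih$, $\deg f_i=d_i$ in the Jacobian computation. One should also note that $J_{\LL}\neq 0$ (the extension is finite, hence generically étale by Corollary~\ref{coroetale}), so that $\deg J_{\LL}$ is well defined.
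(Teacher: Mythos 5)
Your proof is correct and follows essentially the same route as the paper: decompose $\Fact_{n-1}(c)$ over the strata $\Lambda\in\Lb_2$, apply Theorem~\ref{thmfact}(b), convert $\sum_\Lambda \deg D_\Lambda$ into $\deg D_{\LL}-\deg J_{\LL}$ via Theorem~\ref{thmLL}, and finish with the weight bookkeeping. The extra care you take about quasi-homogeneity and $J_{\LL}\neq 0$ is implicit in the paper but does no harm.
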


\begin{proof}
  Using Thm.\ \ref{thmfact}(b) and Thm.\ \ref{thmLL}(b)-(c), we compute:
  \[\begin{array}{lcl}
    |\Fact_{n-1}(c)| =|\Fact_{\alpha}(c)| & = &
    \displaystyle{\sum_{\Lambda \in \Lb_2}
      |\Fact_{n-1}^{\Lambda}(c)|}\\ & = &
    \displaystyle{\frac{(n-1)!\ h^{n-1}}{|W|} \sum_{\Lambda \in \Lb_2}
      \deg D_\Lambda }\\ & = &
    \displaystyle{\frac{(n-1)!\ h^{n-1}}{|W|} \left( \deg D_{\LL} - \deg
        J_{\LL} \right) }\ ,
  \end{array} \]
  As $\deg D_{\LL} = n(n-1)h$ and $\deg
  J_{\LL} = \sum_{i=2}^n \deg(a_i) - \sum_{j=1}^{n-1}
  \deg(f_j)=\sum_{i=2}^n ih - \sum_{j=1}^{n-1} d_j$, a quick
  computation gives the conclusion.
\end{proof}

\begin{remark}
  The formula in the above theorem is actually included in Chapoton's
  formula: indeed, there exist easy combinatorial tricks allowing to pass
  from the numbers of multichains to the numbers of strict chains (which
  are roughly the numbers of block factorisations). In the Appendix
  \ref{annexchapo}, we detail these relations, and give the formulas for
  the number of block factorisations predicted by Chapoton's formula.

  \emph{However}, the proof we obtained here is more
  satisfactory (and more enlightening) than the one using Chapoton's
  formula. Indeed, if we recapitulate the ingredients of the proof, we only
  made use of the formula for the Lyashko-Looijenga number
  (Cor.\ \ref{corcat}(b)) --- necessary to prove the first properties of $\LL$
  in \cite{BessisKPi1} ---, the remaining being the geometric properties of
  $\LL$, for which we never used the classification. In other words, we
  travelled from the numerology of $\Red_\CR(c)$ to that of
  $\Fact_{n-1}(c)$, without adding any case-by-case analysis to the setting
  of \cite{BessisKPi1}.
\end{remark}

 \bigskip

Although it seems to be a new interesting avenue towards a geometric
explanation of Chapoton's formula, the method used here to
compute the number of submaximal factorisations is not directly
generalisable to factorisations with fewer blocks. A more promising
approach would be to avoid computing explicitely these factorisations, and
to try to understand globally Chapoton's formula as some ramification
formula for the morphism $\LL$ (see Remark \ref{rkramifLL}).


\appendix

%

\chapter[Numerical data for the factorisations]{Numerical data
  for the factorisations of the Lyashko-Looijenga discriminants}

\label{annexfactodisc}
\section{Description}

This appendix details explicit numerical data regarding the factorisation of
the discriminant polynomial $D_{\LL}$, where $\LL$ is the Lyashko-Looijenga
morphism associated to a well-generated irreducible reflection group.

\medskip

For such a group $W$, let us recall the definition (from
Sect.\ \ref{subpartLLwell}):
\[ D_{\LL}= \Disc(\Delta_W(f_1,\dots, f_n) ; f_n) \  ,\]
where $\Delta_W$ is the \emph{discriminant} of $W$ (\ie the equation of the
union of the reflecting hyperplanes in the quotient $W \qg V$), and
$f_1,\dots, f_n$ are the fundamental invariant polynomials of $W$ ($f_n$
being the one with maximal degree).

Let us write (as in \ref{subpartLLwell})
\[ D_{\LL} = \prod_{i=1}^r D_i^{p_i} \] 
the factorisation of $D_{\LL}$ in irreducible polynomials of
$\BC[f_1,\dots, f_n]$ (actually, the factorisation holds in $K_W[f_1,\dots,
f_n]$, where $K_W$ is the field of definition
of $W$).

\medskip

In the table \ref{tab}, we give, for each irreducible well-generated group, the weighted degrees $\deg(D_i)$ and the powers $p_i$ which
appear in the factorisation above. It is enough to deal with the
$2$-reflection groups (those generated by reflections of order $2$),
because any irreducible complex reflection group is isodiscriminantal to a
$2$-reflection group (see \cite[Thm.2.2]{BessisKPi1}): it has the same
discriminant $\Delta$, and consequently the same braid group and the
same $D_{\LL}$. Thus we only have to treat the four infinite series $A_n$,
$B_n$, $I_2(e)$, $G(e,e,n)$, and $11$ exceptional types (including the $6$
exceptional Coxeter groups).

\newpage

\subsection*{Notations} 

In the last column of table \ref{tab}, the ``$\LL$-data'':
\[ \boxed{p_1} \centerdot \left( u_1\right) + \boxed{p_2} \centerdot \left(
  u_2\right) + \dots + \boxed{p_r} \centerdot \left( u_r\right) \] means
that the form of the factorisation is $ D_{\LL} = \prod_{i=1}^r D_i^{p_i}$
with $\deg D_i = u_i$. Thus this writing reflects the additive
decomposition of $\deg D_{\LL} = n(n-1)h$ (where $n=\rk(W)$ and $h=d_n$) in terms
of the $u_i$:
\[ \deg D_{\LL} = \sum_i p_i u_i \ . \]

\subsection*{By-products}

These numbers $(p_i,u_i)$ have many combinatorial interpretations. In
particular, thanks to Theorems \ref{thmLL} and \ref{thmfact}, we have:
\begin{itemize}
\item the number of conjugacy classes of parabolic Coxeter elements of
  length $2$ is the number of terms in the sum (each term
  $(p_i,u_i)$ of the sum
  corresponds to one of these classes, say $\Lambda_i$);
\item the order of the elements in $\Lambda_i$ is $p_i$ (provided $W$ is a
  $2$-reflection group);
\item the number $|\Fact_{(2,1,\dots, 1)}^{\Lambda_i}(c)|$ of submaximal
  factorisations of a Coxeter element $c$, whose first factor is in the
  class $\Lambda_i$, equals $\frac{(n-2)!\ h^{n-1}}{|W|} u_i $.
\end{itemize}
As a consequence, the $\LL$-data of the last column, when multiplied by the
scalar $\frac{(n-2)!\ h^{n-1}}{|W|}$ (which is listed in the second
column), gives rise to the following equality:
\[ 
\begin{disarray}{lcl}
\sum_i \ p_i \ |\Fact_{(2,1,\dots, 1)}^{\Lambda_i}(c)| & = & \frac{(n-2)!\ h^{n-1}}{|W|}
\deg D \\
 & = &  \frac{n!\ h^{n}}{|W|} \\
 & = & |\Red_\CR(c)| \ ,
\end{disarray}
\]
which simply reflects the enumeration of fibers of the concatenation map
(see Remark \ref{rkconcat}).
\[ \begin{array}{ccc}
\Red_\CR (c) & \surj & \Fact_{(2,1,\dots, 1)}(c)\\
(r_1,r_2,\dots, r_n) & \mapsto & (r_1 r_2, r_3,\dots, r_n) \ .
\end{array}
\]

\begin{table}[!h]
\rotatebox{90}{
{\normalsize
$   
{\renewcommand{\arraystretch}{1.2}
\begin{disarray}{@{\vrule width 1pt\,}c@{\,\vrule width 1pt\,}c@{\,\vrule width 1pt\,}c@{\,\vrule width 1pt}}
  \hlinewd{1pt}
\text{\begin{tabular}{c} Group type \\ {[Isodiscriminantal groups]}
    \end{tabular}} &
  \slfrac{(n-2)!\ h^{n-1}}{|W|} &
  \LL\text{-data} 
  \\
  \hlinewd{1pt} 
  \begin{array}{c} A_n \ ,\ n \geq 2. \\
    \left[G_4,G_8,G_{16},G_{25},G_{32}\right] \end{array} &
  \slfrac{(n+1)^{n-2}}{(n(n-1))} & \boxed{2} \centerdot \left(\slfrac{n(n-1)(n-2)}{2}\right) +
  \boxed{3} \centerdot \left(n(n-1)\right) \\
  \hline
  \begin{array}{c} B_n \ , \ {n\geq 2}. \\ \left[G(d,1,n),G_5,G_{10},G_{18},G_{26}\right]
    \end{array} &
  \slfrac{n^{n-2}}{(2(n-1))} & 
  \begin{array}{r}
    \boxed{2} \centerdot \left((n-1)(n-2)(n-3)\right) + \boxed{2}
    \centerdot \left(2(n-1)(n-2)\right)  \qquad \qquad \\
    +\: \boxed{3} \centerdot
    \left(2(n-1)(n-2)\right)+ \boxed{4}
    \centerdot \left(2(n-1)\right)
  \end{array}
  \\
  \hline
  \begin{array}{c} 
    I_2(e)\\
    \left[G_6,G_9,G_{17},G_{14},G_{20},G_{21}\right] 
  \end{array}  &
  \slfrac12 & 
  \boxed{e}\centerdot \left(2\right) 
  \\
  \hline
  \begin{array}{c}
    G(e,e,n), \ e \geq 2, n\geq 5 \\
    (=D_n \text{ for } e=2)
  \end{array} &
  \slfrac{(n-1)^{n-2}}n & 
  \boxed{2} \centerdot \left(\slfrac{n(n-2)(n-3)e}{2}\right) +
  \boxed{3} \centerdot \left(n(n-2)e \right) + \boxed{e} \centerdot \left(n\right) 
  \\
  \hline
  G(e,e,3)\ ,\ e\geq 3&
  \slfrac23 & 
  \begin{array}{lcccc}
    \text{If }3\nmid e\ &: &  \boxed{3} \centerdot \left(3e\right) &+& \boxed{e}\centerdot \left(3\right)\\
    \text{If }3\mid e\ &: &  \boxed{3} \centerdot \left(e\right)+ \boxed{3} \centerdot \left(e\right)+
    \boxed{3} \centerdot \left(e\right) &+&\boxed{e}\centerdot \left(3\right) 
  \end{array}
  \\
  \hline
  \begin{array}{c}
    G(e,e,4), \ e \geq 2 \\
    (=D_4 \text{ for } e=2)
  \end{array} &
  \slfrac{9}4 & 
  \begin{array}{lcccc}
    \text{If }e\text{ odd } &: & \boxed{2}\centerdot \left(4e\right) &+& \boxed{3}
    \centerdot \left(8e\right) + \boxed{e} \centerdot \left(4\right)   \\
    \text{If }e\text{ even }& : & \boxed{2}\centerdot \left(2e\right) + \boxed{2}\centerdot \left(2e\right) &+ &\boxed{3}
    \centerdot \left(8e\right) + \boxed{e} \centerdot \left(4\right) 
  \end{array}
  \\
  \hline G_{23} \ (=H_3)& \slfrac56 & \boxed{2} \centerdot \left(6\right) +
  \boxed{3} \centerdot \left( 6 \right) + \boxed{5} \centerdot \left( 6
  \right)
  \\
  \hline G_{24} & \slfrac7{12} & \boxed{3} \centerdot \left(12 \right) +
  \boxed{4} \centerdot \left(12 \right)
  \\
  \hline G_{27} & \slfrac5{12} & \boxed{3} \centerdot \left(12 \right) +
  \boxed{3} \centerdot \left(12 \right) + \boxed{4} \centerdot \left(12
  \right) + \boxed{5} \centerdot \left(12 \right)
  \\
  \hline G_{28} \ (=F_4) & 3 &  \boxed{2} \centerdot \left( 24\right) +
  \boxed{3} \centerdot \left( 8 \right) + \boxed{3} \centerdot \left( 8
  \right) +  \boxed{4} \centerdot \left( 12 \right)
  \\
  \hline G_{29} & \slfrac{25}{12} & \boxed{2} \centerdot \left(24 \right) +
  \boxed{3} \centerdot \left(48 \right) + \boxed{4} \centerdot \left(12
  \right)
  \\
  \hline G_{30} \ (=H_4) & \slfrac{15}4 & \boxed{2} \centerdot
  \left(60\right) + \boxed{3} \centerdot \left( 40 \right) + \boxed{5}
  \centerdot \left( 24 \right)
  \\
  \hline G_{33} & \slfrac{243}{20} & \boxed{2} \centerdot \left(60 \right)
  + \boxed{3} \centerdot \left(80 \right)
  \\
  \hline G_{34} & \slfrac{2401}{30} & \boxed{2} \centerdot \left(270
  \right) + \boxed{3} \centerdot \left(240 \right)
  \\
  \hline G_{35} \ (=E_6) & \slfrac{576}{5} & \boxed{2} \centerdot \left(90
  \right) + \boxed{3} \centerdot \left(60 \right)
  \\
  \hline G_{36} \ (=E_7) & \slfrac{19683}{14} & \boxed{2} \centerdot
  \left(210 \right) + \boxed{3} \centerdot \left(112 \right)
  \\
  \hline G_{37} \ (=E_8) & \slfrac{1265625}{56} & \boxed{2} \centerdot
  \left(504 \right) + \boxed{3} \centerdot \left(224 \right) \\
  \hlinewd{1pt}
\end{disarray}}
$  
}
}
\caption{Factorisation of the $\LL$-discriminant for irreducible
  well-generated groups}\label{tab}
\end{table}
\clearpage
\section{Computations}

Let us explain how Table \ref{tab} was obtained.  As mentioned in Remark
\ref{rkkra}, the numerical results about the $|\Fact_{(2,1,\dots,
  1)}^{\Lambda_i}(c)|$, for $W$ real, are particular cases of the
``decomposition numbers'' computed by Krattenthaler-Müller in \cite{KraMu},
namely the $N_W(T_1,A_1,\dots, A_1)$, where $T_1$ is a rank $2$ group type.

For some types we are able to directly recover these numbers just by
explicitely factorising $D_{\LL}$. When this factorisation is not easily
computable, we rather rely on \cite{KraMu}, thus obtaining the geometric
properties of the discriminant through combinatorial means. 

For non-real groups, we cannot rely on \cite{KraMu}, and the corresponding
entries of the table are new results.

\begin{remark}
  For Coxeter groups, the degrees of certain factors of $D_{\LL}$ have also
  been computed by Saito in \cite{saitopolyhedra2}, in order to check
  case-by-case the formula of Thm.\ \ref{thmLL}(c).
\end{remark}

In the following, we detail the methods of computation that we used.

\subsection{Straightforward cases}

\label{subpartstfwd}
Let $W$ be a well-generated irreducible $2$-reflection group. Suppose that
there are only two conjugacy classes of parabolic Coxeter elements of
length $2$, and that we know the orders of their elements $p_1$,
$p_2$. Then, from Thm.\ \ref{thmLL}, the discriminant $D_{\LL}$ and the
Jacobian $J_{\LL}$ have the form:
\[ D_{\LL} = D_1^{p_1} D_2^{p_2} \qquad ; \qquad
J_{\LL}=D_1^{p_1-1}D_2^{p_2-1} \ .\]
In order to find the degrees $u_1=\deg D_1$ and  $u_2=\deg D_2$, we then
just have to solve the following system:
\[\left\{
\begin{disarray}{rcrclcl}
p_1 \ u_1 & + &  p_2 \ u_2& =& \deg D_{\LL}& =& n(n-1)h \\
(p_1-1)\  u_1& +& (p_2-1)\  u_2 &= &\deg J_{\LL}& =& \sum_{i=2}^n (ih) -
\sum_{j=1}^{n-1} d_j \ .
\end{disarray}
\right.
\]

This allows to easily tackle the cases\footnote{The conjugacy classes of
  length $2$ parabolic elements can be found using the package {\sf CHEVIE}
  of {\sf GAP3}.} of $A_n$, $G_{24}$, and the large exceptional types
$G_{33}$ through $G_{37}$.

\subsection{Other exceptional types}

We can explicitly compute the discriminant $\Delta_W$, via the matrix of
basic derivations, following the strategy described in the Appendix B of
\cite{orlikterao}. We use the software {\sf GAP3} \cite{GAP} together with
its packages {\sf CHEVIE} \cite{chevie} and {\sf VKCURVE} \cite{vkcurve}
\footnote{Most of the implementation of \cite{orlikterao} is already
  available in {\sf VKCURVE}.}. Then we compute the discriminant $D_{\LL}$
and its factorisation in irreducibles, which is an easy task in Maple, or
any symbolic computation software (here the rank is never greater than
$5$).

\subsection{Series $B_n$}

For $W=B_n$, we can take for each invariant $f_i$ ($i=1,\dots ,n$) the
$i$-th elementary symmetric polynomial in $x_1^2,\dots, x_n ^2$; then the
discriminant can be written as:
\[ \Delta_W = f_n \Disc(T^n - f_1 T^{n-1} + \dots + (-1)^n f_n ; T) \ .\]

It should be possible to study in general the factorisation in
$\BQ[f_1,\dots,f_n]$ of the polynomial $\Disc (\Delta_W ; f_n)$; however
it is much simpler here to use the numerical results in \cite{KraMu} and to
apply the formula of Thm.\ \ref{thmfact}.

\medskip

Note that \cite{KraMu} gives the number of factorisations according to the
Coxeter type of the parabolic subgroup associated to the factor; here we
rather need these numbers according to the conjugacy classes of the factors. In
this case, this comes back to use the data for the ``combinatorial types''
of $B_n$, see \cite[Sect.\ 2]{KraMu}.

Let us denote by $t,s_2, \dots, s_n$ a set of Coxeter generators for $B_n$
(with $m_{t,s_2}=4$, $m_{s_i,s_{i+1}}=3$, the other ones being $2$). If $n
\geq 4$, there are in $B_n$ four conjugacy classes of parabolic Coxeter
elements of length $2$:
\begin{itemize}
\item the class (denoted $\Lambda_4$) containing $ts_2$ (elements of order $4$);
\item the class ($\Lambda_3$) of the $s_i s_{i+1}$'s (order $3$);
\item the class ($\Lambda_2$) of the $s_i s_j$'s, for $|i-j|>1$ (order $2$);
\item the class ($\Lambda_2'$) of the $ts_i$'s, for $i>2$ (order $2$).
\end{itemize}

Then, with our notations and those of \cite[Sect.\ 5]{KraMu}, we have:
\begin{eqnarray*}
 |\Fact_{(2,1,\dots, 1)}^{\Lambda_4}(c)|    &=&  
 N_{B_n}^{\text{comb}}(B_2,A_1,\dots , A_1) \\
|\Fact_{(2,1,\dots, 1)}^{\Lambda_3}(c)|    &=&   (n-2) \times 
 N_{B_n}^{\text{comb}}(A_2,B_1,A_1,\dots , A_1)  \\
|\Fact_{(2,1,\dots, 1)}^{\Lambda_2}(c)|   & = &  (n-2) \times 
 N_{B_n}^{\text{comb}}(A_1^2,B_1,A_1,\dots , A_1)  \\
|\Fact_{(2,1,\dots, 1)}^{\Lambda_2'}(c)|    & = &   
 N_{B_n}^{\text{comb}}(B_1*A_1,A_1,\dots , A_1) \ . 
\end{eqnarray*}

The results are listed in the table \ref{tab}.

\subsection{Series $G(e,e,n)$}

If $W=G(e,e,n)$ (with $n\geq 3$, $e\geq 2$), we can take for the invariants:
\[ f_1=x_1\dots x_n \quad ; \quad f_i= \sigma_{i-1}(x_1^e,\dots,
x_n^e)\qquad (i=2,\dots, n) \ ,\] 
where $\sigma_i$ is the $i$-th elementary symmetric polynomial (note that
here $f_1$ is not necessarily the invariant of lowest degree, but
$f_n$ is still the one of highest degree $h= (n-1)e$). Then the discriminant is
equal to:
\[ \Delta_W = \Disc (T^n - f_2 T^{n-1}+\dots + (-1)^{n-1} f_n T + (-1)^n
f_1^e ; T) \ . \]

Let us fix $n$, and denote by $\Delta^{[e]}$, $D_{\LL}^{[e]}$ and
$J_{\LL}^{[e]}$ the discriminant, $\LL$-discriminant and $\LL$-Jacobian for
$G(e,e,n)$. They depend only slightly on $e$, namely:
\[ \Delta^{[e]}(f_1,\dots, f_{n})  = P(f_1^e,f_2,\dots, f_n) \]
for a polynomial $P$ independent of $e$, and
 \begin{gather}  D_{\LL}^{[e]} (f_1,\dots, f_{n-1}) =Q(f_1^e,f_2,\dots, f_{n-1} )
  \tag{1} \label{eqnD} \end{gather}
where  $Q(X_1,\dots, X_{n-1})= \Disc( P(X_1,\dots, X_n) ; X_n)$. Moreover,
an easy computation shows:
\begin{gather} J_{\LL}^{[e]}(f_1,\dots, f_{n-1})=e f_1^{e-1} J_1(f_1^e,f_2,\dots, f_n)
  \tag{2} \label{eqnJ} \end{gather}
where $J_1(X_1,\dots, X_n)= \Jac ((P,\DP{P}{X_n},\dots, \DP[n-1]{P}{X_n}) ;
(X_1,\dots ,X_n))$ (see Sect.\ \ref{subpartintrinsic}).

\bigskip

We will need the following lemma:
\begin{lemma}
  \label{lemorder}
  If $n\geq 5$ and $e\geq 4$, then there are in $G(e,e,n)$
  three conjugacy classes of parabolic Coxeter elements of length $2$. The
  orders of the elements in these classes are $2$, $3$, $e$, respectively.
\end{lemma}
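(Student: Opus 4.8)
The plan is to analyze the structure of parabolic subgroups of rank $2$ in $G(e,e,n)$ and determine which conjugacy classes of such subgroups contain elements of length $2$ that divide a Coxeter element. The key observation is that a rank $2$ parabolic subgroup of $G(e,e,n)$ is determined (up to conjugacy) by the set of two coordinates (or coordinate-lines) it "moves". First I would recall that $G(e,e,n)$ acts on $\BC^n$ by permutations of coordinates combined with scaling each coordinate by an $e$-th root of unity, subject to the constraint that the product of the scalars is $1$. The reflections of $W=G(e,e,n)$ are of the form: "transpose coordinates $i$ and $j$ twisted by a root of unity" — these are order-$2$ reflections. A parabolic subgroup $W_L$ fixing a flat $L$ of codimension $2$ is obtained by intersecting hyperplane kernels; the possibilities are essentially: (a) two "disjoint" reflections acting on four distinct coordinates, giving $A_1\times A_1$; (b) two reflections sharing one coordinate, giving a group of type $A_2$ (the symmetric group $\FS_3$ acting on three coordinates); (c) the subgroup generated by all twisted transpositions on a fixed pair $\{i,j\}$ of coordinates, which is the dihedral group $G(e,e,2)=I_2(e)$.

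Next I would verify that these three families are pairwise non-conjugate (for $n\geq 5$ and $e\geq 4$, so that all three genuinely occur and are distinct — in particular $e\geq 3$ ensures type (c) is not dihedral of order $4$ coinciding with type (a), and $e\neq 3$ would be needed to keep (b) distinct from (c), handled by the $3\mid e$ subtlety elsewhere but here $e\geq 4$). This uses the fact (Steinberg's theorem, Thm.~\ref{thmsteinberg}, and the bijection between strata $\Lb_2$ and conjugacy classes of rank $2$ parabolic subgroups) that two rank $2$ parabolics are conjugate iff their flats lie in the same $W$-orbit; and one checks that the $W$-orbit of a flat of type (a), (b), (c) is determined by the "combinatorial type" of the pair of coordinates involved, which is a genuine invariant because $W$ contains the full alternating-type permutation action on coordinates (and enough scalings). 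I would then identify the order of a Coxeter element in each: in type $A_1\times A_1$ the parabolic Coxeter element is a product of two commuting order-$2$ reflections, hence has order $2$; in type $A_2\cong\FS_3$, a Coxeter element is a $3$-cycle, order $3$; in type $I_2(e)=G(e,e,2)$, a Coxeter element has order equal to the Coxeter number, which is $e$.

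The remaining point is to confirm that \emph{each} of these three conjugacy classes actually contributes a length-$2$ parabolic Coxeter element \emph{dividing} the fixed Coxeter element $c$ of $W$ — but this is automatic from Proposition~\ref{propsgpnc}: every parabolic subgroup is conjugate to a "non-crossing" one (one whose Coxeter element divides $c$), and by the bijection of Proposition preceding Definition~\ref{deftype}, the strata of $\Lb_2$ are in bijection with conjugacy classes of length-$2$ elements of $\NCP_W$. So the count of conjugacy classes of length-$2$ parabolic Coxeter elements equals the number of $W$-orbits of codimension-$2$ flats, which by the analysis above is exactly three when $n\geq 5$ and $e\geq 4$.

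The main obstacle I anticipate is the careful case analysis of codimension-$2$ flats of $G(e,e,n)$ and the verification that there are \emph{exactly} three orbits — one must rule out the possibility of extra orbits coming from the scaling part of the group (e.g., flats where the two reflections involve overlapping coordinates but with incompatible twists), and one must check that the hypotheses $n\geq5$, $e\geq 4$ are precisely what is needed to avoid the degenerate coincidences (for small $n$ the "disjoint pairs" type (a) disappears; for $e=2$ type (c) coincides with type (a) or is absent since $G(2,2,2)$ is reducible; for $e=3$ type (c) becomes $I_2(3)=A_2$ and merges with type (b), which is exactly the $3\mid e$ phenomenon noted in the $G(e,e,3)$ and $G(e,e,4)$ rows of Table~\ref{tab}). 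Once the flat classification is pinned down, the order computations are immediate from the identification of each parabolic with $A_1\times A_1$, $A_2$, or $I_2(e)$ and the known Coxeter numbers of these groups.
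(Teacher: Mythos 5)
Your argument is correct, but it takes a genuinely different route from the paper. The paper proves Lemma~\ref{lemorder} entirely inside the combinatorial model of Bessis--Corran, representing elements of $\NCP_W$ as noncrossing partitions of type $(e,e,n)$, sorting the height-$2$ elements into short symmetric, long symmetric and asymmetric types, and exhibiting explicit conjugations (rotation by $c$, conjugation by height-$1$ asymmetric elements, with the constraints $n-1\geq 3$ and $n-1\geq 4$ appearing there). You instead classify the codimension-$2$ flats of the arrangement of $G(e,e,n)$ directly --- the three types being $\{x_i=\zeta^a x_j,\ x_l=\zeta^b x_m\}$ (parabolic $A_1\times A_1$), $\{x_i=\zeta^a x_j=\zeta^b x_l\}$ (parabolic $A_2$), and $\{x_i=x_j=0\}$ (parabolic $G(e,e,2)=I_2(e)$, the only flat contained in $e\geq 3$ hyperplanes) --- and then invoke Steinberg's theorem, the bijection between $\Lb_2$ and conjugacy classes of length-$2$ parabolic Coxeter elements, and Proposition~\ref{propsgpnc}. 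Each approach buys something: the paper's proof produces the conjugating elements explicitly within $\NCP_W(c)$, but (as its own closing remark concedes) it cannot separate the long symmetric class from the others when $e=2,3$ because it distinguishes classes partly by order; your flat-orbit argument separates the three classes by the $W$-orbit of the flat, which remains valid for all $e\geq 2$, and it transparently locates the source of the degenerations recorded in Table~\ref{tab}. The price is that you must verify single-orbit-per-type, which is where $n\geq 5$ genuinely enters: conjugating by a diagonal element $\mathrm{diag}(\lambda_1,\dots,\lambda_n)$ with $\prod\lambda_i=1$ lets you normalise the twists $\zeta^a,\zeta^b$ only if a spare coordinate is available to absorb the product constraint.

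Two small corrections to your discussion of the hypotheses (they do not affect the proof for $n\geq 5$, $e\geq 4$). At $n=4$ the disjoint-pairs type does not disappear; rather its orbit splits into two when $e$ is even (precisely the product-constraint obstruction above), which is the $\boxed{2}\centerdot(2e)+\boxed{2}\centerdot(2e)$ entry for $G(e,e,4)$. And at $e=2$ the class of type $\{x_i=x_j=0\}$ does not merge with the $A_1\times A_1$ class: the flats lie in different $W$-orbits, so there are still three conjugacy classes, merely two of them consisting of order-$2$ elements --- consistent with the $G(e,e,n)$ row of the table specialised to $D_n$.
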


\begin{proof}
  We use the representation of elements of $\NCP_W$ as ``noncrossing
  partitions of type $(e,e,n)$'', described by Bessis-Corran. We explain
  roughly the method, strongly relying on their paper \cite{BessisCorran}.

  The lattice $\NCP_W$ is isomorphic to the set $NCP(e,e,n)$ of the
  non-crossing partitions $u$ of $\mu_{e(n-1)} \cup \{0\}$ ---where $\mu_p$
  is the $p$-gon of $p$-th roots of unity in $\BC$---, such that,
  forgetting $\{0\}$, $u$ is $e$-symmetric (\ie fixed by the natural action
  of the group $\mu_e$ on partitions of $\mu_{e(n-1)}$).

  Elements $NCP(e,e,n)$ fall into three types, according to the geometric
  shape of the partition: short symmetric, long symmetric, or asymmetric
  (see Def.\ 1.15\footnote{All the references in this proof are to
    \cite{BessisCorran}.}). Moreover, we have a notion of height for an
  element of $NCP(e,e,n)$ (see Sect.\ 1.8), which is equal to the length of
  the corresponding element of $W$ (Lemma 4.1). There is a natural
  geometric way to see elements of $NCP(e,e,n)$ as braids of $B(W)$ or
  elements of $W$ (Sect.\ 2.3). With these ingredients, we can describe the
  elements of length $2$ of $\NCP_W$ according to their types:
  \begin{enumerate}[(i)]
  \item \emph{Short symmetric.} Let us fix a maximal short symmetric
    element $v$. Its partition contains $e$ blocks of size $n-1$, each of
    its divisors is also short symmetric, and the lattice under $v$ is
    isomorphic to the set of classical noncrossing partitions $\NCP(n-1)$
    (or $\NCP_{A_{n-2}}$). Using the type $A$ case, we deduce that there are two
    conjugacy classes of divisors of $v$ of height $2$, corresponding to
    the orders $2$ and $3$. Moreover, any short symmetric element $u$ is
    conjugate to a divisor of $v$: use the conjugation by the Coxeter
    element $c$, which acts as a rotation of the partition.
  \item \emph{Long symmetric.} If $u$ is a long symmetric element of height
    $2$, then its partition has necessarily only one part which is not a
    singleton, and this part must be a regular $e$-gon (together with
    $\{0\}$). All the elements of this type are obviously of order $e$, and
    conjugated to each other by the action of $c$ (rotation).
  \item \emph{Asymmetric.} If $u$ is an asymmetric element, let us denote
    by $u^\flat$ the partition obtained from $u$ by forgetting the point $0$. Then,
    $u$ has height $2$ when in each $e$-sector of $u^\flat$ there are one
    part of size $2$ and $n-3$ singletons.
    \begin{itemize}
    \item If in the partition of $u$, $0$ is with a $2$-part of $u^\flat$, then
      $u$ has order $3$. It can be brought down to a short symmetric
      element of (i) by conjugation with an adapted asymmetric height $1$
      element (note that it requires $n-1 \geq 3$).
    \item If in the partition of $u$, $0$ is with a singleton of $u^\flat$, then
      $u$ has order $2$. It can also be brought down to a short symmetric
      element of (i) by conjugation with an adapted asymmetric height $1$
      element (providing this time that $n-1 \geq 4$).
    \end{itemize}
  \end{enumerate}
As $e \neq 2,3$, we can conclude.
\end{proof}

\begin{remark} When $e=2$ or $3$, the proof only shows that there are two
  or three conjugacy classes, according to whether the long symmetric
  elements are in a specific class or not. Actually, the result is always
  three, as we will see below.
\end{remark}

If we suppose that $n\geq 5$ and $e\geq 4$, the lemma implies that the
$\LL$-discriminant and Jacobian associated to $G(e,e,n)$ have the form
(using Thm.\ \ref{thmLL}):
\[ D_{\LL}^{[e]}  = D_2^2 D_3^3 D_e^e \qquad ; \qquad   J_{\LL}^{[e]}  = D_2
D_3^2 D_e^{e-1} ,\]
where $D_2,D_3,D_e$ are irreducible in $\BC[f_1,\dots f_{n-1}]$. Thanks to
equation \ref{eqnJ}, we know that $f_1^{e-1}$ divides $J_{\LL}^{[e]}$, so $D_e \doteq f_1$.

\medskip

Because of equation \ref{eqnD}, if $e'$ divides $e$, then the irreducible
factorisation of $D_{\LL}^{[e']}$ is a refinement of the one of
$D_{\LL}^{[e]}$. But, as explained in the remark above, even when $e'=2$ or
$3$, there are at most three conjugacy classes of length $2$ parabolic
Coxeter elements in $G(e',e',n)$, so at most three irreducible factors in
$D_{\LL}^{[e']}$. Using for example $e=6$, it proves that $D_{\LL}^{[e']}$,
for $e'=2$ or $3$, has also an irreducible factorisation of the form:
$D_2^2 D_3^3 f_1^e$. 

\medskip

To complete the table, it remains to compute $\deg D_2$ and $\deg D_3$ in
function of $e$, $n$. This is easily done using the same method as in
section \ref{subpartstfwd}, because we have two equations and only two
unknowns left.

\bigskip

When $n \leq 4$, the factorisation can be finer than the general one.
\begin{itemize}
\item If $n=3$, we compute $D_{\LL}^{[e]}=f_1^e (27f_1^e-f_2^3)^3$. The
  second factor is irreducible when $e\notin 3\BZ$; otherwise, it
  decomposes into three factors, each of degree $e$.
\item If $n=4$, as in the general case, the proof of Lemma \ref{lemorder}
  shows that $D_{\LL}^{[e]}=f_1^e D_2^2 D_3^3$, with $D_3$ irreducible but
  $D_2$ possibly reducible. The computation gives the equality
  $D_2\doteq {64f_1^e -(f_2^2-4f_3)^2}$, so $D_2$ is irreducible if and only
  if $e$ is odd.
\end{itemize}

All the numerical data are gathered in the table \ref{tab}.

\chapter{Chains, multichains and block factorisations}
\label{annexchapo}

In this appendix we give the formulas relating the numbers of multichains
and of block factorisations, and we deduce the equalities predicted by
Chapoton's formula for factorisations of a Coxeter element.

\section{Multichains and factorisations in an ordered group}

The relation between multichains and factorisations in $\NCP_W$ can easily
be deduced from the relation between strict chains and multichains, which
is classical material (see Chap. 3.11 in \cite{stanley}). For the sake of
completeness, we give here direct proofs of the formulas relating
multichains and block factorisations, in the general setting of a poset
order given by the divisibility in a generated group $(G,A)$.

\medskip

Let $G$ be a finite group, and $A$ a generating set for $G$. We denote by
$\ell_A$ the associated length function, and $\Aleq$ the associated partial
order on $G$: $u\Aleq v$ if and only if $\ell_A(u) + \ell_A(u^{-1} v) = \ell_A
(v)$. We fix an element $c$ in $G$, and denote by $P_c$ the interval
$[1,c]$ for the order $\Aleq$.

\begin{defi}
  Let $p$ be a positive integer.
  \begin{itemize}
  \item A \emph{$p$-multichain} in $P_c$ is a chain $u_1 \Aleq \dots \Aleq
    u_p \Aleq  c$; the number of such chains is denoted by $\Ch_p$.
  \item A \emph{$p$-block-factorisation} (of $c$) is a $p$-tuple
    $(g_1,\dots,g_p)$ of elements in $G \setminus \{1\}$ such that
    $c=g_1\dots g_p$ and $\ell_A(c)=\ell_A(g_1)+\dots + \ell_A(g_p)$; the
    number of such factorisations is denoted by $\Fact_p$.
  \end{itemize}
\end{defi}

For $N\in \BN^*$, any $N$-multichain $u_1\Aleq \dots \Aleq u_N$ defines a
unique $p$-block-factorisation (for a $p\leq N+1$): first transform the
multichain $u_1\Aleq \dots \Aleq u_N \Aleq c$ in a strict chain $1=v_0
\Aleqst v_1\Aleqst \dots \Aleqst v_p=c$ by erasing the redundant elements, then
define $g_i=v_{i-1}^{-1}v_i$ for $i=1,\dots, p$.

\medskip

Let us count how many multichains give rise to a fixed factorisation
$(g_1,\dots, g_p)$ with this operation. If we define $h_i:=g_1\dots g_i$,
these are the multichains of the form
\[ \underbrace{1 \Aleq \dots \Aleq 1}_{r_0 \text{ times}} \Aleq
\underbrace{h_1 \Aleq \dots \Aleq h_1}_{r_1 \text{ times}} \Aleq
\dots \Aleq 
\underbrace{h_p \Aleq \dots \Aleq h_p}_{r_p \text{ times}} \ , \] with
$r_0\geq 0$, $r_p \geq 0$, and $r_i \geq 1$ for $i=1,\dots, p-1$. It
remains to compute the number of $(r_0,\dots, r_p)$ in $\BN^*\times
\BN^{p-1}\times \BN^*$ such that $\sum r_i=N$. This corresponds to choose $p$
distinct elements in $\{0,\dots, N\}$ (then take for the $r_i$ the lengths of
the intervals which are cut out). So:
\begin{equation*} \Ch_N= \sum_{p\geq 1} \binom{N+1}{p} \Fact_p \ .\tag{1} \end{equation*}

\medskip

We can inverse the formula, using the discrete derivative operator
$\Delta$: for $f:\BC\to \BC$, it is defined by $\Delta f:=(x \mapsto
f(x+1)-f(x))$. It is well known that $f$ is polynomial if and only if
$\Delta^kf=0$ for $k$ large enough, and for $P$ polynomial we have the
following formulas:
\begin{itemize}
\item $\forall n \in \BN$, $\displaystyle{\Delta ^n P (X) = \sum_{k=0}^n
    (-1)^{n-k} \binom{n}{k} P(X+k)}$;
\item $\forall a \in \BC$, $\displaystyle{P(a+X)= \sum_{k=0}^{\deg
      P} \Delta ^k P(a) \binom{X}{k} }$.
\end{itemize}

As a consequence, $\Ch_N$ is a polynomial in $N$. We have
indeed ${\Ch_N=Z(N+1)}$, where $Z$ is the so-called \emph{Zeta} polynomial of the
poset $(P_c,\Aleq)$. We also obtain:
\begin{equation*} \Fact_p=\Delta ^p Z(0)= \sum_{k=1}^p (-1)^{p-k}
  \binom{p}{k}\Ch_k \ .\tag{2} \end{equation*}

\section{Formulas for block factorisations of a Coxeter element}

We consider here $G=W$ a well-generated irreducible complex reflection
group (of rank $n$), and $A=\CR$ the set of all its reflections. We fix a
Coxeter element $c$.

``Chapoton's formula'' states that the number of $N$-multichains in
$P_c=\NCP_W(c)$ is the Fuss-Catalan number $\Cat^{(N)}(W)=\prod_i
\frac{d_i+Nh}{d_i}$, \ie:
\[ Z(X) = \prod_{i=1}^n \frac{d_i + (X-1)h}{d_i} \ ,\]
where $Z$ is the Zeta polynomial of $\NCP_W$, and $d_1\leq \dots \leq
d_n=h$ are the invariant degrees of $W$.

\begin{remark}
\label{rkramifLL}
From Equation (1), we deduce the formulas:
\begin{equation*} \forall N \in \BN,\ \sum_{p = 1}^n \binom{N+1}{p}
  \Fact_p = \prod_{i=1}^n \frac{d_i+Nh}{d_i} \ . \tag{3} \end{equation*} As the
numbers $\Fact_p$ are related to the cardinality of fibers of the
Lyashko-Looijenga morphism (see Thm. \ref{thmbij}), these equalities can be
interpreted as ``ramification formulas'' for $\LL$. Instead of trying to
prove Chapoton's formula by computing explicitely the $\Fact_p$ (which leads
to quite complicated expressions, see Prop. \ref{calculfact} below),
another approach could be to prove directly Equation (3) as a global
property of $\LL$.
\end{remark}
 
We give below the numbers $\Fact_p$ of factorisations of $c$ in $p$ blocks,
as predicted by Chapoton's formula. Note that for $\Fact_{n-1}$ we recover
the formula obtained in Thm. \ref{thmfact}.

\begin{propo}
\label{calculfact}
  We denote by  $\sigma_0^* =1, \sigma_1^*=\sum_{i=1}^{n-1}d_i^*, \dots,
  \sigma_{n-1}=\prod_{i=1}^{n-1}d_i^*$ the elementary symmetric polynomials
  in the  $n-1$ codegrees $d_1^* \geq \dots \geq d_{n-1}^*$ (where
  $d_i^*=h-d_i$). We write $S(p,k)$ for the number of partitions of a
  $p$-set into $k$ nonempty subsets (Stirling number of the second
  kind). Then:

  \[ \forall p\in \{0,\dots, n\},\ \Fact_{n-p}=\frac{(n-p)!\ h^{n-p}}{|W|}
  \sum_{j=0}^{p} (-1)^{p-j} \ \sigma_{p-j}^* \ S(n-p+j,n-p)\ h^j \ .\] 
    
  \[\begin{disarray}{llll}
    \text{In particular:} & \Fact_n & = & \frac{n! \ h^n}{|W|} \ ;\\
    & \Fact_{n-1} & = & \frac{(n-1)!\ h^{n-1}}{|W|} \left( -\sigma_1^* +
      \frac{n(n-1)}{2}h \right) \ .
  \end{disarray}\]
\end{propo}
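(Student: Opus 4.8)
The plan is to obtain the formula purely formally from Chapoton's formula (Theorem~\ref{thmchapo}, i.e.\ the equality $Z(X)=\prod_{i=1}^n\frac{d_i+(X-1)h}{d_i}$ for the Zeta polynomial of $\NCP_W$) and from the inversion identity $\Fact_{n-p}=\Delta^{\,n-p}Z(0)$ established as Equation~(2) of this appendix. So the whole proof reduces to computing the finite differences $\Delta^{\,n-p}Z(0)$; this is a routine but sign-sensitive bookkeeping exercise, and the Stirling numbers will appear through the standard identity $\Delta^{m}(X^{k})\big|_{0}=\sum_{j=0}^{m}(-1)^{m-j}\binom{m}{j}j^{k}=m!\,S(k,m)$ recalled (implicitly) just above.

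First I would put $Z$ in a convenient monomial form. Since $d_n=h$, the factor indexed by $i=n$ is $d_n+(X-1)h=Xh$; for $i<n$ one has $d_i+(X-1)h=Xh-(h-d_i)=Xh-d_i^{*}$ with the codegrees $d_i^{*}=h-d_i$ of the statement; and $\prod_{i=1}^n d_i=|W|$. Hence
\[ Z(X)=\frac{Xh}{|W|}\prod_{i=1}^{n-1}\bigl(Xh-d_i^{*}\bigr). \]
Expanding $\prod_{i=1}^{n-1}(T-d_i^{*})=\sum_{l=0}^{n-1}(-1)^{l}\sigma_l^{*}\,T^{\,n-1-l}$ and substituting $T=Xh$ gives
\[ Z(X)=\frac{1}{|W|}\sum_{l=0}^{n-1}(-1)^{l}\,\sigma_l^{*}\,h^{\,n-l}\,X^{\,n-l}=\sum_{k=1}^{n}c_k\,X^{k},\qquad c_k=\frac{(-1)^{\,n-k}\,\sigma_{n-k}^{*}\,h^{\,k}}{|W|}, \]
where the reindexing is $k=n-l$; in particular $c_0=0$, consistently with $Z(0)=0$.

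Then I would apply $\Delta^{\,n-p}$ termwise. By linearity and the Stirling identity above,
\[ \Fact_{n-p}=\Delta^{\,n-p}Z(0)=\sum_{k=1}^{n}c_k\,(n-p)!\,S(k,n-p)=\frac{(n-p)!}{|W|}\sum_{k=1}^{n}(-1)^{\,n-k}\,\sigma_{n-k}^{*}\,h^{\,k}\,S(k,n-p). \]
Since $S(k,n-p)=0$ for $k<n-p$, only $k\ge n-p$ contributes; writing $k=n-p+j$ with $0\le j\le p$, so that $n-k=p-j$ and $h^{\,k}=h^{\,n-p}h^{\,j}$, this becomes exactly
\[ \Fact_{n-p}=\frac{(n-p)!\,h^{\,n-p}}{|W|}\sum_{j=0}^{p}(-1)^{\,p-j}\,\sigma_{p-j}^{*}\,S(n-p+j,n-p)\,h^{\,j}, \]
which is the claimed formula. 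The two displayed special cases then follow: $p=0$ uses $\sigma_0^{*}=1$ and $S(n,n)=1$, giving $\Fact_n=n!h^{n}/|W|$; and $p=1$ uses $S(n-1,n-1)=1$ and $S(n,n-1)=\binom{n}{2}$, giving $\Fact_{n-1}=\frac{(n-1)!h^{n-1}}{|W|}\bigl(-\sigma_1^{*}+\tfrac{n(n-1)}{2}h\bigr)$.

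There is no genuine obstacle here — everything sits inside Chapoton's formula and the elementary finite-difference calculus already recalled. The only points requiring care are the two "free" simplifications $d_n=h$ and $\prod_i d_i=|W|$ (which, together with $d_i^{*}=h-d_i$ and $d_n^{*}=0$, are precisely what make the factor $Xh$ split off and the codegrees $\sigma_\bullet^{*}$ appear), and then keeping the signs, the shift $k\mapsto n-p+j$, and the truncation $S(k,n-p)=0$ for $k<n-p$ straight. As a consistency check I would verify that the $p=1$ case matches Theorem~\ref{thmsubmax}: indeed $\sum_{i=1}^{n-1}d_i=(n-1)h-\sigma_1^{*}$, so $\frac{(n-1)(n-2)}{2}h+\sum_{i=1}^{n-1}d_i=\frac{n(n-1)}{2}h-\sigma_1^{*}$, in agreement.
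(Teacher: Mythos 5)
Your proof is correct and follows exactly the paper's own route: expand $Z$ in the monomial basis using Chapoton's formula (with $d_n=h$, $\prod_i d_i=|W|$, $d_i^*=h-d_i$) and then apply Equation~(2) together with $\Delta^r(X^k)(0)=r!\,S(k,r)$. Incidentally, your expansion $Z(X)=\frac{Xh}{|W|}\prod_{i=1}^{n-1}(Xh-d_i^{*})$ carries the correct sign, whereas the paper's one-line proof displays $\prod_{i=1}^{n-1}(d_i^{*}+hX)$ --- evidently a typo, since only the minus sign produces the alternating factors $(-1)^{p-j}$ appearing in the stated formula.
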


\begin{proof}
  We develop the expression $Z(X)=\frac{hX}{|W|} \prod_{i=1}^{n-1} (d_i^*
  +hX)$ in the canonical basis. Then we apply Equation (2) and compute
  $\Delta^{n-p}Z(0)$, using the fact that $\Delta^r(X^k)(0)= r!\ S(k,r)$
  (cf. Prop. 1.4.2 in \cite{stanley}).
\end{proof}

\selectlanguage{francais}


\bibliographystyle{alpha-fr} 
%
\bibliography{arxiv_totalbibli} 

\newcommand{\etalchar}[1]{$^{#1}$}
\def\cprime{$'$}
\begin{thebibliography}{GHL{\etalchar{+}}96}
\expandafter\ifx\csname fonteauteurs\endcsname\relax
\def\fonteauteurs{\scshape}\fi

\bibitem[AR04]{athareiner}
Christos~A. \bgroup\fonteauteurs\bgroup Athanasiadis\egroup\egroup{} et Victor
  \bgroup\fonteauteurs\bgroup Reiner\egroup\egroup{} :
\newblock Noncrossing partitions for the group {$D_n$}.
\newblock {\em SIAM J. Discrete Math.}, 18(2)\string:\penalty500\relax 397--417
  (electronic), 2004.

\bibitem[Arm05]{NCP}
Drew \bgroup\fonteauteurs\bgroup Armstrong\egroup\egroup{} :
\newblock Braid groups, clusters, and free probability : an outline from the
  {AIM} workshop.
\newblock \url{http://www.aimath.org/WWN/braidgroups/braidgroups.pdf}, janvier
  2005.

\bibitem[Arm09]{Armstrong}
Drew \bgroup\fonteauteurs\bgroup Armstrong\egroup\egroup{} :
\newblock Generalized noncrossing partitions and combinatorics of {C}oxeter
  groups.
\newblock {\em Mem. Amer. Math. Soc.}, 202(949)\string:\penalty500\relax x+159,
  2009.

\bibitem[Arn74]{arnold}
Vladimir~I. \bgroup\fonteauteurs\bgroup Arnold\egroup\egroup{} :
\newblock Critical points of functions and the classification of caustics.
\newblock {\em Russian Math. Surveys}, 29(3)\string:\penalty500\relax 243--244,
  1974.

\bibitem[Ath98]{Atha3}
Christos~A. \bgroup\fonteauteurs\bgroup Athanasiadis\egroup\egroup{} :
\newblock On noncrossing and nonnesting partitions for classical reflection
  groups.
\newblock {\em Electron. J. Combin.}, 5\string:\penalty500\relax Research Paper
  42, 16 pp. (electronic), 1998.

\bibitem[Ath04]{Atha}
Christos~A. \bgroup\fonteauteurs\bgroup Athanasiadis\egroup\egroup{} :
\newblock Generalized {C}atalan numbers, {W}eyl groups and arrangements of
  hyperplanes.
\newblock {\em Bull. London Math. Soc.}, 36(3)\string:\penalty500\relax
  294--302, 2004.

\bibitem[Ath05]{Atha2}
Christos~A. \bgroup\fonteauteurs\bgroup Athanasiadis\egroup\egroup{} :
\newblock On a refinement of the generalized {C}atalan numbers for {W}eyl
  groups.
\newblock {\em Trans. Amer. Math. Soc.}, 357(1)\string:\penalty500\relax
  179--196 (electronic), 2005.

\bibitem[BC06]{BessisCorran}
David \bgroup\fonteauteurs\bgroup Bessis\egroup\egroup{} et Ruth
  \bgroup\fonteauteurs\bgroup Corran\egroup\egroup{} :
\newblock Non-crossing partitions of type {$(e,e,r)$}.
\newblock {\em Adv. Math.}, 202(1)\string:\penalty500\relax 1--49, 2006.

\bibitem[BCKM]{BCKM}
Noel \bgroup\fonteauteurs\bgroup Brady\egroup\egroup{}, John
  \bgroup\fonteauteurs\bgroup Crisp\egroup\egroup{}, Anton
  \bgroup\fonteauteurs\bgroup Kaul\egroup\egroup{} et Jon
  \bgroup\fonteauteurs\bgroup McCammond\egroup\egroup{} :
\newblock Manuscript in preparation.

\bibitem[BDM02]{BDM}
David \bgroup\fonteauteurs\bgroup Bessis\egroup\egroup{}, Fran{\c{c}}ois
  \bgroup\fonteauteurs\bgroup Digne\egroup\egroup{} et Jean
  \bgroup\fonteauteurs\bgroup Michel\egroup\egroup{} :
\newblock Springer theory in braid groups and the {B}irman-{K}o-{L}ee monoid.
\newblock {\em Pacific J. Math.}, 205(2)\string:\penalty500\relax 287--309,
  2002.

\bibitem[Ben93]{benson}
David~J. \bgroup\fonteauteurs\bgroup Benson\egroup\egroup{} :
\newblock {\em Polynomial invariants of finite groups}, volume 190 de {\em
  London Mathematical Society Lecture Note Series}.
\newblock Cambridge University Press, Cambridge, 1993.

\bibitem[Bes01]{zariski}
David \bgroup\fonteauteurs\bgroup Bessis\egroup\egroup{} :
\newblock Zariski theorems and diagrams for braid groups.
\newblock {\em Invent. Math.}, 145(3)\string:\penalty500\relax 487--507, 2001.

\bibitem[Bes03]{Bessisdual}
David \bgroup\fonteauteurs\bgroup Bessis\egroup\egroup{} :
\newblock The dual braid monoid.
\newblock {\em Ann. Sci. \'Ecole Norm. Sup. (4)},
  36(5)\string:\penalty500\relax 647--683, 2003.

\bibitem[Bes06a]{Bessis}
David \bgroup\fonteauteurs\bgroup Bessis\egroup\egroup{} :
\newblock A dual braid monoid for the free group.
\newblock {\em J. Algebra}, 302(1)\string:\penalty500\relax 55--69, 2006.

\bibitem[Bes06b]{BessisGarside}
David \bgroup\fonteauteurs\bgroup Bessis\egroup\egroup{} :
\newblock Garside categories, periodic loops and cyclic sets.
\newblock Preprint arXiv:math/0610778v1, 2006.

\bibitem[Bes07a]{BessisKPi1}
David \bgroup\fonteauteurs\bgroup Bessis\egroup\egroup{} :
\newblock Finite complex reflection arrangements are ${K}(\pi,1)$.
\newblock Preprint arXiv:math/0610777v3, 2007.

\bibitem[Bes07b]{bessishdr}
David \bgroup\fonteauteurs\bgroup Bessis\egroup\egroup{} :
\newblock Sur les groupes de tresses généralisés.
\newblock Thèse d'habilitation, \url{http://www.math.ens.fr/~bessis/hdr.pdf},
  2007.

\bibitem[BGN03]{BGN}
Philippe \bgroup\fonteauteurs\bgroup Biane\egroup\egroup{}, Frederick
  \bgroup\fonteauteurs\bgroup Goodman\egroup\egroup{} et Alexandru
  \bgroup\fonteauteurs\bgroup Nica\egroup\egroup{} :
\newblock Non-crossing cumulants of type {$B$}.
\newblock {\em Trans. Amer. Math. Soc.}, 355(6)\string:\penalty500\relax
  2263--2303 (electronic), 2003.

\bibitem[Bia02]{biane}
Philippe \bgroup\fonteauteurs\bgroup Biane\egroup\egroup{} :
\newblock Free probability and combinatorics.
\newblock \emph{In} {\em Proceedings of the {I}nternational {C}ongress of
  {M}athematicians, {V}ol. {II} ({B}eijing, 2002)}, pages 765--774, Beijing,
  2002. Higher Ed. Press.

\bibitem[BKL98]{BKL}
Joan \bgroup\fonteauteurs\bgroup Birman\egroup\egroup{}, Ki~Hyoung
  \bgroup\fonteauteurs\bgroup Ko\egroup\egroup{} et Sang~Jin
  \bgroup\fonteauteurs\bgroup Lee\egroup\egroup{} :
\newblock A new approach to the word and conjugacy problems in the braid
  groups.
\newblock {\em Adv. Math.}, 139(2)\string:\penalty500\relax 322--353, 1998.

\bibitem[BM03]{vkcurve}
David \bgroup\fonteauteurs\bgroup Bessis\egroup\egroup{} et Jean
  \bgroup\fonteauteurs\bgroup Michel\egroup\egroup{} :
\newblock {VKCURVE}, software package for {GAP}3, 2003.
\newblock \url{http://www.math.jussieu.fr/~jmichel/vkcurve/vkcurve.html}.

\bibitem[BMR95]{broumarou1}
Michel \bgroup\fonteauteurs\bgroup Brou{\'e}\egroup\egroup{}, Gunter
  \bgroup\fonteauteurs\bgroup Malle\egroup\egroup{} et Rapha{\"e}l
  \bgroup\fonteauteurs\bgroup Rouquier\egroup\egroup{} :
\newblock On complex reflection groups and their associated braid groups.
\newblock \emph{In} {\em Representations of groups ({B}anff, {AB}, 1994)},
  volume~16 de {\em CMS Conf. Proc.}, pages 1--13. Amer. Math. Soc.,
  Providence, RI, 1995.

\bibitem[BMR98]{broumarou}
Michel \bgroup\fonteauteurs\bgroup Brou{\'e}\egroup\egroup{}, Gunter
  \bgroup\fonteauteurs\bgroup Malle\egroup\egroup{} et Rapha{\"e}l
  \bgroup\fonteauteurs\bgroup Rouquier\egroup\egroup{} :
\newblock Complex reflection groups, braid groups, {H}ecke algebras.
\newblock {\em J. Reine Angew. Math.}, 500\string:\penalty500\relax 127--190,
  1998.

\bibitem[Bou68]{Bourbaki}
Nicolas \bgroup\fonteauteurs\bgroup Bourbaki\egroup\egroup{} :
\newblock {\em \'{E}l\'ements de math\'ematique. {F}asc. {XXXIV}. {G}roupes et
  alg\`ebres de {L}ie. {C}hapitre {IV}: {G}roupes de {C}oxeter et syst\`emes de
  {T}its. {C}hapitre {V}: {G}roupes engendr\'es par des r\'eflexions.
  {C}hapitre {VI}: syst\`emes de racines}.
\newblock Actualit\'es Scientifiques et Industrielles, No. 1337. Hermann,
  Paris, 1968.

\bibitem[BR09]{bessisreiner}
David \bgroup\fonteauteurs\bgroup Bessis\egroup\egroup{} et Victor
  \bgroup\fonteauteurs\bgroup Reiner\egroup\egroup{} :
\newblock Cyclic sieving of noncrossing partitions for complex reflection
  groups.
\newblock Preprint arXiv:math/0701792v2, à paraître dans Ann. Comb., 2009.

\bibitem[Bra01]{BradyA_n}
Thomas \bgroup\fonteauteurs\bgroup Brady\egroup\egroup{} :
\newblock A partial order on the symmetric group and new {$K(\pi,1)$}'s for the
  braid groups.
\newblock {\em Adv. Math.}, 161(1)\string:\penalty500\relax 20--40, 2001.

\bibitem[Bri71]{Brieskorn2}
Egbert~V. \bgroup\fonteauteurs\bgroup Brieskorn\egroup\egroup{} :
\newblock Die {F}undamentalgruppe des {R}aumes der regul\"aren {O}rbits einer
  endlichen komplexen {S}piegelungsgruppe.
\newblock {\em Invent. Math.}, 12\string:\penalty500\relax 57--61, 1971.

\bibitem[Bri88]{Brieskorn1}
Egbert~V. \bgroup\fonteauteurs\bgroup Brieskorn\egroup\egroup{} :
\newblock Automorphic sets and braids and singularities.
\newblock \emph{In} {\em Braids ({S}anta {C}ruz, {CA}, 1986)}, volume~78 de
  {\em Contemp. Math.}, pages 45--115. Amer. Math. Soc., Providence, RI, 1988.

\bibitem[Bro01]{broue}
Michel \bgroup\fonteauteurs\bgroup Brou{\'e}\egroup\egroup{} :
\newblock Reflection groups, braid groups, {H}ecke algebras, finite reductive
  groups.
\newblock \emph{In} {\em Current developments in mathematics, 2000}, pages
  1--107. Int. Press, Somerville, MA, 2001.

\bibitem[Bro06]{diff}
Abraham \bgroup\fonteauteurs\bgroup Broer\egroup\egroup{} :
\newblock Differents in modular invariant theory.
\newblock {\em Transform. Groups}, 11(4)\string:\penalty500\relax 551--574,
  2006.

\bibitem[BS72]{BriSaito}
Egbert \bgroup\fonteauteurs\bgroup Brieskorn\egroup\egroup{} et Kyoji
  \bgroup\fonteauteurs\bgroup Saito\egroup\egroup{} :
\newblock Artin-{G}ruppen und {C}oxeter-{G}ruppen.
\newblock {\em Invent. Math.}, 17\string:\penalty500\relax 245--271, 1972.

\bibitem[BW02a]{BWordre2}
Thomas \bgroup\fonteauteurs\bgroup Brady\egroup\egroup{} et Colum
  \bgroup\fonteauteurs\bgroup Watt\egroup\egroup{} :
\newblock {$K(\pi,1)$}'s for {A}rtin groups of finite type.
\newblock \emph{In} {\em Proceedings of the {C}onference on {G}eometric and
  {C}ombinatorial {G}roup {T}heory, {P}art {I} ({H}aifa, 2000)}, volume~94,
  pages 225--250, 2002.

\bibitem[BW02b]{BWordre}
Thomas \bgroup\fonteauteurs\bgroup Brady\egroup\egroup{} et Colum
  \bgroup\fonteauteurs\bgroup Watt\egroup\egroup{} :
\newblock A partial order on the orthogonal group.
\newblock {\em Comm. Algebra}, 30(8)\string:\penalty500\relax 3749--3754, 2002.

\bibitem[BW08]{BWtreillis}
Thomas \bgroup\fonteauteurs\bgroup Brady\egroup\egroup{} et Colum
  \bgroup\fonteauteurs\bgroup Watt\egroup\egroup{} :
\newblock Non-crossing partition lattices in finite real reflection groups.
\newblock {\em Trans. Amer. Math. Soc.}, 360(4)\string:\penalty500\relax
  1983--2005, 2008.

\bibitem[Car72]{carter}
Roger~W. \bgroup\fonteauteurs\bgroup Carter\egroup\egroup{} :
\newblock Conjugacy classes in the {W}eyl group.
\newblock {\em Compositio Math.}, 25\string:\penalty500\relax 1--59, 1972.

\bibitem[Cha05]{chapoton}
Fr{\'e}d{\'e}ric \bgroup\fonteauteurs\bgroup Chapoton\egroup\egroup{} :
\newblock Enumerative properties of generalized associahedra.
\newblock {\em S\'em. Lothar. Combin.}, 51\string:\penalty500\relax Art. B51b,
  16 pp. (electronic), 2004/05.

\bibitem[Cha06]{chapoton2}
Frédéric \bgroup\fonteauteurs\bgroup Chapoton\egroup\egroup{} :
\newblock Sur le nombre de r\'eflexions pleines dans les groupes de {C}oxeter
  finis.
\newblock {\em Bull. Belg. Math. Soc. Simon Stevin},
  13(4)\string:\penalty500\relax 585--596, 2006.

\bibitem[Coh76]{cohen}
Arjeh~M. \bgroup\fonteauteurs\bgroup Cohen\egroup\egroup{} :
\newblock Finite complex reflection groups.
\newblock {\em Ann. Sci. \'Ecole Norm. Sup. (4)}, 9(3)\string:\penalty500\relax
  379--436, 1976.

\bibitem[CP03]{charneypeifer}
Ruth \bgroup\fonteauteurs\bgroup Charney\egroup\egroup{} et David
  \bgroup\fonteauteurs\bgroup Peifer\egroup\egroup{} :
\newblock The {$K(\pi,1)$}-conjecture for the affine braid groups.
\newblock {\em Comment. Math. Helv.}, 78(3)\string:\penalty500\relax 584--600,
  2003.

\bibitem[Deh00]{LD}
Patrick \bgroup\fonteauteurs\bgroup Dehornoy\egroup\egroup{} :
\newblock {\em Braids and self-distributivity}, volume 192 de {\em Progress in
  Mathematics}.
\newblock Birkh\"auser Verlag, Basel, 2000.

\bibitem[Deh02]{Deh2}
Patrick \bgroup\fonteauteurs\bgroup Dehornoy\egroup\egroup{} :
\newblock Groupes de {G}arside.
\newblock {\em Ann. Sci. \'Ecole Norm. Sup. (4)},
  35(2)\string:\penalty500\relax 267--306, 2002.

\bibitem[Del72]{Deligne}
Pierre \bgroup\fonteauteurs\bgroup Deligne\egroup\egroup{} :
\newblock Les immeubles des groupes de tresses g\'en\'eralis\'es.
\newblock {\em Invent. Math.}, 17\string:\penalty500\relax 273--302, 1972.

\bibitem[Del74]{deligneletter}
Pierre \bgroup\fonteauteurs\bgroup Deligne\egroup\egroup{}, 1974 :
\newblock Lettre à E.~Looijenga (9/3/1974).

\bibitem[Dig06]{Digne}
Fran{\c{c}}ois. \bgroup\fonteauteurs\bgroup Digne\egroup\egroup{} :
\newblock Pr\'esentations duales des groupes de tresses de type affine
  {$\widetilde A$}.
\newblock {\em Comment. Math. Helv.}, 81(1)\string:\penalty500\relax 23--47,
  2006.

\bibitem[DP99]{DehPa}
Patrick \bgroup\fonteauteurs\bgroup Dehornoy\egroup\egroup{} et Luis
  \bgroup\fonteauteurs\bgroup Paris\egroup\egroup{} :
\newblock Gaussian groups and {G}arside groups, two generalisations of {A}rtin
  groups.
\newblock {\em Proc. London Math. Soc. (3)}, 79(3)\string:\penalty500\relax
  569--604, 1999.

\bibitem[FR05]{gcccc}
Sergey \bgroup\fonteauteurs\bgroup Fomin\egroup\egroup{} et Nathan
  \bgroup\fonteauteurs\bgroup Reading\egroup\egroup{} :
\newblock Generalized cluster complexes and {C}oxeter combinatorics.
\newblock {\em Int. Math. Res. Not.}, 2005(44)\string:\penalty500\relax
  2709--2757, 2005.

\bibitem[FZ03]{FominZel}
Sergey \bgroup\fonteauteurs\bgroup Fomin\egroup\egroup{} et Andrei
  \bgroup\fonteauteurs\bgroup Zelevinsky\egroup\egroup{} :
\newblock Cluster algebras. {II}. {F}inite type classification.
\newblock {\em Invent. Math.}, 154(1)\string:\penalty500\relax 63--121, 2003.

\bibitem[Gar69]{Garside}
Frank~A. \bgroup\fonteauteurs\bgroup Garside\egroup\egroup{} :
\newblock The braid group and other groups.
\newblock {\em Quart. J. Math. Oxford Ser. (2)}, 20\string:\penalty500\relax
  235--254, 1969.

\bibitem[GG09]{GorGri}
Iain \bgroup\fonteauteurs\bgroup Gordon\egroup\egroup{} et Stephen
  \bgroup\fonteauteurs\bgroup Griffeth\egroup\egroup{} :
\newblock Catalan numbers for complex reflection groups.
\newblock Preprint arXiv:0912.1578v1, 2009.

\bibitem[GHL{\etalchar{+}}96]{chevie}
M.~\bgroup\fonteauteurs\bgroup Geck\egroup\egroup{},
  G.~\bgroup\fonteauteurs\bgroup Hiss\egroup\egroup{},
  F.~\bgroup\fonteauteurs\bgroup L{\"u}beck\egroup\egroup{},
  G.~\bgroup\fonteauteurs\bgroup Malle\egroup\egroup{} et
  G.~\bgroup\fonteauteurs\bgroup Pfeiffer\egroup\egroup{} :
\newblock {\sf CHEVIE} -- {A} system for computing and processing generic
  character tables for finite groups of {L}ie type, {W}eyl groups and {H}ecke
  algebras.
\newblock {\em Appl. Algebra Engrg. Comm. Comput.}, 7\string:\penalty500\relax
  175--210, 1996.

\bibitem[Gor03]{gordon}
Iain \bgroup\fonteauteurs\bgroup Gordon\egroup\egroup{} :
\newblock On the quotient ring by diagonal invariants.
\newblock {\em Invent. Math.}, 153(3)\string:\penalty500\relax 503--518, 2003.

\bibitem[Hum90]{Humphreys}
James~E. \bgroup\fonteauteurs\bgroup Humphreys\egroup\egroup{} :
\newblock {\em Reflection groups and {C}oxeter groups}, volume~29 de {\em
  Cambridge Studies in Advanced Mathematics}.
\newblock Cambridge University Press, Cambridge, 1990.

\bibitem[IT08]{IngThom}
Colin \bgroup\fonteauteurs\bgroup Ingalls\egroup\egroup{} et Hugh
  \bgroup\fonteauteurs\bgroup Thomas\egroup\egroup{} :
\newblock Noncrossing partitions and representations of quivers.
\newblock Preprint arXiv:math/0612219v4, à paraître dans Compos. Math., 2008.

\bibitem[Kan01]{Kane}
Richard \bgroup\fonteauteurs\bgroup Kane\egroup\egroup{} :
\newblock {\em Reflection groups and invariant theory}.
\newblock CMS Books in Mathematics/Ouvrages de Math\'ematiques de la SMC, 5.
  Springer-Verlag, New York, 2001.

\bibitem[KM09]{KraMu2}
Christian \bgroup\fonteauteurs\bgroup Krattenthaler\egroup\egroup{} et
  Thomas~W. \bgroup\fonteauteurs\bgroup Müller\egroup\egroup{} :
\newblock Cyclic sieving for generalised non-crossing partitions associated to
  complex reflection groups of exceptional type.
\newblock Preprint arXiv:1001.0028v1, 2009.

\bibitem[KM10]{KraMu}
Christian \bgroup\fonteauteurs\bgroup Krattenthaler\egroup\egroup{} et
  Thomas~W. \bgroup\fonteauteurs\bgroup Müller\egroup\egroup{} :
\newblock Decomposition numbers for finite {C}oxeter groups and generalised
  non-crossing partitions.
\newblock {\em Trans. Amer. Math. Soc.}, 362\string:\penalty500\relax
  2732--2787, 2010.

\bibitem[Kra08]{krammer}
Daan \bgroup\fonteauteurs\bgroup Krammer\egroup\egroup{} :
\newblock A class of {G}arside groupoid structures on the pure braid group.
\newblock {\em Trans. Amer. Math. Soc.}, 360(8)\string:\penalty500\relax
  4029--4061, 2008.

\bibitem[Kre72]{krew}
Germain \bgroup\fonteauteurs\bgroup Kreweras\egroup\egroup{} :
\newblock {Sur les partitions non crois{\'e}es d'un cycle}.
\newblock {\em Discrete Mathematics}, 1(4)\string:\penalty500\relax 333--350,
  1972.

\bibitem[Loo74]{looijenga}
Eduard \bgroup\fonteauteurs\bgroup Looijenga\egroup\egroup{} :
\newblock The complement of the bifurcation variety of a simple singularity.
\newblock {\em Invent. Math.}, 23\string:\penalty500\relax 105--116, 1974.

\bibitem[LT09]{unitary}
Gustav~I. \bgroup\fonteauteurs\bgroup Lehrer\egroup\egroup{} et Donald~E.
  \bgroup\fonteauteurs\bgroup Taylor\egroup\egroup{} :
\newblock {\em Unitary reflection groups}, volume~20 de {\em Australian
  Mathematical Society Lecture Series}.
\newblock Cambridge University Press, Cambridge, 2009.

\bibitem[LZ99]{landozvonkine}
Sergei~K. \bgroup\fonteauteurs\bgroup Lando\egroup\egroup{} et Dimitri
  \bgroup\fonteauteurs\bgroup Zvonkine\egroup\egroup{} :
\newblock On multiplicities of the {L}yashko-{L}ooijenga mapping on the
  discriminant strata.
\newblock {\em Funct. Anal. Appl.}, 33(3)\string:\penalty500\relax 178--188,
  1999.

\bibitem[LZ04]{LZgraphs}
Sergei~K. \bgroup\fonteauteurs\bgroup Lando\egroup\egroup{} et Alexander~K.
  \bgroup\fonteauteurs\bgroup Zvonkin\egroup\egroup{} :
\newblock {\em Graphs on surfaces and their applications}, volume 141 de {\em
  Encyclopaedia of Mathematical Sciences}.
\newblock Springer-Verlag, Berlin, 2004.
\newblock With an appendix by Don B. Zagier, Low-Dimensional Topology, II.

\bibitem[McC05]{Gar}
Jon \bgroup\fonteauteurs\bgroup McCammond\egroup\egroup{} :
\newblock An introduction to {G}arside structures.
\newblock Preprint,
  \url{http://www.math.ucsb.edu/~jon.mccammond/papers/intro-garside.pdf}, 2005.

\bibitem[McC06]{mc}
Jon \bgroup\fonteauteurs\bgroup McCammond\egroup\egroup{} :
\newblock Noncrossing partitions in surprising locations.
\newblock {\em Amer. Math. Monthly}, 113(7)\string:\penalty500\relax 598--610,
  2006.

\bibitem[Nam87]{branched}
Makoto \bgroup\fonteauteurs\bgroup Namba\egroup\egroup{} :
\newblock {\em Branched coverings and algebraic functions}, volume 161 de {\em
  Pitman Research Notes in Mathematics Series}.
\newblock Longman Scientific \& Technical, Harlow, 1987.

\bibitem[OT92]{orlikterao}
Peter \bgroup\fonteauteurs\bgroup Orlik\egroup\egroup{} et Hiroaki
  \bgroup\fonteauteurs\bgroup Terao\egroup\egroup{} :
\newblock {\em Arrangements of hyperplanes}, volume 300 de {\em Grundlehren der
  Mathematischen Wissenschaften [Fundamental Principles of Mathematical
  Sciences]}.
\newblock Springer-Verlag, Berlin, 1992.

\bibitem[Rea08]{reading}
Nathan \bgroup\fonteauteurs\bgroup Reading\egroup\egroup{} :
\newblock Chains in the noncrossing partition lattice.
\newblock {\em SIAM J. Discrete Math.}, 22(3)\string:\penalty500\relax
  875--886, 2008.

\bibitem[Rea09]{readingshard}
Nathan \bgroup\fonteauteurs\bgroup Reading\egroup\egroup{} :
\newblock Noncrossing partitions and the shard intersection order.
\newblock Preprint arXiv:0909.3288v1, 2009.

\bibitem[Rea10]{readingcoxeter}
Nathan \bgroup\fonteauteurs\bgroup Reading\egroup\egroup{} :
\newblock Noncrossing partitions, clusters and the {C}oxeter plane.
\newblock {\em S\'em. Lothar. Combin.}, 63\string:\penalty500\relax Art. B63b,
  32 pp. (electronic), 2010.

\bibitem[Rei97]{reiner}
Victor \bgroup\fonteauteurs\bgroup Reiner\egroup\egroup{} :
\newblock Non-crossing partitions for classical reflection groups.
\newblock {\em Discrete Math.}, 177(1-3)\string:\penalty500\relax 195--222,
  1997.

\bibitem[Rip06]{M2}
Vivien \bgroup\fonteauteurs\bgroup Ripoll\egroup\egroup{} :
\newblock Propriété de treillis dans les groupes de réflexions réels finis,
  d'après {B}rady-{W}att.
\newblock Mémoire de Master 2 (Université {P}aris~VI),
  \url{http://www.math.ens.fr/~vripoll/M2.pdf}, 2006.

\bibitem[Rip10]{Ripollfacto}
Vivien \bgroup\fonteauteurs\bgroup Ripoll\egroup\egroup{} :
\newblock Orbites d'{H}urwitz des factorisations primitives d'un élément de
  {C}oxeter.
\newblock {\em J. Alg.}, 323(5), Mars 2010.

\bibitem[RSW04]{sieving}
Victor \bgroup\fonteauteurs\bgroup Reiner\egroup\egroup{}, Dennis
  \bgroup\fonteauteurs\bgroup Stanton\egroup\egroup{} et Dennis
  \bgroup\fonteauteurs\bgroup White\egroup\egroup{} :
\newblock The cyclic sieving phenomenon.
\newblock {\em J. Combin. Theory Ser. A}, 108(1)\string:\penalty500\relax
  17--50, 2004.

\bibitem[S{\etalchar{+}}97]{GAP}
Martin \bgroup\fonteauteurs\bgroup Schönert\egroup\egroup{} \emph{et~al.} :
\newblock {\em {GAP} -- {Groups}, {Algorithms}, and {Programming} -- version 3
  release 4 patchlevel 4}.
\newblock Lehrstuhl D für Mathematik, Rheinisch Westfälische Technische
  Hoch\-schule, Aachen, Germany, 1997.

\bibitem[Sai]{saitopolyhedra2}
Kyoji \bgroup\fonteauteurs\bgroup Saito\egroup\egroup{} :
\newblock Polyhedra dual to the {W}eyl chamber decomposition.
\newblock Unpublished extended version (personal communication, August 2009).

\bibitem[Sai93]{saito}
Kyoji \bgroup\fonteauteurs\bgroup Saito\egroup\egroup{} :
\newblock On a linear structure of the quotient variety by a finite reflexion
  group.
\newblock {\em Publ. Res. Inst. Math. Sci.}, 29(4)\string:\penalty500\relax
  535--579, 1993.

\bibitem[Sai04a]{saitopolyhedra}
Kyoji \bgroup\fonteauteurs\bgroup Saito\egroup\egroup{} :
\newblock Polyhedra dual to the {W}eyl chamber decomposition: a pr\'ecis.
\newblock {\em Publ. Res. Inst. Math. Sci.}, 40(4)\string:\penalty500\relax
  1337--1384, 2004.

\bibitem[Sai04b]{saitoorbifold}
Kyoji \bgroup\fonteauteurs\bgroup Saito\egroup\egroup{} :
\newblock Uniformization of the orbifold of a finite reflection group.
\newblock \emph{In} {\em Frobenius manifolds}, Aspects Math., E36, pages
  265--320. Vieweg, Wiesbaden, 2004.

\bibitem[Sai05]{saitosemialg}
Kyoji \bgroup\fonteauteurs\bgroup Saito\egroup\egroup{} :
\newblock Semi-algebraic geometry of braid groups.
\newblock Preprint, \url{http://www.kurims.kyoto-u.ac.jp/~saito/semi-alge.pdf},
  2005.

\bibitem[Ser68]{serre}
Jean-Pierre \bgroup\fonteauteurs\bgroup Serre\egroup\egroup{} :
\newblock {\em Corps locaux}.
\newblock Hermann, Paris, 1968.
\newblock Deuxi{\`e}me {\'e}dition, Publications de l'Universit{\'e} de
  Nancago, No. VIII.

\bibitem[Sha77]{shaf}
Igor~R. \bgroup\fonteauteurs\bgroup Shafarevich\egroup\egroup{} :
\newblock {\em Basic algebraic geometry}.
\newblock Springer-Verlag, Berlin, 1977.
\newblock Translated from the Russian by K. A. Hirsch, Revised printing of
  Grundlehren der mathematischen Wissenschaften, Vol. 213, 1974.

\bibitem[Sim00]{simion}
Rodica \bgroup\fonteauteurs\bgroup Simion\egroup\egroup{} :
\newblock Noncrossing partitions.
\newblock {\em Discrete Math.}, 217(1-3)\string:\penalty500\relax 367--409,
  2000.
\newblock Formal power series and algebraic combinatorics (Vienna, 1997).

\bibitem[Spe97]{speicher}
Roland \bgroup\fonteauteurs\bgroup Speicher\egroup\egroup{} :
\newblock Free probability theory and non-crossing partitions.
\newblock {\em S\'em. Lothar. Combin.}, 39\string:\penalty500\relax Art.\ B39c,
  38 pp.\ (electronic), 1997.

\bibitem[Spr74]{Springer}
Tonny~A. \bgroup\fonteauteurs\bgroup Springer\egroup\egroup{} :
\newblock Regular elements of finite reflection groups.
\newblock {\em Invent. Math.}, 25\string:\penalty500\relax 159--198, 1974.

\bibitem[ST54]{Shephard}
Geoffrey~C. \bgroup\fonteauteurs\bgroup Shephard\egroup\egroup{} et John~A.
  \bgroup\fonteauteurs\bgroup Todd\egroup\egroup{} :
\newblock Finite unitary reflection groups.
\newblock {\em Canadian J. Math.}, 6\string:\penalty500\relax 274--304, 1954.

\bibitem[Sta97a]{stanley}
Richard~P. \bgroup\fonteauteurs\bgroup Stanley\egroup\egroup{} :
\newblock {\em Enumerative combinatorics. {V}ol. 1}, volume~49 de {\em
  Cambridge Studies in Advanced Mathematics}.
\newblock Cambridge University Press, Cambridge, 1997.
\newblock With a foreword by Gian-Carlo Rota, Corrected reprint of the 1986
  original.

\bibitem[Sta97b]{stanley3}
Richard~P. \bgroup\fonteauteurs\bgroup Stanley\egroup\egroup{} :
\newblock Parking functions and noncrossing partitions.
\newblock {\em Electron. J. Combin.}, 4(2)\string:\penalty500\relax Research
  Paper 20, approx. 14 pp. (electronic), 1997.
\newblock The Wilf Festschrift (Philadelphia, PA, 1996).

\bibitem[Sta99]{stanley2}
Richard~P. \bgroup\fonteauteurs\bgroup Stanley\egroup\egroup{} :
\newblock {\em Enumerative combinatorics. {V}ol. 2}, volume~62 de {\em
  Cambridge Studies in Advanced Mathematics}.
\newblock Cambridge University Press, Cambridge, 1999.
\newblock With a foreword by Gian-Carlo Rota and appendix 1 by Sergey Fomin.

\bibitem[Ste64]{steinberg2}
Robert \bgroup\fonteauteurs\bgroup Steinberg\egroup\egroup{} :
\newblock Differential equations invariant under finite reflection groups.
\newblock {\em Trans. Amer. Math. Soc.}, 112\string:\penalty500\relax 392--400,
  1964.

\bibitem[Stu10]{stump}
Christian \bgroup\fonteauteurs\bgroup Stump\egroup\egroup{} :
\newblock {$q,t$-Fuß-Catalan} numbers for finite reflection groups.
\newblock Preprint arXiv:0901.1574, à paraître dans J. Algebraic Combin.,
  2010.

\end{thebibliography}


\end{document}